\newtheorem{theorem}{Theorem}
\newtheorem{lemma}{Lemma}
\newtheorem{corollary}{Corollary}
\newcommand{\integers}{{\mathbb Z}}
\newcommand{\realnos}{{\mathbb R}}
\def\ov{\overline}
\def\Beta{{\rm B}}
\def\Nu{{\rm N}}
\def\Mu{{\rm M}}
\def\Tau{{\rm T}}
\def\Eta{{\rm H}}
\def\Kappa{{\rm K}}
\def\Iota{{\rm I}}
\begin{document}

\title{Closed Flat Riemannian 4-Manifolds}

\author{Thomas P. Lambert, John G. Ratcliffe, and Steven T. Tschantz}

\address{Department of Mathematics, Vanderbilt University, Nashville, TN 37240
\vspace{.1in}}

\email{j.g.ratcliffe@vanderbilt.edu}


\date{}


\begin{abstract}
In this paper we describe the classification of all the geometric fibrations of a closed flat Riemannian 4-manifold over a 1-orbifold. 
\end{abstract}

\maketitle

\section{Introduction} 
In this paper, 
a {\it closed flat $n$-manifold} is a compact, connected, Riemannian $n$-manifold 
of constant zero sectional curvature. 
There are only finitely many homeomorphism classes of  closed flat $n$-manifolds 
for each dimension $n$. 
In dimensions $1,2,3,4$, there are $1,2,10, 74$ homeomorphism equivalence classes 
of closed flat manifolds, respectively. 

The classification of closed flat 4-manifolds 
was first achieved by Brown et al.  in their 
1978 computer assisted classification ~\cite{B-Z} 
of all the isomorphism classes of 4-dimensional crystallographic groups. 
Brown et al. ~found 74 homeomorphism equivalence classes of closed flat 4-manifolds, 
with 27 classes of orientable manifolds and 47 classes of nonorientable manifolds.  
No other topological information about these manifolds was given by Brown et al. ~in \cite{B-Z}. 

Earlier Levine had published in his 1970 PhD thesis \cite{Levine} a computer assisted classification  
of 75 homeomorphism equivalence classes of closed flat 4-manifolds, 
with 27 classes of orientable manifolds and 48 classes of nonorientable manifolds. 
Levine's thesis is flawed by a duplication in the list of nonorientable manifolds. 
Levine's thesis does have an advantage over \cite{B-Z} 
in that it lists the first homology group of each manifold, 
which is a useful topological invariant.   

In his 1995 paper \cite{H}, Hillman described an algebraic topological classification 
of closed flat 4-manifolds based on his proof  
that every such manifold is either a fiber bundle over the circle or a 
union of two twisted $I$-bundles intersecting along their boundaries. 
The latter case is equivalent to an orbifold fibration of the manifold over a closed interval. 
Hillman found 74 homeomorphism equivalence classes of closed flat 4-manifolds. 
Hillman made no attempt in \cite{H} to make his classification correspond to the previous classifications 
of Brown et al. ~and Levine. 

In his 2007 PhD thesis \cite{Lambert}, 
Lambert found the correspondence between 
the Brown et al., Levine, and Hillman classifications and the duplication 
in Levine's classification.  
Today one can use the computer program CARAT \cite{Carat} to identify 
a closed flat 4-manifold from an affine representation. 
However, CARAT does not tell you much about the geometry or topology of an individual manifold. 

In this paper we describe our classification of all the geometric orbifold fibrations of 
a closed flat 4-manifold over a 1-orbifold (a circle or a closed interval). 
We prove that there are 214 affine equivalence classes of geometric orbifold fibrations of closed flat 4-manifolds over a 1-orbifold, 
with 98 classes over a circle and 116 classes over a closed interval.  We also identify the total spaces of the fibrations. 
All 98 classes of fiberings over a circle were found by Hillman in \cite{H} but they were not all sorted by their total spaces. 
A closed flat 4-manifold fibers over a circle if and only if the first Betti number of the manifold is positive. 

As part of his classification in \cite{H}, Hillman also described 8 affine equivalence classes of geometric orbifold fibrations of closed flat 4-manifolds, with 
first Betti number 0, over a closed interval.  We show that, in fact, there are 12 such equivalence classes. 
This implies that Hillman's argument for the classification of closed flat 4-manifolds, with first Betti number 0, in \cite{H} has a gap, 
which is filled by our classification.  See Example 5 in this regard.

Closed flat $n$-manifolds occur naturally in the theory of hyperbolic manifolds as horospherical cross-sections 
of the cusps of hyperbolic $(n+1)$-manifolds of finite volume. 
We first encountered closed flat 4-manifolds in our classification \cite{R-T5} of certain hyperbolic 5-manifolds of finite volume. 
Initially we had a hard time to identity the closed flat 4-manifolds that appeared as horospherical cross-sections of the cusps of our 5-manifolds. 
What we observed is that these 4-manifolds were fiber bundles over a circle, and with this extra information, we were able to identify them. 
The main idea of this paper is that a geometric fibration of a closed flat $4$-manifold over a 1-orbifold carries enough information 
to identify the manifold. 
It is the purpose of this paper to explain how to use this information to identify a closed flat 4-manifold. 
It is our desire that this paper will put the topology and geometry of closed flat 4-manifolds in good order. 

\tableofcontents

\section{Geometric Fibrations of Closed Flat Manifolds} 

Let $E^n$ be Euclidean $n$-space. 
A map $\phi:E^n\to E^n$ is an isometry of $E^n$ 
if and only if there is an $a\in E^n$ and an $A\in {\rm O}(n)$ such that 
$\phi(x) = a + Ax$ for each $x$ in $E^n$. 
We shall write $\phi = a+ A$. 
In particular, every translation $\tau = a + I$ is an isometry of $E^n$.

An $n$-dimensional {\it crystallographic group} ($n$-space group) is a discrete group 
of isometries $\Gamma$ of $E^n$ such that the orbit space $E^n/\Gamma$ is compact. 
If $\Gamma$ is a torsion-free $n$-space group, 
then the orbit space $E^n/\Gamma$ is called an $n$-dimensional {\it Euclidean space-form}. 
A space-form $E^n/\Gamma$ is a closed flat $n$-manifold. 
Conversely, if $M$ is a closed flat $n$-manifold, then there is a torsion-free 
$n$-space group such that $M$ is isometric to the space-form $E^n/\Gamma$. 
The homeomorphism classification of closed flat $n$-manifolds 
is equivalent to the isomorphism classification of torsion-free $n$-space groups.

Let $\Gamma$ be an $n$-space group. 
Define $\eta:\Gamma \to {\rm O}(n)$ by $\eta(a+A) = A$. 
Then $\eta$ is a homomorphism whose kernel is the group 
of translations in $\Gamma$. 
The image of $\eta$ is a finite group called the 
{\it point group} of $\Gamma$.  If $\Gamma$ is torsion-free, the 
point group of $\Gamma$ is also called the {\it holonomy group} of $\Gamma$. 

Let $\Eta$ be a subgroup of an $n$-space group $\Gamma$. 
Define the {\it span} of $\Eta$ by the formula
$${\rm Span}(\Eta) = {\rm Span}\{a\in E^n:a+I\in \Eta\}.$$
Note that ${\rm Span}(\Eta)$ is a vector subspace $V$ of $E^n$.  
Let $V^\perp$ denote the orthogonal complement of $V$ in $E^n$. 

\begin{theorem}  {\rm (Theorem 2 \cite{R-T})\ } 
Let ${\rm N}$ be a normal subgroup of an $n$-space group $\Gamma$, 
and let $V = {\rm Span}(\Nu)$. 
\begin{enumerate}
\item If $b+B\in\Gamma$, then $BV=V$. 
\item If $a+A\in \Nu$,  then $a\in V$ and $ V^\perp\subseteq{\rm Fix}(A)$. 
\item The group $\Nu$ acts effectively on each coset $V+x$ of $V$ in $E^n$ 
as a space group of isometries of $V+x$. 
\end{enumerate}
\end{theorem}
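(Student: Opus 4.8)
The plan is to prove the three parts in order, using repeatedly the composition and inversion formulas for isometries, $(b+B)(a+A) = (b+Ba)+BA$ and $(b+B)^{-1} = -B^{-1}b+B^{-1}$, together with normality of $\Nu$ in $\Gamma$ and the standard fact (first Bieberbach theorem) that the translation subgroup of $\Gamma$ is a lattice spanning $E^n$.

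For part (1) I would take a translation $a+I\in\Nu$ and an arbitrary $b+B\in\Gamma$ and conjugate. A direct calculation gives
$$(b+B)(a+I)(b+B)^{-1} = Ba+I.$$
Since $\Nu$ is normal, $Ba+I\in\Nu$, so $Ba$ belongs to the generating set $\{a:a+I\in\Nu\}$ of $V$. Hence $B$ carries that set into itself, so $BV\subseteq V$, and as $B$ is invertible with $\dim BV=\dim V$ we get $BV=V$.

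Part (2) is the crux and the main obstacle. First, applying part (1) to $a+A\in\Nu\subseteq\Gamma$ gives $AV=V$ and therefore $AV^\perp=V^\perp$. The key trick is to conjugate $a+A$ by an arbitrary translation $c+I$ of $\Gamma$ (not merely by translations of $\Nu$): one computes the commutator
$$(c+I)(a+A)(c+I)^{-1}(a+A)^{-1} = (I-A)c+I,$$
which lies in $\Nu$ by normality, so $(I-A)c\in V$ for every $c$ in the translation lattice $T$ of $\Gamma$. Since $T$ spans $E^n$, linearity yields ${\rm Im}(I-A)\subseteq V$. Now for $v\in V^\perp$ we have $(I-A)v\in V^\perp$ (because $A$ preserves $V^\perp$) while also $(I-A)v\in V$, forcing $(I-A)v=0$; thus $V^\perp\subseteq{\rm Fix}(A)$. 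To conclude $a\in V$, I use that the point group is finite, so $A$ has some finite order $k$: then $(a+A)^k=\big(\sum_{j=0}^{k-1}A^j a\big)+I$ is a translation in $\Nu$, hence its $V^\perp$-component lies in $V$; but since $A$ fixes $V^\perp$ pointwise that component equals $k$ times the $V^\perp$-component of $a$, which therefore vanishes. The delicate point here is recognizing that one must feed in all of $T$ and invoke its full rank, rather than working only inside $\Nu$.

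For part (3) I would use part (2): writing a point as $v+w$ with $v\in V$, $w\in V^\perp$, every $a+A\in\Nu$ acts by $v+w\mapsto(a+Av)+w$, so it preserves each coset $V+w$ and acts there as the isometry $a+A|_V$ of $V$, independently of $w$. Effectiveness is immediate: if $a+A$ acts trivially on one coset then $a=0$ and $A|_V=I$, which together with $A|_{V^\perp}=I$ forces $a+A$ to be the identity. For the space-group property I restrict the action to $V$. The translations of $\Nu$ form a lattice $L$ with ${\rm Span}(L)=V$, so $L$ is a full-rank lattice in $V$ and $V/L$ is already compact; since the restricted group contains these translations, its quotient of $V$ is a further quotient of $V/L$ and hence compact. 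Discreteness follows because $\Nu$ is discrete in the isometry group of $E^n$ and, by part (2), lies in the closed subgroup of isometries fixing $V^\perp$ pointwise, which is canonically identified with the isometry group of $V$; the restriction map is thus a homeomorphism onto a discrete image. Therefore $\Nu$ acts on each coset $V+x$ as a discrete, cocompact group of isometries, i.e. as a space group.
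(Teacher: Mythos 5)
Your proof is correct, and since the paper only quotes this result as Theorem 2 of \cite{R-T} without reproducing an argument, the relevant comparison is with the proof there, which yours matches in all essentials: conjugating translations of $\Nu$ by elements of $\Gamma$ for (1); the commutator identity $(c+I)(a+A)(c+I)^{-1}(a+A)^{-1}=(I-A)c+I$ combined with the full rank of the translation lattice of $\Gamma$ to get $\mathrm{Im}(I-A)\subseteq V$, and the finite order of $A$ to kill the $V^\perp$-component of $a$, for (2); and restriction to the $V$-factor for (3). (One cosmetic slip in (3): the subgroup you invoke is the group of isometries acting as the identity on the $V^\perp$-factor of $E^n=V\times V^\perp$, not the isometries fixing $V^\perp$ pointwise, but the canonical identification with $\mathrm{Isom}(V)$ and the resulting discreteness argument are unaffected.)
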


Let $\Gamma$ be an $n$-space group. 
The {\it dimension} of $\Gamma$ is $n$. 
If $\Nu$ is a normal subgroup of $\Gamma$, 
then $\Nu$ is a $m$-space group with $m= \mathrm{dim}(\mathrm{Span}(\Nu))$ 
by Theorem 1(3). 

\vspace{.15in}
\noindent{\bf Definition:}
Let $\Nu$ be a normal subgroup $\Nu$ of an $n$-space group $\Gamma$, and let $V = {\rm Span}(\Nu)$.  
Then $\Nu$ is said to be a {\it complete normal subgroup} of $\Gamma$ if 
$$\Nu= \{a+A\in \Gamma: a\in V\ \hbox{and}\ V^\perp\subseteq{\rm Fix}(A)\}.$$
\noindent{\bf Remark 1.}
If $\Nu$ is a normal subgroup of an $n$-space group $\Gamma$, 
then $\Nu$ is contained in a unique complete normal subgroup $\hat{\Nu}$ 
of $\Gamma$ such that $\Nu$ has finite index in $\hat{\Nu}$. 
The group $\hat{\Nu}$ is called the {\it completion} of $\Nu$ in $\Gamma$.  

\begin{lemma} {\rm (Lemma 1 \cite{R-T})\ }
Let $\Nu$ be a complete normal subgroup of an $n$-space group $\Gamma$, 
and let $V={\rm Span}(\Nu)$. 
Then $\Gamma/\Nu$ acts effectively as a space group of isometries of $E^n/V$ 
by the formula
$({\rm N}(b+B))(V+x) = V+ b+Bx.$
\end{lemma}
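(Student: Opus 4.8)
The plan is to verify, in turn, that the formula defines a well-defined action, that this action is by isometries, that it is effective, and finally that the image is a space group (discrete and cocompact). Throughout I identify $E^n/V$ with the Euclidean space $V^\perp$ via $V+x\leftrightarrow P(x)$, where $P$ denotes orthogonal projection onto $V^\perp$. First I would check well-definedness. Independence of the representative $x$ of the coset $V+x$ follows from $BV=V$ (Theorem 1(1)): if $x'-x\in V$ then $B(x'-x)\in V$, so $V+b+Bx'=V+b+Bx$. Independence of the representative $b+B$ of the coset $\Nu(b+B)$ is the crucial point and uses Theorem 1(2): if $a+A\in\Nu$ then $a\in V$ and $V^\perp\subseteq{\rm Fix}(A)$, and together with $AV=V$ this gives ${\rm Im}(A-I)\subseteq V$, whence left multiplication by $a+A$ does not change the coset $V+b+Bx$. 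A direct composition check shows the assignment $b+B\mapsto(V+x\mapsto V+b+Bx)$ is a homomorphism, so it descends to an action of $\Gamma/\Nu$.

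Next, for the isometry property I would use that the metric on $E^n/V$ is $d(V+x,V+y)=|P(x-y)|$. Since $B$ is orthogonal and preserves $V$, it preserves $V^\perp$ and commutes with $P$, so $|P(B(x-y))|=|B\,P(x-y)|=|P(x-y)|$. For effectiveness, the step where completeness is essential, suppose ${\rm N}(b+B)$ acts trivially. Then $b+(B-I)x\in V$ for all $x$; taking $x=0$ gives $b\in V$, and then ${\rm Im}(B-I)\subseteq V$. Because $B$ preserves $V^\perp$, for $w\in V^\perp$ the vector $(B-I)w$ lies in $V\cap V^\perp=\{0\}$, so $V^\perp\subseteq{\rm Fix}(B)$. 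The defining property of a complete normal subgroup then forces $b+B\in\Nu$, i.e.\ the acting coset is the identity.

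The remaining and hardest step is to show the image group is a space group, that is, discrete and cocompact in ${\rm Isom}(E^n/V)$. Cocompactness is the easier half: the quotient map $E^n\to E^n/V$ is equivariant over $\Gamma\to\Gamma/\Nu$ and induces a continuous surjection $E^n/\Gamma\to (E^n/V)/(\Gamma/\Nu)$, so the target is compact because $E^n/\Gamma$ is. Discreteness is the main obstacle, since a cocompact group of Euclidean isometries need not be discrete. Here I would exploit that $\Nu$ is itself a space group of $V$ (Theorem 1(3)), so by Bieberbach its translation lattice has full rank in $V={\rm Span}(\Nu)$; thus $V$ is rational with respect to the translation lattice $T$ of $\Gamma$, and the orthogonal projection $P(T)$ is a genuine lattice in $V^\perp$. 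Given a sequence of image elements tending to the identity, I would use finiteness of the point group to fix the rotational part, reduce differences of such elements to translations by vectors of $T$, and conclude via discreteness of $P(T)$ together with completeness of $\Nu$ that these elements are eventually equal, hence that the image is discrete. Combined with cocompactness and effectiveness, this shows that $\Gamma/\Nu$ acts as a space group on $E^n/V$.
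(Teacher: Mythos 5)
The paper does not prove this lemma; it is quoted verbatim from Lemma~1 of the cited Ratcliffe--Tschantz paper, so there is no in-text argument to compare against. Your proof is correct and self-contained. The well-definedness, isometry, and effectiveness steps are exactly right, and you correctly isolate completeness as the ingredient that makes the action effective (consistent with Remark~2 of the paper, which says $\Nu$ is complete precisely when $\Gamma/\Nu$ is a space group). The only portion left as a sketch is discreteness, but the outline you give does close up: since $\Nu$ is an $m$-space group of $V$ with $m=\dim V$ (Theorem~1(3)) and, by completeness, contains every translation of $\Gamma$ with vector in $V$, the lattice $T\cap V$ spans $V$ and hence $P(T)$ is discrete in $V^\perp$; then for a sequence of image elements converging to the identity one passes to a subsequence with constant rotational part (finiteness of the point group), observes that consecutive quotients are classes of translations by vectors of $T$ acting on $V^\perp$ by elements of $P(T)$ tending to $0$, hence eventually trivial, which together with completeness forces the subsequence to be eventually constant and therefore equal to the identity. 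Combined with the cocompactness argument via the surjection from $E^n/\Gamma$, this gives the space-group property. This is the standard route one would expect the cited reference to take; I see no gap.
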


\noindent{\bf Remark 2.} A normal subgroup $\Nu$ of a space group $\Gamma$ is complete 
precisely when $\Gamma/\Nu$ is a space group by Theorem 5 of \cite{R-T}.

\vspace{.15in}

A {\it flat $n$-orbifold} is a $(E^n,{\rm Isom}(E^n))$-orbifold 
as defined in \S 13.2 of Ratcliffe \cite{R}. 
A connected flat $n$-orbifold has a natural inner metric space structure. 
If $\Gamma$ is a discrete group of isometries of $E^n$, 
then its orbit space $E^n/\Gamma = \{\Gamma x: x\in E^n\}$ 
is a  connected, complete, flat $n$-orbifold, 
and conversely if $M$ is a connected, complete, flat $n$-orbifold, 
then there is a discrete group $\Gamma$ of isometries of $E^n$  
such that $M$ is isometric to $E^n/\Gamma$ by Theorem 13.3.10 of \cite{R}. 

\vspace{.15in}
\noindent{\bf Definition:}
A flat $n$-orbifold $M$ {\it geometrically fibers} over a flat $m$-orbifold $B$, 
with {\it generic fiber} a flat $(n-m)$-orbifold $F$, if there is a surjective map $\eta: M \to B$, 
called the {\it fibration projection},  
such that for each point $y$ of $B$,  
there is an open metric ball $B(y,r)$ of radius $r > 0$ centered at $y$ in $B$
such that $\eta$ is isometrically equivalent on $\eta^{-1}(B(y,r))$
to the natural projection $(F\times B_y)/G_y \to B_y/G_y$, 
where $G_y$ is a finite group acting diagonally on $F\times B_y$, isometrically on $F$,  
and effectively and orthogonally on an open metric ball $B_y$ in $E^m$ of radius $r$. 
This implies that the fiber $\eta^{-1}(y)$ is isometric to $F/G_y$. 
The fiber $\eta^{-1}(y)$ is said to be {\it generic} if $G_y = \{1\}$  
or {\it singular} if $G_y$ is nontrivial. 

\vspace{.15in}
In the above definition, if $M$ is a flat $n$-manifold, 
then the generic fiber $F$ must be a flat $(n-m)$-manifold and the group $G_y$ must act freely on  $F\times B_y$, and so $G_y$ acts freely 
on $F$ for each point $y$ of $B$.  Consequently, each fiber $\eta^{-1}(y)$ is a flat $(n-m)$-manifold. 

If the {\it base} $B$ is a flat manifold, then the fibration projection $\eta:M \to B$ is a fiber bundle projection. 
We call a geometric fibration with flat manifold base a {\it flat fibering}. 

If $M$ is a flat $n$-manifold and the base $B$ is the closed interval $\Iota = [0,1]$. 
Then $\eta^{-1}([0,1/2])$ and $\eta^{-1}([1/2,1])$ are twisted $\Iota$-bundles over 
the singular fibers $\eta^{-1}(0)$ and $\eta^{-1}(1)$, respectively, 
and $M$ is the union of these two twisted $\Iota$-bundles over their common boundary $\eta^{-1}(1/2)$. 

\begin{theorem} {\rm (Theorem 4 \cite{R-T})} 
Let ${\rm N}$ be a complete normal subgroup of an $n$-space group $\Gamma$, 
and let $V = {\rm Span}({\rm N})$.  
Then the flat orbifold $E^n/\Gamma$ geometrically fibers over the flat orbifold 
$(E^n/V)/(\Gamma/{\rm N})$ with generic fiber the flat orbifold $V/{\rm N}$ and 
fibration projection $\eta_V: E^n/\Gamma \to (E^n/V)/(\Gamma/{\rm N})$ defined by the formula 
$\eta_V(\Gamma x) = (\Gamma/\Nu)(V+x).$
\end{theorem}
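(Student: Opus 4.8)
The plan is to verify directly that the map $\eta_V$ satisfies the local triviality condition in the definition of a geometric fibration. First I would check that $\eta_V$ is well-defined: if $\Gamma x = \Gamma x'$, then $x' = b + Bx$ for some $b+B \in \Gamma$, and by Theorem 1(1) we have $BV = V$, so $V + x' = V + b + Bx = (\Nu(b+B))(V+x)$ by the action described in Lemma 1; hence $(\Gamma/\Nu)(V+x') = (\Gamma/\Nu)(V+x)$. Since $\Nu$ is complete, Lemma 1 guarantees that $\Gamma/\Nu$ acts as a space group of isometries of $E^n/V$, so the target orbifold $(E^n/V)/(\Gamma/\Nu)$ is a genuine flat orbifold and $\eta_V$ is a surjective map of orbifolds.

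Next I would set up the local model at an arbitrary point $y = (\Gamma/\Nu)(V+x)$ of the base. The key geometric idea is to split $E^n$ orthogonally as $V \oplus V^\perp$. By Theorem 1(2), every element $a+A$ of $\Nu$ fixes $V^\perp$ pointwise and translates within $V$, so $\Nu$ acts only in the $V$-direction; this lets me identify the generic fiber $V/\Nu$ concretely and see $E^n/\Nu$ as isometric to $(V/\Nu) \times V^\perp$. The base $E^m = E^n/V$ should be identified with $V^\perp$ via the orthogonal projection, and under this identification the residual action of $\Gamma/\Nu$ becomes the orthogonal action on $V^\perp$ appearing in Lemma 1's formula. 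Then, to build the local product chart, I would pick a representative $v + x$ lying over $y$, let $G_y$ be the stabilizer of $V + x$ in $\Gamma/\Nu$ (a finite group, since the action is that of a space group and point stabilizers of space groups are finite), and choose a radius $r > 0$ small enough that the ball $B_y = B(\bar x, r)$ in $V^\perp$ meets its $(\Gamma/\Nu)$-translates only under $G_y$. On $\eta_V^{-1}(B(y,r))$ one then obtains an isometric identification with $((V/\Nu) \times B_y)/G_y \to B_y/G_y$, with $G_y$ acting diagonally, isometrically on the fiber $V/\Nu$ and orthogonally and effectively on $B_y$.

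I expect the main obstacle to be the careful bookkeeping in this last step: producing the isometric equivalence on $\eta_V^{-1}(B(y,r))$ precisely in the diagonal-quotient form demanded by the definition. The subtlety is that an element of $\Gamma$ projecting into the stabilizer $G_y$ need not fix the chosen basepoint $v+x$ on the nose; it fixes the coset $V + x$ but can act nontrivially (by a translation combined with the orthogonal rotation of $V$) within the fiber. I would therefore need to show that this induced fiber action is exactly an isometry of $V/\Nu$, so that the full $G_y$-action decomposes diagonally as an isometry on $V/\Nu$ times an orthogonal action on $B_y$. Establishing that the two actions fit together into the single diagonal action, and that the local trivialization respects the metric ball structure for all sufficiently small $r$, is where the real work lies; the well-definedness and surjectivity checks are routine by comparison.
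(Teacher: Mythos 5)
Your proposal is correct and follows essentially the same route as the source: the paper itself only cites this theorem from \cite{R-T}, but the machinery it reproduces in \S 3 and Theorem 8 --- the splitting $E^n/\Nu \cong V/\Nu \times V^\perp$ with $\Gamma/\Nu$ acting diagonally, isometrically on $V/\Nu$ and effectively as a space group on $V^\perp$ --- is exactly the local model you construct. The ``main obstacle'' you flag at the end is already discharged by the displayed diagonal-action formula $(b+B)(v,w)=(c+Bv,\,d+Bw)$: an element of $\Gamma$ whose class stabilizes the base point acts on the fiber through $\overline{\gamma}=c+\overline{B}$, which normalizes $\overline{\Nu}$ and hence descends to an isometry of $V/\Nu$ (the definition of a geometric fibration requires only an isometric action on $F$, not a fixed point there), so nothing remains beyond choosing $r$ smaller than half the distance from $\overline{x}$ to the rest of its $\Gamma/\Nu$-orbit in $V^\perp$.
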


Let $\Nu$ be a complete normal subgroup of an $n$-space group $\Gamma$.  
We call the map $\eta_V:E^n/\Gamma\to (E^n/V)/(\Gamma/\Nu)$ 
defined in Theorem 2, the {\it fibration projection determined by} $\Nu$.

Let $M_i$ be a connected complete flat $n$-orbifold for $i=1,2$. 
Suppose $M_i$ geometrically fibers over a flat $m$-orbifold $B_i$ 
with fibration projection 
$\eta_i:M_i\to B_i$ for $i=1,2$. 
The fibration projections $\eta_1$ and $\eta_2$ are said to be 
{\it isometrically equivalent} if there are isometries $\alpha:M_1\to M_2$ 
and $\beta:B_1\to B_2$ such that $\beta\eta_1=\eta_2\alpha$.

\begin{theorem} {\rm (Theorem 8 \cite{R-T})} 
Let $M$ be a compact connected flat $n$-orbifold. 
If $M$ geometrically fibers over a flat $m$-orbifold $B$ 
with fibration projection $\eta:M\to B$,  
then there exists an $n$-space group $\Gamma$ 
and a complete normal subgroup $\Nu$ of $\Gamma$   
such that $\eta$ is isometrically equivalent to the fibration 
projection $\eta_V: E^n/\Gamma\to (E^n/V)/(\Gamma/\Nu)$ determined by $\Nu$. 
\end{theorem}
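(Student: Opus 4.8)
The plan is to realize $M$ concretely as a Euclidean space-form and then recover $\Nu$ as the subgroup of deck transformations that preserve the lifted fibration leaf-wise. Since $M$ is a compact connected flat $n$-orbifold, Theorem 13.3.10 of \cite{R} gives a discrete group $\Gamma$ of isometries of $E^n$ with $M$ isometric to $E^n/\Gamma$; compactness of $M$ makes $\Gamma$ an $n$-space group. I fix this identification $M = E^n/\Gamma$ with quotient projection $p:E^n\to E^n/\Gamma$, and the whole argument amounts to producing a complete normal subgroup $\Nu$ whose associated $\eta_V$ matches $\eta$.

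Next I would extract the vertical subspace. The local model $(F\times B_y)/G_y \to B_y/G_y$ equips the manifold part of $M$ with a vertical distribution, namely the tangent spaces to the fibers, and this distribution is parallel because each chart is a metric product of flat pieces. Lifting through $p$ to the flat, simply connected space $E^n$ yields a parallel distribution, which is therefore spanned by a single fixed $(n-m)$-dimensional subspace $V$; equivalently, a lift of a generic fiber is a totally geodesic, hence affine, $(n-m)$-subspace of $E^n$, and $V$ is its direction. The leaves of the resulting foliation of $E^n$ are exactly the cosets $V+x$, and $p$ carries them onto the fibers of $\eta$. Since $\Gamma$ acts by deck transformations permuting the fibers of $\eta$, it permutes these leaves, so for every $b+B\in\Gamma$ we have $BV=V$, and, $B$ being orthogonal, also $BV^\perp=V^\perp$.

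Because $\Gamma$ preserves the coset foliation, it acts on the leaf space $E^n/V\cong V^\perp$, and I would define $\Nu$ to be the kernel of this action $\Gamma\to{\rm Isom}(E^n/V)$. An element $b+B$ lies in this kernel precisely when $b\in V$ and $V^\perp\subseteq{\rm Fix}(B)$, so $\Nu$ is exactly the complete normal subgroup determined by $V$ in the sense of the Definition, and it is normal as the kernel of a homomorphism. To see that ${\rm Span}(\Nu)=V$, I would use compactness: the generic fiber $F$ is a compact flat $(n-m)$-orbifold obtained as a quotient of a single leaf $V+x_0$, hence is itself a space-form of an $(n-m)$-space group whose translation lattice is a full lattice in $V$. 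Every such translation $v+I$ with $v\in V$ satisfies $v\in V$ and $V^\perp\subseteq{\rm Fix}(I)$, so it lies in $\Nu$; combined with the inclusion $\Nu\subseteq\{a+A:a\in V\}$, this forces ${\rm Span}(\Nu)=V$.

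With $\Nu$ in hand, Theorem 2 produces the fibration projection $\eta_V:E^n/\Gamma\to (E^n/V)/(\Gamma/\Nu)$ with generic fiber $V/\Nu$ and the very same vertical foliation, by the cosets of $V$, that defines $\eta$. It then remains to exhibit the isometric equivalence: taking $\alpha$ to be the identification $M=E^n/\Gamma$, the projections $\eta$ and $\eta_V$ induce the same partition of $M$ into fibers, so they determine an isometry $\beta:B\to (E^n/V)/(\Gamma/\Nu)$ of base orbifolds with $\beta\eta=\eta_V\alpha$, where $\beta$ is a local isometry by the common local product model and is well defined by Lemma 1. I expect the main obstacle to lie in the second and third steps, namely making rigorous that the locally defined vertical distributions assemble into one globally constant subspace $V$ of $E^n$ and that $\Gamma$ genuinely permutes its cosets; once $V$ is secured, the identification of $\Nu$ as the complete normal subgroup with ${\rm Span}(\Nu)=V$ and the verification that $\beta$ is an isometry are comparatively routine, relying on the compactness of the generic fiber and on Lemma 1.
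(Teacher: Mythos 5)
The paper itself offers no proof of this statement---it is quoted from Theorem~8 of \cite{R-T}---so I am judging your argument on its own terms. Your overall strategy is the right one: realize $M$ as $E^n/\Gamma$, lift the vertical distribution of the local product charts to a parallel, hence constant, distribution with direction space $V$, observe that $\Gamma$ permutes the cosets of $V$ so that $BV=V$ for all $b+B\in\Gamma$, and define $\Nu$ as the kernel of the induced action on the leaf space $E^n/V\cong V^\perp$. Those steps, including the ones you flag as delicate, do go through essentially as you describe.

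The genuine gap is in your verification that $\mathrm{Span}(\Nu)=V$, which is the whole content of completeness here. You write the generic fiber as $(V+x_0)/H$ for an $(n-m)$-space group $H$ acting on the leaf and assert that the translations of $H$, being of the form $v+I$ with $v\in V$, lie in $\Nu$. But the elements of $H$ are isometries of the leaf, i.e.\ restrictions to $V+x_0$ of elements of the leaf stabilizer $\Gamma_{V+x_0}\subseteq\Gamma$, and an element $\gamma=b+B$ of $\Gamma$ can restrict to a translation of the leaf while acting nontrivially on $V^\perp$; such a $\gamma$ is neither a translation of $E^n$ nor an element of $\Nu$. Example~1 of the paper already exhibits this: for the Klein bottle group with $V=\mathrm{Span}\{e_1\}$ one has $\Nu=\langle t_1\rangle$, yet the glide reflection $\alpha=\frac{1}{2}e_1+\mathrm{diag}(1,-1)$ stabilizes the leaf $V$ and restricts to translation by $\frac{1}{2}e_1$ on it, so the leaf lattice is $\frac{1}{2}\integers e_1$ while the translations of $\Nu$ give only $\integers e_1$. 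So ``every such translation lies in $\Nu$'' is unjustified (and false for singular leaves); what is missing is the link between the geometric genericity of the fiber ($G_y=\{1\}$) and the group-theoretic statement $\Gamma_{V+x_0}=\Nu$. The conclusion survives and can be repaired in either of two short ways. One: if $\gamma=b+B$ stabilizes the leaf and $B|_V=I$, let $k$ be the (finite) order of $B$; then $\gamma^k=(kv+c')+I$ with $v\in V$ the leaf-translation vector and $c'\in V^\perp$, and since this translation still preserves the coset $V+x_0$ we get $c'=0$, so $\gamma^k=kv+I\in\Nu$; applying this to a basis of the leaf lattice gives translations in $\Nu$ spanning $V$. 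Two: since $\Gamma/\Nu$ acts effectively on $V^\perp$ and $\Gamma$ is countable, all but a countable union of proper affine subspaces' worth of leaves have stabilizer exactly $\Nu$; for such a leaf the compactness of its image fiber forces $\Nu$ to act cocompactly on $V+x_0$, and the translations of the resulting $(n-m)$-space group $\ov\Nu$ are induced by genuine translations in $\Nu$ (because $B|_V=I$ and $B|_{V^\perp}=I$ force $B=I$), which span $V$. With that step supplied, the identification of the base and the isometric equivalence are routine as you say.
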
 

An {\it affinity} $\alpha$ of $E^n$ is a map $\alpha: E^n \to E^n$ of the form $\alpha = a+A$ with $a \in E^n$ and $A\in \mathrm{GL}(n,\integers)$. 
The set of all affinities of $E^n$ forms a group $\mathrm{Aff}(E^n)$ that contains the group of all isometries of $E^n$ as a subgroup. 

There exist covering projections $\xi_i: E^n \to M_i$ for $i = 1, 2$ that are local isometries. 
A map $\alpha: M_1 \to M_2$ is an {\it affinity} if there is an affinity $\tilde\alpha$ of $E^n$ 
such that $\alpha\xi_1 = \xi_2\tilde\alpha$. 

The fibration projections $\eta_1$ and $\eta_2$ are said to be 
{\it affinely equivalent} if there are affinities $\alpha:M_1\to M_2$ 
and $\beta:B_1\to B_2$ such that $\beta\eta_1=\eta_2\alpha$. 

\begin{theorem} {\rm (Theorem 10 \cite{R-T})} 
Let $\Nu_i$ be a complete normal subgroup of an $n$-space group $\Gamma_i$ for $i=1,2$, 
and let $\eta_i:E^n/\Gamma_i \to (E^n/V_i)/(\Gamma_i/\Nu_i)$ 
be the fibration projections determined by $\Nu_i$ for $i=1,2$. 
Then the following are equivalent:
\begin{enumerate}
\item The fibration projections $\eta_1$ and $\eta_2$ are affinely equivalent. 
\item There is an affinity $\phi$ of $E^n$ such that 
$\phi\Gamma_1\phi^{-1} = \Gamma_2$ and $\phi\Nu_1\phi^{-1} = \Nu_2$. 
\item There is an isomorphism $\psi:\Gamma_1\to \Gamma_2$ such that $\psi(\Nu_1) = \Nu_2$.
\end{enumerate}
\end{theorem}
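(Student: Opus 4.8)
The plan is to prove the cyclic chain of implications $(1)\Rightarrow(2)\Rightarrow(3)\Rightarrow(1)$, exploiting the rigidity of space groups to upgrade the topological data (affinities of orbifolds) into algebraic data (conjugating affinities of $E^n$) and back.

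First I would prove $(1)\Rightarrow(2)$. Suppose $\eta_1$ and $\eta_2$ are affinely equivalent via affinities $\alpha:E^n/\Gamma_1\to E^n/\Gamma_2$ and $\beta$ of the base orbifolds with $\beta\eta_1=\eta_2\alpha$. By definition of an affinity of orbifolds, $\alpha$ lifts to an affinity $\phi = a+A$ of $E^n$ with $\alpha\xi_1=\xi_2\phi$, where $\xi_i:E^n\to E^n/\Gamma_i$ are the universal covering projections. The key observation is that because $\alpha$ is a well-defined bijection of orbit spaces, $\phi$ must carry $\Gamma_1$-orbits to $\Gamma_2$-orbits, which forces $\phi\Gamma_1\phi^{-1}=\Gamma_2$; this is the standard fact that an affinity descending to the quotient normalizes the covering group into the target group. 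It remains to show $\phi\Nu_1\phi^{-1}=\Nu_2$. Here I would use the relation $\beta\eta_1=\eta_2\alpha$ together with the explicit formula $\eta_{V_i}(\Gamma_i x)=(\Gamma_i/\Nu_i)(V_i+x)$ from Theorem 2. Unwinding this, the compatibility of $\phi$ with the base projections shows $\phi$ carries cosets of $V_1$ to cosets of $V_2$, hence $AV_1=V_2$; combined with $\phi\Gamma_1\phi^{-1}=\Gamma_2$ and the \emph{completeness} of $\Nu_1$ and $\Nu_2$, the subgroup $\phi\Nu_1\phi^{-1}$ consists exactly of the elements of $\Gamma_2$ of the form $a'+A'$ with $a'\in V_2$ and $V_2^\perp\subseteq\mathrm{Fix}(A')$, which is precisely $\Nu_2$ by the definition of complete normal subgroup. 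This is where completeness is essential: it is the intrinsic characterization of $\Nu_i$ inside $\Gamma_i$ that is preserved by any affinity respecting the spans.

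Next, $(2)\Rightarrow(3)$ is immediate: given the conjugating affinity $\phi$, define $\psi:\Gamma_1\to\Gamma_2$ by $\psi(\gamma)=\phi\gamma\phi^{-1}$. Then $\psi$ is an isomorphism by $\phi\Gamma_1\phi^{-1}=\Gamma_2$, and $\psi(\Nu_1)=\phi\Nu_1\phi^{-1}=\Nu_2$.

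The implication $(3)\Rightarrow(1)$ is the one I expect to be the main obstacle, since here we must manufacture geometric affinities out of a purely abstract group isomorphism. The tool is the rigidity of space groups: an abstract isomorphism $\psi:\Gamma_1\to\Gamma_2$ of $n$-space groups is induced by conjugation by an affinity $\phi=a+A$ of $E^n$ (this is the affine version of Bieberbach's rigidity theorem, and is presumably the content of the earlier results cited from \cite{R-T}). So I would first produce $\phi$ with $\phi\gamma\phi^{-1}=\psi(\gamma)$ for all $\gamma\in\Gamma_1$, giving $\phi\Gamma_1\phi^{-1}=\Gamma_2$ and $\phi\Nu_1\phi^{-1}=\psi(\Nu_1)=\Nu_2$; note this recovers condition $(2)$, so in fact $(3)\Rightarrow(2)$. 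Then $\phi$ descends to an affinity $\alpha:E^n/\Gamma_1\to E^n/\Gamma_2$ via $\alpha\xi_1=\xi_2\phi$. The remaining task is to construct the base affinity $\beta$ and verify $\beta\eta_1=\eta_2\alpha$. Since $\phi\Nu_1\phi^{-1}=\Nu_2$ and $AV_1=V_2$ (which follows because $A$ conjugates the translation lattice of $\Nu_1$ onto that of $\Nu_2$, hence maps $V_1=\mathrm{Span}(\Nu_1)$ onto $V_2=\mathrm{Span}(\Nu_2)$), the affinity $\phi$ induces a well-defined affinity $\bar\phi$ of $E^n/V_1\to E^n/V_2$ by $\bar\phi(V_1+x)=V_2+\phi(x)$, and this in turn descends to an affinity $\beta:(E^n/V_1)/(\Gamma_1/\Nu_1)\to(E^n/V_2)/(\Gamma_2/\Nu_2)$ using the induced isomorphism $\Gamma_1/\Nu_1\to\Gamma_2/\Nu_2$. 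Finally I would verify the square $\beta\eta_1=\eta_2\alpha$ commutes by a direct diagram chase on a representative point $x\in E^n$, tracking it through both composites using the formula of Theorem 2 and the definitions of $\alpha$, $\beta$, $\bar\phi$; both sides evaluate to $(\Gamma_2/\Nu_2)(V_2+\phi(x))$, completing the cycle. The care needed is purely bookkeeping to confirm each induced map is well defined at every quotient stage, the span condition $AV_1=V_2$ being the linchpin that makes all the descents consistent.
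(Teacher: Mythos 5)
The paper does not prove this statement itself --- it imports it as Theorem 10 of \cite{R-T} --- but your cyclic argument $(1)\Rightarrow(2)\Rightarrow(3)\Rightarrow(1)$ is correct and is the standard route taken in that source: lifting the orbifold affinity to a conjugating affinity of $E^n$, pinning down $\phi\Nu_1\phi^{-1}=\Nu_2$ via $AV_1=V_2$ together with Theorem 1(2) and completeness of \emph{both} $\Nu_i$ (the two inclusions), and invoking the affine Bieberbach rigidity theorem to realize an abstract isomorphism as conjugation for $(3)\Rightarrow(2)$. You correctly identify completeness as the essential hypothesis, and the remaining details you defer (the countability/Baire argument showing an affinity preserving orbits lies in the group, and that a coset of $AV_1$ contained in a countable union of cosets of $V_2$ lies in a single one) are routine.
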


An {\it affine $n$-space group} is a group $\Gamma$ of affinities of $E^n$ that is conjugate to an $n$-space group in $\mathrm{Aff}(E^n)$. 
As is customary in the theory of crystallographic groups, we shall in practice work only with affine $n$-space groups 
whose point groups are subgroups of $\mathrm{GL}(n,\integers)$.

\section{The Generalized Calabi Construction}  

Let $\Nu$ be a complete normal subgroup of an $n$-space group $\Gamma$, 
let $V = \mathrm{Span}(\Nu)$, and let $V^\perp$ be the orthogonal complement of $V$ in $E^n$.  
Euclidean $n$-space $E^n$ decomposes as the Cartesian product $E^n = V \times V^\perp$. 
Let $b+B\in\Gamma$ and let $x\in E^n$.  Write $b= c+d$ with $c\in V$ and $d\in V^\perp$. 
Write $x = v+w$ with $v\in V$ and $w\in V^\perp$. Then 
$$(b+B)x = b+Bx = c+d + Bv + Bw = (c+Bv) + (d+Bw).$$
Hence the action of $\Gamma$ on $E^n$ corresponds to the diagonal action of $\Gamma$ 
on $V\times V^\perp$ defined by the formula
$$(b+B)(v,w) = (c+Bv,d+Bw).$$
Here $\Gamma$ acts on both $V$ and $V^\perp$ via isometries. 
The kernel of the corresponding homomorphism from $\Gamma$ to $\mathrm{Isom}(V)$ 
is the group
$$\Kappa = \{b+B\in\Gamma: b \in V^\perp\ \hbox{and}\ V \subseteq \mathrm{Fix}(B)\}.$$
We call $\Kappa$ the {\it kernel of the action} of $\Gamma$ on $V$. 
The group $\Kappa$ is a normal subgroup of $\Gamma$. 
The action of $\Gamma$ on $V$ induces an effective action of $\Gamma/\Kappa$ on $V$ 
via isometries.  
Note that $\Nu\cap\Kappa = \{I\}$,  
and each element of $\Nu$ commutes with each element of $\Kappa$.   
Hence $\Nu\Kappa$ is a normal subgroup of $\Gamma$,  
and $\Nu\Kappa$ is the direct product of $\Nu$ and $\Kappa$. 

The action of $\Nu$ on $V^\perp$ is trivial and the action of $\Kappa$ on $V$ is trivial. 
Hence $E^n/\Nu\Kappa$ decomposes as the Cartesian product 
$E^n/\Nu\Kappa = V/\Nu \times V^\perp/\Kappa.$

The action of $\Gamma/\Nu\Kappa$ on $E^n/\Nu\Kappa$ corresponds 
to the diagonal action of $\Gamma/\Nu\Kappa$ on $V/\Nu \times V^\perp/\Kappa$ via isometries 
defined by the formula
$$(\Nu\Kappa(b+B))(\Nu v,\Kappa w) = (\Nu(c+Bv),\Kappa(d+Bw)).$$
The generalized Calabi construction is the following theorem.
\begin{theorem} {\rm (Theorem 3 \cite{R-TII})} 
Let $\Nu$ be a complete normal subgroup of an $n$-space group $\Gamma$, 
and let $\Kappa$ be the kernel of the action of $\Gamma$ on $V= \mathrm{Span}(\Nu)$. 
Then the map
$$\chi: E^n/\Gamma \to (V/\Nu\times V^\perp/\Kappa)/(\Gamma/\Nu\Kappa)$$
defined by $\chi(\Gamma x) = (\Gamma/\Nu\Kappa)(\Nu v, \Kappa w)$, 
with  $x = v + w$ and $v\in V$ and $w\in V^\perp$, is an isometry. 
\end{theorem}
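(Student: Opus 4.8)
The plan is to factor $\chi$ through the two structural identifications assembled just above the statement, namely
$$E^n/\Gamma \;\cong\; (E^n/\Nu\Kappa)/(\Gamma/\Nu\Kappa) \;\cong\; (V/\Nu\times V^\perp/\Kappa)/(\Gamma/\Nu\Kappa),$$
and to check that each identification is an isometry. Throughout I use that for a group $G$ acting by isometries on a metric space $X$, the orbit space carries the quotient pseudometric $d(Gx,Gy)=\inf_{g\in G}d(x,gy)$; because $\Gamma$ acts properly discontinuously on $E^n$ and $\Nu$, $\Kappa$, and $\Nu\Kappa$ are subgroups of $\Gamma$, the orbit of any point under each of these subgroups is discrete, so every infimum is attained and each quotient pseudometric is a genuine metric.

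\emph{The product isometry.} First I would show that the natural bijection $E^n/\Nu\Kappa\to V/\Nu\times V^\perp/\Kappa$ sending $\Nu\Kappa(v+w)$ to $(\Nu v,\Kappa w)$ is an isometry for the Pythagorean product metric. The key inputs are Theorem 1(2), which forces every element of $\Nu$ to fix $V^\perp$ pointwise, and the definition of $\Kappa$, which forces every element of $\Kappa$ to fix $V$ pointwise; combined with the internal direct product structure $\Nu\Kappa=\Nu\times\Kappa$ recorded above, this shows $\Nu\Kappa$ acts as a product action, with $\Nu$ moving only the $V$ coordinate and $\Kappa$ moving only the $V^\perp$ coordinate. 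Since the Euclidean metric on $E^n=V\times V^\perp$ satisfies $d^2=d_V^2+d_{V^\perp}^2$, the minimum over $\Nu\Kappa$ decouples:
$$d_{E^n/\Nu\Kappa}\bigl(\Nu\Kappa(v+w),\Nu\Kappa(v'+w')\bigr)^2=\min_{a\in\Nu}d_V(v,av')^2+\min_{p\in\Kappa}d_{V^\perp}(w,pw')^2,$$
and the right side is exactly the squared product distance between $(\Nu v,\Kappa w)$ and $(\Nu v',\Kappa w')$.

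\emph{The iterated quotient.} Because $\Nu\Kappa$ is a normal subgroup of $\Gamma$, I would next establish the general isometry $E^n/\Gamma\cong(E^n/\Nu\Kappa)/(\Gamma/\Nu\Kappa)$, where $\Gamma/\Nu\Kappa$ acts on $E^n/\Nu\Kappa$ as in the formula recorded before the statement. Writing each $\gamma\in\Gamma$ as $\gamma=h\gamma'$ with $h\in\Nu\Kappa$ and $\gamma'$ a coset representative, the quotient distance on the right unwinds as $\min_{\gamma'\Nu\Kappa}\min_{h\in\Nu\Kappa}d(x,h\gamma'y)=\min_{\gamma\in\Gamma}d(x,\gamma y)$, so the natural bijection preserves distance.

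Finally I would transport the product isometry of the first step across the $\Gamma/\Nu\Kappa$-quotients, which is legitimate because that bijection is equivariant for the displayed diagonal action, and then compose with the iterated-quotient isometry. Chasing $\Gamma x$ with $x=v+w$ through both maps confirms that the composite sends $\Gamma x$ to $(\Gamma/\Nu\Kappa)(\Nu v,\Kappa w)$, so it equals $\chi$; being a composite of isometries, $\chi$ is an isometry. I expect the main obstacle to be the first step: justifying that the minimum over $\Nu\Kappa$ genuinely splits into independent minima over $\Nu$ and over $\Kappa$. This rests entirely on the orthogonality of $V$ and $V^\perp$ together with the fact that each of $\Nu$ and $\Kappa$ acts trivially on the complementary subspace, so that the two coordinate displacements can be minimized separately.
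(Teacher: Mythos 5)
Your argument is correct. The paper offers no proof of this statement --- it is quoted verbatim as Theorem 3 of \cite{R-TII} --- so there is no internal argument to diverge from; your route (factoring $\chi$ through $E^n/\Nu\Kappa$, decoupling the minimum over $\Nu\Kappa=\Nu\times\Kappa$ into independent minima over $\Nu$ acting only on the $V$-coordinate and $\Kappa$ acting only on the $V^\perp$-coordinate via Theorem 1(2) and the definition of $\Kappa$, then applying the standard iterated-quotient isometry for the normal subgroup $\Nu\Kappa\trianglelefteq\Gamma$) is precisely the natural argument and matches the setup the paper assembles in the paragraphs preceding the statement.
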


We call $\Gamma/\Nu\Kappa$ the {\it structure group} 
of the geometric fibration of $E^n/\Gamma$ 
determined by the complete normal subgroup $\Nu$ of $\Gamma$. 

The group $\Nu$ is the kernel of the action of $\Gamma$ on $V^\perp$, and so we have 
an effective action of $\Gamma/\Nu$ on $V^\perp$. 
The natural projection from $V/\Nu\times V^\perp/\Kappa$ to $V^\perp/\Kappa$ 
induces a continuous surjection
$$\pi^\perp: (V/\Nu\times V^\perp/\Kappa)/(\Gamma/\Nu\Kappa) \to V^\perp/(\Gamma/\Nu).$$
Orthogonal projection from $E^n$ to $V^\perp$ induces an isometry from $E^n/V$ to $V^\perp$ 
which in turn induces an isometry 
$$\psi^\perp: (E^n/V)/(\Gamma/\Nu) \to V^\perp/(\Gamma/\Nu).$$
\begin{theorem} {\rm (Theorem 4 \cite{R-T})} 
The following diagram commutes
\[\begin{array}{ccc}
E^n/\Gamma & {\buildrel \chi\over\longrightarrow} &
(V/\Nu\times V^\perp/\Kappa)/(\Gamma/\Nu\Kappa) \\
\eta_V \downarrow \  & & \downarrow\pi^\perp \\
(E^n/V)/(\Gamma/\Nu) & {\buildrel \psi^\perp\over\longrightarrow}  & V^\perp/(\Gamma/\Nu). 
\end{array}\] 
\end{theorem}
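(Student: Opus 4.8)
The plan is to prove commutativity by a direct element chase, since each of the four maps $\chi$, $\eta_V$, $\pi^\perp$, and $\psi^\perp$ is given by an explicit formula. I would take an arbitrary point $\Gamma x$ of $E^n/\Gamma$ and write $x = v + w$ with $v \in V$ and $w \in V^\perp$, this being the decomposition induced by $E^n = V \times V^\perp$. The goal is then to show that $\pi^\perp(\chi(\Gamma x)) = \psi^\perp(\eta_V(\Gamma x))$ as elements of the common target $V^\perp/(\Gamma/\Nu)$. Because the four maps have already been established to be well defined in the preceding theorems, no well-definedness needs to be reverified; the argument is purely a matter of composing formulas.

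First I would compute the upper route. By the generalized Calabi construction (Theorem 5), $\chi(\Gamma x) = (\Gamma/\Nu\Kappa)(\Nu v, \Kappa w)$. By definition $\pi^\perp$ is induced by the natural projection $V/\Nu \times V^\perp/\Kappa \to V^\perp/\Kappa$ onto the second factor, so $\pi^\perp(\chi(\Gamma x))$ is the image of $(\Gamma/\Nu\Kappa)(\Kappa w)$ under the identification of $(V^\perp/\Kappa)/(\Gamma/\Nu\Kappa)$ with $V^\perp/(\Gamma/\Nu)$. Tracing this identification back, $\pi^\perp(\chi(\Gamma x))$ is the orbit $(\Gamma/\Nu)w$ of $w$ under the effective action of $\Gamma/\Nu$ on $V^\perp$.

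Next I would compute the lower route. By Theorem 2, $\eta_V(\Gamma x) = (\Gamma/\Nu)(V + x)$. The isometry $\psi^\perp$ is induced by orthogonal projection $E^n \to V^\perp$, which carries the coset $V + x = V + w$ to its orthogonal projection $w$; hence $\psi^\perp(\eta_V(\Gamma x)) = (\Gamma/\Nu)w$, again the orbit of $w$ under $\Gamma/\Nu$. Comparing the two computations shows that both routes send $\Gamma x$ to the same point, so the diagram commutes.

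The routine content of this argument is the element chase; the one point requiring care is the consistency of the two presentations of the common target $V^\perp/(\Gamma/\Nu)$. Here the key observation is that $\Nu$ is precisely the kernel of the action of $\Gamma$ on $V^\perp$, so $\Gamma/\Nu$ acts effectively on $V^\perp$, while $\Kappa$, viewed inside $\Gamma/\Nu$ via $\Nu \cap \Kappa = \{I\}$, is a normal subgroup with quotient $\Gamma/\Nu\Kappa$. Factoring the quotient $V^\perp/(\Gamma/\Nu)$ through $V^\perp/\Kappa$ yields the identification $V^\perp/(\Gamma/\Nu) = (V^\perp/\Kappa)/(\Gamma/\Nu\Kappa)$ used implicitly in the codomain of $\pi^\perp$, and checking that this agrees with the identification implicit in $\psi^\perp$ is what guarantees the two orbit computations land in the same space. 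Since both routes ultimately reduce to extracting the $V^\perp$-component $w$ of $x$ and passing to its $\Gamma/\Nu$-orbit, I expect this identification check to be the main obstacle, though it is bookkeeping rather than genuine analytic or geometric difficulty.
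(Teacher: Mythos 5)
Your element chase is correct: both routes send $\Gamma x$ (with $x = v+w$, $v\in V$, $w\in V^\perp$) to the orbit $(\Gamma/\Nu)w$, and your remark about identifying $(V^\perp/\Kappa)/(\Gamma/\Nu\Kappa)$ with $V^\perp/(\Gamma/\Nu)$ via $\Kappa\cong\Nu\Kappa/\Nu$ is exactly the point that needs checking. The paper itself gives no proof here — the theorem is quoted from Theorem 4 of \cite{R-T} — and your direct computation from the explicit formulas is the standard argument for it.
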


If $\mathrm{Span}(\Kappa) = V^\perp$, then $\Kappa$ is a complete normal subgroup 
of $\Gamma$ called the {\it orthogonal dual} of $\Nu$ in $\Gamma$, 
and we write $\Kappa = \Nu^\perp$. 
The group $\Nu$ has an orthogonal dual in $\Gamma$ if and only if the structure group 
$\Gamma/\Nu\Kappa$ is finite by Theorem 5 of \cite{R-TII}.

Suppose $\Kappa = \Nu^\perp$.  
The natural projection from $V/\Nu\times V^\perp/\Kappa$ to $V/\Nu$ 
induces a continuous surjection
$$\pi: (V/\Nu\times V^\perp/\Kappa)/(\Gamma/\Nu\Kappa) \to V/(\Gamma/\Kappa).$$
Orthogonal projection from $E^n$ to $V$ induces an isometry from $E^n/V^\perp$ to $V$ 
which in turn induces an isometry 
$$\psi: (E^n/V^\perp)/(\Gamma/\Kappa) \to V/(\Gamma/\Kappa).$$
The next corollary follows from Theorem 6 by reversing the roles of $\Nu$ and $\Kappa$. 
\begin{corollary} 
The following diagram commutes
\[\begin{array}{ccc}
E^n/\Gamma & {\buildrel \chi\over\longrightarrow} &
(V/\Nu\times V^\perp/\Kappa)/(\Gamma/\Nu\Kappa) \\
\eta_{V ^\perp}\downarrow\ \ \  & & \downarrow\pi \\
(E^n/V^\perp)/(\Gamma/\Kappa) & {\buildrel \psi\over\longrightarrow}  & V/(\Gamma/\Kappa). 
\end{array}\] 
\end{corollary}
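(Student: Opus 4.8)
The plan is to deduce the corollary from Theorem 6 by applying that theorem to the complete normal subgroup $\Kappa$ in place of $\Nu$, and then translating the resulting diagram back through the canonical isometry that interchanges the two factors of the Calabi decomposition. The essential point is that the generalized Calabi construction of Theorem 5 is symmetric in $\Nu$ and $\Kappa$: once we know that $\Kappa$ is itself a complete normal subgroup of $\Gamma$ whose associated kernel of action is $\Nu$, the roles of $V$ and $V^\perp$ become interchangeable and Theorem 6 applies verbatim to $\Kappa$.

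First I would verify the hypotheses needed to apply Theorem 6 to $\Kappa$. The standing assumption $\Kappa = \Nu^\perp$ means precisely that $\mathrm{Span}(\Kappa) = V^\perp$, so that $\Kappa$ is a complete normal subgroup of $\Gamma$ as observed just before the statement; thus Theorem 6 applies with $\Kappa$ in the role of $\Nu$ and $V^\perp$ in the role of $V$. Next I would identify the kernel of the action of $\Gamma$ on $V^\perp = \mathrm{Span}(\Kappa)$. By the defining formula for this kernel, with $V$ and $V^\perp$ interchanged, it equals $\{b+B\in\Gamma : b\in V \text{ and } V^\perp\subseteq\mathrm{Fix}(B)\}$, and since $\Nu$ is complete this set is exactly $\Nu$. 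Hence the kernel of the action of $\Gamma$ on $V^\perp$ is $\Nu$, the product $\Kappa\Nu = \Nu\Kappa$ and the structure group $\Gamma/\Nu\Kappa$ are unchanged, and the Calabi decomposition $E^n = V^\perp\times V$ attached to $\Kappa$ agrees, after interchanging factors, with the one attached to $\Nu$.

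With these identifications in place, Theorem 6 applied to $\Kappa$ produces a commuting square whose top arrow is the Calabi isometry $\chi'$ for $\Kappa$, whose left arrow is $\eta_{V^\perp}$, whose right arrow is the projection $\rho$ onto the $V/\Nu$-factor landing in $V/(\Gamma/\Kappa)$, and whose bottom arrow is the isometry induced by orthogonal projection $E^n\to V$, namely $\psi$. Writing $\sigma$ for the canonical swap isometry $(V/\Nu\times V^\perp/\Kappa)/(\Gamma/\Nu\Kappa) \to (V^\perp/\Kappa\times V/\Nu)/(\Gamma/\Kappa\Nu)$ induced by $(\Nu v,\Kappa w)\mapsto(\Kappa w,\Nu v)$, a comparison of the two Calabi formulas in Theorem 5 gives $\chi' = \sigma\chi$, while projecting first and swapping versus swapping first and projecting gives $\rho\sigma = \pi$. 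Substituting $\chi' = \sigma\chi$ into the commutativity relation $\rho\chi' = \psi\,\eta_{V^\perp}$ of the $\Kappa$-diagram then yields $\pi\chi = \rho\sigma\chi = \rho\chi' = \psi\,\eta_{V^\perp}$, which is exactly the asserted commutativity of the corollary's square.

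The step I expect to require the most care is the identification of the kernel of the action of $\Gamma$ on $V^\perp$ with $\Nu$, since this is precisely where the completeness hypothesis on $\Nu$ is used; without completeness the kernel could be strictly larger than $\Nu$, and the two Calabi decompositions would fail to match. The remaining bookkeeping — checking that $\sigma$ is a well-defined isometry and that it intertwines the two projections as claimed — is routine given the direct-product structure of $\Nu\Kappa$ established before Theorem 5.
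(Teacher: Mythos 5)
Your proposal is correct and follows the paper's own route exactly: the paper derives the corollary from Theorem 6 by reversing the roles of $\Nu$ and $\Kappa$, which is precisely your argument. Your additional verifications — that $\Kappa=\Nu^\perp$ is complete, that completeness of $\Nu$ makes it the kernel of the action on $V^\perp$, and that the swap isometry intertwines the projections — are the details the paper leaves implicit, and they are all sound.
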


Theorem 6 says that the fibration projection $\eta_V: E^n/\Gamma \to (E^n/V)/(\Gamma/\Nu)$  
is equivalent to the projection 
$\pi^\perp: (V/\Nu\times V^\perp/\Kappa)/(\Gamma/\Nu\Kappa) \to V^\perp/(\Gamma/\Nu)$ 
induced by the projection on the second factor of $V/\Nu\times V^\perp/\Kappa$;   
while Corollary 1 says that the orthogonally dual fibration projection 
$\eta_{V^\perp}:E^n/\Gamma \to(E^n/V^\perp)/(\Gamma/\Kappa)$ is equivalent to the projection 
$\pi: (V/\Nu\times V^\perp/\Kappa)/(\Gamma/\Nu\Kappa) \to V/(\Gamma/\Kappa)$ 
induced by the projection on the first factor of $V/\Nu\times V^\perp/\Kappa$. 
The generalized Calabi construction reveals that the relationship between 
a flat orbifold fibration and its orthogonally dual flat orbifold fibration 
is similar to that of the two sides of the same coin.

\section{The Original Calabi Construction}  

Let $G$ be a finitely generated group. 
The set 
$$I(G) = \big\{g\in G : \hbox{there exists an integer}\ k > 0\ \hbox{such that} \ g^k\in [G,G]\big\}$$
is a characteristic subgroup of $G$ that contains $[G,G]$ 
and corresponds to the torsion subgroup of $G/[G,G]$, 
and so $G/I(G)$ is a free abelian group of rank $\beta_1(G)$, 
the first Betti number of $G$. 

A flat $m$-{\it torus} is a closed flat $m$-manifold affinely equivalent to $(S^1)^m$. 
If $m = 0$, we define $(S^1)^m$ to be a point. 
A 1-torus is a circle.

Let $M = E^n/\Gamma$ be a space-form.  
Then $I(\Gamma)$ is a complete normal subgroup of $\Gamma$, 
since $\Gamma/I(\Gamma)$ is a $m$-space group where $m = \beta_1(\Gamma)$. 
Let $V = \mathrm{Span}(I(\Gamma))$. 
Define $I(M) = V/I(\Gamma)$. 
Then $I(M)$ is a flat $(n-m)$-manifold. 
The flat $n$-manifold $M$ geometrically fibers, with generic fiber $I(M)$, over the flat $m$-torus $V^\perp/(\Gamma/I(\Gamma))$ by Theorems 2 and 4. 
We call $I(M)$ the {\it characteristic fiber} of $M$.

Let $Z(\Gamma)$ be the {\it center} of $\Gamma$. 
Then every element of $Z(\Gamma)$ is a translation and $Z(\Gamma)$ 
is the orthogonal dual of $I(\Gamma)$ in $\Gamma$ by Theorem 15 of \cite{R-T}. 
Let $Z(M) = V^\perp/Z(\Gamma)$.  
Then $Z(M)$ is a $m$-torus where $m = \beta_1(\Gamma)$. 
The original Calabi construction, Theorem 3.6.3 \cite{Wolf}, is the decomposition
$$M = (I(M) \times Z(M))/(\Gamma/I(\Gamma)Z(\Gamma)).$$
The structure group $\Gamma/I(\Gamma)Z(\Gamma)$ is a finite abelian group, since $\Gamma/I(\Gamma)$ is abelian. 

Let $J(\Gamma) = \Gamma/Z(\Gamma)$ and $J(M) = V/J(\Gamma)$.  Then $J(M)$ is a flat $(n-m)$-orbifold  
and $M$ geometrically fibers, with generic fiber $Z(M)$, over $J(M)$. 
We call $J(M)$ the {\it characteristic base} of $M$.  
Usually $J(M)$ is not a flat manifold.  By Theorem 3 of Farkas \cite{Farkas} and Theorem 6 of \cite{R-T}, 
we have that $\beta_1(J(\Gamma)) = 0$. 

The original Calabi construction is canonical.  In particular $I(\Gamma), J(\Gamma)$ and 
the structure group $\Gamma/I(\Gamma)Z(\Gamma)$ are isomorphism invariants of $\Gamma$. 
Moreover the affine equivalence classes of $I(M)$ and $J(M)$ are invariants of the affine equivalence class of $M$, 
in other words, $I(M)$ and $J(M)$ are topological invariants of $M$. 

\vspace{.15in}
\noindent{\bf Remark 3.} It is important to note that a generalized Calabi construction is a geometric construction, and so in general is not canonical, 
that is, it may not be preserved by an affine deformation of the manifold.  
The orthogonally dual geometric fibrations do persist under an affine deformation, but they may no longer be orthogonal to each other. 
The orthogonally dual geometric fibrations in the original Calabi construction remain orthogonal to each other under an affine deformation. 

\vspace{.15in}
Through out this paper, $e_1, e_2, \ldots, e_n$ are the standard basis vectors of $E^n$,
and $t_i = e_i +I$ is the standard translation corresponding to $e_i$ 
for each $i = 1, \ldots, n$.

\vspace{.15in}
\noindent{\bf Example 1.}
The flat Klein bottle $K^2$ is the space-form $E^2/\Gamma$ 
where $\Gamma = \langle t_1, t_2,  \alpha\rangle$, 
and $\alpha  = \frac{1}{2}e_1+ {\rm diag}(1,-1)$. 
The group $\Gamma$ has the presentation $\langle t_2,\alpha\ |\ \alpha t_2\alpha^{-1} = t_2^{-1}\rangle$.  
Hence $I(\Gamma) = \langle t_2\rangle$, and so $I(K^2) = S^1$. 
Therefore $K^2$ geometrically fibers over the circle with circle fibers by Theorem 1. 
Now $Z(\Gamma) = \langle t_1\rangle$.  
As $\alpha^2 = t_1$, we have that $J(\Gamma)$ is an infinite dihedral group, 
with order 2 generators $\langle t_1\rangle \alpha$ and $\langle t_1\rangle t_2\alpha$,  
and so $J(K^2)$ is a closed interval.  
Therefore $K^2$ geometrically fibers over a closed interval with generic fiber 
the circle $V/Z(\Gamma)$ where $V = \mathrm{Span}(Z(\Gamma)) = \mathrm{Span}\{e_1\}$. 

The structure group $\Gamma/I(\Gamma)Z(\Gamma) = \Gamma/\langle t_1,t_2\rangle$ is generated by $\langle t_1,t_2\rangle \alpha$, 
and so has order 2. 
Now $\langle t_1,t_2\rangle \alpha$ acts on the circle $V/Z(\Gamma)$ by a half-turn,  
and $\langle t_1,t_2\rangle \alpha$  acts on the circle $V^\perp/I(\Gamma)$ by a reflection.
Consequently $K^2$ is the union of two isometric M\"obius bands 
joined along their common boundary circle. 
This decomposition of $K^2$ corresponds to the decomposition of $\Gamma$ as 
the free product of the infinite cyclic groups $\langle \alpha\rangle$ and $\langle t_2\alpha\rangle$ 
amalgamated over their index two subgroup $\langle t_1\rangle$. 

\section{The Closed Flat 3-Manifolds}  

We denote the six orientable, closed, flat, 3-manifold homeomorphism classes 
by $O^3_1, \ldots, O^3_6$ and the four nonorientable, closed, flat, 
3-manifold homeomorphism classes by $N^3_1,\ldots, N^3_4$ in the order given by Hantzsche and Wendt \cite{H-W} and Wolf \cite{Wolf}.  
The manifolds are ordered inversely with respect to their first Betti number. 
Thus $O^3_1$ is the 3-torus and $O^3_6$ is the Hantzsche-Wendt 3-manifold. 
The characteristic fibers and bases, structure group, first homology groups,
and holonomy groups of the  closed, flat 3-manifolds are listed in Tables 1 and 2. 
We denote a point by $E^0$, the circle by $S^1$, the closed interval by $\Iota$, the 2-torus by $T^2$, and the Klein bottle by $K^2$. 
We use Conway's notation \cite{Conway} for the compact, connected, flat 2-orbifolds 
in the $J(M)$ column. 
The IT order for these 17 orbifolds is $\circ$, $2222$, $\ast\ast$, $\times\times$, $\ast\times$, $\ast 2222$, $22\ast$, $22\times$, 
$2{\ast}22$, $442$, $\ast 442$, $4{\ast}2$, $333$, $\ast 333$, $3{\ast} 3$, $632$, $\ast 632$. 

In the structure group and holonomy group columns, $C_n$ is the multiplicative cyclic group of order $n$. 
We use exponential notation so that $(C_n)^2 = C_n\times C_n$. 
In the first homology column, $\mathbb Z_n$ is the additive cyclic group of order $n$. 
We use exponential notation so that $(\mathbb Z_n)^2 = \mathbb Z_n\oplus\mathbb Z_n$. 

All orientable closed 3-manifolds are spin manifolds and ``Y" in the spin column 
of Table 1 stands for ``yes".   
The heading ODC in Table 2 stands for ``orientable double cover". 
The BBNWZ column gives the Brown et al.\ designation in \cite{B-Z} for the corresponding 
torsion-free 3-space group.  The IT column gives the IT number in \cite{B-Z} for the corresponding 
torsion-free 3-space group. 

All the closed flat 3-manifolds are fiber bundles over a circle except for the Hantzsche-Wendt 3-manifold $O^3_6$.  
The flat 3-manifold $O^3_6$ does geometrically fiber over a closed interval in a way that is unique up to isometry. 
This fibering is worth examining, since it best reveals the geometry and topology of $O^3_6$. 

\vspace{.15in}
\noindent{\bf Example 2.}
The Hantzsche-Wendt 3-manifold $O^3_6$ is the space-form $E^3/\Gamma$ where $\Gamma$ is the torsion-free 3-space group 3/1/1/4 in \cite{B-Z}.  
After transposing the 1st and 3rd coordinates $\Gamma = \langle t_1, t_2, t_3, \alpha, \beta\rangle$ 
where $\alpha  = \frac{1}{2}e_1+\frac{1}{2}e_3 + \mathrm{diag}(1,-1,-1)$ and $\beta = \frac{1}{2}e_2 + \mathrm{diag}(-1,1,-1)$. 
Let $\Nu = \langle t_1, t_2\rangle$ and $\Kappa = \langle t_3\rangle$, and $V = \mathrm{Span}(\Nu)$.   
Then $\Nu$ and $\Kappa$ are complete normal subgroups of $\Gamma$ with $\Kappa = \Nu^\perp$. 
The structure group $\Gamma/\Nu\Kappa$ is a dihedral group of order 4 generated by $\Nu\Kappa\alpha$ and $\Nu\Kappa\beta$. 
Now $\Nu\Kappa\alpha$ acts on the torus $V/\Nu$ via $\frac{1}{2}e_1 +\mathrm{diag}(1,-1)$  in the first two coordinates,  
and $\Nu\Kappa\beta$ acts on $V/\Nu$ via $\frac{1}{2}e_2 +\mathrm{diag}(-1,1)$. 
Moreover  $\Nu\Kappa\alpha$ acts on the circle $V^\perp/\Kappa$ via the reflection $\frac{1}{2}e_3 - I$ and 
$\Nu\Kappa\beta$ acts on $V^\perp/\Kappa$ via the reflection $-I$. 

The quotient group $\Gamma/\Nu$ is an infinite dihedral group with order 2 generators $\Nu\alpha$ and $\Nu \beta$. 
Therefore $O^3_6$ geometrically fibers over a closed interval with generic fiber the torus $V/\Nu$. 
Hence $O^3_6$ is the union of two twisted $I$-bundles joined over their common torus boundary.  
More precisely, from the action of $\Nu\Kappa\alpha$ and $\Nu\Kappa\beta$ on $V/\Nu$, we see that $O^3_6$ is formed by identifying the boundaries 
of two isometric copies of a twisted $I$-bundle over the Klein bottle, with a square torus boundary, by an isometry induced by interchanging coordinates. 
This decomposition of $O^3_6$ corresponds to the decomposition of $\Gamma$ 
as the free product with amalgamation of the Klein bottle groups $\langle t_2, \alpha\rangle$ and $\langle t_1, \beta\rangle$ 
over their index 2 torus subgroup $\langle t_1, t_2\rangle$. 

The quotient group $\Gamma/\Kappa$ is a 2-space group of type $22\times$. 
Hence the geometric fibration of $O^3_6$ determined by $\Kappa$ is a Seifert fibration over a flat 2-orbifold of type $22\times$, that is, a flat projective pillow. 
This Seifert fibration is orthogonally dual to the geometric fibration of $O^3_6$ determined by $\Nu$.

\begin{table} 

\begin{tabular}{c|c|c|c|c|c|c|c|r}	
	$M$ & $I(M)$ & $J(M)$ & str. grp. & $H_1(M)$  & hol. grp. & Spin & BBNWZ & IT \\ \hline	
	$O^3_1$ & $E^0$ & $E^0$ & $C_1$ & 
	  	$\mathbb Z^3$ & $C_1$ & \phantom{\Big(}Y\phantom{\Big)} & 1/1/1/1 & 1 \\ \hline
	$O^3_2$ & $T^2$ & $2222$ & $C_2$ &
		$\mathbb Z \oplus (\mathbb Z_2)^2$  & $C_2$ & \phantom{\Big(}Y\phantom{\Big)} & 2/1/1/2 & 4 \\ \hline
	$O^3_3$ & $T^2$ & $333$ & $C_3$ &
 		$\mathbb Z \oplus \mathbb Z_3$  & $C_3$ & \phantom{\Big(}Y\phantom{\Big)} & 5/1/2/2 & 144 \\ \hline
	$O^3_4$ & $T^2$ & $442$ & $C_4$ &
 		$\mathbb Z \oplus \mathbb Z_2$  & $C_4$ & \phantom{\Big(}Y\phantom{\Big)} & 4/1/1/2 &  76 \\ \hline
	$O^3_5$ & $T^2$ & $632$ & $C_6$ &
 		$\mathbb Z$  & $C_6$ & \phantom{\Big(}Y\phantom{\Big)} & 6/1/1/4 & 169 \\ \hline
 	$O^3_6$ & $O^3_6$ & $O^3_6$ & $C_1$ &
 	$(\mathbb Z_4)^2$  & $(C_2)^2$ &  \phantom{\Big(}Y\phantom{\Big)}  & 3/1/1/4 & 19 \\ \hline
\end{tabular}

\vspace{.2in}
\caption{Invariants of the orientable, closed, flat 3-manifolds}
\end{table}

\begin{table} 

\begin{tabular}{c|c|c|c|c|c|c|l|r}
	$M$ & $I(M)$ & $J(M)$ & str. grp. & $H_1(M)$  & hol. grp.  & ODC & BBNWZ & IT \\ \hline
	$N^3_1$ & $S^1$ & $\Iota$ & $C_2$ & 
 		$\mathbb Z^2 \oplus \mathbb Z_2$  & $C_2$ & 
 		\phantom{\Big(}$O^3_1$\phantom{\Big)} & 2/2/1/2 & 7 \\ \hline
	$N^3_2$ & $S^1$ &  $\Iota$ & $(C_2)^2$ & 
	    $\mathbb Z^2$  & $C_2$ & \phantom{\Big(}$O^3_1$\phantom{\Big)} & 2/2/2/2 & 9 \\ \hline
	$N^3_3$ & $K^2$ & $22\ast$ & $C_2$ & 
 			$\mathbb Z \oplus (\mathbb Z_2)^2$  & $(C_2)^2$ & 
 			\phantom{\Big(}$O^3_2$\phantom{\Big)} & 3/2/1/9 & 29 \\ \hline
 	$N^3_4$ & $K^2$ &  $22\times$ & $C_2$ &
 			$\mathbb Z \oplus \mathbb Z_4$  & $(C_2)^2$ & 
 			\phantom{\Big(}$O^3_2$\phantom{\Big)} & 3/2/1/10 & 33 \\ \hline
\end{tabular}

\vspace{.2in}
\caption{Invariants of the nonorientable, closed, flat 3-manifolds}
\end{table}

\section{The Closed Flat 4-Manifolds} 

We denote the 27 orientable, closed, flat, 4-manifold homeomorphism classes 
by $O^4_1,\ldots, O^4_{27}$ and the 47 nonorientable, closed, flat, 4-manifold homeomorphism classes 
by $N^4_1,\ldots, N^4_{47}$ in the order given in Lambert's PhD thesis \cite{Lambert}. 
This ordering follows the order in Hillman's classification \cite{H}.  
In particular, the manifolds are ordered inversely with respect to their first Betti number. 
Thus $O^4_1$ is the 4-torus. 
The characteristic fibers and bases, structure groups, first homology groups,
and holonomy groups of the closed flat 4-manifolds 
are listed in Tables 3, 4, and 5. 
The homology groups were first computed by Levine \cite{Levine} 
and were recomputed by us as a check. 
In the holonomy column, $C_n$ is the multiplicative cyclic group of order $n$, 
$D_n$ is the dihedral group of order $2n$, and $A_4$ is the tetrahedral group of order 12. 

We use Conway's notation \cite{Conway} for the compact, connected, flat 2-orbifolds 
in the $J(M)$ column, and we use the IT number \cite{IT} to designate the compact, connected, flat 3-orbifolds. 

In the Spin column of Table 3, ``Y" stands for ``yes" and ``N" stands for ``no". 
The orientable closed flat 4-manifolds that do not have a spin structure were first determined by Ratcliffe and Tschantz in 2000. 
For the existence of the spin structures in Table 3,  
see Ratcliffe and Tschantz \cite{R-Ts} and Putrycz and Szczepa\'nski \cite{P-S}. 
The heading ODC in Tables 4 and 5 stand for ``orientable double cover". 
The orientable double covers of the nonorientable closed flat 4-manifolds 
were determined by Lambert in his PhD thesis \cite{Lambert}.

\begin{table} 

\begin{tabular}{c|c|c|c|c|c|c|l}
	$M$ & $I(M)$ & $J(M)$ & str. grp. & $H_1(M)$ & hol. grp. & Spin & BBNWZ \\ \hline
	$O^4_1$ & $E^0$ & $E^0$ &  $C_1$ &
 		$\mathbb Z^4$ & $C_1$ & \phantom{\Big(}Y\phantom{\Big)} & 1/1/1/1\\ \hline
	$O^4_2$ & $T^2$ &  2222 & $C_2$ & 
		$\mathbb Z^2 \oplus (\mathbb Z_2)^2$  & $C_2$ & \phantom{\Big(}Y\phantom{\Big)} &  3/1/1/2 \\ \hline
	$O^4_3$ & $T^2$ & 2222 & $(C_2)^2$ &
		$\mathbb Z^2 \oplus \mathbb Z_2$  & $C_2$ & \phantom{\Big(}Y\phantom{\Big)} & 3/1/2/2  \\ \hline
	$O^4_4$ & $T^2$ & 333 & $C_3$ & 
		$\mathbb Z^2 \oplus \mathbb Z_3$  & $C_3$ & \phantom{\Big(}Y\phantom{\Big)} & 8/1/2/2\\ \hline
	$O^4_5$ & $T^2$ & 333 & $(C_3)^2$ & 
		$\mathbb Z^2$ & $C_3$ & \phantom{\Big(}Y\phantom{\Big)} & 8/1/1/2 \\ \hline
	$O^4_6$ & $T^2$ & 442 & $C_4$ & 
		$\mathbb Z^2 \oplus \mathbb Z_2$  & $C_4$ & \phantom{\Big(}Y\phantom{\Big)} & 7/2/1/2 \\ \hline
	$O^4_7$ & $T^2$ & 442 & $C_4\times C_2$ & 
			$\mathbb Z^2$ & $C_4$ & \phantom{\Big(}Y\phantom{\Big)} & 7/2/2/2 \\ \hline
	$O^4_8$ & $T^2$ & 632 & $C_6$ & 
			$\mathbb Z^2$  & $C_6$ & \phantom{\Big(}Y\phantom{\Big)} & 9/1/1/2 \\ \hline
	$O^4_9$ & $O^3_2$ & 17 & $C_2$ & 
 		$\mathbb Z \oplus (\mathbb Z_2)^3$  & $(C_2)^2$ & 
 		\phantom{\Big(}Y\phantom{\Big)} & 5/1/2/7\\ \hline
	$O^4_{10}$ & $O^3_2$ & 18 & $C_2$ &
 		$\mathbb Z \oplus \mathbb Z_2 \oplus \mathbb Z_4$ & $(C_2)^2$ & 
 		\phantom{\Big(}Y\phantom{\Big)} & 5/1/2/8 \\ \hline
 	$O^4_{11}$ & $O^3_2$ & $O^3_6$ & $C_2$ &
 		$\mathbb Z \oplus \mathbb Z_2 \oplus \mathbb Z_4$ &  
		$(C_2)^2$ & \phantom{\Big(}Y\phantom{\Big)} & 5/1/2/10 \\ \hline
	$O^4_{12}$ & $O^3_2$ & 20 & $C_2$ & 
 		$\mathbb Z \oplus (\mathbb Z_2)^2$ & 
			$(C_2)^2$ & \phantom{\Big(}Y\phantom{\Big)} & 5/1/3/6 \\ \hline
	$O^4_{13}$ & $O^3_2$ & $O^3_6$ & $C_4$ &
 		$\mathbb Z \oplus \mathbb Z_4$ & 
			$(C_2)^2$ & \phantom{\Big(}Y\phantom{\Big)} & 5/1/10/4\\ \hline	
	$O^4_{14}$ & $O^3_6$ & $O^3_6$ & $C_1$ &
 		$\mathbb Z \oplus (\mathbb Z_4)^2$ & 
			$(C_2)^2$ & \phantom{\Big(}Y\phantom{\Big)} & 5/1/2/9 \\ \hline
	$O^4_{15}$ & $O^3_6$ & 24 & $C_2$ &
 		$\mathbb Z \oplus (\mathbb Z_2)^2$ &  
			$(C_2)^2$ & \phantom{\Big(}Y\phantom{\Big)} & 5/1/7/4 \\ \hline
	$O^4_{16}$ & $O^3_6$ & 20 & $C_2$ &
 		$\mathbb Z \oplus (\mathbb Z_2)^2$ & 
			$(C_2)^2$ & \phantom{\Big(}N\phantom{\Big)} & 5/1/6/6 \\ \hline
	$O^4_{17}$ & $O^3_6$ & 18 & $C_2$ &	
 		$\mathbb Z \oplus \mathbb Z_2 \oplus \mathbb Z_4$ &  
			$(C_2)^2$ & \phantom{\Big(}N\phantom{\Big)} & 5/1/4/6 \\ \hline	
	$O^4_{18}$ & $O^3_3$ & 152 &$C_2$ &
 		$\mathbb Z \oplus \mathbb Z_6$ & 
			$D_3$ & \phantom{\Big(}Y\phantom{\Big)} & 14/3/5/4 \\ \hline
	$O^4_{19}$ & $O^3_3$ & 151 & $C_2$ & 
 		$\mathbb Z \oplus \mathbb Z_2$ &  
 			 $D_3$ & \phantom{\Big(}Y\phantom{\Big)} & 14/3/6/4 \\ \hline
	$O^4_{20}$ & $O^3_3$ & 152 & $C_6$ &
 		$\mathbb Z \oplus \mathbb Z_2$ & 
 			$D_3$ & \phantom{\Big(}Y\phantom{\Big)} & 14/3/1/4 \\ \hline
	$O^4_{21}$ & $O^3_4$ & 91 & $C_2$ &
 		$\mathbb Z \oplus (\mathbb Z_2)^2$ &  
 			$D_4$ & \phantom{\Big(}Y\phantom{\Big)} & 13/4/1/14 \\ \hline
	$O^4_{22}$ & $O^3_4$ &  92 & $C_2$ & 
 		$\mathbb Z \oplus \mathbb Z_4$ &  
 			$D_4$ & \phantom{\Big(}Y\phantom{\Big)} & 13/4/1/20 \\ \hline
	$O^4_{23}$ & $O^3_6$ &  92 & $C_2$ & 
		$\mathbb Z \oplus \mathbb Z_4$ &  
			$D_4$ & \phantom{\Big(}Y\phantom{\Big)} & 13/4/1/23 \\ \hline
	$O^4_{24}$ & $O^3_6$ &  92 & $C_4$ &
		$\mathbb Z \oplus \mathbb Z_2$ &  
		$D_4$ & \phantom{\Big(}N\phantom{\Big)} & 13/4/4/11 \\ \hline
	$O^4_{25}$ & $O^3_5$ & 178 & $C_2$ &
 		$\mathbb Z \oplus \mathbb Z_2$ &  
 		$D_6$ & \phantom{\Big(}Y\phantom{\Big)} & 15/4/1/10 \\ \hline
	$O^4_{26}$ & $O^3_6$ &  198 & $C_3$ &
 		$\mathbb Z$ &  
 		$A_4$ & \phantom{\Big(}Y\phantom{\Big)} & 24/1/2/4 \\ \hline
	$O^4_{27}$ & $O^3_6$ &  199 & $C_6$ &
 		$\mathbb Z$ &  
 		$A_4$ & \phantom{\Big(}Y\phantom{\Big)} & 24/1/4/4\\ \hline
\end{tabular} 
\vspace{.15in}
\caption{Invariants of the orientable, closed, flat 4-manifolds}
\end{table}

\begin{table}  

\begin{tabular}{c|c|c|c|c|c|c|l}
	$M$ & $I(M)$ & $J(M)$ & str. grp. & $H_1(M)$ & hol. grp. & ODC & BBNWZ \\ \hline
	$N^4_1$ & $S^1$ & $I$  & $C_2$ & 
 		$\mathbb Z^3\oplus\mathbb Z_2$ & $C_2$ & 
 		\phantom{\Big(}$O^4_1$\phantom{\Big)} & 2/1/1/2 \\ \hline
	$N^4_2$ & $S^1$ & $I$ & $(C_2)^2$ &
		$\mathbb Z^3$ & $C_2$ & 
		\phantom{\Big(}$O^4_1$\phantom{\Big)} & 2/1/2/2 \\ \hline
	$N^4_3$ & $T^2$ & $\ast 2222$ & $(C_2)^2$ &
		$\mathbb Z^2 \oplus (\mathbb Z_2)^2$ & $(C_2)^2$ & 
		\phantom{\Big(}$O^4_2$\phantom{\Big)} & 4/1/1/10\\ \hline
	$N^4_4$ & $T^2$ & $22\ast$ & $(C_2)^2$ &
		$\mathbb Z^2 \oplus \mathbb Z_2$  & $(C_2)^2$ & 
		\phantom{\Big(}$O^4_2$\phantom{\Big)} & 4/1/1/11 \\ \hline
	$N^4_5$ & $T^2$ & $22\times$ & $(C_2)^2$ &
		$\mathbb Z^2\oplus \mathbb Z_2$ & $(C_2)^2$ & 
		\phantom{\Big(}$O^4_2$\phantom{\Big)} & 4/1/1/13 \\ \hline
	$N^4_6$ & $T^2$ & $2\!\ast\! 22$ & $(C_2)^2$ &
		$\mathbb Z^2 \oplus \mathbb Z_2$ & $(C_2)^2$ & 
		\phantom{\Big(}$O^4_2$\phantom{\Big)} & 4/1/2/4 \\ \hline
	$N^4_7$ & $T^2$ & $22\times$ & $C_4\times C_2$ & 
		$\mathbb Z^2$ & $(C_2)^2$ & 
		\phantom{\Big(}$O^4_3$\phantom{\Big)} & 4/1/6/4 \\ \hline
	$N^4_8$ & $K^2$ & $22\ast$ & $C_2$ &
		$\mathbb Z^2\oplus (\mathbb Z_2)^2$ & $(C_2)^2$ & 
		\phantom{\Big(}$O^4_2$\phantom{\Big)} &  4/1/1/6 \\ \hline
	$N^4_9$ & $K^2$ & $22\times$ & $C_2$ &
 		$\mathbb Z^2 \oplus \mathbb Z_4$ & $(C_2)^2$ & 
 		\phantom{\Big(}$O^4_2$\phantom{\Big)} & 4/1/1/7 \\ \hline
	$N^4_{10}$ & $K^2$ & $\ast 2222$ & $(C_2)^2$ &
 		$\mathbb Z^2 \oplus (\mathbb Z_2)^2$ & $(C_2)^2$ & 
 		\phantom{\Big(}$O^4_3$\phantom{\Big)}  & 4/1/3/11 \\ \hline
 	$N^4_{11}$ & $K^2$ &  $22\ast$ & $(C_2)^2$ &
 		$\mathbb Z^2 \oplus \mathbb Z_2$ & $(C_2)^2$ & 
 		\phantom{\Big(}$O^4_3$\phantom{\Big)} & 4/1/3/4 \\ \hline
	$N^4_{12}$ & $K^2$ & $22\ast$ & $(C_2)^2$ &
 		$\mathbb Z^2 \oplus \mathbb Z_2$ &  
			$(C_2)^2$ & \phantom{\Big(}$O^4_3$\phantom{\Big)} & 4/1/3/12 \\ \hline	
	$N^4_{13}$ & $K^2$ &  $2\!\ast\! 22$ & $(C_2)^2$ &
 		$\mathbb Z^2\oplus\mathbb Z_2$ &  
			$(C_2)^2$ & \phantom{\Big(}$O^4_3$\phantom{\Big)} & 4/1/4/5 \\ \hline
	$N^4_{14}$ & $O^3_1$ & 2 & $C_2$ & 
 		$\mathbb Z \oplus (\mathbb Z_2)^3$ & 
			$C_2$ & \phantom{\Big(}$O^4_1$\phantom{\Big)} & 2/2/1/2 \\ \hline
	$N^4_{15}$ & $O^3_1$ &  81 & $C_4$ &
 		$\mathbb Z \oplus (\mathbb Z_2)^2$ & 
			$C_4$ & \phantom{\Big(}$O^4_2$\phantom{\Big)} & 12/1/2/2  \\ \hline
	$N^4_{16}$ & $O^3_1$ & 82 & $C_4$ &
 		$\mathbb Z \oplus \mathbb Z_4$ &  
			$C_4$ & \phantom{\Big(}$O^4_3$\phantom{\Big)} & 12/1/4/2 \\ \hline
	$N^4_{17}$ & $O^3_2$ & 81 & $C_4$ &
 		$\mathbb Z \oplus (\mathbb Z_2)^2$ &  
			$C_4$ & \phantom{\Big(}$O^4_2$\phantom{\Big)} & 12/1/3/2 \\ \hline	
	$N^4_{18}$ & $O^3_2$ & 82 & $C_4$ &
 		$\mathbb Z \oplus \mathbb Z_4$ &  
			$C_4$ & \phantom{\Big(}$O^4_3$\phantom{\Big)} & 12/1/6/2 \\ \hline
	$N^4_{19}$ & $O^3_1$ & 174 & $C_6$ &
 		$\mathbb Z \oplus \mathbb Z_6$ &  
 			 $C_6$ & \phantom{\Big(}$O^4_4$\phantom{\Big)} & 14/2/3/2 \\ \hline
	$N^4_{20}$ & $O^3_1$ & 147 & $C_6$ &
 		$\mathbb Z \oplus \mathbb Z_2$ & 
 			$C_6$ & \phantom{\Big(}$O^4_4$\phantom{\Big)} & 14/1/3/2 \\ \hline
	$N^4_{21}$ & $O^3_1$ & 148 & $C_6$ &
 		$\mathbb Z \oplus \mathbb Z_2$ &  
 			$C_6$ & \phantom{\Big(}$O^4_5$\phantom{\Big)} & 14/1/1/2 \\ \hline
	$N^4_{22}$ & $O^3_2$ & 11 &  $C_2$ &
 		$\mathbb Z \oplus (\mathbb Z_2)^3$ &  
 			$(C_2)^2$ & \phantom{\Big(}$O^4_2$\phantom{\Big)} & 4/2/1/8 \\ \hline
	$N^4_{23}$ & $O^3_2$ & 14 & $C_2$ &
		$\mathbb Z \oplus\mathbb Z_2\oplus \mathbb Z_4$ &  
			$(C_2)^2$ & \phantom{\Big(}$O^4_2$\phantom{\Big)} & 4/2/1/16 \\ \hline
\end{tabular} 

\vspace{.15in}
\caption{Invariants of the nonorientable, closed, flat 4-manifolds, I}
\end{table}

\begin{table}  

\begin{tabular}{c|c|c|c|c|c|c|l}
	$M$ & $I(M)$ & $J(M)$ &  str. grp. & $H_1(M)$  & hol. grp. & ODC & BBNWZ \\ \hline
	$N^4_{24}$ & $N^3_1$ &  13 & $C_2$ &
 		$\mathbb Z\oplus(\mathbb Z_2)^3$  & $(C_2)^2$ & 
 		\phantom{\Big(}$O^4_2$\phantom{\Big)} & 4/2/1/11 \\ \hline
	$N^4_{25}$ & $N^3_1$ & 14 & $C_2$ &
		$\mathbb Z\oplus\mathbb Z_2\oplus\mathbb Z_4$ & $(C_2)^2$ & 
		\phantom{\Big(}$O^4_2$\phantom{\Big)} & 4/2/1/12 \\ \hline
	$N^4_{26}$ & $N^3_2$ & 15 & $C_2$ & 
		$\mathbb Z \oplus (\mathbb Z_2)^2$  & $(C_2)^2$ & 
		\phantom{\Big(}$O^4_3$\phantom{\Big)} & 4/2/3/4 \\ \hline
	$N^4_{27}$ & $N^3_1$ &  85 & $C_4$ &
		$\mathbb Z \oplus (\mathbb Z_2)^2$   & $C_2 \times C_4$ & 
		\phantom{\Big(}$O^4_6$\phantom{\Big)} & 13/1/1/8 \\ \hline
	$N^4_{28}$ & $N^3_1$ &  86 & $C_4$ &
		$\mathbb Z \oplus (\mathbb Z_2)^2$ & $C_2 \times C_4$ & 
		\phantom{\Big(}$O^4_6$\phantom{\Big)} & 13/1/1/11 \\ \hline
	$N^4_{29}$ & $N^3_2$ &  88 & $C_4$ &
		$\mathbb Z \oplus \mathbb Z_2$  & $C_2 \times C_4 $ & 
		\phantom{\Big(}$O^4_7$\phantom{\Big)} & 13/1/3/8 \\ \hline
	$N^4_{30}$ & $N^3_3$ &  54 & $C_2$ & 
		$\mathbb Z \oplus (\mathbb Z_2)^3$  & $(C_2)^3$ & 
		\phantom{\Big(}$O^4_9$\phantom{\Big)} & 6/1/1/63 \\ \hline
	$N^4_{31}$ & $N^3_3$ &  57 & $C_2$ &
		$\mathbb Z\oplus (\mathbb Z_2)^3$ & $(C_2)^3$ & 
		\phantom{\Big(}$O^4_{10}$\phantom{\Big)} & 6/1/1/41 \\ \hline
	$N^4_{32}$ & $N^3_3$ &  60 & $C_2$ &
 		$\mathbb Z \oplus (\mathbb Z_2)^2$ &  $(C_2)^3$ & 
 		\phantom{\Big(}$O^4_{10}$\phantom{\Big)} & 6/1/1/64 \\ \hline
	$N^4_{33}$ & $N^3_3$ &  61 & $C_2$ &
 		$\mathbb Z \oplus (\mathbb Z_2)^2$ & $(C_2)^3$ & 
 		\phantom{\Big(}$O^4_{11}$\phantom{\Big)} & 6/1/1/66 \\ \hline
 	$N^4_{34}$ & $N^3_4$ &  52 & $C_2$ &
 		$\mathbb Z \oplus (\mathbb Z_2)^2$ & $(C_2)^3$ & 
 		\phantom{\Big(}$O^4_9$\phantom{\Big)} & 6/1/1/82 \\ \hline
	$N^4_{35}$ & $N^3_4$ &  60 & $C_2$ &
 		$\mathbb Z\oplus(\mathbb Z_2)^2$ & 
			$(C_2)^3$ & \phantom{\Big(}$O^4_{10}$\phantom{\Big)} & 6/1/1/83 \\ \hline
	$N^4_{36}$ & $N^3_4$ &  56 & $C_2$ &
 		$\mathbb Z \oplus \mathbb Z_2\oplus \mathbb Z_4$ & 
			$(C_2)^3$ & \phantom{\Big(}$O^4_{10}$\phantom{\Big)} & 6/1/1/81 \\ \hline	
	$N^4_{37}$ & $N^3_4$ &  62 & $C_2$ &
 		$\mathbb Z \oplus \mathbb Z_2 \oplus \mathbb Z_4$ & 
			$(C_2)^3$ & \phantom{\Big(}$O^4_{11}$\phantom{\Big)} & 6/1/1/45 \\ \hline
	$N^4_{38}$ & $O^3_6$ &  62 & $C_2$ &
 		$\mathbb Z \oplus \mathbb Z_2 \oplus \mathbb Z_4$ &  
			$(C_2)^3$ & \phantom{\Big(}$O^4_{14}$\phantom{\Big)} & 6/1/1/49 \\ \hline
	$N^4_{39}$ & $O^3_6$ &  61 & $C_2$ & 
 		$\mathbb Z \oplus (\mathbb Z_2)^2$ &  
			$(C_2)^3$ & \phantom{\Big(}$O^4_{14}$\phantom{\Big)} & 6/1/1/92 \\ \hline
	$N^4_{40}$ & $O^3_6$ & 122 & $C_4$ &
 		$\mathbb Z \oplus \mathbb Z_2$ & 
			$D_4$ & \phantom{\Big(}$O^4_{15}$\phantom{\Big)} & 12/3/10/5 \\ \hline	
	$N^4_{41}$ & $O^3_6$ &  114 & $C_4$ &
 		$\mathbb Z \oplus \mathbb Z_4$ &  
			$D_4$ & \phantom{\Big(}$O^4_{17}$\phantom{\Big)} & 12/3/4/6 \\ \hline
	$N^4_{42}$ & $O^3_2$ &  176 & $C_6$ &
 		$\mathbb Z \oplus \mathbb Z_2$ &  
 			 $C_2 \times C_6$ & \phantom{\Big(}$O^4_8$\phantom{\Big)} & 15/1/1/10 \\ \hline
	$N^4_{43}$ & $O^3_6$ &  205 & $C_6$ &
 		$\mathbb Z$ & $C_2 \times A_4$ & \phantom{\Big(}$O^4_{26}$\phantom{\Big)} & 25/1/1/10 \\ \hline
	$N^4_{44}$ & $N^4_{44}$ &  $N^4_{44}$ & $C_1$ &
 		$\mathbb Z_2 \oplus (\mathbb Z_4)^2$ &  
 			$(C_2)^2$ & \phantom{\Big(}$O^4_2$\phantom{\Big)} & 4/3/1/6 \\ \hline
	$N^4_{45}$ & $N^4_{45}$ &  $N^4_{45}$ & $C_1$ &
 		$(\mathbb Z_2)^2 \oplus \mathbb Z_4$ &  
 			$(C_2)^3$ & \phantom{\Big(}$O^4_9$\phantom{\Big)} & 6/2/1/50 \\ \hline
	$N^4_{46}$ & $N^4_{46}$ &  $N^4_{46}$ & $C_1$ &
		$(\mathbb Z_2)^2 \oplus \mathbb Z_4$ & 
			$(C_2)^3$ & \phantom{\Big(}$O^4_{10}$\phantom{\Big)} & 6/2/1/27 \\ \hline
	$N^4_{47}$ & $N^4_{47}$ &  $N^4_{47}$ & $C_1$ &
		$(\mathbb Z_4)^2$ & $D_4$ & \phantom{\Big(}$O^4_{12}$\phantom{\Big)} & 12/4/3/11 \\ \hline
\end{tabular} 

\vspace{.15in}
\caption{Invariants of the nonorientable, closed, flat 4-manifolds, II}
\end{table}

\section{The Classification Theory for co-Seifert Fibrations} 

A geometric fibration of a closed flat $n$-manifold over a 1-orbifold is called a {\it geometric co-Seifert fibration}. 
There are two cases to consider:  the base 1-orbifold is either a circle or a closed interval. 
To classify geometric co-Seifert fibrations of closed flat $n$-manifolds up to affine equivalence, 
it suffices by Theorem 4 to classify up to isomorphism pairs $(\Gamma,\Nu)$ consisting of a torsion-free $n$-space group and  
a normal subgroup $\Nu$ such that $\Gamma/\Nu$ is either infinite cyclic or infinite dihedral. 

Let $\Mu$ be a torsion-free $(n-1)$-space group, and let $\Delta$ be a 1-space group. 
Define $\mathrm{Iso}(\Delta,\Mu)$ to be the set of isomorphism classes of pairs $(\Gamma, \Nu)$ 
where $\Nu$ is a complete normal subgroup of an $n$-space group $\Gamma$ 
such that $\Nu$ is isomorphic to $\Mu$ and $\Gamma/\Nu$ is isomorphic to $\Delta$. 
We denote the isomorphism class of a pair $(\Gamma,\Nu)$ by $[\Gamma,\Nu]$. 
Define $\mathrm{Iso}_{f}(\Delta,\Mu)$ to be the subset of $\mathrm{Iso}(\Delta,\Mu)$ consisting 
of all the classes $[\Gamma,\Nu]$ such that $\Gamma$ is torsion-free. 
If $\Delta$ is infinite cyclic, then $\mathrm{Iso}_{f}(\Delta,\Mu) = \mathrm{Iso}(\Delta,\Mu)$, since $\Delta$ is torsion-free. 
If $\gamma\in\Gamma$, let $\gamma_\ast$ be the automorphism of $\Nu$ defined by $\gamma_\ast(\nu) = \gamma\nu\gamma^{-1}$. 
The next theorem is well known; for a proof, see Theorem 23 in \cite{R-TII}. 

\begin{theorem} 
Let $\Mu$ be a torsion-free $(n-1)$-space group, and let $\Delta$ be an infinite cyclic 1-space group.  
The set $\mathrm{Iso}_f(\Delta,\Mu)$ is in one-to-one correspondence with the 
set of conjugacy classes of pairs of inverse elements of $\mathrm{Out}(\Mu)$ of finite order.  
If $[\Gamma,\Nu] \in \mathrm{Iso}_f(\Delta,\Mu)$ and 
$\alpha: \Nu \to \Mu$ is an isomorphism and $\gamma$ is an element of $\Gamma$ such that $\Nu\gamma$ generates 
$\Gamma/\Nu$, then $[\Gamma, \Nu]$ corresponds to the conjugacy class 
of the pair of inverse elements $\{\alpha\gamma_\ast^{\pm 1}\alpha^{-1}\mathrm{Inn}(\Mu)\}$ of $\mathrm{Out}(\Mu)$.   
\end{theorem}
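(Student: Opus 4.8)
The plan is to prove the theorem by identifying $\mathrm{Iso}_f(\Delta,\Mu)$ with equivalence classes of splittings of the extension $1\to\Nu\to\Gamma\to\Delta\to1$ and then translating the equivalence relation into $\mathrm{Out}(\Mu)$. Since $\Delta\cong\integers$ is free, every such extension splits, so $\Gamma=\Nu\rtimes\langle\gamma\rangle$ where $\gamma$ is a lift of a generator of $\Gamma/\Nu$, and $\Gamma$, together with $\Nu$, is determined by the single automorphism $\gamma_\ast\in\mathrm{Aut}(\Nu)$. First I would record that $\Gamma$ is automatically torsion-free (an element mapping nontrivially to $\integers$ has infinite order, while $\Nu$ is torsion-free), so that $\mathrm{Iso}_f(\Delta,\Mu)=\mathrm{Iso}(\Delta,\Mu)$, as already noted before the statement.

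The core computation is the equivalence criterion. Given pairs $(\Gamma_i,\Nu_i)$ with lifts $\gamma_i$ and isomorphisms $\alpha_i:\Nu_i\to\Mu$, I would show that an isomorphism $\psi:\Gamma_1\to\Gamma_2$ with $\psi(\Nu_1)=\Nu_2$ exists if and only if $\alpha_1(\gamma_1)_\ast\alpha_1^{-1}$ and $\alpha_2(\gamma_2)_\ast\alpha_2^{-1}$ determine conjugate elements of $\mathrm{Out}(\Mu)$ up to inversion. In one direction, any $\psi$ induces $\bar\psi=\pm1$ on $\Gamma/\Nu\cong\integers$ and a restriction $\theta=\psi|_{\Nu_1}$; writing $\psi(\gamma_1)=\nu\,\gamma_2^{\pm1}$ and applying $\psi$ to $\gamma_1\nu'\gamma_1^{-1}=(\gamma_1)_\ast(\nu')$ gives $\theta\circ(\gamma_1)_\ast=\mathrm{Inn}(\nu)\circ(\gamma_2)_\ast^{\pm1}\circ\theta$ in $\mathrm{Aut}(\Nu_2)$, which is exactly conjugacy-up-to-inverse in $\mathrm{Out}$ after transport by the $\alpha_i$. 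The three sources of ambiguity in the formula account precisely for passing to a conjugacy class of a pair of inverse elements: replacing $\gamma$ within its coset changes $\gamma_\ast$ by $\mathrm{Inn}$; replacing it by the other generator changes $\gamma_\ast$ to $\gamma_\ast^{-1}$, whence the unordered pair of inverses; and replacing $\alpha$ conjugates the whole class in $\mathrm{Out}(\Mu)$. The converse direction builds $\psi$ from such data. This yields a well-defined injection from $\mathrm{Iso}_f(\Delta,\Mu)$ into conjugacy classes of pairs of inverse elements of $\mathrm{Out}(\Mu)$, given by the stated formula.

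It remains to match the images exactly: the class is always of finite order, and conversely every finite-order class is realized by a torsion-free $n$-space group. For finiteness, I would use that because $\Delta$ is infinite cyclic and $\Nu$ is complete, $V^\perp$ is one-dimensional and the generator $\gamma=b+B$ acts on $V^\perp\cong\realnos$ by a translation (not a reflection, since $\Gamma/\Nu$ is cyclic rather than infinite dihedral), forcing $V^\perp\subseteq\mathrm{Fix}(B)$. The point group of the space group $\Gamma$ is finite, so $B$ has finite order; the image of $[\gamma_\ast]$ under the homomorphism $\mathrm{Out}(\Nu)\to\mathrm{GL}(n-1,\integers)$ induced by the action on the translation lattice $T(\Nu)$ is $B|_V$, of finite order, while the kernel of this homomorphism is finite, being controlled by $H^1(P_\Nu;T(\Nu))$ with $P_\Nu$ the finite point group of $\Nu$. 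Hence $[\gamma_\ast]$ has finite order. For realizability, given a finite-order $[\phi]\in\mathrm{Out}(\Mu)$, I would realize $\phi$ by an affinity $a+A$ normalizing $\Mu$ acting on $V=E^{n-1}$, and set $\Gamma=\langle\Mu,\gamma\rangle$ on $E^n=V\oplus\realnos$ with $\gamma$ acting as $a+A$ on $V$ and translating the last coordinate by $1$; finiteness of $[\phi]$ lets me arrange $A$ to have finite order modulo $P_\Mu$, so $\Gamma$ has finite point group and full-rank translation lattice, hence is an $n$-space group, with $\Nu=\Mu$ complete in $\Gamma$, $\Gamma/\Nu\cong\integers$, and $[\Gamma,\Nu]\mapsto\{[\phi]^{\pm1}\}$.

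I expect the main obstacle to be this finite-order equivalence, namely that ``$\Gamma$ is a space group'' is interchangeable with ``$[\gamma_\ast]$ has finite order in $\mathrm{Out}(\Mu)$.'' Both directions rest on the structural fact that $\mathrm{Out}(\Nu)\to\mathrm{GL}(n-1,\integers)$ has finite kernel and image inside the normalizer of the point group, so that finiteness of the point group of $\Gamma$ and finiteness of the order of $[\gamma_\ast]$ correspond; verifying that the realization $\Gamma$ is genuinely discrete and cocompact, and not merely of finite point group, is the technical heart of the surjectivity half.
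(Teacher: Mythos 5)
Your proposal is correct and takes essentially the route the paper relies on: the paper gives no proof of its own here but defers to Theorem 23 of \cite{R-TII}, which likewise classifies the split extensions $\Nu\rtimes_\phi\integers$ up to isomorphisms preserving $\Nu$ and identifies the condition that $\Gamma$ be an $n$-space group with $\Nu$ complete with the finiteness of the order of $[\phi]$ in $\mathrm{Out}(\Mu)$, using that $\mathrm{Out}(\Mu)\to\mathrm{GL}(n-1,\integers)$ has finite kernel and that every outer automorphism class is realized by an affinity normalizing $\Mu$. The steps you flag as delicate (automatic torsion-freeness, the conjugacy-up-to-inversion computation for the semidirect product, and the two directions of the finite-order equivalence) are all handled correctly.
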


In order to treat the case that $\Gamma/\Nu$ is infinite dihedral, we need some more theory. 
Let $\Nu$ be a complete normal subgroup of an $n$-space group $\Gamma$, 
let $V = \mathrm{Span}(\Nu)$, and let $V^\perp$ be the orthogonal complement of $V$ in $E^n$.  
Euclidean $n$-space $E^n$ decomposes as the Cartesian product 
$E^n = V \times V^\perp$. 
The action of $\Nu$ on $V^\perp$ is trivial. 
Hence $E^n/\Nu$ decomposes as the Cartesian product 
$E^n/\Nu = V/\Nu \times V^\perp.$

The action of $\Gamma/\Nu$ on $E^n/\Nu$ corresponds 
to the diagonal action of $\Gamma/\Nu$ on $V/\Nu \times V^\perp$ via isometries 
defined by the formula
$$(\Nu(b+B))(\Nu v,w) = (\Nu(c+Bv),d+Bw)$$
where $b = c + d$ with $c\in V$ and $d\in V^\perp$. 

\begin{theorem} {\rm (Theorem 16 \cite{R-TII})} 
Let $\Nu$ be a complete normal subgroup of an $n$-space group $\Gamma$, 
and let $V= \mathrm{Span}(\Nu)$. 
Then the map
$$\chi: E^n/\Gamma \to (V/\Nu\times V^\perp)/(\Gamma/\Nu)$$
defined by $\chi(\Gamma x) = (\Gamma/\Nu)(\Nu v, w)$, 
with  $x = v + w$ and $v\in V$ and $w\in V^\perp$, is an isometry. 
\end{theorem}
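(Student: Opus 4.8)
The plan is to realize $\chi$ as the canonical identification of the iterated quotient $(E^n/\Nu)/(\Gamma/\Nu)$ with $E^n/\Gamma$, after first identifying $E^n/\Nu$ isometrically with the metric product $V/\Nu\times V^\perp$. Since $E^n = V\times V^\perp$ is an orthogonal splitting, the ambient metric is the product metric, and the orbifold metric on each of these quotients is the quotient pseudometric $d(\Gamma x,\Gamma y)=\inf_{\gamma\in\Gamma}|x-\gamma y|$, which is a genuine inner metric because $\Gamma$ acts properly discontinuously by isometries. I would organize the argument into three steps: identify $E^n/\Nu$ as a metric product, check that $\Gamma/\Nu$ acts by isometries on this product via the stated diagonal formula, and finally match the two quotient metrics.

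First I would compute the quotient metric on $E^n/\Nu$. By Theorem 1(2), every $\nu = a+A$ in $\Nu$ satisfies $a\in V$ and $V^\perp\subseteq\mathrm{Fix}(A)$, so writing a point as $x=(v,w)\in V\times V^\perp$ gives $\nu x = (a+Av,\,w)$; thus $\Nu$ fixes the $V^\perp$-coordinate and acts on the $V$-coordinate as the space group of Theorem 1(3). A direct computation then gives, for $x=(v,w)$ and $x'=(v',w')$ and $\nu = a+A$,
$$d_{E^n/\Nu}(\Nu x,\Nu x')^2 = \inf_{\nu\in\Nu}\big(|v-(a+Av')|^2+|w-w'|^2\big) = d_{V/\Nu}(\Nu v,\Nu v')^2 + |w-w'|^2,$$
which is exactly the product metric on $V/\Nu\times V^\perp$. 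This furnishes an isometry $E^n/\Nu \cong V/\Nu\times V^\perp$ sending $\Nu x$ to $(\Nu v,w)$.

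Next, since $\Nu$ is normal in $\Gamma$, the $\Gamma$-action descends to an action of $\Gamma/\Nu$ on $E^n/\Nu$ by isometries. Transporting through the identification above and using $BV=V$, $BV^\perp=V^\perp$ (Theorem 1(1) together with orthogonality of $B$), this action is the diagonal one $(\Nu(b+B))(\Nu v,w)=(\Nu(c+Bv),\,d+Bw)$ recorded before the statement, where $b=c+d$ with $c\in V$ and $d\in V^\perp$. Finally, because $\Nu$ is normal, quotienting $E^n$ first by $\Nu$ and then by $\Gamma/\Nu$ yields the same orbit space and the same quotient metric as quotienting directly by $\Gamma$, so the canonical bijection $(E^n/\Nu)/(\Gamma/\Nu)\to E^n/\Gamma$ is an isometry. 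Composing and checking on a point $x=v+w$ that the composite sends $\Gamma x$ to $(\Gamma/\Nu)(\Nu v,w)$ identifies the inverse of $\chi$ as an isometry; well-definedness and bijectivity of $\chi$ follow from the orbit computation $\chi(\Gamma(b+B)x)=(\Gamma/\Nu)(\Nu(c+Bv),d+Bw)=(\Gamma/\Nu)(\Nu v,w)$.

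I expect the main obstacle to be the metric step rather than the set-theoretic bookkeeping: one must confirm that the naive ``inf'' pseudometric used above is the inner metric referenced in the definition of a flat orbifold, and that forming the product metric on $E^n/\Nu$ is compatible with the subsequent quotient by $\Gamma/\Nu$. The orthogonality of the splitting $E^n=V\times V^\perp$, which lets the trivial $\Nu$-action on $V^\perp$ decouple the two factors exactly, is what makes the product identification hold on the nose; completeness of $\Nu$ enters through Remark 2, guaranteeing that the base $(E^n/\Nu)/(\Gamma/\Nu)$ is genuinely a flat orbifold with $\Gamma/\Nu$ acting as a space group of isometries. Once the product structure and the descended action are in place, the isometry statement is essentially formal, being the assertion that two descriptions of the same orbit space carry the same quotient metric.
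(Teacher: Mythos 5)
Your proof is correct. Note that the paper itself gives no proof of this statement --- it is quoted verbatim as Theorem~16 of \cite{R-TII} --- so there is no in-paper argument to compare against; your route (first identifying $E^n/\Nu$ with the metric product $V/\Nu\times V^\perp$ using Theorem~1(2), then invoking the iterated-quotient identification $(E^n/\Nu)/(\Gamma/\Nu)\cong E^n/\Gamma$ of quotient metrics) is the natural one and mirrors exactly the product-decomposition reasoning the paper uses for the generalized Calabi construction in \S 3. The metric computations you flag as the main obstacle do go through: the infimum defining the quotient metric decouples over the orthogonal splitting because $\Nu$ fixes the $V^\perp$-coordinate, and the two-step infimum over $\Nu$ and then over $\Gamma/\Nu$ recombines into the single infimum over $\Gamma$. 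Only your closing aside slightly misattributes the role of completeness: $(E^n/\Nu)/(\Gamma/\Nu)$ is the total space $E^n/\Gamma$, not the base, and the isometry itself holds for any normal subgroup; completeness is what makes the associated fibration over $(E^n/V)/(\Gamma/\Nu)$ well behaved (Remark~2), not what makes $\chi$ an isometry.
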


\begin{lemma} 
Let ${\Nu}$ be a complete normal subgroup of an $n$-space group $\Gamma$,  
and let $V= \mathrm{Span}(\Nu)$. 
Then $\Gamma$ is torsion-free if and only if $\Nu$ is torsion-free 
and $\Gamma/\Nu$ acts freely on $V/\Nu\times V^\perp$. 
\end{lemma}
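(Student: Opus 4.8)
The plan is to replace the algebraic notion of torsion-freeness by the geometric one of acting freely, and then to compare fixed points upstairs (in $E^n$) with fixed points downstairs (in $V/\Nu\times V^\perp$). The bridge is the standard fact that an element $\phi=a+A$ of a space group has finite order if and only if it fixes a point of the Euclidean space on which it acts: if $\phi$ has finite order then the centroid of any $\langle\phi\rangle$-orbit is fixed, and conversely, if $\phi$ fixes a point $p$, then, since the point group is finite, $A$ has some finite order $m$, so $\phi^m$ is a translation fixing $p$ and hence trivial. I would apply this both to $\Gamma$ acting on $E^n$ and to $\Nu$ acting on $V$ (which it does as a space group by Theorem 1(3)); thus ``$\Gamma$ is torsion-free'' means ``$\Gamma$ acts freely on $E^n$'' and ``$\Nu$ is torsion-free'' means ``$\Nu$ acts freely on $V$''. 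The relevant action of $\Gamma/\Nu$ on $V/\Nu\times V^\perp$ is the diagonal isometric action displayed just before the statement.

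For the easy implication I would argue by contrapositive. Suppose $\Gamma$ is not torsion-free, so some nontrivial $\gamma=b+B$ fixes a point $x=(v,w)\in V\times V^\perp$, giving $c+Bv=v$ and $d+Bw=w$, where $b=c+d$ with $c\in V$ and $d\in V^\perp$. Projecting, $\Nu\gamma$ fixes $(\Nu v,w)$. If $\gamma\in\Nu$ then $\Nu$ has a nontrivial torsion element; if $\gamma\notin\Nu$ then $\Nu\gamma$ is a nontrivial element of $\Gamma/\Nu$ with a fixed point, so $\Gamma/\Nu$ does not act freely. Either way the two conditions on the right-hand side of the equivalence fail, which is what I want.

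The substance is the converse. Assume $\Gamma$ is torsion-free; then $\Nu$, being a subgroup, is torsion-free, so it remains to prove $\Gamma/\Nu$ acts freely. Suppose instead that $\Nu\gamma$ with $\gamma=b+B\notin\Nu$ fixes some $(\Nu v,w)$. The fixed-point conditions read $d+Bw=w$ together with $\Nu(c+Bv)=\Nu v$ (writing $b=c+d$ as above), and the latter orbit equality furnishes an element $\nu=a+A\in\Nu$ with $a+A(c+Bv)=v$. By completeness of $\Nu$, Theorem 1(2) gives $a\in V$ and $V^\perp\subseteq\mathrm{Fix}(A)$, while Theorem 1(1) shows that both $A$ and $B$ preserve $V$ and $V^\perp$. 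I would then compute $\nu\gamma=(a+Ab)+AB$ and test whether $(v,w)$ is a genuine fixed point of $\nu\gamma$ in $E^n$: its $V^\perp$-component reduces, since $A$ fixes $V^\perp$ pointwise and $Bw\in V^\perp$, to $d+Bw$, which equals $w$ by the first condition; its $V$-component reduces to $(a+Ac)+ABv$, which equals $v$ by the relation $a+A(c+Bv)=v$. Hence $\nu\gamma$ fixes $(v,w)$. Since $\nu\in\Nu$ and $\gamma\notin\Nu$, the element $\nu\gamma$ lies outside $\Nu$, so it is nontrivial, and it has a fixed point, contradicting that $\Gamma$ is torsion-free. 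This completes the converse.

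The main obstacle is precisely this lifting step in the converse: turning a fixed point of the coset $\Nu\gamma$ on the quotient into an honest fixed point of a representative $\nu\gamma$ on $E^n$. It works only because completeness of $\Nu$ forces the correcting element $\nu$ to translate within $V$ and to fix $V^\perp$ pointwise, so that multiplying $\gamma$ by $\nu$ does not disturb the already-solved $V^\perp$-component while it resolves the $V$-component via the orbit relation. As an alternative, higher-level route, I could instead invoke the isometry $\chi$ of Theorem 8 identifying $E^n/\Gamma$ with $(V/\Nu\times V^\perp)/(\Gamma/\Nu)$ and argue that $\Gamma$ is torsion-free iff this orbifold is a manifold, iff $V/\Nu$ is a manifold (forcing $\Nu$ torsion-free) and $\Gamma/\Nu$ then acts freely; but the direct fixed-point computation above is more self-contained and avoids appealing to orbifold-covering facts.
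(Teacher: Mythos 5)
Your proof is correct, but it takes a genuinely different route from the paper's. The paper proves Lemma 2 in two lines by appealing to Theorem 8: the isometry $\chi: E^n/\Gamma \to (V/\Nu\times V^\perp)/(\Gamma/\Nu)$ reduces everything to the chain ``$\Gamma$ torsion-free iff $E^n/\Gamma$ is a flat manifold, $\Nu$ torsion-free iff $V/\Nu\times V^\perp$ is a flat manifold, and the quotient of a flat manifold by a discrete isometric action is a flat manifold iff the action is free'' --- exactly the higher-level alternative you sketch and set aside in your last sentence. Your main argument instead works directly with fixed points: you equate torsion in a space group with the existence of fixed points in $E^n$, and the substantive step is lifting a fixed point of a coset $\Nu\gamma$ on $V/\Nu\times V^\perp$ to an honest fixed point of the representative $\nu\gamma$ in $E^n$. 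That lift is carried out correctly: the correcting element $\nu=a+A$ satisfies $a\in V$ and $V^\perp\subseteq\mathrm{Fix}(A)$, so multiplying by $\nu$ repairs the $V$-component via the orbit relation $a+A(c+Bv)=v$ while leaving the already-solved $V^\perp$-component $d+Bw=w$ untouched, and $\nu\gamma\notin\Nu$ guarantees nontriviality. One small attribution point: this step uses only Theorem 1(1)--(2), which hold for any \emph{normal} subgroup, so completeness of $\Nu$ is not what makes your computation work; it is what makes the quotient action the relevant one in the paper's framework. What your approach buys is self-containedness --- it avoids the orbifold-covering facts implicitly packaged in the paper's ``manifold iff free'' step --- at the cost of an explicit computation. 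A cosmetic remark: in the easy direction, ``either way the two conditions fail'' should read ``either way at least one of the two conditions fails,'' i.e.\ the conjunction fails; the argument itself is fine.
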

\begin{proof}  If $\Gamma$ is torsion-free, then $\Nu$ is torsion-free, and so $E^n/\Gamma$ and $V/\Nu\times V^\perp$ are flat $n$-manifolds,   
Hence $\Gamma/\Nu$ acts freely on $V/\Nu\times V^\perp$ by Theorem 8.  
Conversely, if $\Nu$ is torsion-free and $\Gamma/\Nu$ acts freely on $V/\Nu\times V^\perp$, 
Then $V/\Nu\times V^\perp$ is a flat $n$-manifold, and so $E^n/\Gamma$ is flat $n$-manifold by Theorem 8. 
Hence $\Gamma$ is torsion-free. 
\end{proof}

\begin{theorem} 
Let $\Nu$ be a torsion-free normal subgroup of an $n$-space group $\Gamma$ such that $\Gamma/\Nu$ is infinite dihedral, 
let $V= \mathrm{Span}(\Nu)$, and let  $\gamma_1, \gamma_2$ be elements of $\Gamma$ such that $\Nu\gamma_1$ and $\Nu\gamma_2$ 
are order $2$ generators of $\Gamma/\Nu$. 
Then $\Gamma$ is torsion-free if and only if $\Nu\gamma_1$ and $\Nu\gamma_2$ do not fix a point of $V/\Nu$.  
\end{theorem}
\begin{proof}
The group $\Nu$ is the kernel of the action of $\Gamma$ on $V^\perp$, and so we have 
an effective action of $\Gamma/\Nu$ on $V^\perp$. 
Therefore $\Nu\gamma_1$ and $\Nu\gamma_2$ act as reflections of the line $V^\perp$. 
If $\Gamma$ is torsion-free, then $\Gamma/\Nu$ acts freely on $V/\Nu\times V^\perp$ by Lemma 2, 
and so $\Nu\gamma_1$ and $\Nu\gamma_2$ do not fix a point of $V/\Nu$. 

Conversely, suppose $\Nu\gamma_1$ and $\Nu\gamma_2$ do not fix a point of $V/\Nu$. 
Let $\gamma$ be an element of $\Gamma/\Nu$ such that $\Nu\gamma$ is not the identity.  
If $\Nu\gamma$ has infinite order, then $\Nu\gamma$ acts as a nontrivial translation of the line $V^\perp$, 
and so $\Nu\gamma$ does not fix a point of $V/\Nu\times V^\perp$. 
If $\Nu\gamma$ has finite order, then $\Nu\gamma$ is conjugate to either $\Nu\gamma_1$ or $\Nu\gamma_2$, 
and so $\Nu\gamma$ does not fix a point of $V/\Nu$ or $V/\Nu\times V^\perp$. 
Thus $\Gamma/\Nu$ acts freely on $V/\Nu\times V^\perp$. 
Hence $\Gamma$ is torsion-free by Lemma 2. 
\end{proof}

Let $\Mu$ be a torsion-free $(n-1)$-space group, and  
let $\alpha = a + A$, with $a \in E^{n-1}$ and $A \in \mathrm{GL}(n-1, \realnos)$, be an affinity of $E^{n-1}$ such that $\alpha\Mu\alpha^{-1} = \Mu$. 
Then $\alpha$ induces an affinity $\alpha_\star$ of the flat $(n-1)$-manifold $E^{n-1}/\Mu$ defined by $\alpha_\star(\Mu x) = \Mu \alpha x$. 
Let $\mathrm{Aff}(\Mu)$ be the group of affinities of $E^{n-1}/\Mu$, and 
let $N_A(\Mu)$ be the normalizer of $\Mu$ in the group of affinities of $E^{n-1}$. 
Then the map $\Phi: N_A(\Mu) \to \mathrm{Aff}(\Mu)$ defined by $\Phi(\alpha) = \alpha_\star$ is an epimorphism with kernel $\Mu$ 
by Lemma 1 of \cite{C-V}. 

Define $\Omega: \mathrm{Aff}(\Mu) \to \mathrm{Out}(\Mu)$ by $\Omega(\alpha_\star) = \alpha_\ast\mathrm{Inn}(\Mu)$ 
where $\alpha_\ast$ is the automorphism of $\Mu$ defined by $\alpha_\ast(\mu) = \alpha\mu\alpha^{-1}$. 
Let $C_A(\Mu)$ be the centralizer of $\Mu$ in the group of affinities of $E^{n-1}$. 
Then $C_A(\Mu) = \{c+I: c \in \mathrm{Span}(Z(\Mu))\}$ and 
$\Omega: \mathrm{Aff}(\Mu) \to \mathrm{Out}(\Mu)$ is an epimorphism with kernel $\Phi(C_A(\Mu))$ by Lemmas 1 and 2 of \cite{C-V}. 

Let $\Nu$ be a complete normal subgroup of an $n$-space group $\Gamma$, and let $V = \mathrm{Span}(\Nu)$. 
Let $\gamma \in \Gamma$.   Then $\gamma = b+B$ with $b\in E^n$ and $B\in \mathrm{O}(n)$. 
Write $b = \overline b + b'$ with $\overline b \in V$ and $b' \in V^\perp$. 
Let $\overline B$ and $B'$ be the orthogonal transformations of $V$ and $V^\perp$, respectively, 
obtained by restricting $B$. 
Let $\overline \gamma = \overline b + \overline B$ and $\gamma' = b' + B'$. 
Then $\overline \gamma$ and $\gamma'$ are isometries of $V$ and $V^\perp$, respectively. 
The diagonal action of $\Gamma$ on $V\times V^\perp$ is given by $\gamma(v,w) = (\overline \gamma v, \gamma' w)$. 
The diagonal action of $\Gamma/\Nu$ on $V/\Nu \times V^\perp$ is given by 
$\Nu\gamma(\Nu v, w) = (\Nu \overline \gamma v, \gamma' w)$. 

Let $\ov \Gamma = \{\ov \gamma: \gamma \in \Gamma\}$. 
Then $\ov \Gamma$ is a subgroup of $\mathrm{Isom}(V)$. 
The map $\Beta: \Gamma \to \ov \Gamma$ defined by $\Beta(\gamma)= \ov\gamma$ is an epimorphism 
with kernel equal to the kernel  $\Kappa$ of the action of $\Gamma$ on $V$. 

Let $\ov \Nu = \{\ov \nu: \nu \in \Nu\}$.  
Then $\ov \Nu$ is a subgroup of $\mathrm{Isom}(V)$. 
The map $\Beta: \Nu \to \ov \Nu$ defined by $\Beta(\nu) = \ov\nu$ is an isomorphism. 
The group $\ov\Nu$ is a space group with $V/\ov{\Nu} = V/\Nu$ by Lemma 1.

\begin{theorem} 
Let $\Mu$ be a torsion-free $(n-1)$-space group, and let $\Delta$ be an infinite dihedral group. 
The set $\mathrm{Iso}_f(\Delta,\Mu)$ is in one-to-one correspondence with the set of equivalence classes 
of pairs of order $2$ elements $\alpha_\star$ and $\beta_\star$ of the group $\mathrm{Aff}(\Mu)$, 
where $\alpha = a + A$ and $\beta = b+B$,  
such that $\alpha_\star$ and $\beta_\star$ do not fix a point of $E^{n-1}/\Mu$ and $\Omega(\alpha_\star\beta_\star)$ has finite order in $\mathrm{Out}(\Mu)$. 
The equivalence class of the pair $\{\alpha_\star, \beta_\star\}$ is the union of the conjugacy classes of the pairs $\{\alpha_\star, (u+I)_\star\beta_\star\}$ 
for each $u$ in $E^{n-1}$ such that $u\in \mathrm{Span}(Z(\Mu))$ and $Au = Bu = -u$. 

Let $[\Gamma, \Nu] \in \mathrm{Iso}_f(\Delta,\Mu)$, let $V = \mathrm{Span}(\Nu)$, 
and let $\phi:V\to E^{n-1}$ be an affinity such that $\phi\overline{\Nu}\phi^{-1} = \Mu$. 
Let $\gamma_1$ and $\gamma_2$ be elements of $\Gamma$ such that $\Nu\gamma_1$ and $\Nu\gamma_2$ are order two generators of $\Gamma/\Nu$. 
Then $[\Gamma, \Nu]$ corresponds to the equivalence class of the pair $\{(\phi\overline{\gamma}_1\phi^{-1})_\star, (\phi\overline{\gamma}_2\phi^{-1})_\star\}$. 
\end{theorem}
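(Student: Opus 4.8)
The plan is to construct the asserted bijection explicitly and then verify, in turn, that the forward map lands in the indicated set of pairs, that it is constant on isomorphism classes up to the stated equivalence, and that it is a bijection. Throughout I would use Theorem 4 to pass between affine equivalence of fibration projections and isomorphism of pairs $(\Gamma,\Nu)$, and in particular to realize any abstract isomorphism $\Gamma_1\to\Gamma_2$ carrying $\Nu_1$ onto $\Nu_2$ as conjugation by an affinity of $E^n$. The guiding model is the infinite cyclic case, Theorem 7, to which I would reduce the finite-order condition by passing to the index-two translation subgroup $\Delta^+$ of $\Delta$.

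First I would show the forward map lands in the right set. Given $[\Gamma,\Nu]$, choose $V$, $\phi$, and $\gamma_1,\gamma_2$ as in the statement and set $\alpha=\phi\overline{\gamma}_1\phi^{-1}$, $\beta=\phi\overline{\gamma}_2\phi^{-1}$. Since $\Nu$ is normal, $\overline{\gamma}_i$ normalizes $\overline{\Nu}$, so $\alpha,\beta$ normalize $\Mu$ and $\alpha_\star,\beta_\star\in\mathrm{Aff}(\Mu)$. From $\gamma_i^2\in\Nu$ I get $\alpha^2\in\Mu$, hence $\alpha_\star^2=1$; and $\alpha_\star\neq 1$, for otherwise $\overline{\gamma}_1\in\overline{\Nu}$ gives $\gamma_1\in\Kappa\Nu$, say $\gamma_1=\kappa\nu$, and then $\gamma_1^2\in\Nu$ together with $\Nu\cap\Kappa=\{I\}$ and the fact that $\Kappa$ and $\Nu$ commute forces $\kappa^2=I$, so $\kappa=I$ by torsion-freeness and $\gamma_1\in\Nu$, a contradiction. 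Thus $\alpha_\star,\beta_\star$ have order $2$. Transporting the action of $\Nu\gamma_i$ on $V/\Nu$ through $\phi$, Theorem 9 shows that $\alpha_\star$ and $\beta_\star$ fix no point of $E^{n-1}/\Mu$. Finally, letting $\Gamma^+$ be the preimage in $\Gamma$ of $\Delta^+$, the subgroup $\Nu$ is complete and normal in the torsion-free group $\Gamma^+$ with $\Gamma^+/\Nu$ infinite cyclic generated by $\Nu\gamma_1\gamma_2$; Theorem 7 applied to $[\Gamma^+,\Nu]$ shows that $\Omega(\alpha_\star\beta_\star)=(\alpha\beta)_\ast\mathrm{Inn}(\Mu)$ has finite order in $\mathrm{Out}(\Mu)$.

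Next I would check the map is well defined up to the stated equivalence. A different choice of $\phi$ replaces the pair by its image under conjugation by $\Phi(\phi'\phi^{-1})\in\mathrm{Aff}(\Mu)$; a different representative of the coset $\Nu\gamma_i$ multiplies $\alpha$ (or $\beta$) on the left by an element of $\Mu$ and so leaves $\alpha_\star$ (or $\beta_\star$) unchanged; and replacing $\{\Nu\gamma_1,\Nu\gamma_2\}$ by the other adjacent pair of order-two generators, e.g.\ $\gamma_2\mapsto\gamma_1\gamma_2\gamma_1$, conjugates $\beta_\star$ by $\alpha_\star$. All of these keep the pair inside a single conjugacy class. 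The remaining, and essential, freedom produces the $(u+I)$-modifications, which I would exhibit by reconstruction: from a valid pair build $\Gamma$ acting on $V\times V^\perp=E^{n-1}\times\realnos$ with $\Nu=\Mu$, $\gamma_1=(\hat\alpha,\, w\mapsto -w)$ and $\gamma_2=(\hat\beta,\, w\mapsto 2h-w)$ for representatives $\hat\alpha=a+A$, $\hat\beta=b+B$ and some $h\neq 0$. The finite-order hypothesis makes the holonomy group of $\Mu$ together with $A$ and $B$ finite, so $\Gamma$ is a space group; the no-fixed-point hypothesis with Theorem 9 and Lemma 2 makes $\Gamma$ torsion-free; and $\gamma_1\gamma_2$ translates $V^\perp$ nontrivially, so $\Gamma/\Nu\cong\Delta$. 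Now conjugate $\Gamma$ by the shear $\Sigma_q\colon (v,w)\mapsto (v+qw,\,w)$. A direct computation shows that $\Sigma_q$ centralizes $\Nu$ iff $q$ is fixed by the holonomy of $\Mu$, that is $q\in\mathrm{Span}(Z(\Mu))$, and that $\Sigma_q\gamma_i\Sigma_q^{-1}$ is again a block-diagonal reflection iff $Aq=Bq=-q$; in that case $\alpha_\star$ is unchanged while $\beta_\star$ is replaced by $(u+I)_\star\beta_\star$ with $u=2hq$. This realizes precisely the modifications by $u\in\mathrm{Span}(Z(\Mu))$ with $Au=Bu=-u$.

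To finish, surjectivity is exactly the reconstruction above, and injectivity would come from running the analysis in reverse: if two reconstructed groups are isomorphic by a map carrying $\Nu$ to $\Nu$, Theorem 4 realizes this as conjugation by an affinity $\Sigma$ of $E^n$, whose linear part preserves $V=\mathrm{Span}(\Nu)$ since $\Sigma$ normalizes $\Nu$; writing $\Sigma$ in block-triangular form with respect to $V\oplus V^\perp$, its $V$-block contributes a conjugation in $\mathrm{Aff}(\Mu)$, its shear block contributes a $(u+I)$-modification as above, and its $V^\perp$-block only relocates the reflection axes, so the two pairs are equivalent. The main obstacle I anticipate is precisely this block-triangular bookkeeping: showing that the shear component of an arbitrary $\Nu$-preserving affinity is forced to lie in $\mathrm{Span}(Z(\Mu))\cap\ker(A+I)\cap\ker(B+I)$ and contributes exactly an admissible $u$, and that composing shears with $\mathrm{Aff}(\Mu)$-conjugations closes up to give precisely the union of conjugacy classes in the statement.
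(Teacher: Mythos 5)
Your proposal is essentially correct, but it takes a genuinely different route from the paper. The paper's own proof is very short: it checks only that the stated equivalence relation is well posed (namely that $(u+I)_\star\beta_\star = ((u/2+I)\beta(u/2+I)^{-1})_\star$ is again an admissible order-$2$ element and that $\Omega(\alpha_\star(u+I)_\star\beta_\star)=\Omega(\alpha_\star\beta_\star)$), and then outsources all remaining content to Theorem 9 together with Theorem 26 of \cite{R-TII}, which is the prior classification of $\mathrm{Iso}(\Delta,\Mu)$ for $\Delta$ infinite dihedral without the torsion-free hypothesis on $\Gamma$; the only new work is translating torsion-freeness of $\Gamma$ into the fixed-point-free condition on $\alpha_\star,\beta_\star$ via Theorem 9. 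You instead rebuild the underlying classification from scratch: the forward map via $\gamma_i\mapsto(\phi\overline{\gamma}_i\phi^{-1})_\star$ (with a correct use of $\Nu\cap\Kappa=\{I\}$ and torsion-freeness to rule out $\alpha_\star=1$, and a clean reduction of the finite-order condition to Theorem 7 applied to the index-two subgroup $\Gamma^+$), surjectivity via the mapping-torus-type reconstruction on $V\times V^\perp$, and --- the most illuminating point --- the identification of the residual ambiguity with conjugation by shears $\Sigma_q$, whose admissibility conditions $q\in\mathrm{Span}(Z(\Mu))$ and $Aq=Bq=-q$ you compute explicitly and which produce exactly the $(u+I)_\star$-modifications of the statement. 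What your approach buys is a self-contained proof that explains geometrically where the $(u+I)_\star$ equivalence comes from; what it costs is the block-triangular bookkeeping in the injectivity step, which you correctly flag as the remaining obstacle but do not fully carry out (one must check that an arbitrary affinity conjugating $(\Gamma_1,\Nu_1)$ to $(\Gamma_2,\Nu_2)$ decomposes, modulo $N_A(\Mu)$ and translations of $V^\perp$, into exactly an admissible shear, and that a negative $V^\perp$-block only permutes the generating pair of reflections of $\Gamma/\Nu$, which your adjacent-pair analysis already absorbs). That verification is routine --- it is the reverse of your shear computation --- so I regard the proposal as sound, just not yet complete at that one step.
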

\begin{proof} Note that $(u+I)\beta = (u/2+I)\beta(u/2+I)^{-1}$, since $Bu = -u$. 
Hence  $(u+I)_\star\beta_\star$ has order $2$ and does not fix a point.  
Moreover $\Omega(\alpha_\star (u+I)_\star\beta_\star) = \Omega(\alpha_\star\beta_\star)$, 
since $(u+I)_\star$ is in the kernel of $\Omega$. 
Therefore the statement of the theorem is consistent. 
The proof now follows from Theorem 9 and Theorem 26 of \cite{R-TII}. 
\end{proof}

\begin{lemma}  
Let $\Mu$ be a torsion-free $n$-space group whose group of translations is $\Tau$. 
Let $\alpha_0 = a_0+A_0, \ldots, \alpha_k = a_k+A_k$ be elements of $\Mu$ 
such that $\alpha_0 = I$, and $\{A_0, \ldots, A_k\}$ is the holonomy group of $\Mu$. 
Let $\beta$ be an affinity of $E^n$ that normalizes $\Mu$. 
Then $\beta$ normalizes $\Tau$, 
and so $\beta$ induces an affinity $\tilde\beta_\star$ of the $n$-torus $E^n/\Tau$. 
Moreover $\beta_\star$ does not fix a point of $E^n/\Mu$ if and only if $(\widetilde{\alpha_i\beta})_\star$ does not 
fix a point of $E^n/\Tau$ for each $i = 0, 1, \ldots, k$. 
\end{lemma}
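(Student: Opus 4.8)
The plan is to reduce both assertions — that $\beta$ normalizes $\Tau$, and the fixed-point equivalence — to two elementary ingredients: a computation showing conjugates of translations are translations, and a single ``translation principle'' relating fixed points in a quotient $E^n/\Lambda$ to fixed points of $\lambda\gamma$ in $E^n$.

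First I would show $\beta$ normalizes $\Tau$. Since $\Tau$ is the group of translations of $\Mu$, it suffices to observe that the conjugate of a translation by any affinity is again a translation: writing $\beta = b+B$ and $\tau = t+I$, a direct computation gives $\beta\tau\beta^{-1} = Bt + I$. Hence $\beta\Tau\beta^{-1}$ is a group of translations contained in $\beta\Mu\beta^{-1} = \Mu$, so $\beta\Tau\beta^{-1}\subseteq\Tau$; applying the same reasoning to $\beta^{-1}$ gives equality. Consequently $\beta$ descends to the affinity $\tilde\beta_\star$ of the $n$-torus $E^n/\Tau$, and likewise each $\alpha_i\beta$ normalizes $\Tau$ (as $\alpha_i\in\Mu$ already normalizes $\Tau$), so every $(\widetilde{\alpha_i\beta})_\star$ is defined.

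The core of the argument is the translation principle: for an affinity $\gamma$ normalizing a group $\Lambda$ acting on $E^n$, the induced map $\gamma_\star(\Lambda x) = \Lambda\gamma x$ fixes a point of $E^n/\Lambda$ if and only if $\lambda\gamma$ has a fixed point in $E^n$ for some $\lambda\in\Lambda$. Indeed, $\Lambda\gamma x = \Lambda x$ means $\gamma x = \lambda^{-1}x$ for some $\lambda\in\Lambda$, i.e.\ $\lambda\gamma$ fixes $x$; conversely a fixed point of $\lambda\gamma$ projects to a fixed point of $\gamma_\star$. I would apply this once with $\Lambda = \Mu$ and $\gamma=\beta$ to characterize the fixed points of $\beta_\star$ on $E^n/\Mu$, and once with $\Lambda = \Tau$ and $\gamma = \alpha_i\beta$ to characterize the fixed points of $(\widetilde{\alpha_i\beta})_\star$ on $E^n/\Tau$.

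Finally I would combine these using the coset decomposition of $\Mu$. Because $\{A_0,\ldots,A_k\}$ is the holonomy group and the $A_i$ are the distinct values of the point-group homomorphism $\eta$ with kernel $\Tau$, the elements $\alpha_0,\ldots,\alpha_k$ form a complete set of coset representatives for $\Tau$ in $\Mu$, so $\Mu = \bigcup_{i=0}^k \Tau\alpha_i$. Hence $\beta_\star$ fixes a point of $E^n/\Mu$ iff $\mu\beta$ has a fixed point for some $\mu\in\Mu$, iff $\tau\alpha_i\beta$ has a fixed point for some $i$ and some $\tau\in\Tau$, iff $(\widetilde{\alpha_i\beta})_\star$ fixes a point of $E^n/\Tau$ for some $i$. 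Taking the contrapositive gives exactly the stated equivalence. There is no genuine obstacle; the only point needing care is writing an arbitrary $\mu\in\Mu$ as $\tau\alpha_i$ with $\tau$ a translation and keeping the composition order straight, so that $\mu\beta = \tau(\alpha_i\beta)$ matches the fixed-point criterion for $(\widetilde{\alpha_i\beta})_\star$ on the torus term by term.
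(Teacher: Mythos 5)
Your proposal is correct and follows essentially the same route as the paper's proof: both reduce the fixed-point condition on $E^n/\Mu$ to the existence of $\mu\in\Mu$ with $\mu\beta x = x$, write $\mu = \tau\alpha_i$ using the coset decomposition of $\Tau$ in $\Mu$, and recognize $\tau\alpha_i\beta x = x$ as the fixed-point condition for $(\widetilde{\alpha_i\beta})_\star$ on $E^n/\Tau$. The only cosmetic difference is that you verify $\beta\Tau\beta^{-1}=\Tau$ by direct computation where the paper simply cites that $\Tau$ is characteristic in $\Mu$.
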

\begin{proof}
The affinity $\beta$ normalizes $\Tau$, since $\Tau$ is a characteristic subgroup of $\Mu$. 
Suppose $\beta_\star$ fixes a point $\Mu x$ of $E^n/\Mu$.  
Then $\Mu\beta x = \Mu x$.  Hence, there is a $\mu \in \Mu$ such that $\mu\beta x = x$. 
Now $\{\alpha_0,\ldots, \alpha_k\}$ is a set of coset representatives for $\Tau$ in $\Mu$. 
Hence there exists $\tau \in \Tau$ and an index $i$ such that $\mu = \tau\alpha_i$. 
Then we have $\tau\alpha_i\beta x = x$,  and so $\Tau\alpha_i\beta x = \Tau x$. 
Thus $(\widetilde{\alpha_i\beta})_\star$ fixes the point $\Tau x$ of $E^n/\Tau$. 

Conversely, if $(\widetilde{\alpha_i\beta})_\star$ fixes the point $\Tau x$ of $E^n/\Tau$, 
then $\Tau\alpha_i\beta x = \Tau x$, and so $\Mu\beta x = \Mu x$. 
Therefore $\beta_\star$ fixes the point $\Mu x$ of $E^n/\Mu$. 
Hence $\beta_\star$ fixes a point of $E^n/\Mu$ if and only if there is an index $i$ 
such that $(\widetilde{\alpha_i\beta})_\star$ fixes a point of $E^n/\Tau$. 
\end{proof}

\section{Table Formatting}  

In the rest of the paper, we will apply the classification theory of \S 7  
to classify all the geometric co-Seifert fibrations of closed flat 3- or 4-manifolds up to affine equivalence. 
We will describe our classification by a series of Tables.  
In this section, we describe the formatting of these tables and the information that can be derived from these tables.   

We denote a geometric fibration, with generic fiber $F$, over a base $B$ by $F\, |\, B$. 
The are two types for $B$.  Either $B$ is a circle, denoted by $S^1$, or a closed interval, denoted by $\Iota$. 
There are two types of tables corresponding to the two types of $B$. 

Consider the table for the case of flat $F\, |\, S^1$ fiberings. See Table 6 for example. 
The headings of the table are number (no.), manifold (mfd.), co-Seifert fibration (cS-fbr.), structure group (grp.), and classifying pair representative (representative). 
The rows of the table correspond to the affine equivalence classes of flat $F\, |\, S^1$ fiberings. 
Each row of the table describes a flat $F\, |\, S^1$ fibering that represents the corresponding equivalence class. 
The flat $F\, |\, S^1$ fibering is described by a generalized Calabi construction. 

We represent the flat $(n-1)$-manifold $F$ as a space-form $E^{n-1}/\Mu$. 
The torsion-free $(n-1)$-space group $\Mu$ is specified by a standard set of generators $t_1,\ldots, t_{n-1}$, $\alpha_1, \ldots, \alpha_k$ 
where $\alpha_i = a_i + A_i$ with $a_i \in E^{n-1}$ and $A_i\in \mathrm{GL}(n-1,\integers)$ for each $i$.  Here $k$ may be $0$. 
We represent $S^1$ as the space-form $E^1/\langle t_1\rangle$. 

The flat $n$-manifold in Column 2 is obtained as the quotient space of $F \times S^1$ 
by the diagonal action of the representative $\beta = b+B$ in Column 5. 
The representative $\beta$ is an affinity of $E^{n-1}$ that normalizes $\Mu$. 
If $b \neq 0$, we denote $b+B$ by an augmented matrix with the vector $b$ as the first column. 
The representative $\beta$ acts on $F$ by the affinity $\beta_\star$ of order $m$, and acts on $S^1$ by the rotation $(e_1/m+I)_\star$ 
of $2\pi/m$ where $m$ is the order of the cyclic structure group $C_m$ in Column 4. 
If $B$ is not orthogonal, the quotient manifold is an affine $n$-manifold that is affinely equivalent to the corresponding flat $n$-manifold in Column 2. 
The Calabi construction is equivalent to a mapping torus construction with monodromy map $\beta_\star$. 

Note that if $m = 1$, then the flat $n$-manifold in Column 2 is affinely equivalent to the Cartesian product $F \times S^1$. 

The Calabi construction determines a pair $(\Gamma, \Nu)$ consisting of a torsion-free $n$-space group $\Gamma$ 
and a complete normal subgroup $\Nu$ such that $\Gamma/\Nu$ is infinite cyclic, $\ov \Nu = \Mu$, 
and $[\Gamma, \Nu]$ corresponds to the conjugacy class of the pair 
of inverse elements $\{\Omega(\beta_\star),\Omega(\beta_\star)^{-1}\}$ of $\mathrm{Out}(\Mu)$ by Theorem 18 in \cite{R-TII} and Theorem 7. 
The geometric fibration of $E^n/\Gamma$ determined by $\Nu$ is equivalent to the $F\,|\,S^1$ fibering determined by the generalized Calabi construction 
by Theorem 6. 

The torsion-free, affine, $n$-space group $\Gamma$ is easy to describe. 
Extend the affinity $\alpha_i = a_i + A_i$ of $E^{n-1}$ to an affinity $\hat\alpha_i = a_i + \hat A_i$ of $E^n$ 
by considering $a_i$ as a vector in $E^n$ whose $n$-coordinate is $0$, 
and by extending $A_i$ to $\hat A_i \in \mathrm{GL}(n,\integers)$ so that $\hat A_i(e_n) = e_n$ for each $i$. . 
We have 
$$\Gamma = \langle t_1,\ldots, t_n,\hat\alpha_1,\ldots, \hat\alpha_k, \hat\beta\rangle$$
where $\hat\beta = b + e_n/m + \hat B$ and $\hat B$ extends $B$ so that $\hat Be_n = e_n$. 
Moreover 
$$\Nu = \langle t_1, \ldots, t_{n-1}, \hat\alpha_1, \ldots, \hat\alpha_k\rangle$$
and $\Nu\hat\beta$ generates the infinite cyclic group $\Gamma/\Nu$. 

The flat $n$-manifold in Column 2 was identified in the case $n = 3$ 
by comparing $\Gamma$ with the corresponding torsion-free 3-space groups in Table 1B of \cite{B-Z},   
and in the case $n = 4$ by applying CARAT's recognition routine to $\Gamma$. 
After this identification has been made, the flat $n$-manifold in Column 2 can be identified by the co-Seifert fibration data in Columns 3 and 5 via Theorem 7. 

Next consider the table for the case of flat $F\, |\, \Iota$ fiberings.  See Table 7 for example. 
The headings of the table are number (no.), manifold (mfd.), co-Seifert fibration (cS-fbr.), structure group (grp.), classifying pair representatives (pair representatives), 
and singular fibers (s-fbrs.). 
The rows of the table correspond to the affine equivalence classes of flat $F\, |\, \Iota$ fiberings. 
Each row of the table describes a flat $F\, |\, \Iota$ fibering that represents the corresponding equivalence class. 
The flat $F\, |\, \Iota$ fibering is described by a generalized Calabi construction. 

We represent the flat $(n-1)$-manifold $F$ as a space-form $E^{n-1}/\Mu$. 
The torsion-free $(n-1)$-space group $\Mu$ is specified by a standard set of generators $t_1,\ldots, t_{n-1}$, $\alpha_1, \ldots, \alpha_k$ 
where $\alpha_i = a_i + A_i$ with $a_i \in E^{n-1}$ and $A_i\in \mathrm{GL}(n-1,\integers)$ for each $i$.  Here $k$ may be $0$. 
We represent $S^1$ as the space-form $E^1/\langle t_1\rangle$. 

The flat $n$-manifold in Column 2 is obtained as the quotient space of $F \times S^1$ 
by the diagonal action of the representatives $\beta = b+B$ and $\gamma = c +C$ in Column 5. 
The representatives $\beta$ and $\gamma$ are affinities of $E^{n-1}$ that normalize $\Mu$. 
If $b \neq 0$, we denote $b+B$ by an augmented matrix with the vector $b$ as the first column and likewise for $c+C$. 
The representative $\beta$ acts on $F$ by the order 2 affinity $\beta_\star$ without fixing a point of $F$, and acts on $S^1$ by the reflection $(-I)_\star$. 
The representative $\gamma$ acts on $F$ by the order 2 affinity $\gamma_\star$ without fixing a point of $F$, and acts on $S^1$ by the reflection $(e_1/m-I)_\star$ 
where $2m$ is the order of the dihedral structure group $D_m$ in Column 4. 
Moreover $\gamma_\star\beta_\star$ has order $m$ 
and $\gamma\beta$ acts on $S^1$ by the rotation $(e_1/m+I)_\star$ of $2\pi/m$. 
If $B$ or $C$ is not orthogonal, the quotient manifold is an affine $n$-manifold that is affinely equivalent to the corresponding flat $n$-manifold. 

The Calabi construction determines a pair $(\Gamma, \Nu)$ consisting of a torsion-free $n$-space group $\Gamma$ 
and a complete normal subgroup $\Nu$ such that $\Gamma/\Nu$ is infinite dihedral, $\ov \Nu = \Mu$, 
and $[\Gamma, \Nu]$ corresponds to the equivalence class of the pair 
of order 2 elements $\{\beta_\star, \gamma_\star\}$ of $\mathrm{Aff}(\Mu)$ by Theorem 18 in \cite{R-TII} and Theorem 10. 
The geometric fibration of $E^n/\Gamma$ determined by $\Nu$ is equivalent to the $F\,|\,\Iota$ fibering determined by the generalized Calabi construction 
by Theorem 6. 

The torsion-free, affine, $n$-space group $\Gamma$ is easy to describe. 
Extend the affinity $\alpha_i = a_i + A_i$ of $E^{n-1}$ to an affinity $\hat\alpha_i = a_i + \hat A_i$ of $E^n$ by considering $a_i$ as a vector in $E^n$ whose $n$-coordinate is $0$, 
and by extending $A_i$ to $\hat A_i \in \mathrm{GL}(n,\integers)$ so that $\hat A_i(e_n) = e_n$ for each $i$. . 
We have 
$$\Gamma = \langle t_1,\ldots, t_n,\hat\alpha_1,\ldots, \hat\alpha_k, \hat\beta, \hat\gamma \rangle$$
where $\hat\beta = b+ \hat B$ and $\hat B$ extends $B$ so that $\hat Be_n = -e_n$ 
and $\hat\gamma = c + e_n/m+ \hat C$ and $\hat C$ extends $C$ so that $\hat Ce_n = -e_n$. 
Moreover 
$$\Nu = \langle t_1, \ldots, t_{n-1}, \hat\alpha_1, \ldots, \hat\alpha_k\rangle$$
and $\Nu\hat\beta$ and $\Nu\hat\gamma$ are order 2 generators of the infinite dihedral group $\Gamma/\Nu$.  

Reversing the order of $\beta$ and $\gamma$ does not change the isomorphism class $[\Gamma,\Nu]$, 
since conjugating $\Gamma$ by the reflection $e_n/2m+\mathrm{diag}(1,\ldots, 1, -1)$ conjugates $\Gamma$, while keeping $\Nu$ fixed, 
to the group defined by reversing the order of $\beta$ and $\gamma$.  

The group $\Gamma$ is the free product of the subgroups $\langle \Nu, \hat\beta\rangle$ and $\langle \Nu, \hat\gamma\rangle$ 
amalgamated over their index 2 subgroup $\Nu$. 
The space group types of the factors $\langle \Nu,\hat\beta\rangle$ and $\langle \Nu, \hat\gamma\rangle$ 
are the types of the singular fibers in Column 6. 
Geometrically this corresponds to the decomposition of $E^n/\Gamma$ as the union of two twisted $\Iota$-bundles, with cores the singular fibers, 
over their common boundary, which is a generic fiber.  

The flat $n$-manifold in Column 2 was identified in the case $n = 3$ 
by comparing $\Gamma$ with the corresponding torsion-free 3-space groups in Table 1B of \cite{B-Z},   
and in the case $n = 4$ by applying CARAT's recognition routine to $\Gamma$. 
After this identification has been made, the flat $n$-manifold in Column 2 can be identified by the co-Seifert fibration data in Columns 3 and 5 via Theorem 10.

\section{The Closed Flat 3-Manifold co-Seifert Fibrations} 

In this section, we describe the affine classification of the geometric co-Seifert fibrations of closed flat 3-manifolds. 
There are two possible generic fibers, the 2-torus $T^2$ or the Klein bottle $K^2$.  
We begin with the $T^2$ case.  Here $T^2 = E^2/\Mu$ with $\Mu = \langle t_1,t_2\rangle$. 
The following lemma is well known. 

\begin{lemma} 
Let $\Mu = \langle t_1, t_2\rangle$. We have that $\mathrm{Out}(\Mu) = \mathrm{Aut}(\Mu)= \mathrm{GL}(2,\integers)$ and 
$$N_A(\Mu) = \{a+A: a\in E^2 \ and\ A \in \mathrm{GL}(2,\integers)\}.$$
The map $\eta: \mathrm{Aff}(\Mu) \to \mathrm{GL}(2,\integers)$, defined by 
$\eta((a+A)_\star) = A$, is an epimorphism with kernel $\Kappa = \{(a+I)_\star: a \in E^2\}$. 
The map $\sigma:  \mathrm{GL}(2,\integers) \to \mathrm{Aff}(\Mu)$, defined by $\sigma(A) = A_\star$, 
is a monomorphism,  and $\sigma$ is a right inverse of $\eta$. 
The group $\Kappa$ is the maximal torus of the Lie group $\mathrm{Aff}(\Mu)$ of rank $2$. 
\end{lemma}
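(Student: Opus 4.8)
The plan is to verify each assertion of the lemma directly, working from the structure established earlier in the paper for $\Mu = \langle t_1, t_2\rangle$, the 2-torus group. First I would establish the computation of $\mathrm{Aut}(\Mu)$ and $\mathrm{Out}(\Mu)$. Since $\Mu \cong \integers^2$ is free abelian of rank 2, every automorphism is determined by its action on the generators $t_1, t_2$ and is represented by an invertible integer matrix, giving $\mathrm{Aut}(\Mu) = \mathrm{GL}(2,\integers)$. Because $\Mu$ is abelian, $\mathrm{Inn}(\Mu)$ is trivial, so $\mathrm{Out}(\Mu) = \mathrm{Aut}(\Mu) = \mathrm{GL}(2,\integers)$ as well. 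This part is routine.

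Next I would compute the normalizer $N_A(\Mu)$ in $\mathrm{Aff}(E^2)$. An affinity $a + A$ conjugates the translation $t_i = e_i + I$ to $(A e_i) + I$, which is again a translation, specifically the one by $A e_i$. For this to lie in $\Mu = \langle t_1, t_2\rangle$ we need $A e_i \in \integers^2$ for each $i$, i.e.\ $A \in \mathrm{GL}(2,\integers)$; the translation part $a$ is unconstrained since translations commute with $\Mu$ up to an element of $\Mu$. Conversely any such $a + A$ clearly normalizes $\Mu$. This yields $N_A(\Mu) = \{a + A : a \in E^2,\ A \in \mathrm{GL}(2,\integers)\}$.

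For the map $\eta$, I would first observe that by the general epimorphism $\Phi: N_A(\Mu) \to \mathrm{Aff}(\Mu)$ with kernel $\Mu$ (established earlier via Lemma 1 of \cite{C-V}), every element of $\mathrm{Aff}(\Mu)$ has the form $(a+A)_\star$ with $a + A \in N_A(\Mu)$, so the formula $\eta((a+A)_\star) = A$ is at least a candidate. The main obstacle here is well-definedness: I must check that $\eta$ does not depend on the representative, i.e.\ if $(a+A)_\star = (a'+A')_\star$ then $A = A'$. This follows because two elements of $N_A(\Mu)$ induce the same affinity of $E^2/\Mu$ precisely when they differ by an element of $\Mu = \ker\Phi$, and elements of $\Mu$ are translations by integer vectors, which have trivial linear part; hence the linear parts $A$ and $A'$ agree. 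Once well-definedness is secured, $\eta$ is clearly a homomorphism (linear parts multiply), it is surjective since $\mathrm{GL}(2,\integers) = \mathrm{Aut}(\Mu)$ is realized, and its kernel consists of those $(a+A)_\star$ with $A = I$, namely $\Kappa = \{(a+I)_\star : a \in E^2\}$.

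Finally I would treat $\sigma$ and the torus claim. The map $\sigma(A) = A_\star$ sends $A \in \mathrm{GL}(2,\integers)$ to the affinity induced by the linear map $0 + A$; it is a homomorphism and satisfies $\eta \sigma = \mathrm{id}$, so $\sigma$ is injective and is a right inverse of $\eta$. For the torus claim, I would note that the translations $a + I$ act on $E^2/\Mu = E^2/\integers^2$, and $(a+I)_\star$ depends only on $a$ modulo $\integers^2$, since $(a + m)_\star = (a+I)_\star$ for $m \in \integers^2$; thus $\Kappa \cong E^2/\integers^2 \cong (S^1)^2$ is a 2-torus, which is a maximal compact connected abelian subgroup (hence a maximal torus) of the Lie group $\mathrm{Aff}(\Mu)$ of rank $2$. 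I expect the well-definedness check for $\eta$ to be the only genuinely substantive point; everything else reduces to the abelian structure of $\integers^2$ and the properties of $\Phi$ already granted by the earlier lemmas.
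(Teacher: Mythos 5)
Your proof is correct, and it is the standard direct verification; the paper itself offers no proof of this lemma, simply labelling it ``well known,'' so there is no argument in the text to compare against. All the key points are handled properly: the computation that conjugation by $a+A$ sends $e_i+I$ to $Ae_i+I$ (forcing $A\in\mathrm{GL}(2,\integers)$), the well-definedness of $\eta$ via the fact that $\ker\Phi=\Mu$ consists of translations with trivial linear part, and the identification of $\Kappa\cong E^2/\integers^2$ as the identity component, hence the maximal torus, of $\mathrm{Aff}(\Mu)$.
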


The conjugacy classes of finite subgroups of $\mathrm{GL}(2,\integers)$ are called the $\integers$-classes of finite subgroups of $\mathrm{GL}(2,\integers)$ in \cite{B-Z}. 
From Tables 1A and 6A in \cite{B-Z}, we see that there are 7 conjugacy classes of finite cyclic subgroups of $\mathrm{GL}(2,\integers)$. 
An element of $\mathrm{GL}(2,\integers)$ of finite order has order at most 6. 
Hence there are 7 conjugacy classes of inverse pairs of elements of $\mathrm{GL}(2,\integers)$ of finite order.  
Representatives of these classes, taken from Table 1A in \cite{B-Z}, are listed in Column 5 of Table 6. 
By Theorem 7, there are 7 affine equivalence classes of flat $T^2\,|\, S^1$ fiberings. 
These fiberings are represented by the generalized Calabi constructions described in Table 6 
as explained in \S 8. 

\begin{table} 

\begin{tabular}{c|c|c|c|c}	
no. & mfd. & cS-fbr.  &  grp. & representative      \\ \hline	
 1	&$O^3_1$ & $T^2\,|\,S^1$ &  $C_1$ & 
	  	$ \left(\begin{array}{rr} 
		1 &  0 \\ 0 & 1 \end{array}\right)$   \\ \hline
 2	&$O^3_2$ & $T^2\,|\,S^1$ &  $C_2$ &
		$ \left(\begin{array}{rr} 
		-1 &  0 \\ 0 & -1 \end{array}\right)$    \\ \hline
 3	&$O^3_3$ & $T^2\,|\,S^1$ &  $C_3$ &
 		$ \left(\begin{array}{rr} 
		0 &  -1 \\ 1 & -1 \end{array}\right)$   \\ \hline
 4	&$O^3_4$ & $T^2\,|\,S^1$ &  $C_4$ &
 		$ \left(\begin{array}{rr} 
		0 &  -1\\ 1 & 0 \end{array}\right)$    \\ \hline
 5	&$O^3_5$ & $T^2\,|\,S^1$ &  $C_6$ &
 		$ \left(\begin{array}{rr} 
		1 &  -1 \\ 1 & 0 \end{array}\right)$     \\ \hline
 6 	&$N^3_1$ & $T^2\,|\,S^1$ &  $C_2$ &
 	$ \left(\begin{array}{rr} 
		1 &  0 \\ 0 & -1 \end{array}\right)$   \\ \hline
 7 	&$N^3_2$ & $T^2\,|\,S^1$ &  $C_2$ &
 	$ \left(\begin{array}{rr} 
		0 &  1 \\ 1 & 0 \end{array}\right)$    \\ \hline
\end{tabular}

\vspace{.2in}
\caption{The flat $T^2$ fiberings over $S^1$}
\end{table}

The representative, for cases 3, 4, 5, was chosen so that the resulting flat 3-manifold has the corresponding IT number in Table 1. 
Replacing a representative by its inverse changes the IT number in these 3 cases. 
In other words, the space group types for cases 3, 4, 5 subdivide into two enantiomorphic proper space group types which have their own IT numbers.

Next we consider $T^2\,|\, \Iota$ fiberings. 

\begin{lemma} 
Let $\Mu = \langle t_1, t_2\rangle$, and let $\alpha = a + A$ be an affinity of $E^2$ that normalizes $\Mu$ 
such that $\alpha_\star$ has order $2$ and $\alpha_\star$ does not fix a point of the $2$-torus $E^2/\Mu$. 
Then $A$ has order $1$ or $2$.  
If $A$ has order $1$, then $\alpha_\star$ is conjugate in $\mathrm{Aff}(\Mu)$ to $(e_1/2+I)_\star$. 
The quotient of $E^2/\Mu$ by the action of $(e_1/2+I)_\star$ is a $2$-torus. 
If $A$ has order $2$, then $\alpha_\star$ is conjugate in $\mathrm{Aff}(\Mu)$ to $(e_1/2 + \mathrm{diag}(1,-1))_\star$. 
The quotient of $E^2/\Mu$ by the action of $(e_1/2 + \mathrm{diag}(1,-1))_\star$ is a Klein bottle. 
\end{lemma}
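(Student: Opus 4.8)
The plan is to run everything through the exact sequence $\eta:\mathrm{Aff}(\Mu)\to \mathrm{GL}(2,\integers)$ of Lemma 3 together with two elementary criteria for an affinity $\alpha=a+A$ with $A^2=I$. Since $\eta$ is a homomorphism and $\alpha_\star^2=\mathrm{id}$, we get $A^2=\eta(\alpha_\star^2)=I$, so $A$ has order $1$ or $2$; this is the first assertion. For the normalization I would record, as bookkeeping: (i) $\alpha_\star$ has order dividing $2$ if and only if $(I+A)a\in\integers^2$, because $\alpha^2=(I+A)a+I$; and (ii) $\alpha_\star$ fixes a point of $E^2/\Mu$ if and only if $a\in\mathrm{im}(I-A)+\integers^2$, because a fixed point is exactly a solution of $(I-A)x\equiv a\bmod\integers^2$. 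I would also note that conjugating $a+A$ by an element $B\in\mathrm{GL}(2,\integers)$ of $N_A(\Mu)$ sends $A\mapsto BAB^{-1}$ and $a\mapsto Ba$, while conjugating by a translation $c+I$ fixes $A$ and sends $a\mapsto a+(I-A)c$.

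In the case $A=I$, the map $\alpha$ is a translation by $a$, and order $2$ forces $2a\in\integers^2$ but $a\notin\integers^2$, so $a$ is congruent mod $\integers^2$ to one of $e_1/2,\ e_2/2,\ (e_1+e_2)/2$ and $\alpha_\star$ is automatically fixed-point free. Conjugation by $B$ replaces $a$ by $Ba$; since the reduction $\mathrm{GL}(2,\integers)\to\mathrm{GL}(2,\mathbb{F}_2)$ is onto and $\mathrm{GL}(2,\mathbb{F}_2)$ permutes the three nonzero vectors of $\mathbb{F}_2^2$ transitively, I can choose $B$ with $Ba\equiv e_1\bmod 2\integers^2$, so $\alpha_\star$ is conjugate in $\mathrm{Aff}(\Mu)$ to $(e_1/2+I)_\star$. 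The quotient $E^2/\langle t_1,t_2,\,e_1/2+I\rangle=E^2/\langle t_2,\,e_1/2+I\rangle$ is a quotient by a rank-$2$ lattice, hence a $2$-torus.

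In the case $A$ of order $2$, I would first eliminate $A=-I$: then $I-A=2I$ is invertible over $\realnos$, so $\mathrm{im}(I-A)+\integers^2=E^2$ and criterion (ii) forces a fixed point, contrary to hypothesis. Hence $A$ is a reflection ($\det A=-1$, $\mathrm{tr}\,A=0$). Taking primitive integer eigenvectors $u$ (for $+1$) and $w$ (for $-1$) and using $2\integers^2\subseteq\integers u+\integers w\subseteq\integers^2$ together with the primitivity of $u$, the index $[\integers^2:\integers u+\integers w]$ is $1$ or $2$, so $A$ is $\mathrm{GL}(2,\integers)$-conjugate either to $\mathrm{diag}(1,-1)$ (index $1$) or to the coordinate-interchange matrix (index $2$). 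For the index-$2$ case a direct computation shows $\alpha_\star$ has order dividing $2$ iff $a_1+a_2\in\integers$, and precisely then $a\in\mathrm{im}(I-A)+\integers^2$, so $\alpha_\star$ always fixes a point; the no-fixed-point hypothesis therefore rules this out. Thus after conjugating by a suitable $B$ we may take $A=\mathrm{diag}(1,-1)$, where criterion (i) gives $2a_1\in\integers$ and criterion (ii) gives $a_1\notin\integers$, so $a_1\equiv 1/2\bmod\integers$; conjugation by a translation $c+I$ moves $a$ by $(I-A)c=(0,2c_2)$, clearing the second coordinate, and (since $a$ is defined only mod $\integers^2$) yields $\alpha_\star$ conjugate to $(e_1/2+\mathrm{diag}(1,-1))_\star$. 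The final sentence follows because $\langle t_1,t_2,\,e_1/2+\mathrm{diag}(1,-1)\rangle=\langle t_2,\,e_1/2+\mathrm{diag}(1,-1)\rangle$ is a Klein bottle group.

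The hard part is the \emph{uniqueness of the linear part}: a priori a fixed-point-free involution of the torus could have the index-$2$ reflection as its linear part, and that matrix is \emph{not} $\mathrm{GL}(2,\integers)$-conjugate to $\mathrm{diag}(1,-1)$, so it could never be conjugated into the stated normal form. The crux of the argument is therefore the observation that the no-fixed-point hypothesis is exactly what eliminates both $-I$ and the index-$2$ reflection via criterion (ii), leaving $\mathrm{diag}(1,-1)$ as the only possible linear part; the remaining normalization of the translation vector is routine.
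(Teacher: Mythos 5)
Your proof is correct, and it settles the crux by a genuinely different route than the paper. The skeleton is shared: both arguments get $A^2=I$ from the epimorphism $\eta$ of Lemma 4, split on the order of $A$, normalize the translation part by conjugation in $N_A(\Mu)$, and identify the quotients via the groups $\langle t_2, e_1/2+I\rangle$ and $\langle t_2, e_1/2+\mathrm{diag}(1,-1)\rangle$. The difference lies in pinning down the linear part when $A$ has order $2$. The paper cites Table 1A of Brown et al.\ \cite{B-Z}: among the three $\integers$-classes of order-$2$ elements of $\mathrm{GL}(2,\integers)$, only the class of $\mathrm{diag}(1,-1)$ yields a torsion-free $2$-space group $\langle t_1,t_2,\alpha\rangle$, torsion-freeness being the translation of the fixed-point-free hypothesis. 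You instead eliminate $-I$ and the coordinate-interchange matrix by the explicit criterion that $\alpha_\star$ fixes a point iff $a\in\mathrm{im}(I-A)+\integers^2$, observing that this set is all of $E^2$ for $A=-I$ and contains every $a$ compatible with $\alpha_\star^2=\mathrm{id}$ for the swap matrix. Your version is self-contained and makes transparent exactly where the no-fixed-point hypothesis acts, at the cost of supplying (in correctly sketched form) the standard classification of order-$2$, determinant $-1$ matrices in $\mathrm{GL}(2,\integers)$ by the index of the eigenlattice $\integers u+\integers w$; the paper's version is shorter because it outsources both that matrix classification and the torsion-freeness check to the crystallographic tables. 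Both proofs are complete.
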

\begin{proof}
The matrix $A$ has order $1$ or $2$ by Lemma 4. 
If $A$ has order $1$, then $2a \in\integers^2$, 
and so $\alpha_\star$ is conjugate in $\mathrm{Aff}(\Mu)$ to $(e_1/2+I)_\star$.  
The quotient of $E^2/\Mu$ by the action of $(e_1/2+I)_\star$ is a $2$-torus, 
since $\langle e_1/2+I, t_2\rangle$ is a $2$-torus group. 

Now suppose $A$ has order 2.  
According to Table 1A in \cite{B-Z}, there are 3 possible $\integers$-classes for $A$. 
The group $\langle t_1, t_2, \alpha\rangle$ is a torsion-free 2-space group, 
since $\alpha_\star$ fixes no point of $E^2/\Mu$. 
According to Table 1A in \cite{B-Z}, only the $\integers$-class of 2-space groups corresponding to the $\integers$-class of $\mathrm{diag}(1,-1)$ contains 
a torsion-free group, namely $\langle t_1, t_2, e_1/2 + \mathrm{diag}(1,-1)\rangle$. 
Therefore $\alpha_\star$ is conjugate in $\mathrm{Aff}(\Mu)$ to $(e_1/2 + \mathrm{diag}(1,-1))_\star$, 
since $\mathrm{Aff}(\Mu) = N_A(\Mu)/\Mu$. 
The quotient of $E^2/\Mu$ by the action of $(e_1/2 + \mathrm{diag}(1,-1))_\star$ is a Klein bottle, 
since $\langle t_2, e_1/2 + \mathrm{diag}(1,-1)\rangle$ is a Klein bottle group. 
\end{proof}

Let $\alpha = a + A$ and $\beta = b+B$ be an affinities of $E^2$ such that $\alpha$ and $\beta$ normalize $\Mu$, and 
$\alpha_\star$ and $\beta_\star$ are order 2 affinities of $T^2$ which do not fix a point of $T^2$, and $\Omega(\alpha_\star\beta_\star)$ has finite order. 
Then $A$ and $B$ have order 1 or 2.  If $A = I$, then $\alpha_\star$ is conjugate to $(e_1/2+I)_\star$, and  
if $A$ has order 2, then $\alpha_\star$ is conjugate to $(e_1/2+\mathrm{diag}(1,-1))_\star$ by Lemma 5. 
The same is true for $B$. 
For each positive integer $m$, let $D_m$ be the dihedral group of order $2m$. 
Now $\langle A, B\rangle$ is a finite subgroup of $\mathrm{GL}(2,\integers)$ 
which either has order 1 or is isomorphic to $D_m$ for some $m$. 
By considering all possible $\integers$-classes of groups isomorphic to $D_m$ for some $m$ in Table 1A in \cite{B-Z}, 
we see that $\langle A, B\rangle$ has order 1 or 2 or $\langle A, B\rangle$ is conjugate 
to $\langle \mathrm{diag}(1,-1), \mathrm{diag}(-1,1)\rangle$. 
We find that there are 7 equivalence classes of pairs $\{\alpha_\star,\beta_\star\}$, and so 
there are 7 affine equivalence classes of flat $T^2\,|\, \Iota$ fiberings by Theorem 10. 
These fiberings are represented by the generalized Calabi constructions described in Table 7 
as explained in \S 8. 
Note that the $O^3_6\,|\,\Iota$ fibering described in Example 2 corresponds to the fibering 
described in Row 2 of Table 7. 

\begin{table} 

\begin{tabular}{c|c|c|c|c|c}	
no. & mfd. & cS-fbr.  & grp. & pair representatives  & s-fbrs. \\ \hline	
 1	&$O^3_2$ & $T^2\,|\,\Iota$ & $D_1$ & 
	  	$ \left(\begin{array}{rrr} 
		\frac{1}{2}& 1 &  0 \\ 0 & 0 & -1 \end{array}\right)$, 
		$ \left(\begin{array}{rrr} 
		\frac{1}{2}& 1 &  0 \\ 0 & 0 & -1 \end{array}\right)$ & $K^2, K^2$ \\ \hline
 2	&$O^3_6$ & $T^2\,|\,\Iota$ &  $D_2$ &
		$ \left(\begin{array}{rrr} 
		\frac{1}{2}& 1 &  0 \\ 0 & 0 & -1 \end{array}\right)$, 
		$ \left(\begin{array}{rrr} 
		0 & -1 &  0 \\ \frac{1}{2} & 0 & 1 \end{array}\right)$ & $K^2, K^2$ \\ \hline 
3	&$N^3_1$ & $T^2\,|\,\Iota$ & $D_1$ &
 		$ \left(\begin{array}{rrr} 
		\frac{1}{2}& 1 &  0 \\ 0 & 0 & 1 \end{array}\right)$, 
		$ \left(\begin{array}{rrr} 
		\frac{1}{2}& 1 &  0 \\ 0 & 0 & 1 \end{array}\right)$ & $T^2, T^2$ \\ \hline
 4	&$N^3_2$ & $T^2\,|\,\Iota$ &  $D_2$ &
 		$ \left(\begin{array}{rrr} 
		\frac{1}{2}& 1 &  0 \\ 0 & 0 & 1 \end{array}\right)$, 
		$ \left(\begin{array}{rrr} 
		0 & 1 & 0 \\  \frac{1}{2} & 0 & 1 \end{array}\right)$ & $T^2, T^2$ \\ \hline
 5	&$N^3_3$ & $T^2\,|\,\Iota$ &  $D_2$ &
 		$ \left(\begin{array}{rrr} 
		\frac{1}{2}& 1 &  0 \\ 0 & 0 & 1 \end{array}\right)$, 
		$ \left(\begin{array}{rrr} 
		\frac{1}{2}& 1 &  0 \\ 0 & 0 & -1 \end{array}\right)$ & $T^2, K^2$ \\ \hline
 6 	&$N^3_4$ & $T^2\,|\,\Iota$ &  $D_2$ &
 	$ \left(\begin{array}{rrr} 
		0& 1 &  0 \\ \frac{1}{2} & 0 & 1 \end{array}\right)$, 
		$ \left(\begin{array}{rrr} 
		\frac{1}{2}& 1 &  0 \\ 0 & 0 & -1 \end{array}\right)$  & $T^2, K^2$ \\ \hline
 7 	&$N^3_4$ & $T^2\,|\,\Iota$ &  $D_2$ &
 	$ \left(\begin{array}{rrr} 
		\frac{1}{2} & 1 &  0 \\ \frac{1}{2} & 0 & 1 \end{array}\right)$, 
		$ \left(\begin{array}{rrr} 
		\frac{1}{2}& 1 &  0 \\ 0 & 0 & -1 \end{array}\right)$ & $T^2, K^2$ \\ \hline
\end{tabular}

\vspace{.2in}
\caption{The flat $T^2$ fiberings over $\Iota$}
\end{table}

Next we consider $K^2\,|\, S^1$ fiberings. Here $K^2=E^2/\Mu$ 
with $\Mu= \langle t_1, t_2, \alpha \rangle$ and $\alpha =  e_1/2+\mathrm{diag}(1,-1)$.  

\begin{lemma} {\rm (Lemmas 38, 39 \cite{R-TII})}  
Let  $\Mu = \langle t_1, t_2, \alpha \rangle$, with $\alpha = e_1/2+ A$ and $A = \mathrm{diag}(1,-1)$. 
Then 
$$N_A(\Mu) = \{b+B: 2b_2 \in \integers \ \, \hbox{and}\ \, B \in \langle -I, A\rangle\}. $$
The Lie group $\mathrm{Aff}(\Mu)$ is the Cartesian product of the subgroup $\langle (e_2/2+I)_\star\rangle$ of order $2$ 
and the orthogonal group $\langle (te_1\pm I)_\star: t\in \realnos\rangle$ of rank $1$. 
Moreover, the epimorphism $\Omega:  \mathrm{Aff}(\Mu) \to \mathrm{Out}(\Mu)$ maps the subgroup 
$ \{I_\star , (e_2/2+I)_\star, (-I)_\star, (e_2/2-I)_\star\}$  
isomorphically onto $\mathrm{Out}(\Mu)$.  

\end{lemma}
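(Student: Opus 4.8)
First I would establish the three assertions in turn, using throughout the identification $\mathrm{Aff}(\Mu) = N_A(\Mu)/\Mu$ coming from the fact that $\Phi$ is an epimorphism with kernel $\Mu$.

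For the normalizer formula, suppose $\phi = b+B$ normalizes $\Mu$, where $b = (b_1,b_2)$. The translation subgroup $\Tau = \langle t_1,t_2\rangle = \integers^2$ is characteristic in $\Mu$, so $\phi$ normalizes $\Tau$; conjugation sends $v+I \mapsto Bv+I$, forcing $B\integers^2 = \integers^2$, i.e.\ $B \in \mathrm{GL}(2,\integers)$. Since every element of $\Mu \setminus \Tau$ has linear part $A$, and $\phi\alpha\phi^{-1}$ must lie in this coset, its linear part $BAB^{-1}$ equals $A$; the integer matrices commuting with $A = \mathrm{diag}(1,-1)$ are exactly the diagonal ones, so $B \in \{\mathrm{diag}(\pm1,\pm1)\} = \langle -I, A\rangle$. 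A direct computation then gives $\phi\alpha\phi^{-1} = (b - Ab + Be_1/2) + A$, whose translation part is $(\pm\frac{1}{2}, 2b_2)$; requiring this to be the translation part $(m+\frac{1}{2})e_1 + ne_2$ of an element of $\Mu$ imposes no condition on $b_1$ but forces $2b_2 \in \integers$. Conversely, any $b+B$ with $2b_2 \in \integers$ and $B \in \langle -I,A\rangle$ conjugates each generator of $\Mu$ back into $\Mu$, so it lies in $N_A(\Mu)$, which yields the first displayed formula.

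For the product decomposition of $\mathrm{Aff}(\Mu)$, I would compute the images of the two proposed factors in $N_A(\Mu)/\Mu$. The translations $(te_1 + I)_\star$ descend to a circle $\realnos/\integers$ (since $te_1 + I \in \Mu$ iff $t \in \integers$), and adjoining the reflection $(-I)_\star$, which has order $2$ and inverts this circle, yields the rank-one orthogonal group $\langle (te_1 \pm I)_\star\rangle$; meanwhile $(e_2/2+I)_\star$ has order $2$ because its square is $t_2 \in \Mu$. The key computation is that these two factors commute modulo $\Mu$: the commutator of $e_2/2+I$ with $te_1 - I$ is precisely $t_2 \in \Mu$. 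They intersect trivially, since a half-integer second coordinate cannot be absorbed into the first factor, and they generate: using $\alpha \equiv I$ in $\mathrm{Aff}(\Mu)$ one reduces any linear part $A$ or $-A$ to $I$ or $-I$, and using $t_2 \in \Mu$ one reduces $b_2$ modulo $\integers$ to $0$ or $\frac{1}{2}$, so every element has the form $(e_2/2+I)_\star^\epsilon (te_1 \pm I)_\star$. Hence $\mathrm{Aff}(\Mu)$ is the asserted Cartesian product.

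For the identification of $\mathrm{Out}(\Mu)$, I would first note $Z(\Mu) = \langle t_1\rangle$, since $t_1$ is central while $\alpha t_2\alpha^{-1} = t_2^{-1}$, so $\mathrm{Span}(Z(\Mu)) = \realnos e_1$ and therefore $\ker\Omega = \Phi(C_A(\Mu)) = \{(te_1+I)_\star : t \in \realnos\}$ is exactly the circle factor of the orthogonal group. The four elements $I_\star, (e_2/2+I)_\star, (-I)_\star, (e_2/2-I)_\star$ form a subgroup isomorphic to $\integers_2 \times \integers_2$, their squares and products reducing via $t_2$ to members of the list, and none but $I_\star$ lies in the kernel circle, since the nontrivial ones have either a half-integer second coordinate or linear part $-I$. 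Thus $\Omega$ restricts to an injection on this order-$4$ subgroup; as $\mathrm{Aff}(\Mu)/\ker\Omega \cong \integers_2 \times (\mathrm{O}(2)/\mathrm{SO}(2)) \cong \integers_2 \times \integers_2$ also has order $4$, the restriction is an isomorphism onto $\mathrm{Out}(\Mu)$. The hard part will be the bookkeeping in the second assertion: verifying that the two subgroups genuinely commute only after passing to the quotient $N_A(\Mu)/\Mu$, and confirming that they exhaust $\mathrm{Aff}(\Mu)$ by carefully absorbing the linear parts $A, -A$ and the integer part of $b_2$ into $\Mu$. Once the direct-product structure and the center are pinned down, the third assertion is essentially an order count.
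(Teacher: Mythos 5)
Your proof is correct. The paper itself gives no argument for this lemma --- it simply cites Lemmas 38 and 39 of \cite{R-TII} --- so your write-up is a self-contained verification of an imported result. All the key computations check out: the conjugate $\phi\alpha\phi^{-1}$ does have translation part $(\pm\tfrac{1}{2},2b_2)$, forcing $2b_2\in\integers$ and $B$ diagonal in $\mathrm{GL}(2,\integers)$; the commutator of $e_2/2+I$ with $te_1-I$ is exactly $t_2\in\Mu$, which is the whole content of the direct-product claim; and once $Z(\Mu)=\langle t_1\rangle$ identifies $\ker\Omega$ with the circle $\{(te_1+I)_\star\}$, the order count $|\mathrm{Aff}(\Mu)/\ker\Omega|=4$ finishes the third assertion. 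The only cosmetic quibble is the phrase ``a half-integer second coordinate cannot be absorbed into the first factor'': what you are really using is that every element of $\Mu$ has integral second coordinate, so $(e_2/2+I)(te_1\pm I)^{-1}\notin\Mu$; stating it that way would make the trivial-intersection step airtight.
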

By Lemma 6 and Theorem 7, there are 4 affine equivalence classes of flat $K^2\,|\, S^1$ fiberings represented 
by the generalized Calabi constructions described in Table 8 as explained in \S 8. 

\begin{table} 

\begin{tabular}{c|c|c|c|c}	
no. & mfd. & cS-fbr.  & grp. & representative      \\ \hline	
 1	&$N^3_1$ & $K^2\,|\,S^1$ &  $C_1$ & 
	  	$ \left(\begin{array}{rr} 
		1 &  0 \\ 0 & 1 \end{array}\right)$   \\ \hline
 2	&$N^3_2$ & $K^2\,|\,S^1$ &  $C_2$ &
		$ \left(\begin{array}{rrr} 
		0& 1 &  0 \\ \frac{1}{2} & 0 & 1 \end{array}\right)$    \\ \hline
 3	&$N^3_3$ & $K^2\,|\,S^1$ &  $C_2$ &
 		$ \left(\begin{array}{rr} 
		-1 &  0 \\ 0 & -1 \end{array}\right)$   \\ \hline
 4	&$N^3_4$ & $K^2\,|\,S^1$ &  $C_2$ &
 		$ \left(\begin{array}{rrr} 
		0& -1 &  0\\ \frac{1}{2}& 0 & -1 \end{array}\right)$    \\ \hline
\end{tabular}

\vspace{.2in}
\caption{The flat $K^2$ fiberings over $S^1$}
\end{table}

Next we consider $K^2\,|\, \Iota$ fiberings. 
By Lemmas 3 and 6, the isometry $(e_2/2 + I)_\star$ is the unique order 2 affinity of $K^2$ that does not fix a point of $K^2$. 
By Theorem 10, there is exactly 1 affine equivalence class of flat $K^2\,|\, \Iota$ fiberings represented 
by the generalized Calabi construction described in Table 9 as explained in \S 8. 

\begin{table} 

\begin{tabular}{c|c|c|c|c|c}	
no. & mfd. & cS-fbr.  &  grp. & pair representatives  & s-fbrs.  \\ \hline	
 1	&$N^3_3$ & $K^2\,|\,\Iota$ &  $D_1$ &
 		$ \left(\begin{array}{rrr} 
		0 & 1 &  0 \\ \frac{1}{2} & 0 & 1 \end{array}\right)$, 
		$ \left(\begin{array}{rrr} 
		0 & 1 &  0 \\ \frac{1}{2} & 0 & 1 \end{array}\right)$  & $K^2, K^2$ \\ \hline
 \end{tabular}

\vspace{.2in}
\caption{The flat $K^2$ fiberings over $\Iota$}
\end{table}

\section{Fibrations of Closed Flat 4-Manifolds with Generic Fiber $O^3_1$} 

In this section, we describe the affine classification of the geometric co-Seifert fibrations of closed flat 4-manifolds with generic fiber 
the 3-torus $O^3_1$.  Here $O^3_1 = E^3/\Mu$ with $\Mu = \langle t_1,t_2,t_3\rangle$. 
The following lemma is well known. 

\begin{lemma} 
Let $\Mu = \langle t_1, t_2, t_3\rangle$. We have that $\mathrm{Out}(\Mu) = \mathrm{Aut}(\Mu)= \mathrm{GL}(3,\integers)$ and 
$$N_A(\Mu) = \{a+A: a\in E^3 \ and\ A \in \mathrm{GL}(3,\integers)\}.$$
The map $\eta: \mathrm{Aff}(\Mu) \to \mathrm{GL}(3,\integers)$, defined by 
$\eta((a+A)_\star) = A$, is an epimorphism with kernel $\Kappa = \{(a+I)_\star: a \in E^3\}$. 
The map $\sigma:  \mathrm{GL}(3,\integers) \to \mathrm{Aff}(\Mu)$, defined by $\sigma(A) = A_\star$, 
is a monomorphism,  and $\sigma$ is a right inverse of $\eta$. 
The group $\Kappa$ is the maximal torus of the Lie group $\mathrm{Aff}(\Mu)$ of rank 3. 
\end{lemma}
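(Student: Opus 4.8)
The statement is the rank-$3$ analog of Lemma 4, and I would prove it by the same elementary computations, using the general facts about $N_A(\Mu)$, $\Phi$, and $\Omega$ recorded in \S 7. Since $\Mu$ is free abelian of rank $3$, every automorphism is given by an element of $\mathrm{GL}(3,\integers)$ acting on the standard generators, so $\mathrm{Aut}(\Mu) = \mathrm{GL}(3,\integers)$; and because $\Mu$ is abelian, $\mathrm{Inn}(\Mu)$ is trivial, whence $\mathrm{Out}(\Mu) = \mathrm{Aut}(\Mu) = \mathrm{GL}(3,\integers)$.

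To pin down $N_A(\Mu)$, I would conjugate a translation by a general affinity. If $\phi = a+A$ is an affinity of $E^3$ and $t = v+I$ is the translation by $v$, a direct computation gives $\phi\, t\, \phi^{-1} = Av + I$. Hence $\phi$ normalizes $\Mu = \{v+I : v \in \integers^3\}$ precisely when $A\integers^3 = \integers^3$, i.e.\ $A \in \mathrm{GL}(3,\integers)$, with the translation vector $a$ completely unconstrained. This gives the asserted description of $N_A(\Mu)$.

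Next I would use the epimorphism $\Phi : N_A(\Mu) \to \mathrm{Aff}(\Mu)$, $\Phi(\alpha) = \alpha_\star$, with kernel $\Mu$ (Lemma 1 of \cite{C-V}), to identify $\mathrm{Aff}(\Mu)$ with $N_A(\Mu)/\Mu$. The key well-definedness point for $\eta$ is that $(a+A)_\star = (a'+A')_\star$ forces $a+A$ and $a'+A'$ to differ by an element of $\Mu$, so $A' = A$ and $a'-a \in \integers^3$; since multiplication by $\Mu$ leaves the linear part $A$ unchanged, the rule $\eta((a+A)_\star) = A$ is well-defined. That $\eta$ is a homomorphism follows from $(a+A)(b+B) = (a+Ab)+AB$, surjectivity is witnessed by $A \mapsto A_\star$, and the kernel consists exactly of the classes with $A = I$, namely $\Kappa = \{(a+I)_\star : a \in E^3\}$. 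For the splitting I would verify $\sigma(AB) = A_\star B_\star$, that $\sigma$ is injective because $A_\star = I_\star$ forces $A \in \Mu$ and hence $A = I$, and that $\eta\sigma = \mathrm{id}$, so $\sigma$ is a right inverse of $\eta$. As a consistency check, $\eta$ coincides with $\Omega$ under the identification $\mathrm{Out}(\Mu) = \mathrm{GL}(3,\integers)$, and its kernel $\Kappa$ equals $\Phi(C_A(\Mu))$, in agreement with the general description of $\ker\Omega$ in \S 7.

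Finally, for the torus claim I would observe that $(a+I)_\star = (a'+I)_\star$ iff $a - a' \in \integers^3$, so $\Kappa \cong E^3/\integers^3 \cong (S^1)^3$ is a compact connected abelian Lie group of rank $3$. Because $\mathrm{GL}(3,\integers)$ is discrete, $\Kappa$ is the identity component of $\mathrm{Aff}(\Mu) \cong (S^1)^3 \rtimes \mathrm{GL}(3,\integers)$, and an identity component that is itself a torus is the maximal torus. None of these steps is a genuine obstacle; the only places needing care are the conjugation computation that fixes $N_A(\Mu)$ and the verification that $\eta$ is well-defined on cosets modulo $\Mu$.
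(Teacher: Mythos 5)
Your proof is correct: the conjugation computation $\phi t\phi^{-1} = Av+I$ pins down $N_A(\Mu)$, the identification $\mathrm{Aff}(\Mu)\cong N_A(\Mu)/\Mu$ via $\Phi$ makes $\eta$ well defined with the stated kernel and splitting $\sigma$, and the identity-component argument correctly identifies $\Kappa\cong(S^1)^3$ as the maximal torus. The paper offers no proof to compare against --- it states the lemma as well known (just as it does for its rank-2 analog, Lemma 4) --- and your argument is exactly the standard verification the authors are implicitly invoking.
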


First we consider $O^3_1\,|\, S^1$ fiberings. 
The conjugacy classes of finite subgroups of $\mathrm{GL}(3,\integers)$ are called the $\integers$-classes of finite subgroups of $\mathrm{GL}(3,\integers)$ in \cite{B-Z}. 
From Tables 1B and 6B in \cite{B-Z}, we see that there are 16 conjugacy classes of finite cyclic subgroups of $\mathrm{GL}(3,\integers)$. 
An element of $\mathrm{GL}(3,\integers)$ of finite order has order at most 6. 
Hence there are 16 conjugacy classes of inverse pairs of elements of $\mathrm{GL}(3,\integers)$ of finite order.  
Representatives of these classes, taken from Table 1B in \cite{B-Z}, are listed in Column 5 of Tables 10 and 11. 
By Theorem 7, there are 16 affine equivalence classes of flat $O^3_1\,|\, S^1$ fiberings. 
These fiberings are represented by the generalized Calabi constructions described in Tables 10 and 11 
as explained in \S 8. 

Replacing a representative in Column 5 by its inverse does not change the proper space group type, 
since no torsion-free 4-space group type splits into two enantiomorphic proper space group types;  see Table 7C of \cite{B-Z} and the correction \cite{NSW}.

\begin{table} 

\begin{tabular}{c|c|c|c|c}	
no. & mfd. & cS-fbr.  &  grp. & representative      \\ \hline	
 1	&$O^4_1$ & $O^3_1\,|\,S^1$  & $C_1$ & 
	  	$ \left(\begin{array}{rrr} 
		1 &  0 & 0 \\ 0 & 1 & 0 \\ 0 & 0 & 1 \end{array}\right)$   \\ \hline
 2	&$O^4_2$ & $O^3_1\,|\,S^1$ &  $C_2$ &
		$ \left(\begin{array}{rrr} 
		-1 &  0 & 0\\ 0 & 1 & 0 \\ 0 & 0 & -1\end{array}\right)$    \\ \hline
 3	&$O^4_3$ & $O^3_1\,|\,S^1$ &  $C_2$ &
 		$ \left(\begin{array}{rrr} 
		0 &  -1 & 0\\ -1 & 0 & 0 \\ 0 & 0 & -1 \end{array}\right)$   \\ \hline
 4	&$O^4_4$ & $O^3_1\,|\,S^1$ &  $C_3$ &
 		$ \left(\begin{array}{rrr} 
		0 &  -1 & 0\\ 1 & -1 & 0 \\ 0 & 0 & 1 \end{array}\right)$    \\ \hline
 5	&$O^4_5$ & $O^3_1\,|\,S^1$ &  $C_3$ &
 		$ \left(\begin{array}{rrr} 
		0 &  0 & 1 \\ 1 & 0 & 0 \\ 0 & 1 & 0 \end{array}\right)$     \\ \hline
 6 	&$O^4_6$ & $O^3_1\,|\,S^1$ &  $C_4$ &
 	$ \left(\begin{array}{rrr} 
		0 & -1 &  0 \\ 1 & 0 & 0 \\ 0 & 0 & 1 \end{array}\right)$   \\ \hline
 7 	&$O^4_7$ & $O^3_1\,|\,S^1$ &  $C_4$ &
 	$ \left(\begin{array}{rrr} 
		0 &  1 & 0\\ 0 & 1 & -1 \\ -1 & 1 & 0 \end{array}\right)$    \\ \hline
8 	&$O^4_8$ & $O^3_1\,|\,S^1$ &  $C_6$ &
 	$ \left(\begin{array}{rrr} 
		0 &  1 & 0\\ -1 & 1 & 0 \\ 0 & 0 & 1 \end{array}\right)$    \\ \hline
\end{tabular}

\vspace{.2in}
\caption{The orientable flat $O^3_1$ fiberings over $S^1$}
\end{table}
\begin{table} 

\begin{tabular}{c|c|c|c|c}	
no. & mfd. & cS-fbr.  &  grp. & representative      \\ \hline
9	&$N^4_1$ & $O^3_1\,|\,S^1$ &  $C_2$ & 
	  	$ \left(\begin{array}{rrr} 
		1 &  0 & 0 \\ 0 & -1 & 0 \\ 0 & 0 & 1 \end{array}\right)$   \\ \hline
10	&$N^4_2$ & $O^3_1\,|\,S^1$ &  $C_2$ & 
	  	$ \left(\begin{array}{rrr} 
		0 &  1 & 0 \\ 1 & 0 & 0 \\ 0 & 0 & 1 \end{array}\right)$   \\ \hline
11	&$N^4_{14}$ & $O^3_1\,|\,S^1$ &  $C_2$ & 
	  	$ \left(\begin{array}{rrr} 
		-1 &  0 & 0 \\ 0 & -1 & 0 \\ 0 & 0 & -1 \end{array}\right)$   \\ \hline
12	&$N^4_{15}$ & $O^3_1\,|\,S^1$ & $C_4$ & 
	  	$ \left(\begin{array}{rrr} 
		0 &  1 & 0 \\ -1 & 0 & 0 \\ 0 & 0 & -1 \end{array}\right)$   \\ \hline
13	&$N^4_{16}$ & $O^3_1\,|\,S^1$ &  $C_4$ & 
	  	$ \left(\begin{array}{rrr} 
		0 &  -1 & 0 \\ 0 & -1 & 1 \\ 1 & -1 & 0 \end{array}\right)$   \\ \hline
14	&$N^4_{19}$ & $O^3_1\,|\,S^1$ &  $C_6$ & 
	  	$ \left(\begin{array}{rrr} 
		0 &  -1 & 0 \\ 1 & -1 & 0 \\ 0 & 0 & -1 \end{array}\right)$   \\ \hline
15	&$N^4_{20}$ & $O^3_1\,|\,S^1$ &  $C_6$ & 
	  	$ \left(\begin{array}{rrr} 
		0 &  1 & 0 \\ -1 & 1 & 0 \\ 0 & 0 & -1 \end{array}\right)$   \\ \hline
16	&$N^4_{21}$ & $O^3_1\,|\,S^1$ &  $C_6$ & 
	  	$ \left(\begin{array}{rrr} 
		0 &  0 & -1 \\ -1 & 0 & 0 \\ 0 & -1 & 0 \end{array}\right)$   \\ \hline
\end{tabular}

\vspace{.2in}
\caption{The nonorientable flat $O^3_1$ fiberings over $S^1$}
\end{table}
\begin{table} 

\begin{tabular}{c|c|c|c|c|c}	
no. & mfd. & cS-fbr.  &  grp. & pair representatives  & s-fbrs.  \\ \hline	
 1	&$O^4_2$ & $O^3_1\,|\,\Iota$ & $D_1$ &
 		$ \left(\begin{array}{rrrr} 
		\frac{1}{2} & 1 & 0 &  0 \\ 0 & 0 & -1 & 0 \\ 0 & 0 & 0 & 1\end{array}\right)$, 
		$ \left(\begin{array}{rrrr} 
		\frac{1}{2} & 1 & 0 &  0 \\ 0 & 0 & -1 & 0 \\ 0 & 0 & 0 & 1\end{array}\right)$ & $N^3_1, N^3_1$ \\ \hline
2	&$O^4_3$ & $O^3_1\,|\,\Iota$ & $D_1$ &
 		$ \left(\begin{array}{rrrr} 
		0 & 0 & 1 &  0 \\ 0 & 1 & 0 & 0 \\ \frac{1}{2} & 0 & 0 & 1\end{array}\right)$, 
		$ \left(\begin{array}{rrrr} 
		0 & 0 & 1 &  0 \\ 0 & 1 & 0 & 0 \\ \frac{1}{2} & 0 & 0 & 1\end{array}\right)$ & $N^3_2, N^3_2$  \\ \hline
3	&$O^4_3$ & $O^3_1\,|\,\Iota$ & $D_2$ &
 		$ \left(\begin{array}{rrrr} 
		\frac{1}{2} & 1 & 0 &  0 \\ 0 & 0 & -1 & 0 \\ 0 & 0 & 0 & 1\end{array}\right)$, 
		$ \left(\begin{array}{rrrr} 
		0 & 1 & 0 &  0 \\ 0 & 0 & -1 & 0 \\ \frac{1}{2} & 0 & 0 & 1\end{array}\right)$ & $N^3_1, N^3_1$ \\ \hline
4	&$O^4_9$ & $O^3_1\,|\,\Iota$ & $D_2$ &
 		$ \left(\begin{array}{rrrr} 
		0 & -1 & 0 &  0 \\ 0 & 0 & 1 & 0 \\ \frac{1}{2} & 0 & 0 & 1\end{array}\right)$, 
		$ \left(\begin{array}{rrrr} 
		0 & 1 & 0 &  0 \\ 0 & 0 & -1 & 0 \\ \frac{1}{2} & 0 & 0 & 1\end{array}\right)$ & $N^3_1, N^3_1$ \\ \hline
5	&$O^4_{10}$ & $O^3_1\,|\,\Iota$ & $D_2$ &
 		$ \left(\begin{array}{rrrr} 
		0 & -1 & 0 &  0 \\ \frac{1}{2} & 0 & 1 & 0 \\ 0 & 0 & 0 & 1\end{array}\right)$, 
		$ \left(\begin{array}{rrrr} 
		0 & 1 & 0 &  0 \\ 0 & 0 & -1 & 0 \\ \frac{1}{2} & 0 & 0 & 1\end{array}\right)$ & $N^3_1, N^3_1$ \\ \hline
6	&$O^4_{10}$ & $O^3_1\,|\,\Iota$ & $D_2$ &
 		$ \left(\begin{array}{rrrr} 
		0 & -1 & 0 &  0 \\ 0 & 0 & 1 & 0 \\ \frac{1}{2} & 0 & 0 & 1\end{array}\right)$, 
		$ \left(\begin{array}{rrrr} 
		\frac{1}{2} & 1 & 0 &  0 \\ 0 & 0 & -1 & 0 \\ \frac{1}{2} & 0 & 0 & 1\end{array}\right)$ & $N^3_1, N^3_1$ \\ \hline
7	&$O^4_{11}$ & $O^3_1\,|\,\Iota$ & $D_2$ &
 		$ \left(\begin{array}{rrrr} 
		0 & -1 & 0 &  0 \\ \frac{1}{2} & 0 & 1 & 0 \\ 0 & 0 & 0 & 1\end{array}\right)$, 
		$ \left(\begin{array}{rrrr} 
		\frac{1}{2} & 1 & 0 &  0 \\ 0 & 0 & -1 & 0 \\ \frac{1}{2} & 0 & 0 & 1\end{array}\right)$ & $N^3_1, N^3_1$ \\ \hline
8	&$O^4_{11}$ & $O^3_1\,|\,\Iota$ & $D_2$ &
 		$ \left(\begin{array}{rrrr} 
		0 & -1 & 0 &  0 \\ \frac{1}{2} & 0 & 1 & 0 \\ \frac{1}{2} & 0 & 0 & 1\end{array}\right)$, 
		$ \left(\begin{array}{rrrr} 
		\frac{1}{2} & 1 & 0 &  0 \\ 0 & 0 & -1 & 0 \\ \frac{1}{2} & 0 & 0 & 1\end{array}\right)$ & $N^3_1, N^3_1$ \\ \hline
9	&$O^4_{12}$ & $O^3_1\,|\,\Iota$ & $D_2$ &
 		$ \left(\begin{array}{rrrr} 
		0 & 0 & -1 &  0 \\ 0 & -1 & 0 & 0 \\ \frac{1}{2} & 0 & 0 & 1\end{array}\right)$, 
		$ \left(\begin{array}{rrrr} 
		0 & 0 & 1 &  0 \\ 0 & 1 & 0 & 0 \\ \frac{1}{2} & 0 & 0 & 1\end{array}\right)$ & $N^3_2, N^3_2$  \\ \hline
10	&$O^4_{13}$ & $O^3_1\,|\,\Iota$ & $D_2$ &
 		$ \left(\begin{array}{rrrr} 
		0 & 0 & 0 & -1 \\ \frac{1}{2} & 1 & 1 & 1 \\ 0 & -1 & 0 & 0\end{array}\right)$, 
		$ \left(\begin{array}{rrrr} 
		\frac{1}{2} & 1 & 1 &  1 \\ 0 & 0 & 0 & -1 \\ 0 & 0 & -1 & 0\end{array}\right)$ & $N^3_2, N^3_2$  \\ \hline
11	&$O^4_{13}$ & $O^3_1\,|\,\Iota$ & $D_4$ &
		$ \left(\begin{array}{rrrr} 
		 \frac{1}{2} & 1 & 0 &  0 \\ 0 & 0 & 1 & 0 \\ 0 & 0 & 0 & -1\end{array}\right)$,
		 $ \left(\begin{array}{rrrr} 
		0 & 0 & 1 &  0 \\ 0 & 1 & 0 & 0 \\ \frac{1}{2} & 0 & 0 & 1\end{array}\right)$ & $N^3_1, N^3_2$ \\  \hline
12	&$O^4_{14}$ & $O^3_1\,|\,\Iota$ & $D_2$ &
 		$ \left(\begin{array}{rrrr} 
		0 & -1 & 0 &  0 \\ \frac{1}{2} & 0 & 1 & 0 \\ 0 & 0 & 0 & 1\end{array}\right)$, 
		$ \left(\begin{array}{rrrr} 
		\frac{1}{2} & 1 & 0 &  0 \\ 0 & 0 & -1 & 0 \\ 0 & 0 & 0 & 1\end{array}\right)$ & $N^3_1, N^3_1$ \\ \hline
13	&$O^4_{16}$ & $O^3_1\,|\,\Iota$ & $D_2$ &
 		$ \left(\begin{array}{rrrr} 
		0 & 0 & 1 & -1 \\ \frac{1}{2} & 0 & 1 & 0 \\ \frac{1}{2} & -1 & 1 & 0\end{array}\right)$, 
		$ \left(\begin{array}{rrrr} 
		\frac{1}{2} & 1 & 0 &  0 \\ 0 & 1 & 0 & -1 \\ \frac{1}{2} & 1 & -1 & 0\end{array}\right)$ & $N^3_2, N^3_2$  \\ \hline
14	&$O^4_{17}$ & $O^3_1\,|\,\Iota$ & $D_2$ &
		$ \left(\begin{array}{rrrr} 
		 \frac{1}{2} & 1 & 0 &  0 \\ \frac{1}{2} & 0 & 1 & 0 \\ 0 & 0 & 0 & -1\end{array}\right)$,
		 $ \left(\begin{array}{rrrr} 
		0 & 0 & 1 &  0 \\ 0 & 1 & 0 & 0 \\ \frac{1}{2} & 0 & 0 & 1\end{array}\right)$ & $N^3_1, N^3_2$  \\ \hline
 \end{tabular}

\vspace{.2in}
\caption{The flat $O^3_1$ fiberings over $\Iota$, Part I}
\end{table}

\begin{table} 

\begin{tabular}{c|c|c|c|c|c}	
no. & mfd. & cS-fbr.  &  grp. & pair representatives  & s-fbrs.  \\ \hline	
15	&$O^4_{18}$ & $O^3_1\,|\,\Iota$ & $D_3$ &
 		$ \left(\begin{array}{rrrr} 
		0 & -1 & 0 &  0 \\ 0 & -1 & 1 & 0 \\ \frac{1}{2} & 0 & 0 & 1\end{array}\right)$, 
		$ \left(\begin{array}{rrrr} 
		0 & 0 & 1 &  0 \\ 0 & 1 & 0 & 0 \\ \frac{1}{2} & 0 & 0 & 1\end{array}\right)$ & $N^3_2, N^3_2$ \\ \hline
16	&$O^4_{19}$ & $O^3_1\,|\,\Iota$ & $D_3$ &
 		$ \left(\begin{array}{rrrr} 
		0 & -1 & -1 &  0 \\ 0 & 0 & 1 & 0 \\ \frac{1}{2} & 0 & 0 & 1\end{array}\right)$, 
		$ \left(\begin{array}{rrrr} 
		0 & 0 & 1 &  0 \\ 0 & 1 & 0 & 0 \\ \frac{1}{2} & 0 & 0 & 1\end{array}\right)$ & $N^3_2, N^3_2$ \\ \hline
17	&$O^4_{20}$ & $O^3_1\,|\,\Iota$ & $D_3$ &
 		$ \left(\begin{array}{rrrr} 
		0 & 0 & 0 &  1 \\ \frac{1}{2} & 0 & 1 & 0 \\ 0 & 1 & 0 & 0\end{array}\right)$, 
		$ \left(\begin{array}{rrrr} 
		0 & 0 & 1 &  0 \\ 0 & 1 & 0 & 0 \\ \frac{1}{2} & 0 & 0 & 1\end{array}\right)$ & $N^3_2, N^3_2$ \\ \hline 
18	&$O^4_{21}$ & $O^3_1\,|\,\Iota$ & $D_4$ &
 		$ \left(\begin{array}{rrrr} 
		0 & -1 & 0 &  0 \\ 0 & 0 & 1 & 0 \\ \frac{1}{2} & 0 & 0 & 1\end{array}\right)$, 
		$ \left(\begin{array}{rrrr} 
		0 & 0 & 1 &  0 \\ 0 & 1 & 0 & 0 \\ \frac{1}{2} & 0 & 0 & 1\end{array}\right)$ & $N^3_1, N^3_2$ \\ \hline  
19	&$O^4_{22}$ & $O^3_1\,|\,\Iota$ & $D_4$ &
 		$ \left(\begin{array}{rrrr} 
		0 & -1 & 0 &  0 \\ \frac{1}{2} & 0 & 1 & 0 \\ \frac{1}{2} & 0 & 0 & 1\end{array}\right)$, 
		$ \left(\begin{array}{rrrr} 
		0 & 0 & 1 &  0 \\ 0 & 1 & 0 & 0 \\ \frac{1}{2} & 0 & 0 & 1\end{array}\right)$ & $N^3_1, N^3_2$ \\ \hline 
20	&$O^4_{23}$ & $O^3_1\,|\,\Iota$ & $D_4$ &
 		$ \left(\begin{array}{rrrr} 
		0 & -1 & 0 &  0 \\ \frac{1}{2} & 0 & 1 & 0 \\ 0 & 0 & 0 & 1\end{array}\right)$, 
		$ \left(\begin{array}{rrrr} 
		0 & 0 & 1 &  0 \\ 0 & 1 & 0 & 0 \\ \frac{1}{2} & 0 & 0 & 1\end{array}\right)$ & $N^3_1, N^3_2$ \\ \hline  
21	&$O^4_{24}$ & $O^3_1\,|\,\Iota$ & $D_4$ &
 		$ \left(\begin{array}{rrrr} 
		0 & 0 & 1 &  -1 \\ \frac{1}{2} & 0 & 1 & 0 \\ \frac{1}{2} & -1 & 1 & 0\end{array}\right)$, 
		$ \left(\begin{array}{rrrr} 
		0 & 0 & 1 &  0 \\ 0 & 1 & 0 & 0 \\ \frac{1}{2} & 0 & 0 & 1\end{array}\right)$ & $N^3_2, N^3_2$ \\ \hline 
22	&$O^4_{25}$ & $O^3_1\,|\,\Iota$ & $D_6$ &
 		$ \left(\begin{array}{rrrr} 
		0 & -1 & 0 &  0 \\ 0 & 1 & 1 & 0 \\ \frac{1}{2} & 0 & 0 & 1\end{array}\right)$, 
		$ \left(\begin{array}{rrrr} 
		0 & 0 & 1 &  0 \\ 0 & 1 & 0 & 0 \\ \frac{1}{2} & 0 & 0 & 1\end{array}\right)$ & $N^3_2, N^3_2$ \\ \hline  
23	&$N^4_1$ & $O^3_1\,|\,\Iota$ & $D_1$ &
 		$ \left(\begin{array}{rrrr} 
		\frac{1}{2} & 1 & 0 &  0 \\ 0 & 0 & 1 & 0 \\ 0 & 0 & 0 & 1\end{array}\right)$, 
		$ \left(\begin{array}{rrrr} 
		\frac{1}{2} & 1 & 0 &  0 \\ 0 & 0 & 1 & 0 \\ 0 & 0 & 0 & 1\end{array}\right)$ & $O^3_1, O^3_1$ \\ \hline
24	&$N^4_2$ & $O^3_1\,|\,\Iota$ & $D_2$ &
 		$ \left(\begin{array}{rrrr} 
		\frac{1}{2} & 1 & 0 &  0 \\ 0 & 0 & 1 & 0 \\ 0 & 0 & 0 & 1\end{array}\right)$, 
		$ \left(\begin{array}{rrrr} 
		0 & 1 & 0 &  0 \\ \frac{1}{2} & 0 & 1 & 0 \\ 0 & 0 & 0 & 1\end{array}\right)$ & $O^3_1, O^3_1$ \\ \hline
25	&$N^4_4$ & $O^3_1\,|\,\Iota$ & $D_2$ &
		$ \left(\begin{array}{rrrr} 
		0 & 1 & 0 &  0 \\ 0 & 0 & 1 & 0 \\ \frac{1}{2} & 0 & 0 & 1\end{array}\right)$, 
		$ \left(\begin{array}{rrrr} 
		\frac{1}{2} & 1 & 0 &  0 \\ 0 & 0 & -1 & 0 \\ 0 & 0 & 0 & 1\end{array}\right)$ & $O^3_1, N^3_1$ \\ \hline
26	&$N^4_5$ & $O^3_1\,|\,\Iota$ & $D_2$ &
		$ \left(\begin{array}{rrrr} 
		0 & 1 & 0 &  0 \\ \frac{1}{2} & 0 & 1 & 0 \\ \frac{1}{2} & 0 & 0 & 1\end{array}\right)$, 
		$ \left(\begin{array}{rrrr} 
		\frac{1}{2} & 1 & 0 &  0 \\ 0 & 0 & -1 & 0 \\ 0 & 0 & 0 & 1\end{array}\right)$ & $O^3_1, N^3_1$ \\ \hline
27	&$N^4_7$ & $O^3_1\,|\,\Iota$ & $D_4$ &
		$ \left(\begin{array}{rrrr} 
		\frac{1}{2} & 1 & 0 &  0 \\ 0 & 0 & 1 & 0 \\ 0 & 0 & 0 & 1\end{array}\right)$, 
		$ \left(\begin{array}{rrrr} 
		0 & 0 & 1 &  0 \\ 0 & 1 & 0 & 0 \\ \frac{1}{2} & 0 & 0 & 1\end{array}\right)$ & $O^3_1, N^3_2$ \\ \hline
28	&$N^4_8$ & $O^3_1\,|\,\Iota$ & $D_2$ &
		$ \left(\begin{array}{rrrr} 
		\frac{1}{2} & 1 & 0 &  0 \\ 0 & 0 & 1 & 0 \\ 0 & 0 & 0 & 1\end{array}\right)$, 
		$ \left(\begin{array}{rrrr} 
		\frac{1}{2} & 1 & 0 &  0 \\ 0 & 0 & -1 & 0 \\ 0 & 0 & 0 & 1\end{array}\right)$ & $O^3_1, N^3_1$ \\ \hline
 \end{tabular}

\vspace{.2in}
\caption{The flat $O^3_1$ fiberings over $\Iota$, Part II}
\end{table}

\begin{table} 

\begin{tabular}{c|c|c|c|c|c}	
no. & mfd. & cS-fbr.  &  grp. & pair representatives  & s-fbrs.  \\ \hline	
29	&$N^4_9$ & $O^3_1\,|\,\Iota$ & $D_2$ &
		$ \left(\begin{array}{rrrr} 
		0 & 1 & 0 &  0 \\ \frac{1}{2} & 0 & 1 & 0 \\ 0 & 0 & 0 & 1\end{array}\right)$,
		$ \left(\begin{array}{rrrr} 
		\frac{1}{2} & 1 & 0 &  0 \\ 0 & 0 & -1 & 0 \\ 0 & 0 & 0 & 1\end{array}\right)$ & $O^3_1, N^3_1$ \\ \hline
30	&$N^4_9$ & $O^3_1\,|\,\Iota$ & $D_2$ &
		$ \left(\begin{array}{rrrr} 
		\frac{1}{2} & 1 & 0 &  0 \\ \frac{1}{2} & 0 & 1 & 0 \\ 0 & 0 & 0 & 1\end{array}\right)$, 
		$ \left(\begin{array}{rrrr} 
		\frac{1}{2} & 1 & 0 &  0 \\ 0 & 0 & -1 & 0 \\ 0 & 0 & 0 & 1\end{array}\right)$ & $O^3_1, N^3_1$ \\ \hline
31	&$N^4_{11}$ & $O^3_1\,|\,\Iota$ & $D_2$ &
		$ \left(\begin{array}{rrrr} 
		0 & 1 & 0 &  0 \\ 0 & 0 & 1 & 0 \\ \frac{1}{2} & 0 & 0 & 1\end{array}\right)$, 
		$ \left(\begin{array}{rrrr} 
		0 & 0 & 1 &  0 \\ 0 & 1 & 0 & 0 \\ \frac{1}{2} & 0 & 0 & 1\end{array}\right)$ & $O^3_1, N^3_2$ \\ \hline
32	&$N^4_{12}$ & $O^3_1\,|\,\Iota$ & $D_2$ &
		$ \left(\begin{array}{rrrr} 
		\frac{1}{2} & 1 & 0 &  0 \\ \frac{1}{2} & 0 & 1 & 0 \\ 0 & 0 & 0 & 1\end{array}\right)$, 
		$ \left(\begin{array}{rrrr} 
		0 & 0 & 1 &  0 \\ 0 & 1 & 0 & 0 \\ \frac{1}{2} & 0 & 0 & 1\end{array}\right)$ & $O^3_1, N^3_2$ \\ \hline
33	&$N^4_{14}$ & $O^3_1\,|\,\Iota$ & $D_1$ &
 		$ \left(\begin{array}{rrrr} 
		0 & -1 & 0 &  0 \\ \frac{1}{2} & 0 & 1 & 0 \\ 0 & 0 & 0 & -1\end{array}\right)$, 
		$ \left(\begin{array}{rrrr} 
		0 & -1 & 0 &  0 \\ \frac{1}{2} & 0 & 1 & 0 \\ 0 & 0 & 0 & -1\end{array}\right)$ & $O^3_2, O^3_2$ \\ \hline
34	&$N^4_{22}$ & $O^3_1\,|\,\Iota$ & $D_2$ &
		$ \left(\begin{array}{rrrr} 
		0 & 1 & 0 &  0 \\ \frac{1}{2} & 0 & 1 & 0 \\ 0 & 0 & 0 & 1\end{array}\right)$, 
		$ \left(\begin{array}{rrrr} 
		0 & -1 & 0 &  0 \\ \frac{1}{2} & 0 & 1 & 0 \\ 0 & 0 & 0 & -1\end{array}\right)$ & $O^3_1, O^3_2$ \\ \hline
35	&$N^4_{23}$ & $O^3_1\,|\,\Iota$ & $D_2$ &
		$ \left(\begin{array}{rrrr} 
		 \frac{1}{2} & 1 & 0 &  0 \\ \frac{1}{2} & 0 & 1 & 0 \\ 0 & 0 & 0 & 1\end{array}\right)$, 
		 $ \left(\begin{array}{rrrr} 
		0 & -1 & 0 &  0 \\ \frac{1}{2} & 0 & 1 & 0 \\ 0 & 0 & 0 & -1\end{array}\right)$ & $O^3_1, O^3_2$ \\ \hline
36	&$N^4_{23}$ & $O^3_1\,|\,\Iota$ & $D_2$ &
 		$ \left(\begin{array}{rrrr} 
		0 & -1 & 0 &  0 \\ 0 & 0 & -1 & 0 \\ \frac{1}{2} & 0 & 0 & 1\end{array}\right)$, 
		$ \left(\begin{array}{rrrr} 
		0 & -1 & 0 &  0 \\ \frac{1}{2} & 0 & 1 & 0 \\ 0 & 0 & 0 & 1\end{array}\right)$ & $O^3_2, N^3_1$ \\ \hline
37	&$N^4_{24}$ & $O^3_1\,|\,\Iota$ & $D_2$ &
 		$ \left(\begin{array}{rrrr} 
		0 & -1 & 0 &  0 \\ 0 & 0 & -1 & 0 \\ \frac{1}{2} & 0 & 0 & 1\end{array}\right)$, 
		$ \left(\begin{array}{rrrr} 
		0 & -1 & 0 &  0 \\ 0 & 0 & 1 & 0 \\ \frac{1}{2} & 0 & 0 & 1\end{array}\right)$ & $O^3_2, N^3_1$ \\ \hline
38	&$N^4_{25}$ & $O^3_1\,|\,\Iota$ & $D_2$ &
		$ \left(\begin{array}{rrrr} 
		\frac{1}{2} & 1 & 0 &  0 \\ 0 & 0 & 1 & 0 \\ 0 & 0 & 0 & 1\end{array}\right)$, 
		$ \left(\begin{array}{rrrr} 
		0 & -1 & 0 &  0 \\ \frac{1}{2} & 0 & 1 & 0 \\ 0 & 0 & 0 & -1\end{array}\right)$ & $O^3_1, O^3_2$ \\ \hline
39	&$N^4_{25}$ & $O^3_1\,|\,\Iota$ & $D_2$ &
 		$ \left(\begin{array}{rrrr} 
		0 & -1 & 0 &  0 \\ 0 & 0 & -1 & 0 \\ \frac{1}{2} & 0 & 0 & 1\end{array}\right)$, 
		$ \left(\begin{array}{rrrr} 
		0 & -1 & 0 &  0 \\ \frac{1}{2} & 0 & 1 & 0 \\ \frac{1}{2} & 0 & 0 & 1\end{array}\right)$ & $O^3_2, N^3_1$ \\ \hline
40	&$N^4_{26}$ & $O^3_1\,|\,\Iota$ & $D_2$ &
 		$ \left(\begin{array}{rrrr} 
		0 & -1 & 0 &  0 \\ 0 & 0 & -1 & 0 \\ \frac{1}{2} & 0 & 0 & 1\end{array}\right)$, 
		$ \left(\begin{array}{rrrr} 
		0 & 0 & 1 &  0 \\ 0 & 1 & 0 & 0 \\ \frac{1}{2} & 0 & 0 & 1\end{array}\right)$ & $O^3_2, N^3_2$ \\ \hline
41	&$N^4_{44}$ & $O^3_1\,|\,\Iota$ & $D_2$ &
 		$ \left(\begin{array}{rrrr} 
		0 & -1 & 0 &  0 \\ \frac{1}{2} & 0 & 1 & 0 \\ 0 & 0 & 0 & -1\end{array}\right)$, 
		$ \left(\begin{array}{rrrr} 
		\frac{1}{2} & 1 & 0 &  0 \\ 0 & 0 & -1 & 0 \\ 0 & 0 & 0 & 1\end{array}\right)$ & $O^3_2, N^3_1$ \\ \hline
42	&$N^4_{44}$ & $O^3_1\,|\,\Iota$ & $D_2$ &
 		$ \left(\begin{array}{rrrr} 
		0 & -1 & 0 &  0 \\ 0 & 0 & -1 & 0 \\ \frac{1}{2} & 0 & 0 & 1\end{array}\right)$, 
		$ \left(\begin{array}{rrrr} 
		0 & -1 & 0 &  0 \\ \frac{1}{2} & 0 & 1 & 0 \\ 0 & 0 & 0 & -1\end{array}\right)$ & $O^3_2, O^3_2$ \\ \hline
43	&$N^4_{47}$ & $O^3_1\,|\,\Iota$ & $D_4$ &
 		$ \left(\begin{array}{rrrr} 
		0 & -1 & 0 &  0 \\ \frac{1}{2} & 0 & 1 & 0 \\ 0 & 0 & 0 & -1\end{array}\right)$, 
		$ \left(\begin{array}{rrrr} 
		0 & 0 & 1 &  0 \\ 0 & 1 & 0 & 0 \\ \frac{1}{2} & 0 & 0 & 1\end{array}\right)$ & $O^3_2, N^3_2$ \\ \hline
\end{tabular}

\vspace{.2in}
\caption{The flat $O^3_1$ fiberings over $\Iota$, Part III}
\end{table}

Next we consider $O^3_1\,|\, \Iota$ fiberings. 

\begin{lemma} 
Let $\Mu = \langle t_1, t_2, t_3\rangle$, and let $\alpha = a + A$ be an affinity of $E^3$ that normalizes $\Mu$ 
such that $\alpha_\star$ has order $2$ and $\alpha_\star$ does not fix a point of the $3$-torus $E^3/\Mu$. 
Then $A$ has order $1$ or $2$.  
If $A$ has order $1$, then $\alpha_\star$ is conjugate in $\mathrm{Aff}(\Mu)$ to $(e_1/2+I)_\star$. 
The quotient of $E^3/\Mu$ by the action of $(e_1/2+I)_\star$ is a $3$-torus. 
If $A$ has order $2$, then $\alpha_\star$ is conjugate in $\mathrm{Aff}(\Mu)$ to one of 
$(e_2/2 + \mathrm{diag}(-1,1,-1))_\star$, $(e_1/2 + \mathrm{diag}(1,-1,1))_\star$ or $(e_3/2 + B)_\star$ 
with $Be_1 = e_2, Be_2 = e_1$ and $Be_3 = e_3$. 
The quotient of $E^3/\Mu$ by the action of $\alpha_\star$ is of type $O^3_2$, $N^3_1$, or $N^3_2$, respectively. 
\end{lemma}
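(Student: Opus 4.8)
The plan is to mirror the proof of Lemma 5, replacing the two-dimensional involution theory by its three-dimensional counterpart and reducing everything to the $\integers$-conjugacy classification of order-two matrices in $\mathrm{GL}(3,\integers)$ together with a single fixed-point computation. First I would record that $A$ has order $1$ or $2$: since the map $\eta\colon\mathrm{Aff}(\Mu)\to\mathrm{GL}(3,\integers)$ of Lemma 7 is a homomorphism with $\eta(\alpha_\star)=A$, the hypothesis $\alpha_\star^2=I_\star$ forces $A^2=I$. Writing $V_+$, $V_-$ for the $(\pm1)$-eigenspaces of $A$ and $P_+$ for the projection of $E^3$ onto $V_+$ along $V_-$, I would establish two elementary equivalences, valid for every $a$: the element $\alpha_\star$ has order dividing $2$ iff $(I+A)a=2P_+(a)\in\integers^3$, and $\alpha_\star$ fixes a point of $E^3/\Mu$ iff $P_+(a)\in P_+(\integers^3)$ (because $\alpha_\star$ fixes $\Mu x$ exactly when $(I-A)x=a-m$ is solvable for some $m\in\integers^3$, i.e.\ when $a\in V_-+\integers^3$). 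Setting $L_+=\integers^3\cap V_+$ and $\Pi_+=P_+(\integers^3)$, the two hypotheses together become $P_+(a)\in\frac{1}{2}L_+\setminus\Pi_+$, which is equivalent to $\langle t_1,t_2,t_3,\alpha\rangle$ being a torsion-free $3$-space group.

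For the case $A=I$ the conditions reduce to $a\in\frac{1}{2}\integers^3\setminus\integers^3$. Conjugating $(a+I)_\star$ by $C\in\mathrm{GL}(3,\integers)$ replaces $a$ by $Ca$ modulo $\integers^3$, and the induced action on $\frac{1}{2}\integers^3/\integers^3\cong(\integers/2)^3$ is that of $\mathrm{GL}(3,\integers/2)$, which is transitive on nonzero classes; hence $\alpha_\star$ is conjugate to $(e_1/2+I)_\star$. Since $(e_1/2+I)^2=t_1$, the group $\langle e_1/2+I,t_2,t_3\rangle$ is free abelian of rank $3$, so the quotient is a $3$-torus.

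The substantive case is $A$ of order $2$. I would enumerate the $\integers$-classes of involutions in $\mathrm{GL}(3,\integers)$, either by quoting Table 1B of \cite{B-Z} or, self-containedly, from the three indecomposable integral $C_2$-lattices $\integers_+$, $\integers_-$, $\integers[C_2]$: the five non-identity classes are $\mathrm{diag}(-1,1,-1)$, $\mathrm{diag}(1,-1,1)$, $-I$, and the two classes carrying a regular summand, namely the transposition $B$ (with $Be_1=e_2$, $Be_2=e_1$, $Be_3=e_3$) together with $+1$ or with $-1$ on the complementary axis. For each class I would compute $L_+$ and $\Pi_+$ and test whether $\frac{1}{2}L_+\not\subseteq\Pi_+$. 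For $\mathrm{diag}(-1,1,-1)$ and $\mathrm{diag}(1,-1,1)$ one has $\Pi_+=L_+\ne0$, so admissible $a$ exist, and reducing $P_+(a)$ modulo $\Pi_+$ and invoking transitivity of the centralizer of $A$ (a $\mathrm{GL}(2,\integers/2)$-action on $\frac{1}{2}L_+/L_+$) forces the representatives $e_2/2+\mathrm{diag}(-1,1,-1)$ and $e_1/2+\mathrm{diag}(1,-1,1)$. For $-I$ one has $V_+=0$, so $\alpha_\star$ always fixes a point; for the transposition class $B\oplus(-1)$ one computes $\Pi_+=\frac{1}{2}L_+$; both are therefore excluded. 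For $B\oplus(+1)$ one finds $\Pi_+=\frac{1}{2}\integers(e_1+e_2)\oplus\integers e_3$ of index $2$ in $\frac{1}{2}L_+$, the single surviving coset being represented by $e_3/2$, which forces $e_3/2+B$. Finally, in each surviving case $\alpha^2=t_i$ for the relevant standard translation and the holonomy is $C_2$, while $\det A=+1$ for $\mathrm{diag}(-1,1,-1)$ and $-1$ for the other two; comparison of $\langle t_1,t_2,t_3,\alpha\rangle$ with the torsion-free $3$-space groups in Table 1B of \cite{B-Z} identifies the quotients as the orientable $O^3_2$ and the nonorientable $N^3_1$, $N^3_2$, respectively.

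The main obstacle is the order-$2$ case, where the decisive classification is integral rather than rational: each of the eigenvalue patterns $(1,1,-1)$ and $(1,-1,-1)$ splits into a diagonal $\integers$-class and a transposition $\integers$-class that are $\mathrm{GL}(3,\rationalnos)$-conjugate, and the whole dichotomy is governed by the index of $\Pi_+=P_+(\integers^3)$ in $\frac{1}{2}L_+$. The delicate point is that for $B\oplus(-1)$ the unique $(+1)$-axis is exactly the $\frac{1}{2}$-glued direction $e_1+e_2$, so $\Pi_+=\frac{1}{2}L_+$ and no fixed-point-free glide exists (the same mechanism that discards the transposition in the two-dimensional Lemma 5), whereas for $B\oplus(+1)$ the extra ``pure'' eigenvector $e_3$ is not glued, so a fixed-point-free glide exists but must run along $e_3$—a glide along $e_1+e_2$ would land in $\Pi_+$ and fix a point, and this is precisely what separates $N^3_1$ from $N^3_2$. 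Confirming that the centralizer of $A$ acts transitively on the admissible nonzero cosets, so that each surviving class is a single affine-conjugacy class with exactly the stated representative, is the remaining routine verification.
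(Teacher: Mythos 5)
Your argument is correct and arrives at the same normal forms, but it differs from the paper's proof at the decisive step. The paper's proof is essentially a table lookup: it quotes Table 1B of \cite{B-Z} for the five $\integers$-classes of involutions in $\mathrm{GL}(3,\integers)$, notes that $\langle t_1,t_2,t_3,\alpha\rangle$ must be a torsion-free $3$-space group, and then reads off from the same table which three $\integers$-classes admit torsion-free extensions and which IT numbers (hence quotient types) they carry. You instead prove the selection of the three surviving classes from scratch: the equivalences ``$\alpha_\star^2=I_\star$ iff $2P_+(a)\in\integers^3$'' and ``$\alpha_\star$ fixes a point iff $P_+(a)\in P_+(\integers^3)$'' are both right (the latter because $\mathrm{Im}(I-A)=V_-$ when $A^2=I$), and your computation of $\Pi_+$ versus $\tfrac{1}{2}L_+$ correctly discards $-I$ and $B\oplus(-1)$ and isolates the unique admissible coset in each of the remaining three cases. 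This buys a self-contained proof of exactly the fact the paper imports from the crystallographic tables, and it makes the normalization of the translational part (conjugation by translations to kill the $V_-$-component, the centralizer of $A$ acting on $\tfrac{1}{2}L_+/\Pi_+$) explicit where the paper compresses it into ``since $\mathrm{Aff}(\Mu)=N_A(\Mu)/\Mu$.'' Two minor remarks, neither a gap: for $A=\mathrm{diag}(-1,1,-1)$ the quotient $\tfrac{1}{2}L_+/\Pi_+$ has a single nonzero element, so the transitivity statement there is vacuous rather than a $\mathrm{GL}(2,\integers/2)$-action; and your final identification of the quotients as $O^3_2$, $N^3_1$, $N^3_2$ still rests on comparison with Table 1B of \cite{B-Z}, since holonomy and determinant alone do not separate $N^3_1$ from $N^3_2$ --- which is precisely what the paper does as well.
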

\begin{proof}
The matrix $A$ has order $1$ or $2$ by Lemma 7. 
If $A$ has order $1$, then $2a \in\integers^3$, 
and so $\alpha_\star$ is conjugate in $\mathrm{Aff}(\Mu)$ to $(e_1/2+I)_\star$. 
The quotient of $E^3/\Mu$ by the action of $(e_1/2+I)_\star$ is a $3$-torus, 
since $\langle e_1/2+I, t_2, t_3\rangle$ is a $3$-torus group. 

Now suppose $A$ has order 2.  
According to Table 1B in \cite{B-Z}, there are 5 possible $\integers$-classes for $A$. 
The group $\langle t_1, t_2, t_3, \alpha\rangle$ is a torsion-free 3-space group, 
since $\alpha_\star$ fixes no point of $E^3/\Mu$. 
According to Table 1B in \cite{B-Z}, only the $\integers$-classes of 3-space groups corresponding to the $\integers$-classes of 
$\mathrm{diag}(-1,1,-1)$, $\mathrm{diag}(1,-1,1)$ and $B$, with $Be_1 = e_2, Be_2 = e_1$ and $Be_3 = e_3$, 
contain a torsion-free group, namely $\langle t_1, t_2, t_3,\beta\rangle$ for $\beta = e_2/2 + \mathrm{diag}(-1,1,-1)$, 
$e_1/2 + \mathrm{diag}(1,-1,1)$ and $e_3/2 + B$ with $Be_1 = e_2, Be_2 = e_1$ and $Be_3 = e_3$. 
Therefore $\alpha_\star$ is conjugate in $\mathrm{Aff}(\Mu)$ to one of $(e_2/2 + \mathrm{diag}(-1,1,-1))_\star$, 
$(e_1/2 + \mathrm{diag}(1,-1,1))_\star$ or $(e_3/2 + B)_\star$ 
with $Be_1 = e_2, Be_2 = e_1$ and $Be_3 = e_3$, 
since $\mathrm{Aff}(\Mu) = N_A(\Mu)/\Mu$. 

The quotient of $E^3/\Mu$ by the action of $(e_2/2 + \mathrm{diag}(-1,1,-1))_\star$ is of type $O^3_2$, 
since the 3-space group $\langle t_1, t_3, e_2/2 + \mathrm{diag}(-1,1,-1)\rangle$ has IT number 4 according to Table 1B of \cite{B-Z}. 
The quotient of $E^3/\Mu$ by the action of $(e_1/2 + \mathrm{diag}(1,-1,1))_\star$ is of type $N^3_1$, 
since the 3-space group $\langle t_2, t_3, e_1/2 + \mathrm{diag}(1,-1,1)\rangle$ has IT number 7 according to Table 1B of \cite{B-Z}. 
The quotient of $E^3/\Mu$ by the action of $(e_3/2 + B))_\star$ is of type $N^3_2$, 
since the 3-space group $\langle t_1, t_2, e_3/2 + B\rangle$ has IT number 9 according to Table 1B of \cite{B-Z}. 
\end{proof}

Let $\alpha = a + A$ and $\beta = b+B$ be an affinities of $E^3$ such that $\alpha$ and $\beta$ normalize $\Mu$, and
$\alpha_\star$ and $\beta_\star$ are order 2 affinities of $O^3_1$ which do not fix a point of $O^3_1$, and $\Omega(\alpha_\star\beta_\star)$ has finite order. 
Then $A$ and $B$ have order 1 or 2.  If $A = I$, then $\alpha_\star$ is conjugate to $(e_1/2+I)_\star$, and  
if $A$ has order 2, then $\alpha_\star$ is conjugate to one of $(e_2/2+\mathrm{diag}(-1,1,-1))_\star$, 
$(e_1/2+\mathrm{diag}(1,-1,1))_\star$, or $(e_3/2+C)_\star$ with $Ce_1 = e_2, Ce_2 = e_1, Ce_3 = e_3$ by Lemma 8.
The same is true for $B$. 
Now $\langle A, B\rangle$ is a finite subgroup of $\mathrm{GL}(3,\integers)$ 
which either has order 1 or is isomorphic to $D_m$ for some $m$. 
By considering all $\integers$-classes of groups isomorphic to $D_m$ for some $m$ in Table 1B of \cite{B-Z}, 
we can find all the $\integers$-classes of the possible groups $\langle A, B\rangle$. 
To distinguish the conjugacy classes of pairs $\{\alpha_\star,\beta_\star\}$ with the same group $\langle A, B\rangle$, 
it is necessary to consider the action of the normalizer of $\langle A, B\rangle$ on the set of possible pairs $\{\alpha, \beta\}$. 
The normalizer of $\langle A, B\rangle$ is given in Table 5B of \cite{B-Z}. 

We find that there are 43 equivalence classes of pairs $\{\alpha_\star, \beta_\star\}$, 
and so there are 43 affine equivalence classes of flat $O^3_1\,|\, \Iota$ fiberings by Theorem 10. 
These fiberings are represented by the generalized Calabi constructions described in Tables 12, 13, 14 as explained in \S 8. 

Rows 41, 42, and 43 of Table 14 correspond to the three free products with amalgamation over $\integers^3$ described by Hillman on page 37 of \cite{H}.

\section{Fibrations of Closed Flat 4-Manifolds with Generic Fiber $O^3_2$} 

In this section, we describe the affine classification of the geometric co-Seifert fibrations of closed flat 4-manifolds with generic fiber 
the flat 3-manifold $O^3_2$.  Here $O^3_2 = E^3/\Mu$ with $\Mu = \langle t_1,t_2,t_3,\alpha\rangle$ and $\alpha = e_3/2 + \mathrm{diag}(-1,-1,1)$.

\begin{lemma}  
Let $\Mu = \langle t_1, t_2, t_3, \alpha\rangle$ with $\alpha = e_3/2 +  \mathrm{diag}(-1,-1,1)$.  
Then we have
$$N_A(\Mu) = \{b+B: b\in E^3, 2b_1, 2b_2 \in \integers\ and\ B = \mathrm{diag}(C,\pm 1)\ with\ C\in\mathrm{GL}(2,\integers)\}. $$
Moreover
$$\mathrm{Out}(\Mu) \cong ((\integers/2\integers)^2 \rtimes \mathrm{PGL}(2,\integers)) \times \{\pm 1\},$$
where the action of $\mathrm{PGL}(2,\integers)$ on $ (\integers/2\integers)^2$ is induced by the action of $\mathrm{GL}(2,\integers)$ on $E^2$, 
and if $b+ B \in N_A(\Mu)$ and $B = \mathrm{diag}(C,\delta)$, then $\Omega((b+B)_\star)$ 
corresponds to $(\epsilon(b_1), \epsilon(b_2), \pm C, \delta)$ with $\epsilon(b_i) = 0$ if $b_i \in \integers$ and $\epsilon(b_i) = 1$ if $b_i\not\in\integers$. 
\end{lemma}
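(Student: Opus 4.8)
The plan is to establish the three assertions in turn, reducing each to a concrete computation inside the parametrised group $N_A(\Mu)$.

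First I would compute the normalizer. The translation subgroup $\Tau = \langle t_1, t_2, t_3\rangle = \integers^3$ is characteristic in $\Mu$, so any $\beta = b + B$ normalizing $\Mu$ must normalize $\Tau$; since conjugation sends the translation by $v$ to the translation by $Bv$, this forces $B\integers^3 = \integers^3$, i.e.\ $B \in \mathrm{GL}(3,\integers)$. The point group of $\Mu$ is $\{I, A\}$ with $A = \mathrm{diag}(-1,-1,1)$, and $\beta$ must carry it to itself; as $A$ is the only nontrivial element, $BAB^{-1} = A$, so $B$ preserves the eigenspaces $\langle e_1, e_2\rangle$ and $\langle e_3\rangle$ of $A$. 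Over $\integers$ this gives $B = \mathrm{diag}(C, \pm 1)$ with $C \in \mathrm{GL}(2,\integers)$, using that $Be_3 \in \integers e_3$ is a unit multiple of $e_3$. Finally $\beta$ must send the coset $\Tau\alpha$ into $\Mu$: computing $\beta\alpha\beta^{-1} = (b + Be_3/2 - Ab) + A$ and using $(I - A) = \mathrm{diag}(2,2,0)$ and $Be_3 = \pm e_3$, the translation part lies in $e_3/2 + \integers^3$ exactly when $2b_1, 2b_2 \in \integers$, with $b_3$ unconstrained. These conditions are closed under inverses and composition and the generators of $\Mu$ visibly map into $\Mu$, so they are also sufficient, giving the stated formula for $N_A(\Mu)$.

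For the structure of $\mathrm{Out}(\Mu)$ I would use the exact sequences already recorded in the excerpt: $\Phi: N_A(\Mu) \to \mathrm{Aff}(\Mu)$ is onto with kernel $\Mu$, and $\Omega: \mathrm{Aff}(\Mu) \to \mathrm{Out}(\Mu)$ is onto with kernel $\Phi(C_A(\Mu))$, so that $\Omega\circ\Phi$ identifies $\mathrm{Out}(\Mu)$ with $N_A(\Mu)/\Mu C_A(\Mu)$. Since $\alpha t_i \alpha^{-1} = t_i^{-1}$ for $i = 1, 2$ while $t_3 = \alpha^2$ is central, one has $Z(\Mu) = \langle t_3\rangle$, hence $C_A(\Mu) = \{c e_3 + I : c \in \realnos\}$ by the formula $C_A(\Mu) = \{c + I : c \in \mathrm{Span}(Z(\Mu))\}$. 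A short computation then identifies $\Mu C_A(\Mu)$ with the set of $b + \mathrm{diag}(\pm I_2, 1)$ having $b_1, b_2 \in \integers$ and $b_3 \in \realnos$.

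The last step, which I expect to be the main obstacle, is to recognise this quotient as the asserted group and to read off $\Omega$. I would define $\theta: N_A(\Mu) \to ((\integers/2\integers)^2 \rtimes \mathrm{PGL}(2,\integers)) \times \{\pm 1\}$ by $\theta(b + \mathrm{diag}(C,\delta)) = (\epsilon(b_1), \epsilon(b_2), \pm C, \delta)$, where $\epsilon(b_i)$ records $b_i$ modulo $\integers$. Checking that $\theta$ is a homomorphism is where the care lies: in a product the first two translation coordinates transform by $(b_1, b_2) + C(b_1', b_2')$, and reducing modulo $\integers^2$ shows $C$ acts on $(\integers/2\integers)^2$ through $C \bmod 2$, which is exactly the semidirect action; because $-I_2 \equiv I_2 \pmod 2$, this action depends only on the class $\pm C \in \mathrm{PGL}(2,\integers)$. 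The delicate conceptual point is why $C$ descends to $\mathrm{PGL}$ rather than $\mathrm{GL}$: the matrix $-I_2$ is the $C$-block of the point part $A$ of $\alpha \in \Mu$, so $C$ and $-C$ differ by conjugation by $\alpha$ and define the same outer class. The third coordinate $\delta$ is multiplicative and uncoupled from the first factor, yielding the direct $\{\pm 1\}$. Surjectivity of $\theta$ is immediate by choosing $b_i \in \{0, 1/2\}$ and a representative $C$, and its kernel is precisely the set $\Mu C_A(\Mu)$ computed above; hence $\theta$ induces the claimed isomorphism, and the formula for $\Omega((b+B)_\star)$ is obtained as $\theta$ followed by this isomorphism.
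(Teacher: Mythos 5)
Your proposal is correct and follows exactly the route the paper indicates: the paper leaves the computation of $N_A(\Mu)$ to the reader as elementary, and states that it verified $\mathrm{Out}(\Mu)$ by computing $N_A(\Mu)/\Mu C_A(\Mu)$ via the epimorphisms $\Phi$ and $\Omega$, which is precisely your argument. Your details check out, including the identification $Z(\Mu)=\langle t_3\rangle$, the kernel $\Mu C_A(\Mu)=\{b+\mathrm{diag}(\pm I_2,1): b_1,b_2\in\integers\}$, and the observation that $C$ descends to $\mathrm{PGL}(2,\integers)$ because multiplying by $\alpha\in\Mu$ replaces $C$ by $-C$ without changing the outer class.
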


The computation of the normalizer $N_A(\Mu)$ is elementary and is left to the reader. 
The outer automorphism group of $\Mu$ was computed by Hillman in \cite{H} for all the closed flat 3-manifold groups $\Mu$ 
by first computing the automorphism group of $\Mu$. 
We checked Hillman's computation of $\mathrm{Out}(\Mu)$ by computing $\mathrm{Out}(\Mu)$ from $N_A(\Mu)$ 
via the epimorphisms $\Phi: N_A(\Mu) \to \mathrm{Aff}(\Mu)$ and $\Omega: \mathrm{Aff}(\Mu) \to \mathrm{Out}(\Mu)$. 

First we consider $O^3_2\,|\, S^1$ fiberings. 

\begin{lemma}  
Every finite subgroup of $\mathrm{PGL}(2,\integers)$ is conjugate to a subgroup of one of the following dihedral groups 
$$\Big\langle \pm\left(\begin{array}{rr} 0 & -1 \\  1 & 0 \end{array}\right),  \pm\left(\begin{array}{rr} 0 & 1 \\  1 & 0 \end{array}\right)\Big\rangle, \ 
\Big\langle \pm\left(\begin{array}{rr} 0 & -1 \\  1 & 1 \end{array}\right),  \pm\left(\begin{array}{rr} 0 & 1 \\  1 & 0 \end{array}\right)\Big\rangle$$
of orders $4$ and $6$, respectively, 
The group $\mathrm{PGL}(2,\integers)$ has $5$ conjugacy classes of elements of finite order represented by 
$$\pm\left(\begin{array}{rr} 1 & 0 \\  0 & 1 \end{array}\right), \pm\left(\begin{array}{rr} 0 & -1 \\  1 & 0 \end{array}\right), 
\pm\left(\begin{array}{rr} 0 & 1 \\  1 & 0 \end{array}\right),\pm\left(\begin{array}{rr} -1 & 0 \\  0 & 1 \end{array}\right),\pm\left(\begin{array}{rr} 0 & -1 \\  1 & 1\end{array}\right)$$
of orders $1, 2, 2, 2, 3$, respectively. 
\end{lemma}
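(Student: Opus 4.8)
The plan is to exploit the classical decomposition of $\mathrm{PGL}(2,\integers)$ as an amalgamated free product and then invoke the standard torsion theorem for such products. Write $x, y, s$ for the images in $\mathrm{PGL}(2,\integers)$ of the integer matrices $\bigl(\begin{smallmatrix} 0 & -1 \\ 1 & 0\end{smallmatrix}\bigr)$, $\bigl(\begin{smallmatrix} 0 & -1 \\ 1 & 1\end{smallmatrix}\bigr)$, and $\bigl(\begin{smallmatrix} 0 & 1 \\ 1 & 0\end{smallmatrix}\bigr)$, so that $x^2 = y^3 = s^2 = 1$ in $\mathrm{PGL}(2,\integers)$ (each relation holds because the relevant matrix power is $\pm I$). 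A direct computation gives $sxs = x$ and $sys = y^{-1}$, so $\langle x, s\rangle$ is the Klein four group, the order $4$ dihedral group of the statement, and $\langle y, s\rangle \cong S_3$ is the order $6$ dihedral group of the statement. Using the well-known presentation $\mathrm{PSL}(2,\integers) = \langle x\rangle * \langle y\rangle \cong C_2 * C_3$ together with the split extension $\mathrm{PGL}(2,\integers) = \mathrm{PSL}(2,\integers) \rtimes \langle s\rangle$ (note $s$ has determinant $-1$, hence lies outside $\mathrm{PSL}(2,\integers)$), I would identify $\mathrm{PGL}(2,\integers)$ with the amalgamated free product $\langle x, s\rangle *_{\langle s\rangle} \langle y, s\rangle$ over the common order-two subgroup $\langle s\rangle$; this identification is standard and can be confirmed from Bass--Serre normal forms once the relations $sxs = x$ and $sys = y^{-1}$ are checked.

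First I would prove the subgroup assertion. By the standard fact that in an amalgamated free product $A *_C B$ every finite subgroup is conjugate to a subgroup of $A$ or of $B$, every finite subgroup of $\mathrm{PGL}(2,\integers)$ is conjugate to a subgroup of $\langle x, s\rangle$ or of $\langle y, s\rangle$, which are exactly the two named dihedral groups. This is the first claim.

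Next I would enumerate the conjugacy classes of finite-order elements. Every such element is conjugate into $\langle x, s\rangle = \{1, x, s, xs\}$ or $\langle y, s\rangle = \{1, y, y^2, s, ys, y^2s\}$, so the only orders occurring are $1$, $2$, $3$. The identity gives one class. For order $3$, the elements $y$ and $y^{-1} = y^2$ are conjugate via $s$, giving a single class with representative $y$. For order $2$ I would separate the classes using two conjugacy invariants that descend to $\mathrm{PGL}(2,\integers)$. The sign of the determinant is a homomorphism $\mathrm{PGL}(2,\integers) \to \{\pm 1\}$, well defined because $\det(-M) = \det M$ in dimension two, hence a conjugacy invariant; it separates $x$ (determinant $+1$) from $s$ and $xs$ (determinant $-1$). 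Among determinant $+1$ involutions there is a single $\mathrm{PSL}(2,\integers)$-class, that of $x$, since in $C_2 * C_3$ every involution is conjugate to the generator of the $C_2$ factor. To separate $s$ from $xs = \mathrm{diag}(-1,1)$ I would use the eigenlattice invariant: a determinant $-1$ involution $\sigma$ of $\integers^2$ has $\pm 1$-eigensublattices $L_+, L_-$, and the index $[\integers^2 : L_+ \oplus L_-] \in \{1,2\}$ is unchanged under $\sigma \mapsto -\sigma$ (which merely swaps $L_+$ and $L_-$) and is preserved under conjugation; it equals $1$ for $xs$ and $2$ for $s$. Thus $x$, $s$, $xs$ represent three distinct order-two classes, and the three reflections of $\langle y, s\rangle$, being conjugate to $s$ within that group, contribute nothing new. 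This yields exactly the five classes with the stated representatives and orders $1, 2, 2, 2, 3$.

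The main obstacle is the passage from $\mathrm{GL}$- to $\mathrm{PGL}$-conjugacy for the order-two elements: the determinant sign alone leaves the two orientation-reversing involutions $s$ and $xs$ indistinguishable, and the honest separation requires the lattice-index invariant (equivalently, $\integers$-diagonalizability of a representing matrix) together with the verification that it is well defined modulo the central scalar $-I$. The amalgamated decomposition $\mathrm{PGL}(2,\integers) \cong \langle x,s\rangle *_{\langle s\rangle} \langle y,s\rangle$ is classical, but I would still check the relations $sxs = x$ and $sys = y^{-1}$ explicitly so that the two vertex groups are precisely the dihedral groups named in the statement.
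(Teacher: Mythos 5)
Your proposal is correct and rests on the same decomposition the paper uses: Lemma~45 of \cite{R-TII} is quoted to say that $\mathrm{PGL}(2,\integers)$ is the amalgamated free product of the two dihedral groups over their intersection $\langle\pm\left(\begin{smallmatrix}0&1\\1&0\end{smallmatrix}\right)\rangle$, which is exactly your $\langle x,s\rangle \ast_{\langle s\rangle}\langle y,s\rangle$, after which the torsion theorem for amalgams gives both claims. You merely supply details the paper leaves implicit --- deriving the splitting from $\mathrm{PSL}(2,\integers)\cong C_2\ast C_3$ instead of citing it, and separating the three order-two classes by the determinant sign and the eigenlattice index rather than by the conjugacy theory of amalgamated products --- and these details check out.
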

\begin{proof}
By Lemma 45 of \cite{R-TII}, the group $\mathrm{PGL}(2,\integers)$ is the free product with amalgamation 
of the two dihedral groups over their intersection. 
\end{proof}

There are 2 conjugacy classes of elements of $(\integers/2\integers)^2 \rtimes \mathrm{PGL}(2,\integers)$ corresponding to $\pm I$ 
represented by $\pm I$ and $e_2/2\pm I$.  Likewise for the 2nd and 3rd cases in Lemma 10. 
There are 3 conjugacy classes of elements of $(\integers/2\integers)^2 \rtimes \mathrm{PGL}(2,\integers)$ corresponding to $\pm\mathrm{diag}(-1,1)$ 
represented by $\pm\mathrm{diag}(-1,1)$, $e_2/2 \pm\mathrm{diag}(-1,1)$,  and $e_1/2+e_2/2 \pm\mathrm{diag}(-1,1)$. 
There is 1 conjugacy class of elements of $(\integers/2\integers)^2 \rtimes \mathrm{PGL}(2,\integers)$ corresponding to the last case in Lemma 10. 
Thus $(\integers/2\integers)^2 \rtimes \mathrm{PGL}(2,\integers)$ has 10 conjugacy classes of elements of finite order. 
Hence $\mathrm{Out}(\Mu)$ has 20 conjugacy classes of inverse pairs of elements of finite order. 
By Theorem 7, there are 20 affine equivalence classes of flat $O^3_2\,|\, S^1$ fiberings. 
These fiberings are represented by the generalized Calabi constructions described in Tables 15 and 16 as explained in \S 8. 

We next consider an example to illustrate how to use our tables to identify a closed flat 4-manifold. 

\vspace{.15in}
\noindent{\bf Example 3.}
Consider the torsion-free 4-space group 4/2/1/16 in \cite{B-Z}. 
The group is defined by 
$$\Gamma = \langle t_1, t_2, t_3, t_4, \alpha, \beta\rangle$$
with $\alpha = e_2/2 +e_4/2+ \mathrm{diag}(1,1,-1,1)$, and $\beta = e_3/2+e_4/2+\mathrm{diag}(-1,-1,-1,1)$. 

Let $\Nu = \langle t_1, t_2, t_3, \beta\alpha^{-1}\rangle$ with 
$$\beta\alpha^{-1} = e_2/2+e_3/2 + \mathrm{diag}(-1,-1,1,1).$$
Then $\Nu$ is a complete normal subgroup of $\Gamma$.  
By conjugating $\Nu$ by $-e_2/4+I$, we see that $\Nu$ is conjugate to 
$$\langle t_1,t_2,t_3, e_3/2+\mathrm{diag}(-1,-1,1,1)\rangle,$$
and so $\Nu$ is of type $O^3_2$. 
Let $V = \mathrm{Span}\{e_1, e_2, e_3\}$.  Then $V/\Nu$ is a closed flat 3-manifold of type $O^3_2$. 
The group $\Gamma/\Nu$ is infinite cyclic,  
since $\Nu\alpha$ generates $\Gamma/\Nu$ and $\Nu\alpha$ acts on the line $V^\perp$ by the translation $e_4/2+I$. 
Moreover $\Nu\alpha$ acts on $V/\Nu$ by $(e_2/2+\mathrm{diag}(1,1,-1))_\star$ in the first three coordinates. 
Therefore the geometric fibration of $E^4/\Gamma$ determined by $\Nu$ is affinely equivalent to the $O^3_2\,|\,S^1$ fibering described in Row 19 of Table 16 
by Theorem 7.  Hence $E^4/\Gamma$ is of type $N^4_{23}$.

\begin{table} 

\begin{tabular}{c|c|c|c|c}	
no. & mfd. & cS-fbr.  &  grp. & representative      \\ \hline	
 1	&$O^4_2$ & $O^3_2\,|\,S^1$  & $C_1$ & 
	  	$ \left(\begin{array}{rrr} 
		1 &  0 & 0 \\ 0 & 1 & 0 \\ 0 & 0 & 1 \end{array}\right)$   \\ \hline
 2	&$O^4_3$ & $O^3_2\,|\,S^1$ &  $C_2$ &
		$ \left(\begin{array}{rrrr} 
		0 & 1 &  0 & 0 \\ \frac{1}{2} & 0 & 1 & 0 \\ 0 & 0 & 0 & 1\end{array}\right)$    \\ \hline
 3	&$O^4_6$ & $O^3_2\,|\,S^1$ &  $C_2$ &
 		$ \left(\begin{array}{rrrr} 
		0& 0 &  -1 & 0\\ 0& 1 & 0 & 0 \\ \frac{1}{4} & 0 & 0 & 1 \end{array}\right)$   \\ \hline
 4	&$O^4_7$ & $O^3_2\,|\,S^1$ &  $C_4$ &
 		$ \left(\begin{array}{rrrr} 
		0 & 0 &  -1 & 0 \\ \frac{1}{2} & 1 & 0 & 0 \\ 0 & 0 & 0 & 1 \end{array}\right)$    \\ \hline
 5	&$O^4_8$ & $O^3_2\,|\,S^1$ &  $C_3$ &
 		$ \left(\begin{array}{rrrr} 
		0 & 0 &  -1 & 0 \\ 0 & 1 & 1 & 0 \\ \frac{1}{6} & 0 & 0 & 1 \end{array}\right)$     \\ \hline
 6 	&$O^4_9$ & $O^3_2\,|\,S^1$ &  $C_2$ &
 	$ \left(\begin{array}{rrr} 
		-1 & 0 &  0 \\ 0 & 1 & 0 \\ 0 & 0 & -1 \end{array}\right)$   \\ \hline
 7 	&$O^4_{10}$ & $O^3_2\,|\,S^1$ &  $C_2$ &
 	$ \left(\begin{array}{rrrr} 
		0 & -1 &  0 & 0\\ \frac{1}{2} & 0 & 1& 0 \\ 0 & 0 & 0 & -1 \end{array}\right)$    \\ \hline
8 	&$O^4_{11}$ & $O^3_2\,|\,S^1$ &  $C_2$ &
 	$ \left(\begin{array}{rrrr} 
		 \frac{1}{2} & -1 &  0 & 0\\ \frac{1}{2} & 0 & 1& 0 \\ 0 & 0 & 0 & -1 \end{array}\right)$    \\ \hline
9 	&$O^4_{12}$ & $O^3_2\,|\,S^1$ &  $C_2$ &
 	$ \left(\begin{array}{rrr} 
		0 & 1 &  0 \\ 1 & 0 & 0 \\ 0 & 0 & -1 \end{array}\right)$   \\ \hline
10 	&$O^4_{13}$ & $O^3_2\,|\,S^1$ &  $C_4$ &
 	$ \left(\begin{array}{rrrr} 
		0 & 0 &  1 & 0\\ \frac{1}{2} & 1 & 0 & 0 \\ 0 & 0 & 0 & -1 \end{array}\right)$    \\ \hline
\end{tabular}

\vspace{.2in}
\caption{The orientable flat $O^3_2$ fiberings over $S^1$}
\end{table}

\begin{table} 

\begin{tabular}{c|c|c|c|c}	
no. & mfd. & cS-fbr.  &  grp. & representative      \\ \hline	
 11	&$N^4_3$ & $O^3_2\,|\,S^1$  & $C_2$ & 
	  	$ \left(\begin{array}{rrr} 
		-1 &  0 & 0 \\ 0 & 1 & 0 \\ 0 & 0 & 1 \end{array}\right)$   \\ \hline
 12	&$N^4_4$ & $O^3_2\,|\,S^1$  & $C_2$ & 
	  	$ \left(\begin{array}{rrrr} 
		0 & -1 &  0 & 0  \\ \frac{1}{2} & 0 & 1 & 0 \\ 0 & 0 & 0 & 1 \end{array}\right)$   \\ \hline
 13	&$N^4_5$ & $O^3_2\,|\,S^1$  & $C_2$ & 
	  	$ \left(\begin{array}{rrrr} 
		\frac{1}{2} & -1 &  0 & 0  \\ \frac{1}{2} & 0 & 1 & 0 \\ 0 & 0 & 0 & 1 \end{array}\right)$   \\ \hline
 14 	&$N^4_6$ & $O^3_2\,|\,S^1$ &  $C_2$ &
 	$ \left(\begin{array}{rrr} 
		0 & 1 &  0 \\ 1 & 0 & 0 \\ 0 & 0 & 1 \end{array}\right)$   \\ \hline
 15 	&$N^4_7$ & $O^3_2\,|\,S^1$ &  $C_4$ &
 	$ \left(\begin{array}{rrrr} 
		0 & 0 &  1 & 0\\ \frac{1}{2} & 1 & 0 & 0 \\ 0 & 0 & 0 & 1 \end{array}\right)$    \\ \hline
16	&$N^4_{17}$ & $O^3_2\,|\,S^1$ &  $C_4$ &
 		$ \left(\begin{array}{rrr} 
		0 &  -1 & 0\\ 1 & 0 & 0 \\ 0 & 0 & -1 \end{array}\right)$   \\ \hline
 17	&$N^4_{18}$ & $O^3_2\,|\,S^1$ &  $C_4$ &
 		$ \left(\begin{array}{rrrr} 
		0 & 0 &  -1 & 0 \\ \frac{1}{2} & 1 & 0 & 0 \\ 0 & 0 & 0 & -1 \end{array}\right)$    \\ \hline
18	&$N^4_{22}$ & $O^3_2\,|\,S^1$  & $C_2$ & 
	  	$ \left(\begin{array}{rrr} 
		1 &  0 & 0 \\ 0 & 1 & 0 \\ 0 & 0 & -1 \end{array}\right)$   \\ \hline
19	&$N^4_{23}$ & $O^3_2\,|\,S^1$  & $C_2$ & 
	  	$ \left(\begin{array}{rrrr} 
		0 & 1 &  0 & 0  \\ \frac{1}{2} & 0 & 1 & 0 \\ 0 & 0 & 0 & -1 \end{array}\right)$   \\ \hline
20	&$N^4_{42}$ & $O^3_2\,|\,S^1$ &  $C_6$ &
 		$ \left(\begin{array}{rrr} 
		0 &  -1 & 0 \\ 1 & 1 & 0 \\ 0 & 0 & -1 \end{array}\right)$     \\ \hline
\end{tabular}

\vspace{.2in}
\caption{The nonorientable flat $O^3_2$ fiberings over $S^1$}
\end{table}

\vspace{.15in}
Next we consider $O^3_2\,|\, \Iota$ fiberings. 

\begin{lemma}  
Let $\Mu = \langle t_1, t_2, t_3, \alpha\rangle$ with $\alpha = e_3/2 + \mathrm{diag}(-1,-1,1)$,  
and let $\beta = b + \mathrm{diag}(C,\pm 1)$ be an affinity of $E^3$ that normalizes $\Mu$ 
such that $\beta_\star$ has order $2$ and $\beta_\star$ does not fix a point of $E^3/\Mu$. 
Then $\pm C$ has order $1, 2$ in $\mathrm{PGL}(2,\integers)$. 
If $\pm C$ has order $1$, then $\beta_\star$ is conjugate to $(e_2/2+I)_\star$. 
The quotient of $E^3/\Mu$ by the action of  $(e_2/2+I)_\star$ is of type $O^3_2$. 
If $\pm C$ has order $2$, then $\beta_\star$ is conjugate to either $(e_3/4 +B)_\star$, with $Be_1 = e_2, Be_2 = -e_1$ and $Be_3 = e_3$, 
or $(e_1/2+e_2/2+\mathrm{diag}(-1,1,-1))_\star$, or $(e_2/2+\mathrm{diag}(-1,1,1))_\star$, or $(e_1/2+e_2/2+\mathrm{diag}(-1,1,1))_\star$. 
The quotient of $E^3/\Mu$ by the action of $\beta_\star$ is of type $O^3_4$, $O^3_6$, $N^3_3$, or $N^3_4$, respectively. 
\end{lemma}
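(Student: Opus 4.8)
The plan is to follow the template of Lemmas 5 and 8, using the normalizer and outer-automorphism data of Lemma 9 together with the $\mathrm{PGL}(2,\integers)$ classification of Lemma 10. First I would bound the rotational part: since $\beta_\star$ has order $2$ we have $\beta^2\in\Mu$, so the matrix part $B^2=\mathrm{diag}(C^2,1)$ lies in the holonomy group $\{I,\mathrm{diag}(-1,-1,1)\}$ of $\Mu$; hence $C^2=I$ or $C^2=-I$, and in either case $\pm C$ has order $1$ or $2$ in $\mathrm{PGL}(2,\integers)$. By Lemma 10 the class of $\pm C$ is then one of $\pm I$; $\pm C_0$ with $C_0e_1=e_2,\ C_0e_2=-e_1$ (so $C_0^2=-I$); $\pm S$ with $Se_1=e_2,\ Se_2=e_1$ (the transposition); or $\pm\mathrm{diag}(-1,1)$. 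For the surviving maps $\beta_\star$ (those satisfying the hypotheses, namely order $2$ and no fixed point) the group $\langle\Mu,\beta\rangle$ is a torsion-free $3$-space group containing $\Mu$ (of type $O^3_2$) with index $2$, since $\Mu$ acts freely and $\beta_\star$ generates the quotient $C_2$-action on $E^3/\Mu$; thus $(E^3/\Mu)/\langle\beta_\star\rangle=E^3/\langle\Mu,\beta\rangle$ is one of the ten closed flat $3$-manifolds, to be named by its holonomy, orientability, and first homology (equivalently its IT number in Table 1B of Brown et al.), exactly as in the proof of Lemma 8.

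In the order-$1$ case ($C=\pm I$) I would multiply $\beta$ by $\alpha\in\Mu$, which changes the matrix by $\mathrm{diag}(-1,-1,1)$ and replaces $C$ by $-C$ without altering $\beta_\star$, reducing to $C=I$. If $\delta=-1$ then $\alpha\beta$ has matrix $-I$ and is a point reflection, so $\beta_\star$ fixes a point, contradicting the hypothesis; hence $\delta=1$ and $\beta$ is a translation $b+I$. Testing the two cosets of the translation lattice $\integers^3$ in $\Mu$ shows $\beta_\star$ fixes no point precisely when $b\notin\integers^3$ and $b_3\notin\tfrac12+\integers$; with $2b\in\integers^3$ this forces $b\in\tfrac12\integers^3$ with $b_3\in\integers$, and conjugating by a suitable $\mathrm{diag}(C',1)$, $C'\in\mathrm{GL}(2,\integers)$, makes $\beta_\star$ conjugate to $(e_2/2+I)_\star$. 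As $\beta$ contributes a trivial rotation, the quotient has holonomy $C_2$ and is orientable, so it is $O^3_2$.

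For the order-$2$ case I would treat the three $\mathrm{PGL}$-classes separately. For the transposition $\pm S$ the relation $\beta^2\in\Mu$ gives $b_1+b_2\in\integers$, and a direct computation of $\mathrm{im}(B-I)$ and of $\mathrm{im}(\mathrm{diag}(-1,-1,1)B-I)$ shows that for both $\delta=\pm1$ the map $\beta_\star$ always fixes a point; this class is therefore vacuous, which is why it never appears (note the holonomy alone would misleadingly allow $(C_2)^2$-types here, so the fixed-point computation is essential). For $\pm C_0$ the matrix $B$ has order $4$ and the holonomy is $\langle B\rangle\cong C_4$; when $\delta=-1$ the matrix $B$ is orientation-reversing, so a fixed-point-free $\beta_\star$ would make $\langle\Mu,\beta\rangle$ a nonorientable flat $3$-manifold group with $C_4$ holonomy, which does not exist by Tables 1 and 2, so $\beta_\star$ fixes a point; when $\delta=1$ the order condition forces $b_1\equiv b_2\pmod{\integers}$ and $b_3\in\tfrac14+\tfrac12\integers$, and conjugation reduces $\beta_\star$ to $(e_3/4+B)_\star$, with orientable quotient of holonomy $C_4$, namely $O^3_4$.

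The remaining and most delicate case is $\pm C=\pm\mathrm{diag}(-1,1)$, where $B=\mathrm{diag}(-1,1,\delta)$ and the quotient always has Klein-four holonomy $(C_2)^2$, so the $\mathrm{PGL}$-class no longer determines the manifold; here I would separate cases by $\delta$ and by the residues $\epsilon(b_1),\epsilon(b_2)$ of Lemma 9. The fixed-point analysis gives: for $\delta=-1$ (orientable) the map is fixed-point-free iff $\epsilon(b_1)=\epsilon(b_2)=1$, yielding $(e_1/2+e_2/2+\mathrm{diag}(-1,1,-1))_\star$ with orientable quotient $O^3_6$; for $\delta=1$ (orientation-reversing) it is fixed-point-free iff either $b_2\notin\integers$ with $b_3\in\integers$, or $b_1\notin\integers$ with $b_3\in\tfrac12+\integers$, and conjugation by $\mathrm{diag}(S,1)$ followed by multiplication by $\alpha$ folds the $b_3\in\tfrac12+\integers$ subcases into the $b_3\in\integers$ ones and identifies $\epsilon=(1,0)$ with $\epsilon=(0,1)$. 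This leaves exactly the two classes $\epsilon=(0,1)$ and $\epsilon=(1,1)$, represented by $(e_2/2+\mathrm{diag}(-1,1,1))_\star$ and $(e_1/2+e_2/2+\mathrm{diag}(-1,1,1))_\star$; both quotients are nonorientable with holonomy $(C_2)^2$ and characteristic fiber $K^2$, and I would separate them by first homology (or by matching IT numbers), the extra $e_1/2$ producing the $\integers_4$ summand of $N^3_4$ while its absence gives the $(\integers_2)^2$ summand of $N^3_3$. The hard part is precisely this bookkeeping in the $\mathrm{diag}(-1,1)$ case: tracking which translation residues and signs survive the no-fixed-point condition, correctly merging the conjugate subcases under the part of $N_A(\Mu)$ that centralizes $\pm\mathrm{diag}(-1,1)$ in $\mathrm{PGL}(2,\integers)$, and distinguishing $N^3_3$ from $N^3_4$, which are invisible to holonomy and orientability alone.
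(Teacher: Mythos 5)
Your proposal is correct and follows essentially the same route as the paper's proof, which simply cites Lemmas 3, 8, and 10 for the conjugacy classification and identifies the quotients by their IT numbers in Table 1B of Brown et al.; you have filled in the fixed-point and conjugation casework that those citations compress, and your computations (including the vacuousness of the transposition class and the folding of the $b_3\in\tfrac12+\integers$ subcases) check out. The only cosmetic differences are that you identify the quotient types by holonomy, orientability, and first homology rather than by IT-number lookup, and you exclude the orientation-reversing order-4 subcase by non-existence of a nonorientable flat 3-manifold with $C_4$ holonomy rather than by observing directly that $\beta^2\notin\Mu$ there.
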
 
\begin{proof}
If $\pm C$ has order $1$, then $\beta_\star$ is conjugate to $(e_2/2+I)_\star$ by Lemmas 3 and 8. 
The quotient of $E^3/\Mu$ by the action of  $(e_2/2+I)_\star$ is clearly of type $O^3_2$. 

If $\pm C$ has order $2$, then $\beta_\star$ is conjugate to either $(e_3/4 +B)_\star$, with $Be_1 = e_2, Be_2 = -e_1$ and $Be_3 = e_3$, 
or $(e_1/2+e_2/2+\mathrm{diag}(-1,1,-1))_\star$, or $(e_2/2+\mathrm{diag}(-1,1,1))_\star$, or $(e_1/2+e_2/2+\mathrm{diag}(-1,1,1))_\star$ 
by Lemmas 3, 8, and 10. 

The quotient of $E^3/\Mu$ by the action of $(e_3/4 +B)_\star$ is of type $O^3_4$, 
since the 3-space group $\langle t_1, t_2, \alpha, e_3/2 +B\rangle$ has IT number 76 according to Table 1B of \cite{B-Z}. 
The quotient of $E^3/\Mu$ by the action of $(e_1/2+e_2/2+\mathrm{diag}(-1,1,-1))_\star$ is of type $O^3_6$, 
since the 3-space group $\langle t_1, \alpha, e_1/2+e_2/2+\mathrm{diag}(-1,1,-1)\rangle$ has IT number 19 according to Table 1B of \cite{B-Z}. 
The quotient of $E^3/\Mu$ by the action of $(e_2/2+\mathrm{diag}(-1,1,1))_\star$ is of type $N^3_3$, 
since the 3-space group $\langle t_1, \alpha, e_2/2+\mathrm{diag}(-1,1,1)\rangle$ has IT number 29. 
The quotient of $E^3/\Mu$ by the action of $(e_1/2+e_2/2+\mathrm{diag}(-1,1,1))_\star$ is of type $N^3_4$, 
since the 3-space group $\langle t_1, \alpha, e_1/2+e_2/2+\mathrm{diag}(-1,1,1)\rangle$ has IT number 33. 
\end{proof}

Let $\beta = b+\mathrm{diag}(B, \pm 1)$ and $\gamma = c +\mathrm{diag}(C,\pm 1)$ be affinities of $E^3$ that normalize $\Mu$ and such that 
$\beta_\star$ and $\gamma_\star$ are order 2 affinities of $O^3_2$ which do not fix a point of $O^3_2$, and $\Omega(\beta_\star\gamma_\star)$ has finite order. 
By Lemmas 9 and 11, the elements $\Omega(\beta_\star)$ and $\Omega(\gamma_\star)$ have order 2. 
Therefore $\Omega(\langle \beta_\star,\gamma_\star\rangle)$ is a dihedral group of order $2m$ for some positive integer $m$. 
If $\langle \pm B, \pm C\rangle$ has order greater than 2, then $\langle \pm B, \pm C\rangle$ has order 4 
and the pair $\{\pm B, \pm C\}$ is conjugate to the pair $\{\pm D, \pm \mathrm{diag}(-1,1)\}$, 
where $De_1 = e_2$ and $De_2 = -e_1$, by Lemmas 9, 10, 11.  
Hence we have that  $m = 1, 2$ or $4$. 
Therefore by conjugating the pair $\{\beta_\star, \gamma_\star\}$, we may assume $B = I, D$ or $\mathrm{diag}(-1,1)$ 
and $C = \pm I, \pm D$ or $\pm \mathrm{diag}(-1,1)$. 
By considering all the possibilities for the conjugacy classes of $\beta_\star$ and $\gamma_\star$, 
we can find that there are 28 equivalence classes of pairs $\{\beta_\star,\gamma_\star\}$, and so there are 28 
affine equivalence classes of flat $O^3_2\,|\, \Iota$ fiberings by Theorem 10.  
These fiberings are represented by the generalized Calabi constructions described in Tables 17 and 18 as explained in \S 8.  

Rows 23, 25, 26, 27, and 28 of Table 18 correspond to the five amalgamated products described by Hillman on page 38 of \cite{H}. 
The amalgamated product corresponding to Row 24 of Table 18 was missed by Hillman in \cite{H}; see Example 4. 

\begin{table} 

\begin{tabular}{c|c|c|c|c|c}	
no. & mfd. & cS-fbr.  &  grp. & pair representatives  & s-fbrs.  \\ \hline	
 1	&$O^4_9$ & $O^3_2\,|\,\Iota$ & $D_1$ &
 		$ \left(\begin{array}{rrrr} 
		0 & -1 & 0 &  0 \\ \frac{1}{2} & 0 & 1 & 0 \\ 0 & 0 & 0 & 1\end{array}\right)$, 
		$ \left(\begin{array}{rrrr} 
		0 & -1 & 0 &  0 \\ \frac{1}{2} & 0 & 1 & 0 \\ 0 & 0 & 0 & 1\end{array}\right)$ & $N^3_3, N^3_3$ \\ \hline
2	&$O^4_{10}$ & $O^3_2\,|\,\Iota$ & $D_1$ &
 		$ \left(\begin{array}{rrrr} 
		\frac{1}{2}  & -1 & 0 &  0 \\ \frac{1}{2} & 0 & 1 & 0 \\ 0 & 0 & 0 & 1\end{array}\right)$, 
		$ \left(\begin{array}{rrrr} 
		\frac{1}{2}  & -1 & 0 &  0 \\ \frac{1}{2} & 0 & 1 & 0 \\ 0 & 0 & 0 & 1\end{array}\right)$ & $N^3_4, N^3_4$ \\ \hline
3	&$O^4_{12}$ & $O^3_2\,|\,\Iota$ & $D_2$ &
 		$ \left(\begin{array}{rrrr} 
		0  & -1 & 0 &  0 \\ \frac{1}{2} & 0 & 1 & 0 \\ 0 & 0 & 0 & 1\end{array}\right)$, 
		$ \left(\begin{array}{rrrr} 
		\frac{1}{2}  & -1 & 0 &  0 \\ \frac{1}{2} & 0 & 1 & 0 \\ 0 & 0 & 0 & 1\end{array}\right)$ & $N^3_3, N^3_4$ \\ \hline
4	&$O^4_{15}$ & $O^3_2\,|\,\Iota$ & $D_2$ &
 		$ \left(\begin{array}{rrrr} 
		0  & -1 & 0 &  0 \\ \frac{1}{2} & 0 & 1 & 0 \\ 0 & 0 & 0 & 1\end{array}\right)$, 
		$ \left(\begin{array}{rrrr} 
		\frac{1}{2}  & 1 & 0 &  0 \\ 0 & 0 & -1 & 0 \\ 0 & 0 & 0 & 1\end{array}\right)$ & $N^3_3, N^3_3$ \\ \hline
5	&$O^4_{16}$ & $O^3_2\,|\,\Iota$ & $D_2$ &
 		$ \left(\begin{array}{rrrr} 
		0  & -1 & 0 &  0 \\ \frac{1}{2} & 0 & 1 & 0 \\ 0 & 0 & 0 & 1\end{array}\right)$, 
		$ \left(\begin{array}{rrrr} 
		\frac{1}{2}  & 1 & 0 &  0 \\ \frac{1}{2} & 0 & -1 & 0 \\ 0 & 0 & 0 & 1\end{array}\right)$ & $N^3_3, N^3_4$ \\ \hline
6	&$O^4_{17}$ & $O^3_2\,|\,\Iota$ & $D_2$ &
 		$ \left(\begin{array}{rrrr} 
		\frac{1}{2}  & -1 & 0 &  0 \\ \frac{1}{2} & 0 & 1 & 0 \\ 0 & 0 & 0 & 1\end{array}\right)$, 
		$ \left(\begin{array}{rrrr} 
		\frac{1}{2}  & 1 & 0 &  0 \\ \frac{1}{2} & 0 & -1 & 0 \\ 0 & 0 & 0 & 1\end{array}\right)$ & $N^3_4, N^3_4$ \\ \hline
7	&$N^4_{15}$ & $O^3_2\,|\,\Iota$ & $D_1$ &
 		$ \left(\begin{array}{rrrr} 
		0 & 0 & -1 &  0 \\ 0 & 1 & 0 & 0 \\ \frac{1}{4} & 0 & 0 & 1\end{array}\right)$, 
		$ \left(\begin{array}{rrrr} 
		0 & 0 & -1 &  0 \\ 0 & 1 & 0 & 0 \\ \frac{1}{4} & 0 & 0 & 1\end{array}\right)$ & $O^3_4, O^3_4$ \\ \hline
8	&$N^4_{16}$ & $O^3_2\,|\,\Iota$ & $D_2$ &
 		$ \left(\begin{array}{rrrr} 
		0 & 0 & -1 &  0 \\ 0 & 1 & 0 & 0 \\ \frac{1}{4} & 0 & 0 & 1\end{array}\right)$, 
		$ \left(\begin{array}{rrrr} 
		\frac{1}{2} & 0 & -1 &  0 \\ \frac{1}{2} & 1 & 0 & 0 \\ \frac{1}{4} & 0 & 0 & 1\end{array}\right)$ & $O^3_4, O^3_4$ \\ \hline
9	&$N^4_{17}$ & $O^3_2\,|\,\Iota$ & $D_2$ &
 		$ \left(\begin{array}{rrrr} 
		0 & 0 & -1 &  0 \\ 0 & 1 & 0 & 0 \\ \frac{1}{4} & 0 & 0 & 1\end{array}\right)$, 
		$ \left(\begin{array}{rrrr} 
		0 & 0 & 1 &  0 \\ 0 & -1 & 0 & 0 \\ \frac{1}{4} & 0 & 0 & 1\end{array}\right)$ & $O^3_4, O^3_4$ \\ \hline
10	&$N^4_{18}$ & $O^3_2\,|\,\Iota$ & $D_2$ &
 		$ \left(\begin{array}{rrrr} 
		0 & 0 & -1 &  0 \\ 0 & 1 & 0 & 0 \\ \frac{1}{4} & 0 & 0 & 1\end{array}\right)$, 
		$ \left(\begin{array}{rrrr} 
		\frac{1}{2}  & 0 & 1 &  0 \\ \frac{1}{2} & -1 & 0 & 0 \\ \frac{1}{4} & 0 & 0 & 1\end{array}\right)$ & $O^3_4, O^3_4$ \\ \hline
11	&$N^4_{24}$ & $O^3_2\,|\,\Iota$ & $D_1$ &
 		$ \left(\begin{array}{rrrr} 
		0 & 1 & 0 &  0 \\ \frac{1}{2} & 0 & 1 & 0 \\ 0 & 0 & 0 & 1\end{array}\right)$, 
		$ \left(\begin{array}{rrrr} 
		0 & 1 & 0 &  0 \\ \frac{1}{2} & 0 & 1 & 0 \\ 0 & 0 & 0 & 1\end{array}\right)$ & $O^3_2, O^3_2$ \\ \hline
12	&$N^4_{26}$ & $O^3_2\,|\,\Iota$ & $D_2$ &
 		$ \left(\begin{array}{rrrr} 
		\frac{1}{2} & 1 & 0 &  0 \\ 0 & 0 & 1 & 0 \\ 0 & 0 & 0 & 1\end{array}\right)$, 
		$ \left(\begin{array}{rrrr} 
		0 & 1 & 0 &  0 \\ \frac{1}{2} & 0 & 1 & 0 \\ 0 & 0 & 0 & 1\end{array}\right)$ & $O^3_2, O^3_2$ \\ \hline
13	&$N^4_{28}$ & $O^3_2\,|\,\Iota$ & $D_2$ &
 		$ \left(\begin{array}{rrrr} 
		\frac{1}{2} & 1 & 0 &  0 \\ \frac{1}{2} & 0 & 1 & 0 \\ 0 & 0 & 0 & 1\end{array}\right)$, 
		$ \left(\begin{array}{rrrr} 
		0 & 0 & -1 &  0 \\ 0 & 1 & 0 & 0 \\ \frac{1}{4} & 0 & 0 & 1\end{array}\right)$ & $O^3_2, O^3_4$ \\ \hline
14	&$N^4_{29}$ & $O^3_2\,|\,\Iota$ & $D_4$ &
 		$ \left(\begin{array}{rrrr} 
		0 & 1 & 0 &  0 \\ \frac{1}{2} & 0 & 1 & 0 \\ 0 & 0 & 0 & 1\end{array}\right)$, 
		$ \left(\begin{array}{rrrr} 
		0 & 0 & -1 &  0 \\ 0 & 1 & 0 & 0 \\ \frac{1}{4} & 0 & 0 & 1\end{array}\right)$ & $O^3_2, O^3_4$ \\ \hline \end{tabular}

\vspace{.2in}
\caption{The flat $O^3_2$ fiberings over $\Iota$, Part I}
\end{table}

\begin{table} 

\begin{tabular}{c|c|c|c|c|c}	
no. & mfd. & cS-fbr.  &  grp. & pair representatives  & s-fbrs.  \\ \hline	
15	&$N^4_{30}$ & $O^3_2\,|\,\Iota$ & $D_2$ &
 		$ \left(\begin{array}{rrrr} 
		0 & 1 & 0 &  0 \\ \frac{1}{2} & 0 & 1 & 0 \\ 0 & 0 & 0 & 1\end{array}\right)$, 
		$ \left(\begin{array}{rrrr} 
		0 & -1& 0 &  0 \\ \frac{1}{2} & 0 & 1 & 0 \\ 0 & 0 & 0 & 1\end{array}\right)$ & $O^3_2, N^3_3$ \\ \hline
16	&$N^4_{32}$ & $O^3_2\,|\,\Iota$ & $D_2$ &
 		$ \left(\begin{array}{rrrr} 
		0 & 1 & 0 &  0 \\ \frac{1}{2} & 0 & 1 & 0 \\ 0 & 0 & 0 & 1\end{array}\right)$, 
		$ \left(\begin{array}{rrrr} 
		\frac{1}{2} & -1& 0 &  0 \\ \frac{1}{2} & 0 & 1 & 0 \\ 0 & 0 & 0 & 1\end{array}\right)$ & $O^3_2, N^3_4$ \\ \hline
17	&$N^4_{34}$ & $O^3_2\,|\,\Iota$ & $D_2$ &
 		$ \left(\begin{array}{rrrr} 
		 \frac{1}{2} & 1 & 0 &  0 \\ 0 & 0 & 1 & 0 \\ 0 & 0 & 0 & 1\end{array}\right)$, 
		$ \left(\begin{array}{rrrr} 
		0 & -1& 0 &  0 \\ \frac{1}{2} & 0 & 1 & 0 \\ 0 & 0 & 0 & 1\end{array}\right)$ & $O^3_2, N^3_3$ \\ \hline
18	&$N^4_{34}$ & $O^3_2\,|\,\Iota$ & $D_2$ &
 		$ \left(\begin{array}{rrrr} 
		\frac{1}{2} & 1 & 0 &  0 \\ \frac{1}{2} & 0 & 1 & 0 \\ 0 & 0 & 0 & 1\end{array}\right)$, 
		$ \left(\begin{array}{rrrr} 
		0 & -1& 0 &  0 \\ \frac{1}{2} & 0 & 1 & 0 \\ 0 & 0 & 0 & 1\end{array}\right)$ & $O^3_2, N^3_3$ \\ \hline
19	&$N^4_{35}$ & $O^3_2\,|\,\Iota$ & $D_2$ &
 		$ \left(\begin{array}{rrrr} 
		 \frac{1}{2} & 1 & 0 &  0 \\ 0 & 0 & 1 & 0 \\ 0 & 0 & 0 & 1\end{array}\right)$, 
		$ \left(\begin{array}{rrrr} 
		\frac{1}{2} & -1& 0 &  0 \\ \frac{1}{2} & 0 & 1 & 0 \\ 0 & 0 & 0 & 1\end{array}\right)$ & $O^3_2, N^3_4$ \\ \hline
20	&$N^4_{36}$ & $O^3_2\,|\,\Iota$ & $D_2$ &
 		$ \left(\begin{array}{rrrr} 
		\frac{1}{2} & 1 & 0 &  0 \\ \frac{1}{2} & 0 & 1 & 0 \\ 0 & 0 & 0 & 1\end{array}\right)$, 
		$ \left(\begin{array}{rrrr} 
		\frac{1}{2} & -1& 0 &  0 \\ \frac{1}{2} & 0 & 1 & 0 \\ 0 & 0 & 0 & 1\end{array}\right)$ & $O^3_2, N^3_4$ \\ \hline
21	&$N^4_{40}$ & $O^3_2\,|\,\Iota$ & $D_4$ &
 		$ \left(\begin{array}{rrrr} 
		0 & 0 & -1 &  0 \\ 0 & 1 & 0 & 0 \\ \frac{1}{4} & 0 & 0 & 1\end{array}\right)$, 
		$ \left(\begin{array}{rrrr} 
		0 & -1 & 0 &  0 \\ \frac{1}{2} & 0 & 1 & 0 \\ 0 & 0 & 0 & 1\end{array}\right)$ & $O^3_4, N^3_3$ \\ \hline
22	&$N^4_{41}$ & $O^3_2\,|\,\Iota$ & $D_4$ &
 		$ \left(\begin{array}{rrrr} 
		0 & 0 & -1 &  0 \\ 0 & 1 & 0 & 0 \\ \frac{1}{4} & 0 & 0 & 1\end{array}\right)$, 
		$ \left(\begin{array}{rrrr} 
		\frac{1}{2} & -1 & 0 &  0 \\ \frac{1}{2} & 0 & 1 & 0 \\ 0 & 0 & 0 & 1\end{array}\right)$ & $O^3_4, N^3_4$ \\ \hline
23	&$N^4_{44}$ & $O^3_2\,|\,\Iota$ & $D_1$ &
 		$ \left(\begin{array}{rrrr} 
		\frac{1}{2} & -1 & 0 &  0 \\ \frac{1}{2} & 0 & 1 & 0 \\ 0 & 0 & 0 & -1\end{array}\right)$, 
		$ \left(\begin{array}{rrrr} 
		\frac{1}{2} & -1& 0 &  0 \\ \frac{1}{2} & 0 & 1 & 0 \\ 0 & 0 & 0 & -1\end{array}\right)$ & $O^3_6, O^3_6$ \\ \hline
24	&$N^4_{45}$ & $O^3_2\,|\,\Iota$ & $D_2$ &
 		$ \left(\begin{array}{rrrr} 
		\frac{1}{2} & 1 & 0 &  0 \\ \frac{1}{2} & 0 & 1 & 0 \\ 0 & 0 & 0 & 1\end{array}\right)$, 
		$ \left(\begin{array}{rrrr} 
		\frac{1}{2} & -1& 0 &  0 \\ \frac{1}{2} & 0 & 1 & 0 \\ 0 & 0 & 0 & -1\end{array}\right)$ & $O^3_2, O^3_6$ \\ \hline
25	&$N^4_{45}$ & $O^3_2\,|\,\Iota$ & $D_2$ &
 		$ \left(\begin{array}{rrrr} 
		\frac{1}{2} & -1 & 0 &  0 \\ \frac{1}{2} & 0 & 1 & 0 \\ 0 & 0 & 0 & -1\end{array}\right)$, 
		$ \left(\begin{array}{rrrr} 
		0 & -1& 0 &  0 \\ \frac{1}{2} & 0 & 1 & 0 \\ 0 & 0 & 0 & 1\end{array}\right)$ & $O^3_6, N^3_3$ \\ \hline
26	&$N^4_{46}$ & $O^3_2\,|\,\Iota$ & $D_2$ &
 		$ \left(\begin{array}{rrrr} 
		0 & 1 & 0 &  0 \\ \frac{1}{2} & 0 & 1 & 0 \\ 0 & 0 & 0 & 1\end{array}\right)$, 
		$ \left(\begin{array}{rrrr} 
		\frac{1}{2} & -1& 0 &  0 \\ \frac{1}{2} & 0 & 1 & 0 \\ 0 & 0 & 0 & -1\end{array}\right)$ & $O^3_2, O^3_6$ \\ \hline
27	&$N^4_{46}$ & $O^3_2\,|\,\Iota$ & $D_2$ &
 		$ \left(\begin{array}{rrrr} 
		\frac{1}{2} & -1 & 0 &  0 \\ \frac{1}{2} & 0 & 1 & 0 \\ 0 & 0 & 0 & -1\end{array}\right)$, 
		$ \left(\begin{array}{rrrr} 
		\frac{1}{2} & 1& 0 &  0 \\ \frac{1}{2} & 0 & -1 & 0 \\ 0 & 0 & 0 & 1\end{array}\right)$ & $O^3_6, N^3_4$ \\ \hline
28	&$N^4_{47}$ & $O^3_2\,|\,\Iota$ & $D_2$ &
 		$ \left(\begin{array}{rrrr} 
		0 & 0 & -1 &  0 \\ 0 & 1 & 0 & 0 \\ \frac{1}{4} & 0 & 0 & 1\end{array}\right)$, 
		$ \left(\begin{array}{rrrr} 
		\frac{1}{2} & -1 & 0 &  0 \\ \frac{1}{2} & 0 & 1 & 0 \\ 0 & 0 & 0 & -1\end{array}\right)$ & $O^3_4, O^3_6$ \\ \hline \end{tabular}

\vspace{.2in}
\caption{The flat $O^3_2$ fiberings over $\Iota$, Part II}
\end{table}

\vspace{.15in}
\noindent{\bf Example 4.}
Consider the torsion-free 4-space group 6/2/1/50 in \cite{B-Z}. 
After transposing the 1st and 3rd coordinates, the group is defined by 
$$\Gamma = \langle t_1, t_2, t_3, t_4, \alpha, \beta, \gamma\rangle$$
with $\alpha = e_1/2 +e_2/2+ \mathrm{diag}(1,1,1,-1)$, and $\beta = e_2/2+e_4/2+\mathrm{diag}(-1,1,-1,-1)$, 
and $\gamma = e_3/2+e_4/2+\mathrm{diag}(1,-1,-1,1)$.  
We have that 
$$\alpha\beta^{-1}\gamma = e_1/2 -e_3/2+ \mathrm{diag}(-1,-1,1,1).$$ 

Let $\Nu = \langle t_1, t_2, t_3,\alpha\beta^{-1}\gamma\rangle$. Then $\Nu$ is a complete normal subgroup of $\Gamma$.  
By conjugating $\Nu$ by $-e_1/4+I$, we see that $\Nu$ is conjugate to 
$$\langle t_1,t_2,t_3, e_3/2+\mathrm{diag}(-1,-1,1,1)\rangle,$$
and so $\Nu$ is of type $O^3_2$. 
Let $V = \mathrm{Span}\{e_1, e_2, e_3\}$.  Then $V/\Nu$ is a closed flat 3-manifold of type $O^3_2$. 
The group $\Gamma/\Nu$ is infinite dihedral,  
since $\Nu\alpha$ and $\Nu\beta$ are order 2 generators of $\Gamma/\Nu$, and $\Nu\alpha$ acts on the line $V^\perp$ by the reflection $-I$ 
and $\Nu\beta$ acts on the line $V^\perp$ by the reflection $e_4/2-I$.

Now $\Nu\alpha$ acts on $V/\Nu$ by $(e_1/2+e_2/2 + \mathrm{diag}(1,1,1))_\star$ in the first three coordinates. 
Conjugating by $-e_1/4+I$ does not change the action of $\Nu\alpha$ on $V/\Nu$. 
Now $\Nu\beta$ acts on $V/\Nu$ by $(e_2/2+\mathrm{diag}(-1,1,-1))_\star$.   
Conjugating by $-e_1/4+I$ changes the action of $\Nu\beta$ on $V/\Nu$ to $(e_1/2+e_2/2+\mathrm{diag}(-1,1,-1))_\star$. 
Therefore the geometric fibration of $E^4/\Gamma$ determined by $\Nu$ is affinely equivalent to the $O^3_2\,|\,\Iota$ fibering described in Row 24 of Table 18 
by Theorem 10.  Hence $E^4/\Gamma$ is of type $N^4_{45}$.

\section{Fibrations of Closed Flat 4-Manifolds with Generic Fiber $O^3_3$} 

In this section, we describe the affine classification of the geometric co-Seifert fibrations of closed flat 4-manifolds with generic fiber 
the flat 3-manifold $O^3_3$.  Here $O^3_3 = E^3/\Mu$ with $\Mu = \langle t_1,t_2,t_3,\alpha\rangle$ with $\alpha = e_3/3 + A$ and 
$$A = \left(\begin{array}{rrr}
0 & -1 & 0 \\ 1 & -1 & 0 \\ 0 & 0 & 1 
\end{array}\right).$$  
 
\begin{lemma} 
Let $\Mu = \langle t_1, t_2, t_3, \alpha\rangle$ with $\alpha = e_3/3 + A$ and $A$ defined as above.  
Then we have
$$N_A(\Mu) = \{b+B: b\in E^3, 3b_1, 3b_2, b_1+b_2 \in \integers\ and\ B \in G\},$$
where $G$ is the dihedral group of order $12$ given by 
$$G = \Bigg\langle \left(\begin{array}{rrr}
0 & 1 & 0 \\ -1 & 1 & 0 \\ 0 & 0 & 1 
\end{array}\right), \ \left(\begin{array}{rrr}
0 & 1 & 0 \\ 1 & 0 & 0 \\ 0 & 0 & -1 
\end{array}\right)\Bigg\rangle.$$
Moreover, $\mathrm{Out}(\Mu) \cong (\integers/3\integers) \rtimes H$, with $H$ a dihedral group of order $4$,   
and $\Omega((e_1/3 + 2e_2/3+I)_\star)$ corresponding to a generator of $\integers/3\integers$, 
and $\Omega((\mathrm{diag}(-1,-1,1))_\star)$ and $\Omega(D_\star)$, with $D$ the second generator of $G$,  corresponding to generators of $H$. 
\end{lemma}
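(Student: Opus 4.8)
The plan is to compute the affine normalizer $N_A(\Mu)$ by hand and then transport it through the two epimorphisms of \S2, namely $\Phi\colon N_A(\Mu)\to\mathrm{Aff}(\Mu)$ with kernel $\Mu$ and $\Omega\colon\mathrm{Aff}(\Mu)\to\mathrm{Out}(\Mu)$ with kernel $\Phi(C_A(\Mu))$, so that $\mathrm{Out}(\Mu)\cong N_A(\Mu)/(\Mu\,C_A(\Mu))$. Throughout I write $A=\mathrm{diag}(A_2,1)$, where $A_2$ is the upper $2\times 2$ block of order $3$, and $B=\mathrm{diag}(C,\delta)$ for a prospective normalizing matrix.

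First I would pin down $N_A(\Mu)$. Since the translation lattice $\Tau=\integers^3$ is characteristic in $\Mu$, any $\beta=b+B$ normalizing $\Mu$ satisfies $B\in\mathrm{GL}(3,\integers)$ and must normalize the holonomy group $\langle A\rangle\cong C_3$; the integral normalizer of $\langle A_2\rangle$ in $\mathrm{GL}(2,\integers)$ is the hexagonal dihedral group of order $12$ recorded in \cite{B-Z}, with the six rotations centralizing $A_2$ and the six reflections inverting it. This gives $C$, and one checks $A=R^4$, so $\langle A\rangle\subset\langle R\rangle$. The sign $\delta$ is then forced: a direct affine computation gives $\beta\alpha\beta^{-1}$ the linear part $BAB^{-1}$ and the translation part $(I-BAB^{-1})b+Be_3/3$, whose third coordinate is $\delta/3$; membership $\beta\alpha\beta^{-1}\in\Mu$ requires this to be $\equiv 1/3$ or $\equiv 2/3\pmod 1$ according as $C$ centralizes or inverts $A_2$, which is exactly $\delta=\det C$. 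This cuts the a priori $24$ choices of $B$ down to the group $G$ of order $12$.

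Next I would read off the translation condition on $b$ from the in-plane part of $(I-A)b\in\integers^3$. Because $I_2-A_2$ has determinant $3$ with rows $(1,1)$ and $(-1,2)$, integrality of $(b_1+b_2,\,-b_1+2b_2)$ is equivalent to $3b_1,3b_2,b_1+b_2\in\integers$, which establishes the stated description of $N_A(\Mu)$. To compute $\mathrm{Out}(\Mu)$ I note $Z(\Mu)=\langle t_3\rangle$, so $\mathrm{Span}(Z(\Mu))=\realnos e_3$ and $C_A(\Mu)=\{c\,e_3+I:c\in\realnos\}$ merely erases $b_3$. The in-plane translations form the lattice $L=(I_2-A_2)^{-1}\integers^2$, and $L/\integers^2\cong\integers/3\integers$ is generated by the class of $e_1/3+2e_2/3$, while reducing $G\cong D_6$ modulo the inner rotations $\langle A_2\rangle\cong C_3$ gives $H=G/\langle A_2\rangle\cong D_2$.

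Finally I would assemble the extension $1\to\integers/3\integers\to\mathrm{Out}(\Mu)\to H\to 1$ and show it splits: the images $\Omega((\mathrm{diag}(-1,-1,1))_\star)$ and $\Omega(D_\star)$ are commuting involutions whose classes generate $H$ and meet $\integers/3\integers$ trivially, and a one-line calculation shows each sends the generator $e_1/3+2e_2/3$ of $L/\integers^2$ to its negative modulo $\integers^2$, yielding $\mathrm{Out}(\Mu)\cong(\integers/3\integers)\rtimes H$ with the asserted correspondences. The main obstacle is the bookkeeping in the middle steps: confirming that no integral matrix outside $G$ can normalize $\Mu$, that the sign and translation congruences trim the normalizer exactly as claimed, and that the index-$3$ lattice quotient $L/\integers^2$ together with the $H$-action is computed correctly; once the normalizer is secured, the passage to $\mathrm{Out}(\Mu)$ is routine.
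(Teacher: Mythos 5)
Your computation is correct and is exactly the route the paper indicates but does not write out: Lemma 12 is stated without proof, the authors remarking only (after Lemma 9) that the normalizer computations are elementary and that $\mathrm{Out}(\Mu)$ is obtained from $N_A(\Mu)$ via the epimorphisms $\Phi$ and $\Omega$ as a check on Hillman's calculation, which is precisely what you carry out. Your block decomposition of $B$, the sign condition $\delta=\det C$, the lattice $L=(I_2-A_2)^{-1}\integers^2$ with $L/\integers^2\cong\integers/3\integers$, and the splitting of the extension by the image of $G$ all check out; the only detail worth adding is that the translation condition for the reflections in $G$ involves $I-A_2^{2}$ rather than $I-A_2$, but $(I-A_2^{2})^{-1}\integers^2=(I-A_2)^{-1}\integers^2$ since $I+A_2\in\mathrm{GL}(2,\integers)$, so the stated condition on $b$ is indeed uniform over $G$.
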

It follows from Lemma 12 that $\mathrm{Out}(\Mu)$ has order 12 and $\mathrm{Out}(\Mu)$ has 
6 conjugacy classes of inverse pairs of elements of finite order. 
By Theorem 7, there are 6 affine equivalence classes of flat $O^3_3\,|\, S^1$ fiberings. 
These fiberings are described in Table 19 as explained in \S 8. 

\begin{table} 

\begin{tabular}{c|c|c|c|c}	
no. & mfd. & cS-fbr.  &  grp. & representative      \\ \hline	
 1	&$O^4_4$ & $O^3_3\,|\,S^1$  & $C_1$ & 
	  	$ \left(\begin{array}{rrr} 
		1 &  0 & 0 \\ 0 & 1 & 0 \\ 0 & 0 & 1 \end{array}\right)$   \\ \hline
 2	&$O^4_5$ & $O^3_3\,|\,S^1$ &  $C_3$ &
		$ \left(\begin{array}{rrrr} 
		\frac{1}{3} & 1 &  0 & 0 \\ \frac{2}{3} & 0 & 1 & 0 \\ 0 & 0 & 0 & 1\end{array}\right)$    \\ \hline
 3	&$O^4_8$ & $O^3_3\,|\,S^1$ &  $C_2$ &
 		$ \left(\begin{array}{rrr} 
		-1 &  0 & 0\\ 0 & -1 & 0 \\ 0 & 0 & 1 \end{array}\right)$   \\ \hline
 4	&$O^4_{18}$ & $O^3_3\,|\,S^1$ &  $C_2$ &
 		$ \left(\begin{array}{rrr} 
		 0 &  1 & 0 \\ 1 & 0 & 0 \\  0 & 0 & -1 \end{array}\right)$    \\ \hline
 5	&$O^4_{19}$ & $O^3_3\,|\,S^1$ &  $C_2$ &
 		$ \left(\begin{array}{rrr} 
		0 &  -1 & 0 \\ -1 & 0 & 0 \\ 0 & 0 & -1 \end{array}\right)$     \\ \hline
 6 	&$O^4_{20}$ & $O^3_3\,|\,S^1$ &  $C_6$ &
 	$ \left(\begin{array}{rrrr} 
		\frac{1}{3} & 0 & -1 &  0 \\ \frac{2}{3} & -1 & 0 & 0 \\ 0 & 0 & 0 & -1 \end{array}\right)$   \\ \hline
 \end{tabular}

\vspace{.2in}
\caption{The flat $O^3_3$ fiberings over $S^1$}
\end{table}

\begin{table} 

\begin{tabular}{c|c|c|c|c|c}	
no. & mfd. & cS-fbr.  &  grp. & pair representatives  & s-fbrs.  \\ \hline	
 1	&$N^4_{19}$ & $O^3_3\,|\,\Iota$ & $D_1$ &
 		$ \left(\begin{array}{rrrr} 
		0 & 1 & 0 &  0 \\ 0 & 0 & 1 & 0 \\ \frac{1}{2} & 0 & 0 & 1\end{array}\right)$, 
		$ \left(\begin{array}{rrrr} 
		0 & 1 & 0 &  0 \\ 0 & 0 & 1 & 0 \\ \frac{1}{2} & 0 & 0 & 1\end{array}\right)$ & $O^3_3, O^3_3$ \\ \hline
 2	&$N^4_{20}$ & $O^3_3\,|\,\Iota$ & $D_1$ &
 		$ \left(\begin{array}{rrrr} 
		0 & -1 & 0 &  0 \\ 0 & 0 & -1 & 0 \\ \frac{1}{2} & 0 & 0 & 1\end{array}\right)$, 
		$ \left(\begin{array}{rrrr} 
		0 & -1 & 0 &  0 \\ 0 & 0 & -1 & 0 \\ \frac{1}{2} & 0 & 0 & 1\end{array}\right)$ & $O^3_5, O^3_5$ \\ \hline
3	&$N^4_{21}$ & $O^3_3\,|\,\Iota$ & $D_3$ &
 		$ \left(\begin{array}{rrrr} 
		0 & -1 & 0 &  0 \\ 0 & 0 & -1 & 0 \\ \frac{1}{2} & 0 & 0 & 1\end{array}\right)$, 
		$ \left(\begin{array}{rrrr} 
		\frac{1}{3} & -1 & 0 &  0 \\ \frac{2}{3} & 0 & -1 & 0 \\ \frac{1}{2} & 0 & 0 & 1\end{array}\right)$ & $O^3_5, O^3_5$ \\ \hline
4	&$N^4_{42}$ & $O^3_3\,|\,\Iota$ & $D_2$ &
 		$ \left(\begin{array}{rrrr} 
		0 & 1 & 0 &  0 \\ 0 & 0 & 1 & 0 \\ \frac{1}{2} & 0 & 0 & 1\end{array}\right)$, 
		$ \left(\begin{array}{rrrr} 
		0 & -1 & 0 &  0 \\ 0 & 0 & -1 & 0 \\ \frac{1}{2} & 0 & 0 & 1\end{array}\right)$ & $O^3_3, O^3_5$ \\ \hline
 \end{tabular}

\vspace{.2in}
\caption{The flat $O^3_3$ fiberings over $\Iota$}
\end{table}

Next we consider $O^3_3\,|\, \Iota$ fiberings. 

\begin{lemma}  
Let $\Mu = \langle t_1, t_2, t_3, \alpha\rangle$ with $\alpha = e_3/3 + A$ and $A$ defined as above,   
and let $\beta$ be an affinity of $E^3$ that normalizes $\Mu$ 
such that $\beta_\star$ has order $2$ and $\beta_\star$ does not fix a point of $E^3/\Mu$. 
Then $\Omega(\beta_\star)$ has order $1$ or $2$ in $\mathrm{Out}(\Mu)$. 
If $\Omega(\beta_\star)$ has order $1$, then $\beta_\star = (e_3/2+I)_\star$. 
The quotient of $E^3/\Mu$ by the action of  $(e_3/2+I)_\star$ is of type $O^3_3$. 
If $\Omega(\beta_\star)$ has order $2$, then $\beta_\star$ is conjugate to $(e_3/2+\mathrm{diag}(-1,-1,1))_\star$. 
The quotient of $E^3/\Mu$ by the action of $\beta_\star$ is of type $O^3_5$. 
\end{lemma}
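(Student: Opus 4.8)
The plan is to mirror the proofs of Lemmas 5, 8, and 11, splitting on the order of $\Omega(\beta_\star)$ in $\mathrm{Out}(\Mu)$ and identifying each quotient through its holonomy group using Tables 1 and 1B of \cite{B-Z}. Since $\beta_\star$ has order $2$ and $\Omega$ is a homomorphism, $\Omega(\beta_\star)$ has order $1$ or $2$ at once.

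In the order-$1$ case, $\beta_\star$ lies in the kernel $\Phi(C_A(\Mu))$ of $\Omega$, where $C_A(\Mu) = \{c+I : c \in \mathrm{Span}(Z(\Mu))\}$. A direct check gives $Z(\Mu) = \langle t_3\rangle$: an integer translation $v+I$ commutes with $\alpha = e_3/3 + A$ exactly when $Av = v$, i.e.\ $v \in \realnos e_3$. Hence $\mathrm{Span}(Z(\Mu)) = \realnos e_3$ and $\beta_\star = (ce_3+I)_\star$; order $2$ forces $2c \in \integers$ and $c \notin \integers$, so $\beta_\star = (e_3/2+I)_\star$. That this fixes no point of $E^3/\Mu$ follows from Lemma 3, since every element of $\Mu$ shifts the $e_3$-coordinate only by a multiple of $1/3$ and so cannot realize the half-integer shift. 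By Lemma 2 the quotient group $\langle \Mu, e_3/2+I\rangle$ is then torsion-free with holonomy $\langle A\rangle = C_3$, and as $O^3_3$ is the unique flat $3$-manifold of holonomy $C_3$ (Table 1), the quotient is of type $O^3_3$.

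In the order-$2$ case, write $\beta = b+B$ with $B \in G$ the rotation part, and let $R$ be the first generator of $G$, so that $A = R^{-2}$ and $R^3 = \mathrm{diag}(-1,-1,1)$. Since $\Omega(\beta_\star)$ has order $2$ while the normal subgroup $\integers/3\integers$ of $\mathrm{Out}(\Mu)$ has no involution, the image of $\Omega(\beta_\star)$ in $H = \mathrm{Out}(\Mu)/(\integers/3\integers)$ is a nontrivial involution; as this image is the class of $B$ modulo $\langle R^2\rangle$, we get $B \notin \langle A\rangle = \langle R^2\rangle$. Because $\langle R^2\rangle$ is the normal $C_3$ of the order-$12$ group $G$, the group $\langle A, B\rangle$ is then forced to have order $6$ and to be one of the three overgroups of $\langle R^2\rangle$: the cyclic group $\langle R\rangle \cong C_6$, or one of two copies of $D_3$. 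The decisive point is that, by Lemma 2, $\beta_\star$ fixing no point makes $\langle \Mu, \beta\rangle$ a torsion-free $3$-space group whose holonomy is $\langle A, B\rangle$; since no closed flat $3$-manifold has holonomy $D_3$ (Tables 1 and 2), the two $D_3$ cases are impossible, leaving $\langle A, B\rangle = C_6$. Then, $O^3_5$ being the unique flat $3$-manifold with holonomy $C_6$ (Table 1), the quotient is of type $O^3_5$.

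It remains to normalize $\beta_\star$ in the surviving case. Replacing $\beta$ by $\beta\alpha^{\pm 1}$ multiplies $B$ by $R^{\mp 2}$ and so cycles it through $\{R, R^3, R^5\}$, allowing me to take $B = R^3 = \mathrm{diag}(-1,-1,1)$; conjugating by a translation $v+I$ replaces $b$ by $b + (I-B)v = b + (2v_1, 2v_2, 0)$, clearing the first two coordinates, so $\beta_\star = (b_3 e_3 + \mathrm{diag}(-1,-1,1))_\star$ with $2b_3 \in \integers$. Since $(\mathrm{diag}(-1,-1,1))_\star$ does fix a point---its rotation part fixes the $e_3$-axis, which Lemma 3 detects via the coset representative $\alpha_0 = I$---the value $b_3 \in \integers$ is excluded, leaving $\beta_\star = (e_3/2 + \mathrm{diag}(-1,-1,1))_\star$, whose freeness follows from the same $e_3$-coordinate count as before. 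I expect the main obstacle to be precisely this uniqueness in the order-$2$ case; the cleanest route is the holonomy/torsion-free dichotomy above, which offloads the conjugacy bookkeeping onto the flat $3$-manifold classification rather than onto a direct computation inside $\mathrm{Out}(\Mu)$, together with the reduction of the remaining translational freedom to the single half-period $e_3/2$.
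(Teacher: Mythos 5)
Your argument is sound in outline and takes a genuinely different route from the paper's. The paper disposes of both cases by citing Lemmas 3, 8, and 12: it pushes the fixed-point question down to the translation lattice via Lemma 3 and the classification of free involutions of the $3$-torus in Lemma 8, and it identifies the quotients by looking up IT numbers in Table 1B of \cite{B-Z}. You instead eliminate the unwanted rotational parts by a holonomy argument --- $\langle\Mu,\beta\rangle$ is torsion-free because $\beta_\star$ is a free involution, no closed flat $3$-manifold has holonomy $D_3$ by Tables 1 and 2, so $\langle A,B\rangle$ must be the cyclic $C_6$ inside the order-$12$ dihedral group $G$ --- and you identify the quotients as $O^3_3$ and $O^3_5$ by their holonomy groups $C_3$ and $C_6$ rather than by IT number. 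This offloads the conjugacy bookkeeping onto the classification of flat $3$-manifolds and is arguably cleaner here; the cost is that it is special to this fiber (for the fiber $O^3_2$, for instance, holonomy alone does not separate the candidate quotients, so the paper's IT-number method is the one that generalizes).

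There is, however, one concrete error in your normalization step. After arranging $B=\mathrm{diag}(-1,-1,1)$ you conjugate by the translation $v+I$ with $(v_1,v_2)=(-b_1/2,-b_2/2)$ to clear the first two coordinates of $b$. By Lemma 12 a translation $v+I$ normalizes $\Mu$ only when $3v_1$, $3v_2$, and $v_1+v_2$ are integers, and your $v$ need not satisfy this: for the legitimate element $\beta=(1/3,\,2/3,\,1/2)+\mathrm{diag}(-1,-1,1)$ of $N_A(\Mu)$ you would be conjugating by $(-1/6,-1/3,0)+I$, which does not normalize $\Mu$, so the conjugate of $\beta_\star$ is not even defined as an element of $\mathrm{Aff}(\Mu)$. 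The conclusion survives because the residues of $(b_1,b_2)$ modulo $\integers^2$ compatible with $3b_1,3b_2,b_1+b_2\in\integers$ are only $(0,0)$, $(1/3,2/3)$, and $(2/3,1/3)$, and conjugation by the allowed translation $w+I$ with $(w_1,w_2)=(1/3,2/3)$ adds $(2w_1,2w_2)\equiv(2/3,1/3)$ to $(b_1,b_2)$, which permutes these three residues cyclically and hence transitively. Replace your conjugating element by such a $w+I$ and the proof closes.
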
 
\begin{proof}
If $\Omega(\beta_\star)$ has order $1$, then $\beta_\star= (e_3/2+I)_\star$ by Lemmas 3, 8, and 12. 
The quotient of $E^3/\Mu$ by the action of  $(e_3/2+I)_\star$ is clearly of type $O^3_3$. 

If $\Omega(\beta_\star)$ has order $2$, then $\beta_\star$ is conjugate to $(e_3/2+\mathrm{diag}(-1,-1,1))_\star$ by Lemmas 3, 8, and 12. 
The quotient of $E^3/\Mu$ by the action of $(e_3/2+\mathrm{diag}(-1,-1,1))_\star$ is of type $O^3_5$, 
since the 3-space group $\langle t_1, t_2, \alpha, e_3/2+\mathrm{diag}(-1,-1,1)\rangle$ has IT number 169 according to Table 1B of \cite{B-Z}. 
\end{proof}

Let $\beta$ and $\gamma$ be affinities of $E^3$ that normalize $\Mu$ and such that 
$\beta_\star$ and $\gamma_\star$ are order 2 affinities of $O^3_3$ which do not fix a point of $O^3_3$, and $\Omega(\beta_\star\gamma_\star)$ has finite order. 
By Lemmas 12 and 13, there are only 4 conjugacy classes of pairs $\{\beta_\star, \gamma_\star\}$, 
and so there are only 4 affine equivalence classes of flat $O^3_3\,|\, \Iota$ fiberings by Theorem 10.  
These fiberings are described in Table 20 as explained in \S 8.

\section{Fibrations of Closed Flat 4-Manifolds with Generic Fiber $O^3_4$} 

In this section, we describe the affine classification of the geometric co-Seifert fibrations of closed flat 4-manifolds with generic fiber 
the flat 3-manifold $O^3_4$.  Here $O^3_4 = E^3/\Mu$ with $\Mu = \langle t_1,t_2,t_3,\alpha\rangle$ with $\alpha = e_3/4 + A$ and 
$$A = \left(\begin{array}{rrr}
0 & -1 & 0 \\ 1 & 0 & 0 \\ 0 & 0 & 1 
\end{array}\right).$$  
 
\begin{lemma} 
Let $\Mu = \langle t_1, t_2, t_3, \alpha\rangle$ with $\alpha = e_3/4 + A$ and $A$ defined as above.  
Then we have
$$N_A(\Mu) = \{b+B: b\in E^3, 2b_1, 2b_2, b_1+b_2 \in \integers\ and\ B \in G\},$$
where $G$ is the dihedral group of order $8$ generated by $A$ and $\mathrm{diag}(-1,1,-1)$.  
Moreover $\mathrm{Out}(\Mu)$ is a dihedral group of order $4$ 
with generators represented by $e_1/2+e_2/2+I$ and $\mathrm{diag}(-1,1,-1)$. 
\end{lemma}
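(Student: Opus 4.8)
The plan is to pin down first the linear parts that can occur in $N_A(\Mu)$, then the admissible translation parts, and finally to feed the resulting description of $N_A(\Mu)$ through the two epimorphisms $\Phi:N_A(\Mu)\to\mathrm{Aff}(\Mu)$ and $\Omega:\mathrm{Aff}(\Mu)\to\mathrm{Out}(\Mu)$ recalled before Lemma 4, which give $\mathrm{Out}(\Mu)\cong N_A(\Mu)/\Mu\,C_A(\Mu)$. First I would note that $\Tau=\langle t_1,t_2,t_3\rangle=\integers^3$ is characteristic in $\Mu$, so any $\beta=b+B$ normalizing $\Mu$ satisfies $B\integers^3=\integers^3$, i.e. $B\in\mathrm{GL}(3,\integers)$, and $B$ normalizes the holonomy group $P=\langle A\rangle\cong C_4$. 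Since $B$ commutes with $A^2=\mathrm{diag}(-1,-1,1)$, it preserves the $\pm1$-eigenspaces $\langle e_3\rangle$ and $\mathrm{Span}\{e_1,e_2\}$; integrality then forces $B=\mathrm{diag}(C,\epsilon)$ with $C\in\mathrm{GL}(2,\integers)$, $\epsilon=\pm1$, and $CRC^{-1}=R^{\pm1}$, where $R$ is the $2\times2$ rotation block of $A$. Thus $C$ lies in the order-$8$ normalizer $N_{\mathrm{GL}(2,\integers)}(\langle R\rangle)=\langle R,\mathrm{diag}(1,-1)\rangle$, leaving $16$ candidate matrices $B$.

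The key step, and the main obstacle, is to see that the screw translation $e_3/4$ in $\alpha$ kills half of these candidates, cutting $16$ down to the order-$8$ group $G$. For this I would compute $\beta\alpha\beta^{-1}=\bigl((I-BAB^{-1})b+Ba\bigr)+BAB^{-1}$ with $a=e_3/4$ and impose $\beta\alpha\beta^{-1}\in\Mu$. Because $A$ and $A^{-1}$ fix $e_3$, the third coordinate of $(I-BAB^{-1})b$ vanishes, so the third-coordinate membership condition reads $(\epsilon-1)/4\in\integers$ when $BAB^{-1}=A$ and $(\epsilon+1)/4\in\integers$ when $BAB^{-1}=A^{-1}$, independently of $b$. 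This forces $\epsilon=1$ in the first case and $\epsilon=-1$ in the second, i.e. $B\in\langle A\rangle\cup D\langle A\rangle=G$ with $D=\mathrm{diag}(-1,1,-1)$, and rules out the other $8$ candidates outright. The first two coordinates then give $(I-R^{\pm1})b'\in\integers^2$, equivalent in both cases to $2b_1,2b_2,b_1+b_2\in\integers$, while $b_3$ stays free. A short converse check settles the formula: each such $\beta$ satisfies $\beta\Mu\beta^{-1}\subseteq\Mu$, and since $\beta\Tau\beta^{-1}=\Tau$ while $[\Mu:\Tau]=[\beta\Mu\beta^{-1}:\Tau]=4$, this containment is an equality.

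For $\mathrm{Out}(\Mu)$ I would first record that $Z(\Mu)=\langle t_3\rangle$ (a translation $t_v$ is central iff $Av=v$, forcing $v\in\integers e_3$), so $\mathrm{Span}(Z(\Mu))=\langle e_3\rangle$ and $C_A(\Mu)=\{ce_3+I:c\in\realnos\}$. Passing to the quotient by $C_A(\Mu)$ deletes the $e_3$-component of every translation and the sign $\epsilon$, which on $G$ is determined by $C$; this identifies $N_A(\Mu)/C_A(\Mu)$ with the planar group $\{b'+C:b'\in L,\ C\in\bar G\}$, where $L=\{(b_1,b_2):2b_1,2b_2,b_1+b_2\in\integers\}$ and $\bar G=\langle R,\mathrm{diag}(1,-1)\rangle$, and it identifies the image of $\Mu$ with the wallpaper group $\langle\integers^2,R\rangle$. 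By the third isomorphism theorem, $\mathrm{Out}(\Mu)$ is the quotient of the former by the latter.

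Finally I would compute the index. At the level of linear parts, $\langle R\rangle$ has index $2$ in $\bar G$; at the level of translations, $\integers^2$ has index $2$ in $L=\integers^2+\integers(1/2,1/2)$; hence $|\mathrm{Out}(\Mu)|=4$. The classes of $e_1/2+e_2/2+I$ and $\mathrm{diag}(-1,1,-1)$ are each of order $2$ (their squares $(1,1)+I$ and $I$ lie in $\Mu\,C_A(\Mu)$), they lie in different fibres of the linear-part map, and a one-line commutator computation shows that conjugating the translation by $(1/2,1/2)$ by $D$ changes it by $(1,0)+I\in\Mu$, so the two classes commute; being distinct from their product, they generate. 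Therefore $\mathrm{Out}(\Mu)\cong\integers/2\integers\times\integers/2\integers$, the dihedral group of order $4$, with the stated generators. The only genuine delicacy is the obstruction argument of the second paragraph; the remaining steps are routine index and commutator bookkeeping.
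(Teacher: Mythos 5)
Your proof is correct, and it follows exactly the route the paper indicates but does not write out: the paper leaves the normalizer computation to the reader and obtains $\mathrm{Out}(\Mu)$ as $N_A(\Mu)/\Mu C_A(\Mu)$ via the epimorphisms $\Phi$ and $\Omega$, which is precisely your scheme. The one genuinely nontrivial point --- using the screw translation $e_3/4$ to force $\epsilon=1$ when $BAB^{-1}=A$ and $\epsilon=-1$ when $BAB^{-1}=A^{-1}$, cutting the sixteen candidates $\mathrm{diag}(C,\epsilon)$ down to the dihedral group $G$ --- is handled correctly, and your index and commutator bookkeeping identifying $\mathrm{Out}(\Mu)\cong(\integers/2\integers)^2$ with the stated generators is sound.
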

It follows from Lemma 14 that $\mathrm{Out}(\Mu)$ has 
4 conjugacy classes of elements of finite order represented by the matrices in Table 21. 
Hence $\mathrm{Out}(\Mu)$ has 4 conjugacy classes of inverse pairs of elements of finite order. 
By Theorem 7, there are 4 affine equivalence classes of flat $O^3_4\,|\, S^1$ fiberings. 
These fiberings are described in Table 21 as explained in \S 8. 

Next we consider $O^3_4\,|\, \Iota$ fiberings. 

\begin{lemma}  
Let $\Mu = \langle t_1, t_2, t_3, \alpha\rangle$ with $\alpha = e_3/4 + A$ and $A$ defined as above,   
and let $\beta$ be an affinity of $E^3$ that normalizes $\Mu$ 
such that $\beta_\star$ has order $2$ and $\beta_\star$ does not fix a point of the flat 3-manifold $E^3/\Mu$. 
Then $\beta_\star = (e_1/2+e_2/2+I)_\star$. 
The quotient of $E^3/\Mu$ by the action of  $(e_1/2+e_2/2+I)_\star$ is of type $O^3_4$. 
\end{lemma}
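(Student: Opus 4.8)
The plan is to use the description of $N_A(\Mu)$ and of $\mathrm{Out}(\Mu)$ in Lemma 14 to cut the possible affinities down to a short list, and then to use Lemma 3 to decide which of them fix a point. Since $\mathrm{Aff}(\Mu) = N_A(\Mu)/\Mu$ and $\Mu$ contains an element with linear part $A^k$ for every $k$ (namely $\alpha^k$), I may multiply $\beta$ by a suitable power of $\alpha$ to arrange that the linear part $B$ of $\beta$ is one of the two coset representatives $I$ and $\mathrm{diag}(-1,1,-1)$ of $\langle A\rangle$ in the dihedral group $G$ of Lemma 14. This splits the argument according to the coset of $B$.

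First I would dispose of the case $B = \mathrm{diag}(-1,1,-1)$. Here $\langle\Mu,\beta\rangle$ is a $3$-space group whose point group is $\langle A, \mathrm{diag}(-1,1,-1)\rangle = G$, a dihedral group of order $8$. No closed flat $3$-manifold has holonomy of order $8$, since by Tables 1 and 2 the holonomy groups of the flat $3$-manifolds have order at most $6$. Hence $\langle\Mu,\beta\rangle$ is not torsion-free, so it contains an elliptic element; equivalently $\beta_\star$ fixes a point of $E^3/\Mu$. This contradicts the hypothesis, so the case does not occur.

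In the remaining case $B = I$, the normalizer conditions of Lemma 14 together with the requirement that $\beta_\star$ have order $2$ force the translation vector to lie in $\frac{1}{2}\integers^3$ with $b_1 + b_2 \in \integers$; modulo $\Tau = \integers^3$ and the action of $\langle A\rangle$ this leaves only the representatives $(e_3/2 + I)_\star$ and $(e_1/2 + e_2/2 + ce_3 + I)_\star$ with $2c \in \integers$. To decide which fix a point I would invoke Lemma 3 with coset representatives $\alpha_0 = I$, $\alpha_1 = \alpha$, $\alpha_2 = \alpha^2$, $\alpha_3 = \alpha^3$, reducing the question to the $3$-torus $E^3/\Tau$, where $(\widetilde{\alpha_i\beta})_\star$ fixes a point exactly when its translation vector lies in $\mathrm{Im}(A^i - I) + \integers^3$. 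Since $\mathrm{Im}(A^i - I)$ is the $e_1e_2$-plane for $i = 1,2,3$, the test reduces to tracking the $e_3$-component of each translation. One finds that $(e_3/2+I)_\star$ fixes a point (concretely $\alpha^{-2}(e_3/2+I) = \mathrm{diag}(-1,-1,1)$ fixes the origin, so $\alpha^{-2}$ witnesses the fixed point on $E^3/\Mu$), while $(e_1/2 + e_2/2 + ce_3 + I)_\star$ fixes a point precisely when $c \in \integers + \frac{1}{2}$. Therefore the only order $2$ affinity without a fixed point is $(e_1/2 + e_2/2 + I)_\star$, as claimed.

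Finally I would identify the quotient. Because $\beta = e_1/2 + e_2/2 + I$ has linear part $I$, the group $\langle\Mu,\beta\rangle$ is a torsion-free $3$-space group with point group $\langle A\rangle \cong C_4$, and by Tables 1 and 2 the unique closed flat $3$-manifold with holonomy $C_4$ is $O^3_4$; hence the quotient is of type $O^3_4$. I expect the main obstacle to be the bookkeeping in the case $B = I$: correctly enumerating the half-lattice translations permitted by Lemma 14 and distinguishing, through the torus computation supplied by Lemma 3, the free involution $(e_1/2+e_2/2+I)_\star$ from the non-free ones such as $(e_3/2+I)_\star$ and $(e_1/2+e_2/2+e_3/2+I)_\star$.
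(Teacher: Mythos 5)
Your proof is correct and supplies the details behind the paper's one-line argument (which simply cites Lemmas 3, 8, and 14): you use Lemma 14 to constrain $\beta$ modulo $\Mu$ and Lemma 3 to reduce the fixed-point test to the torus $E^3/\Tau$, which is exactly the intended route, and your enumeration of the candidates $e_3/2$, $(e_1+e_2)/2$, $(e_1+e_2)/2+e_3/2$ and the criterion $i/4+b_3\in\integers$ are all accurate. The only deviation is that you eliminate the coset $B\in A^k\,\mathrm{diag}(-1,1,-1)$ by noting that no closed flat $3$-manifold has holonomy of order $8$, where the paper would instead invoke Lemma 8's classification of free involutions of the $3$-torus; both routes are valid.
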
 
\begin{proof}
We have that $\beta_\star = (e_1/2+e_2/2+I)_\star$ by Lemmas 3, 8, and 14. 
The quotient of $E^3/\Mu$ by the action of  $(e_1/2+e_2/2+I)_\star$ is clearly of type $O^3_4$. 
\end{proof}

By Lemma 15, the isometry $(e_1/2+e_2/2+I)_\star$ is the unique order 2 affinity of $O^3_4$ that 
does not fix a point of $O^3_4$. 
By Theorem 10, there is exactly 1 affine equivalence class of  flat $O^3_4\,|\, \Iota$ fiberings, 
which is described in Table 22 as explained in \S 8. 

\begin{table} 

\begin{tabular}{c|c|c|c|c}	
no. & mfd. & cS-fbr.  &  grp. & representative      \\ \hline	
 1	&$O^4_6$ & $O^3_4\,|\,S^1$  & $C_1$ & 
	  	$ \left(\begin{array}{rrr} 
		1 &  0 & 0 \\ 0 & 1 & 0 \\ 0 & 0 & 1 \end{array}\right)$   \\ \hline
 2	&$O^4_7$ & $O^3_4\,|\,S^1$ &  $C_2$ &
		$ \left(\begin{array}{rrrr} 
		\frac{1}{2} & 1 &  0 & 0 \\ \frac{1}{2} & 0 & 1 & 0 \\ 0 & 0 & 0 & 1\end{array}\right)$    \\ \hline
 3	&$O^4_{21}$ & $O^3_4\,|\,S^1$ &  $C_2$ &
 		$ \left(\begin{array}{rrr} 
		-1 &  0 & 0\\ 0 & 1 & 0 \\ 0 & 0 & -1 \end{array}\right)$   \\ \hline
 4	&$O^4_{22}$ & $O^3_4\,|\,S^1$ &  $C_2$ &
 		$ \left(\begin{array}{rrrr} 
		\frac{1}{2} & -1 &  0 & 0 \\ \frac{1}{2} & 0 & 1 & 0 \\ 0 & 0 & 0 & -1 \end{array}\right)$    \\ \hline
 \end{tabular}

\vspace{.2in}
\caption{The flat $O^3_4$ fiberings over $S^1$}
\end{table}

\begin{table} 

\begin{tabular}{c|c|c|c|c|c}	
no. & mfd. & cS-fbr.  &  grp. & pair representatives  & s-fbrs.  \\ \hline	
 1	&$N^4_{27}$ & $O^3_4\,|\,\Iota$ & $D_1$ &
 		$ \left(\begin{array}{rrrr} 
		\frac{1}{2} & 1 & 0 &  0 \\ \frac{1}{2} & 0 & 1 & 0 \\ 0 & 0 & 0 & 1\end{array}\right)$, 
		$ \left(\begin{array}{rrrr} 
		\frac{1}{2} & 1 & 0 &  0 \\ \frac{1}{2} & 0 & 1 & 0 \\ 0 & 0 & 0 & 1\end{array}\right)$ & $O^3_4, O^3_4$ \\ \hline
 \end{tabular}

\vspace{.2in}
\caption{The flat $O^3_4$ fiberings over $\Iota$}
\end{table}

\section{Fibrations of Closed Flat 4-Manifolds with Generic Fiber $O^3_5$} 

In this section, we describe the affine classification of the geometric co-Seifert fibrations of closed flat 4-manifolds with generic fiber 
the flat 3-manifold $O^3_5$.  Here $O^3_5 = E^3/\Mu$ with $\Mu = \langle t_1,t_2,t_3,\alpha\rangle$ with $\alpha = e_3/6 + A$ and 
$$A = \left(\begin{array}{rrr}
1 & -1 & 0 \\ 1 & 0 & 0 \\ 0 & 0 & 1 
\end{array}\right).$$

\begin{lemma} 
Let $\Mu = \langle t_1, t_2, t_3, \alpha\rangle$ with $\alpha = e_3/6 + A$ and $A$ defined as above.  
Then we have
$$N_A(\Mu) = \{b+B: b\in E^3, b_1, b_2 \in \integers\ and\ B \in G\},$$
where $G$ is the dihedral group of order $12$ given by 
$$G = \Bigg\langle \left(\begin{array}{rrr}
1 & -1 & 0 \\ 1 & 0 & 0 \\ 0 & 0 & 1 
\end{array}\right), \ \left(\begin{array}{rrr}
0 & 1 & 0 \\ 1 & 0 & 0 \\ 0 & 0 & -1 
\end{array}\right)\Bigg\rangle.$$
Moreover $\mathrm{Out}(\Mu)$ has order $2$ with generator represented by the second generator of $G$. 
\end{lemma}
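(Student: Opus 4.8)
The plan is to determine $N_A(\Mu)$ directly from the normalizer condition and then feed the answer into the two epimorphisms $\Phi\colon N_A(\Mu)\to\mathrm{Aff}(\Mu)$ and $\Omega\colon\mathrm{Aff}(\Mu)\to\mathrm{Out}(\Mu)$ recalled before Theorem 10 to read off $\mathrm{Out}(\Mu)$. First I would note that the translation subgroup $\Tau=\langle t_1,t_2,t_3\rangle=\integers^3$ is exactly the set of translations in $\Mu$ (every element is $\tau\alpha^k$, and $\alpha^k=ke_3/6+A^k$ is a translation only when $6\mid k$), and that $\Tau$ is characteristic in $\Mu$. Hence an affinity $\beta=b+B$ normalizing $\Mu$ must satisfy $B\integers^3=\integers^3$, i.e. $B\in\mathrm{GL}(3,\integers)$, and must send $\alpha$ into $\Mu$. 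Since the linear part of $\beta\alpha\beta^{-1}$ is $BAB^{-1}$, the linear parts occurring in $\Mu$ are the powers of $A$, and $BAB^{-1}$ has order $6$, this forces $BAB^{-1}\in\{A,A^{-1}\}$. Conversely, because $\Mu=\langle\Tau,\alpha\rangle$, the two conditions $B\in\mathrm{GL}(3,\integers)$ and $\beta\alpha\beta^{-1}\in\Mu$ together are equivalent to $\beta$ normalizing $\Mu$.

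Next I would pin down $B$. Writing $A_0=\bigl(\begin{smallmatrix}1&-1\\1&0\end{smallmatrix}\bigr)$ for the upper-left block of $A$, the line $\mathrm{Span}(e_3)=\mathrm{Fix}(A)$ is the unique rational line invariant under $A$ (equivalently under $A^{-1}$), and $\mathrm{Span}(e_1,e_2)$ is the unique invariant plane. Since $B$ conjugates $A$ to $A^{\pm1}$ it permutes these invariant subspaces, so $B=\mathrm{diag}(C,\delta)$ with $Be_3=\delta e_3$, $\delta=\pm1$, and $C\in\mathrm{GL}(2,\integers)$ satisfying $CA_0C^{-1}=A_0^{\pm1}$. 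The matrices $C$ form the normalizer of $\langle A_0\rangle$ in $\mathrm{GL}(2,\integers)$, namely the order-$12$ hexagonal dihedral group generated by $A_0$ and $\bigl(\begin{smallmatrix}0&1\\1&0\end{smallmatrix}\bigr)$.

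The crux — and the step I expect to be the main obstacle — is showing that $\delta$ is not free but is forced to equal $\det C$, which cuts the a priori group of order $24$ down to $G$ of order $12$. For this I would compute the translation part of $\beta\alpha\beta^{-1}$, namely $(I-BAB^{-1})b+\delta e_3/6$, and require $\beta\alpha\beta^{-1}=\tau\alpha^{\pm1}$ with $\tau\in\Tau$ and exponent matching the sign in $BAB^{-1}=A^{\pm1}$ (since $\alpha^{\pm1}=\pm e_3/6+A^{\pm1}$). Reading off the $e_3$-coordinate, which is fixed by $A$ and $A^{-1}$, gives $(\delta-1)/6\in\integers$ when $BAB^{-1}=A$ and $(\delta+1)/6\in\integers$ when $BAB^{-1}=A^{-1}$; these force $\delta=+1$ and $\delta=-1$ respectively, i.e. $\delta=\det C$. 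This is precisely where the screw translation $e_3/6$ of $\alpha$ does the work, and it is the only non-routine point; it also identifies $G=\{\mathrm{diag}(C,\det C)\}$ with the stated dihedral group of order $12$. Reading off the remaining two coordinates leaves $(I-A_0^{\pm1})(b_1,b_2)^{\top}\in\integers^2$, and since $\det(I-A_0)=\det(I-A_0^{-1})=1$ these $2\times2$ matrices are unimodular, so the condition is exactly $b_1,b_2\in\integers$ with $b_3$ unrestricted. This yields $N_A(\Mu)=\{b+B:b_1,b_2\in\integers,\ B\in G\}$.

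Finally I would compute $\mathrm{Out}(\Mu)\cong N_A(\Mu)/\Mu\,C_A(\Mu)$, using that $\Omega\Phi$ is surjective with kernel $\Mu\,C_A(\Mu)$. The center $Z(\Mu)$ consists of the translations $v+I$ with $Av=v$, so $Z(\Mu)=\langle t_3\rangle$, $\mathrm{Span}(Z(\Mu))=\mathrm{Span}(e_3)$, and $C_A(\Mu)=\{(0,0,c_3)+I:c_3\in\realnos\}$. The linear-part map $b+B\mapsto B$ carries $N_A(\Mu)$ onto $G$ and $\Mu\,C_A(\Mu)$ onto $\langle A\rangle$, inducing a surjection $\mathrm{Out}(\Mu)\to G/\langle A\rangle\cong C_2$. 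To see it is an isomorphism I would check that every $\beta=b+A^k$ with $b_1,b_2\in\integers$ already lies in $\Mu\,C_A(\Mu)$: since $\alpha^k=ke_3/6+A^k$, the product $\beta\alpha^{-k}=(b-ke_3/6)+I$ splits as $\bigl((b_1,b_2,0)+I\bigr)\bigl((0,0,b_3-k/6)+I\bigr)$ with the first factor in $\integers^3\subseteq\Mu$ and the second in $C_A(\Mu)$. Hence $\mathrm{Out}(\Mu)\cong C_2$, and the nontrivial class is represented by the reflection $\bigl(\begin{smallmatrix}0&1&0\\1&0&0\\0&0&-1\end{smallmatrix}\bigr)$, the second generator of $G$, as claimed.
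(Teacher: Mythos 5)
Your proof is correct, and it follows exactly the route the paper indicates for these lemmas (which it leaves unproved): an elementary direct computation of $N_A(\Mu)$ from the normalizer condition, followed by reading off $\mathrm{Out}(\Mu)\cong N_A(\Mu)/\Mu\,C_A(\Mu)$ via the epimorphisms $\Phi$ and $\Omega$. The key step forcing $\delta=\det C$ from the $e_3$-coordinate of the screw translation, and the unimodularity of $I-A_0^{\pm 1}$ giving $b_1,b_2\in\integers$ with $b_3$ free, are both verified correctly.
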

It follows from Lemma 12 that $\mathrm{Out}(\Mu)$ has 
2 conjugacy classes of elements of finite order represented by the matrices in Table 23. 
Hence $\mathrm{Out}(\Mu)$ has 2 conjugacy classes of inverse pairs of elements of finite order. 
By Theorem 7, there are 2 affine equivalence classes of flat $O^3_5\,|\, S^1$ fiberings. 
These fiberings are described in Table 23 as explained in \S 8. 

\begin{table} 

\begin{tabular}{c|c|c|c|c}	
no. & mfd. & cS-fbr.  &  grp. & representative      \\ \hline	
 1	&$O^4_8$ & $O^3_5\,|\,S^1$  & $C_1$ & 
	  	$ \left(\begin{array}{rrr} 
		1 &  0 & 0 \\ 0 & 1 & 0 \\ 0 & 0 & 1 \end{array}\right)$   \\ \hline
 2	&$O^4_{25}$ & $O^3_5\,|\,S^1$ &  $C_2$ &
		$ \left(\begin{array}{rrr} 
		 0 &  1 & 0 \\ 1 & 0 & 0 \\  0 & 0 & -1\end{array}\right)$    \\ \hline
 \end{tabular}

\vspace{.2in}
\caption{The flat $O^3_5$ fiberings over $S^1$}
\end{table}

There are no $O^3_5\,|\, \Iota$ fiberings, since by Lemmas 3, 8, and 16, there are no fix point free affinities of $E^3/\Mu$ of order 2.

\section{Fibrations of Closed Flat 4-Manifolds with Generic Fiber $O^3_6$} 

In this section, we describe the affine classification of the geometric co-Seifert fibrations of closed flat 4-manifolds with generic fiber 
the flat 3-manifold $O^3_6$.  Here $O^3_6 = E^3/\Mu$ with $\Mu = \langle t_1,t_2,t_3,\alpha, \beta\rangle$ with $\alpha = e_1/2 + e_3/2 + 
\mathrm{diag}(-1,-1,1)$, and $\beta = e_2/2 + \mathrm{diag}(-1,1,-1)$. 

\begin{lemma}  
Let $\Mu = \langle t_1, t_2, t_3, \alpha, \beta\rangle$ with $\alpha$ and $\beta$ defined as above.  
Then we have a short exact sequence of groups and homomorphisms
$$1 \to (\textstyle{\frac{1}{2}}\integers)^3 \to N_A(\Mu) \to \mathrm{O}(3)\cap \mathrm{GL}(3, \integers) \to 1$$
with  $c \in (\frac{1}{2}\integers)^3$ mapping to $c+I$, and  $c + C \in N_A(\Mu)$ mapping to $C$. 
The group $\mathrm{O}(3)\cap \mathrm{GL}(3, \integers)$ 
consists of the $48$ $(3\times 3)$-matrices in which each row and each column 
has exactly one nonzero entry and each nonzero entry is $\pm 1$.  

Furthermore, we have a short exact sequence of groups and homomorphisms
$$1 \to (\integers/2\integers)^3 \to \mathrm{Out}(\Mu) \to S_3 \times \{\pm I\} \to 1.$$
such that if $c+ C\in N_A(\Mu)$, then $\Omega((c+C)_\star)$ has the following properties. 
If $C= I$, then $\Omega((c+C)_\star)$ corresponds to $(\epsilon(c_1), \epsilon(c_2), \epsilon(c_3))$ in $(\integers/2\integers)^3$, where $\epsilon(c_i) = 0$ 
if $c_i \in \integers$ and $\epsilon(c_i) = 1$ if $c_i \not\in \integers$.
If $C$ is a permutation matrix, then $\Omega((c+C)_\star)$ projects to the corresponding element 
of the symmetric group $S_3$ factor of $S_3\times \{\pm I \}$. 
If $C = - I$, then $\Omega((c+C)_\star)$ projects to $-I$ in the $\{\pm I\}$ factor of $S_3\times \{\pm I\}$. 
\end{lemma}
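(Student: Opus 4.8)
The plan is to compute $N_A(\Mu)$ directly and then read off $\mathrm{Out}(\Mu)$ from it. Write $\Tau=\langle t_1,t_2,t_3\rangle=\integers^3$ for the translation subgroup and let $H=\{I,\mathrm{diag}(1,-1,-1),\mathrm{diag}(-1,1,-1),\mathrm{diag}(-1,-1,1)\}$ be the holonomy group of $\Mu$, i.e. the even diagonal sign matrices, with nontrivial elements $d_1,d_2,d_3$ (indexed so that $d_i$ fixes the axis $\langle e_i\rangle$). First I would note that any affinity $\phi=c+C$ normalizing $\Mu$ normalizes the characteristic subgroup $\Tau$, hence $C\in\mathrm{GL}(3,\integers)$, and that $\phi$ induces by conjugation a permutation of $H$, so $C$ lies in the normalizer of $H$ in $\mathrm{GL}(3,\integers)$. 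The key observation is that this normalizer is exactly $\mathrm{O}(3)\cap\mathrm{GL}(3,\integers)$: conjugation by $C$ permutes $d_1,d_2,d_3$, hence permutes their $(+1)$-eigenlines $\langle e_i\rangle$; since $C$ preserves $\integers^3$ and sends $\langle e_i\rangle$ to $\langle e_{\sigma(i)}\rangle$, it must send $e_i$ to $\pm e_{\sigma(i)}$, so $C$ is a signed permutation matrix. This produces the homomorphism $N_A(\Mu)\to \mathrm{O}(3)\cap\mathrm{GL}(3,\integers)$, $c+C\mapsto C$, and constrains its image.

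For the kernel and surjectivity I would analyse, for each $d_i$, the coset of $\Mu$ of elements with linear part $d_i$; these have a well-defined translation part modulo $\integers^3$, the glide class $g_i\in(\tfrac12\integers)^3/\integers^3$, and a short computation with $\alpha,\beta,\alpha\beta$ gives $g_1\equiv(\tfrac12,\tfrac12,\tfrac12)$, $g_2\equiv(0,\tfrac12,0)$, $g_3\equiv(\tfrac12,0,\tfrac12)$. Conjugating an element with linear part $d_i$ by $c+C$ yields one with linear part $d_{\sigma(i)}$ and translation part congruent to $C g_i+(I-d_{\sigma(i)})c$ modulo $\integers^3$, and membership in $\Mu$ is the requirement that this equal $g_{\sigma(i)}$. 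For $C=I$ this forces $2c_1,2c_2,2c_3\in\integers$, giving the kernel $\{c+I:c\in(\tfrac12\integers)^3\}$, identified with $(\tfrac12\integers)^3$. For surjectivity it suffices to realise a generating set of $\mathrm{O}(3)\cap\mathrm{GL}(3,\integers)$: the diagonal sign matrices and $-I$ are realised with $c=0$ (each $g_i$ is invariant under sign changes modulo $\integers^3$), while each transposition is realised by solving the resulting congruences for $c$. Here the off-axis entries of $(I-d_{\sigma(i)})c$ are the $2c_j$ with $j\neq\sigma(i)$, so quarter-integer entries of $c$ are available, and the decisive point is that the three congruences (one per $i$) are mutually consistent. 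This consistency is exactly the symmetry of the glide data of $O^3_6$ and establishes the first exact sequence.

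With $N_A(\Mu)$ in hand I would turn to $\mathrm{Out}(\Mu)$. Since $\beta_1(\Mu)=0$, the orthogonal-dual description of the center gives $\mathrm{Span}(Z(\Mu))=\{0\}$, hence $C_A(\Mu)=\{c+I:c\in\mathrm{Span}(Z(\Mu))\}=\{I\}$; therefore $\Omega:\mathrm{Aff}(\Mu)\to\mathrm{Out}(\Mu)$ is an isomorphism and, composing with $\Phi$, we get $\mathrm{Out}(\Mu)\cong N_A(\Mu)/\Mu$, where $\Mu$ is normal in $N_A(\Mu)$. Intersecting the first exact sequence with $\Mu$ (so that $\Mu\cap(\tfrac12\integers)^3=\Tau=\integers^3$ and the image of $\Mu$ is $H$) and passing to quotients gives $1\to(\tfrac12\integers)^3/\integers^3\to N_A(\Mu)/\Mu\to(\mathrm{O}(3)\cap\mathrm{GL}(3,\integers))/H\to1$. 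I would then identify the kernel $(\tfrac12\integers)^3/\integers^3\cong(\integers/2\integers)^3$ and, using that $H$ is the normal subgroup of even diagonal sign matrices, identify $(\mathrm{O}(3)\cap\mathrm{GL}(3,\integers))/H\cong S_3\times\{\pm I\}$: the permutation matrices inject to give the $S_3$ factor, while the central $-I$ (odd total sign) generates a complementary central $\{\pm I\}$. This yields the second exact sequence, and tracing $c+C$ through the construction gives the stated description of $\Omega((c+C)_\star)$: the class of $c\bmod\integers^3$ in $(\integers/2\integers)^3$ when $C=I$, the permutation of $C$ in $S_3$ when $C$ is a permutation matrix, and $-I$ in $\{\pm I\}$ when $C=-I$.

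The main obstacle I anticipate is the surjectivity step, namely verifying that for every signed permutation $C$ (equivalently, for a generating set) the system of congruences for the translation part $c$ is solvable. This is where the particular glide structure of the Hantzsche--Wendt manifold is essential: unlike the earlier closed flat $3$-manifolds, the consistency is not visible from a single coset computation and must be checked across all three holonomy cosets simultaneously.
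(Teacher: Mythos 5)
Your proposal is correct, and it follows essentially the route the paper itself indicates but does not write out: the paper leaves the computation of $N_A(\Mu)$ to the reader and obtains $\mathrm{Out}(\Mu)$ as $N_A(\Mu)/\Mu$ via the epimorphisms $\Phi$ and $\Omega$ (noting that $C_A(\Mu)$ is trivial since $\beta_1(\Mu)=0$), citing Charlap--Vasquez for the order $96$. Your glide-class congruences, the resulting kernel $(\frac{1}{2}\integers)^3$, the realization of generators of $\mathrm{O}(3)\cap\mathrm{GL}(3,\integers)$ with quarter-integer translation parts, and the passage to the quotient sequence all check out against the data in Table 24.
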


It follows from Lemma 17 that $\mathrm{Out}(\Mu)$ has order 96.  
This fact was first determined by Charlap and Vasquez \cite{C-V}.  
Zimmermann \cite{Z} showed that the second exact sequence in Lemma 17 does not split. 

Hillman \cite{H} determined that $\mathrm{Out}(\Mu)$ has 13 conjugacy classes of inverse pairs. 
This was checked by us by a computer calculation. 
By Theorem 7, there are 13 affine equivalence classes of flat $O^3_6\,|\, S^1$ fiberings. 
These fiberings are described in Table 24 as explained in \S 8. 

There are no $O^3_6\,|\, \Iota$ fiberings, since by Lemmas 3, 8, and 17, there are no fix point free affinities of $E^3/\Mu$ of order 2.  

\begin{table} 

\begin{tabular}{c|c|c|c|c}	
no. & mfd. & cS-fbr.  &  grp. & representative      \\ \hline	
 1	&$O^4_{14}$ & $O^3_6\,|\,S^1$  & $C_1$ & 
	  	$ \left(\begin{array}{rrr} 
		1 &  0 & 0 \\ 0 & 1 & 0 \\ 0 & 0 & 1 \end{array}\right)$   \\ \hline
 2	&$O^4_{15}$ & $O^3_6\,|\,S^1$ &  $C_2$ &
		$ \left(\begin{array}{rrrr} 
		 \frac{1}{2} & 1 &  0 &  0 \\ \frac{1}{2} & 0 & 1 & 0 \\  \frac{1}{2} & 0 & 0 & 1\end{array}\right)$    \\ \hline
3	&$O^4_{16}$ & $O^3_6\,|\,S^1$ &  $C_2$ &
		$ \left(\begin{array}{rrrr} 
		 0 & 1 &  0 &  0 \\ \frac{1}{2} & 0 & 1 & 0 \\  \frac{1}{2} & 0 & 0 & 1\end{array}\right)$    \\ \hline
4	&$O^4_{17}$ & $O^3_6\,|\,S^1$ &  $C_2$ &
		$ \left(\begin{array}{rrrr} 
		 0 & 1 &  0 &  0 \\ 0 & 0 & 1 & 0 \\  \frac{1}{2} & 0 & 0 & 1\end{array}\right)$    \\ \hline
5	&$O^4_{23}$ & $O^3_6\,|\,S^1$ &  $C_2$ &
		$ \left(\begin{array}{rrrr} 
		 \frac{1}{4}  & -1 &  0 &  0 \\ 0 & 0 & 0 & -1 \\  0 & 0 & -1 & 0\end{array}\right)$    \\ \hline
6	&$O^4_{24}$ & $O^3_6\,|\,S^1$ &  $C_4$ &
		$ \left(\begin{array}{rrrr} 
		 \frac{1}{4}  & -1 &  0 &  0 \\ 0 & 0 & 0 & -1 \\  \frac{1}{2} & 0 & -1 & 0\end{array}\right)$    \\ \hline
7	&$O^4_{26}$ & $O^3_6\,|\,S^1$ &  $C_3$ &
		$ \left(\begin{array}{rrrr} 
		 \frac{1}{4}  & 0 &  0 &  1 \\ \frac{1}{2} & 1 & 0 & 0 \\  \frac{1}{4} & 0 & 1 & 0\end{array}\right)$    \\ \hline
8	&$O^4_{27}$ & $O^3_6\,|\,S^1$ &  $C_6$ &
		$ \left(\begin{array}{rrrr} 
		 \frac{1}{4}  & 0 &  0 &  1 \\ 0 & 1 & 0 & 0 \\  \frac{1}{4} & 0 & 1 & 0\end{array}\right)$    \\ \hline 
9	&$N^4_{38}$ & $O^3_6\,|\,S^1$  & $C_2$ & 
	  	$ \left(\begin{array}{rrr} 
		-1 &  0 & 0 \\ 0 & -1 & 0 \\ 0 & 0 & -1 \end{array}\right)$   \\ \hline
10	&$N^4_{39}$ & $O^3_6\,|\,S^1$  & $C_2$ & 
	  	$ \left(\begin{array}{rrrr} 
		0 & -1 &  0 & 0 \\ 0 & 0 & -1 & 0 \\ \frac{1}{2} & 0 & 0 & -1 \end{array}\right)$   \\ \hline
11	&$N^4_{40}$ & $O^3_6\,|\,S^1$  & $C_4$ & 
	  	$ \left(\begin{array}{rrrr} 
		\frac{1}{4} & 1 &  0 & 0 \\ 0 & 0 & 0 & 1 \\ \frac{1}{2} & 0 & 1 & 0 \end{array}\right)$   \\ \hline
12	&$N^4_{41}$ & $O^3_6\,|\,S^1$  & $C_4$ & 
	  	$ \left(\begin{array}{rrrr} 
		\frac{1}{4} & 1 &  0 & 0 \\ 0 & 0 & 0 & 1 \\ 0 & 0 & 1 & 0 \end{array}\right)$   \\ \hline
13	&$N^4_{43}$ & $O^3_6\,|\,S^1$ &  $C_6$ &
		$ \left(\begin{array}{rrrr} 
		 \frac{1}{4}  & 0 &  0 &  -1 \\ 0 & -1 & 0 & 0 \\  \frac{1}{4} & 0 & -1 & 0\end{array}\right)$    \\ \hline		
 \end{tabular}

\vspace{.2in}
\caption{The flat $O^3_6$ fiberings over $S^1$}
\end{table}

\section{Fibrations of Closed Flat 4-Manifolds with Generic Fiber $N^3_1$} 

In this section, we describe the affine classification of the geometric co-Seifert fibrations of closed flat 4-manifolds with generic fiber 
the flat 3-manifold $N^3_1$.  Here $N^3_1 = E^3/\Mu$ with $\Mu = \langle t_1,t_2,t_3,\alpha\rangle$ with $\alpha = e_1/2  + \mathrm{diag}(1,1,-1)$. 

\begin{lemma} 
Let $\Mu = \langle t_1, t_2, t_3, \alpha\rangle$ with $\alpha = e_1/2 +  \mathrm{diag}(1,1,-1)$.  
Then we have
$$N_A(\Mu) = \{b+B: 2b_3 \in \integers\ and\ B = \mathrm{diag}(C,\pm 1)\ with\ C\in\mathrm{GL}(2,\integers)\ and\ c_{21}\in 2\integers\}.$$
Moreover 
$\mathrm{Out}(\Mu) \cong (\integers/2\integers) \times H$ with 
$$H = \{C\in  \mathrm{GL}(2,\integers): c_{21} \in 2\integers\},$$ 
and $\Omega((e_3/2+I)_\star)$ corresponding to the generator of the $\integers/2\integers$ factor 
of $(\integers/2\integers) \times H$, and $\Omega((\mathrm{diag}(C,\pm 1))_\star)$ corresponding to $C$ 
in the $H$ factor of $(\integers/2\integers) \times H$. 
\end{lemma}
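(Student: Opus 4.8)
The plan is to exploit the product structure of $\Mu$ and the machinery of Section 7 relating $N_A(\Mu)$ to $\mathrm{Out}(\Mu)$. First I would record the basic algebra of $\Mu$: since $\alpha^2 = t_1$ and $A = \mathrm{diag}(1,1,-1)$ fixes $e_1, e_2$ and negates $e_3$, the translation lattice is $\Tau = \langle t_1, t_2, t_3\rangle = \integers^3$, the holonomy group is $\{I, A\}$, and every element of $\Mu$ with nontrivial linear part $A$ has translation part in $(\frac12+\integers)\times\integers\times\integers$. Because $\Tau$ is characteristic in $\Mu$ (cf. Lemma 3), any $\beta = b + B \in N_A(\Mu)$ satisfies $B\integers^3 = \integers^3$, so $B\in\mathrm{GL}(3,\integers)$, and conjugation must preserve the holonomy, forcing $BAB^{-1}=A$. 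The commuting condition $BA=AB$ in $\mathrm{GL}(3,\integers)$ then pins $B$ to the block form $\mathrm{diag}(C,\pm1)$ with $C\in\mathrm{GL}(2,\integers)$.

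Next I would impose that $\beta$ sends $\alpha$ into $\Mu$. A direct computation gives $\beta\alpha\beta^{-1} = \big((I-A)b + Be_1/2\big) + A$, whose translation part is $(c_{11}/2,\,c_{21}/2,\,2b_3)$, since $(I-A)b = 2b_3e_3$ and $Be_1 = (c_{11},c_{21},0)$. Membership in $\Mu$ requires this vector to lie in $(\frac12+\integers)\times\integers\times\integers$, i.e. $c_{11}$ odd, $c_{21}\in 2\integers$, and $2b_3\in\integers$; here $c_{11}$ odd is not a separate hypothesis but is forced by $c_{21}\in 2\integers$ and $\det C=\pm1$ (whence $c_{11}c_{22}$ is odd). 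Conversely, these conditions give $\beta\Tau\beta^{-1}=\Tau$ and $\beta\alpha\beta^{-1}\in\Mu$, and since $\beta\alpha\beta^{-1}$ has linear part $A\neq I$ it generates $\Mu$ together with $\Tau$ (index $2$), so $\beta\in N_A(\Mu)$. Because $(I-A)b$ annihilates $b_1,b_2$, these coordinates stay unconstrained, yielding the stated description of $N_A(\Mu)$.

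For $\mathrm{Out}(\Mu)$ I would use that $\Omega\circ\Phi$ realizes $\mathrm{Out}(\Mu)\cong N_A(\Mu)/(\Mu\,C_A(\Mu))$, where $\Phi,\Omega$ and the formula $C_A(\Mu)=\{c+I: c\in\mathrm{Span}(Z(\Mu))\}$ are supplied by Lemmas 1 and 2 of \cite{C-V}. A short calculation gives $Z(\Mu)=\langle t_1,t_2\rangle$ (a translation $v+I$ is central iff $Av=v$, and no element with linear part $A$ commutes with $t_3$), so $\mathrm{Span}(Z(\Mu))=\langle e_1,e_2\rangle$ and $C_A(\Mu)=\{(c_1,c_2,0)+I\}$. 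Forming the product subgroup, $\Mu\,C_A(\Mu)=\{(r_1,r_2,m)+\mathrm{diag}(I,\pm1): r_1,r_2\in\realnos,\ m\in\integers\}$. Writing a general element of $N_A(\Mu)$ as $b+\mathrm{diag}(C,d)$ with $d=\pm1$, I would then define the map $N_A(\Mu)\to H\times(\integers/2\integers)$ by $\beta\mapsto(C,\ b_3\bmod\integers)$, with $H=\{C:c_{21}\in 2\integers\}$, check that it is a surjective homomorphism (in the product the third coordinate is $b_3+d\,b_3'$, and the sign $d$ is invisible modulo $\integers$), and identify its kernel with $\Mu\,C_A(\Mu)$, giving $\mathrm{Out}(\Mu)\cong(\integers/2\integers)\times H$ with the stated generators.

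The main obstacle is the outer-automorphism count: naive bookkeeping suggests three independent order-two contributions (the sign $d=\pm1$, the class of $b_3$, and the parity data inside $C$), which overcounts. The crux is to observe that $A=\mathrm{diag}(1,1,-1)$ itself lies in $\Mu\,C_A(\Mu)$ — indeed $A=\alpha\cdot(-e_1/2+I)$ — so $d$ is not an invariant of the outer class and $\mathrm{diag}(C,1)$, $\mathrm{diag}(C,-1)$ represent the same element of $\mathrm{Out}(\Mu)$. Pinning down $\Mu\,C_A(\Mu)$ exactly and verifying that the proposed map is a well-defined homomorphism onto $(\integers/2\integers)\times H$ with precisely this kernel is where the care is needed; the normalizer computation itself is routine integral linear algebra.
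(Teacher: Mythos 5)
Your proposal is correct, and it follows exactly the route the paper intends: the paper omits the proof of this lemma, describing the normalizer computation as elementary and stating that $\mathrm{Out}(\Mu)$ is obtained from $N_A(\Mu)$ via the epimorphisms $\Phi$ and $\Omega$ of \S 7, which is precisely your identification $\mathrm{Out}(\Mu)\cong N_A(\Mu)/\Mu\,C_A(\Mu)$. Your conjugation formula, the parity argument forcing $c_{11}$ odd, the computation $Z(\Mu)=\langle t_1,t_2\rangle$, and the observation that $\mathrm{diag}(I,-1)=\alpha\cdot(-e_1/2+I)\in\Mu\,C_A(\Mu)$ (so the sign $\pm 1$ is killed in the quotient) all check out.
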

\begin{lemma} 
Let $H = \{C\in  \mathrm{GL}(2,\integers): c_{21} \in 2\integers\}$, and let 
$$A = \left(\begin{array}{rr} 0 & -1 \\  1 & 0  \end{array}\right), \  B = \left(\begin{array}{rr} 0 & -1 \\  1 & 1  \end{array}\right), \ 
C = \left(\begin{array}{rr} 0 & 1 \\  1 & 0  \end{array}\right).$$
Then $H$ is the free product of the dihedral group $\langle  -I, AC\rangle$ of order $4$ with the dihedral group $B\langle A, C\rangle B^{-1}$ of order $8$ 
amalgamated over the cyclic group $\langle -I\rangle$ of order $2$. 
Every finite subgroup of $H$ is conjugate in $H$ to a subgroup of either $\langle  -I, AC\rangle$ or $B\langle A, C\rangle B^{-1}$. 
The group $H$ has $7$ conjugacy classes of inverse pairs of elements of finite order 
represented by $I, AC, CA, -I, BCB^{-1}, BACB^{-1}, BAB^{-1}$, respectively, of orders  
$1,2,2,2,2,2,4$, respectively. 
Moreover $AC = \mathrm{diag}(-1,1)$, $CA = -AC$ and 
$$BCB^{-1} = \left(\begin{array}{rr} -1 & -1 \\  0 & 1  \end{array}\right), \  BACB^{-1} = \left(\begin{array}{rr} 1 & 0 \\  -2 & -1  \end{array}\right), \ 
BAB^{-1} = \left(\begin{array}{rr} -1 & -1 \\  2 & 1  \end{array}\right).$$
\end{lemma}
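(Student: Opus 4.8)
The plan is to realize $H$ as an index--$3$ subgroup of $\mathrm{GL}(2,\integers)$ and read off its amalgamated structure from a Bass--Serre tree, then to deduce the finite--subgroup and conjugacy statements from that structure; the displayed matrices are direct multiplications. First I would check that $H$ is a subgroup (the $(2,1)$ entry of a product is $c_{21}d_{11}+c_{22}d_{21}$, which is even when $c_{21},d_{21}$ are, and inversion negates $c_{21}$ up to $\det=\pm1$). Reduction modulo $2$ is a homomorphism $\mathrm{GL}(2,\integers)\to \mathrm{GL}(2,\integers/2\integers)$, and $\mathrm{GL}(2,\integers/2\integers)\cong S_3$ via its action on the three lines of $(\integers/2\integers)^2$. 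Then $H$ is exactly the preimage of the stabiliser of $\langle e_1\rangle$ (the upper--triangular subgroup, of order $2$), so $[\mathrm{GL}(2,\integers):H]=3$, and as a $\mathrm{GL}(2,\integers)$--set $H\backslash\mathrm{GL}(2,\integers)$ is the three--point set on which $\mathrm{GL}(2,\integers)$ acts through $S_3$.

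By Lemma 45 of \cite{R-TII} (already used in Lemma 10), $\mathrm{PGL}(2,\integers)$ is the amalgam of the stabiliser $\overline P$ of $i$ (dihedral of order $4$) and the stabiliser $\overline Q$ of $\rho$ (dihedral of order $6$) over the common reflection $z\mapsto 1/\bar z$, namely the image of $C$; let $\overline T$ be the associated tree. Both $\mathrm{GL}(2,\integers)$ and $H$ act on $\overline T$ through $\mathrm{GL}(2,\integers)\to\mathrm{PGL}(2,\integers)$, with $-I$ acting trivially. I would compute the permutation action on the three cosets: $A$ and $C$ both reduce to the transposition fixing $\langle e_1+e_2\rangle$, so $\overline P$ has image of order $2$ with two orbits; the order--$6$ rotation of $\overline Q$ reduces to a $3$--cycle, so $\overline Q$ acts transitively; and the edge reflection $C$ reduces to a transposition. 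Bass--Serre theory then presents $\overline H=H/\langle -I\rangle$ as the fundamental group of the quotient graph of groups $\overline H\backslash\overline T$, which comes out a path of two edges; the edge whose edge group equals its order--$2$ endpoint collapses, leaving a single segment $C_2\ast(C_2\times C_2)$ amalgamated over the trivial group. Lifting through $1\to\langle -I\rangle\to H\to\overline H\to1$ doubles every stabiliser and turns the trivial edge group into $\langle -I\rangle$, giving $H\cong(\text{order }4)\ast_{\langle -I\rangle}(\text{order }8)$. Finally I would match the two surviving vertex groups with the stated subgroups: $H\cap\langle A,C\rangle=\langle -I,AC\rangle$ is the index--$2$ subgroup of the order--$8$ group $\langle A,C\rangle$ (note $AC=\mathrm{diag}(-1,1)$ and $CA=-AC$ lie in $\langle A,C\rangle$ while $A,C$ do not lie in $H$), whereas the fixed--point vertex of $\overline P$ sits in the $H$--translate by the double--coset representative $B$, so its stabiliser is $B\langle A,C\rangle B^{-1}\subseteq H$. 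The Euler--characteristic identity $\chi(H)=3\,\chi(\mathrm{GL}(2,\integers))=3(-\tfrac{1}{24})=-\tfrac18=\tfrac14+\tfrac18-\tfrac12$ confirms the orders $4,8,2$.

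The finite--subgroup statement is then immediate: a finite group acting on a tree fixes a vertex, hence lies in a vertex stabiliser, so every finite subgroup of $H$ is conjugate into $\langle -I,AC\rangle$ or $B\langle A,C\rangle B^{-1}$. For the conjugacy classes I would use that the edge group $\langle -I\rangle$ contains only $\pm I$ and that $-I$ is central, so every torsion element other than $\pm I$ fixes a unique vertex; by the conjugacy theorem for amalgams, two such elements are $H$--conjugate iff they are conjugate inside the same vertex group. Enumerating inside the Klein four--group $\langle -I,AC\rangle$ (the singletons $I,AC,CA$, together with the shared $-I$) and inside the order--$8$ dihedral group $B\langle A,C\rangle B^{-1}$ (its classes $I,-I$, the order--$4$ inverse pair $BAB^{-1}$, and the two reflection classes $BCB^{-1}$ and $BACB^{-1}$), and discarding the duplicate $-I$, yields exactly the seven inverse pairs $I,AC,CA,-I,BCB^{-1},BACB^{-1},BAB^{-1}$ of orders $1,2,2,2,2,2,4$. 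The closing matrix identities are one--line computations from $B^{-1}$ and the definitions of $A,C$.

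The crux is the Bass--Serre bookkeeping of the second paragraph: correctly determining the permutation action on the three cosets, hence the unreduced quotient graph of groups, and reducing it to a single segment while tracking which $\mathrm{GL}(2,\integers)$--conjugate of $\langle A,C\rangle$ survives as each vertex group --- in particular pinning down that $B$ is the double--coset representative of the fixed point, which is why the order--$8$ factor appears conjugated by $B$ while the order--$4$ factor does not. A secondary subtlety in the conjugacy count is justifying that $AC$ and $CA=-AC$ stay distinct in $H$ (the abelian vertex group $\langle -I,AC\rangle$ fuses nothing, and neither element is conjugate into the edge group) and that the two reflection classes of the order--$8$ factor do not fuse with the order--$2$ classes of the order--$4$ factor.
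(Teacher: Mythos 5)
Your proposal is correct and follows essentially the same route as the paper: reduce mod $2$ to see that $H$ has index $3$ in $\mathrm{GL}(2,\integers)$ with coset representatives in $\langle B\rangle$, invoke an amalgam decomposition of the ambient group from \cite{R-TII}, and apply the subgroup theorem for amalgamated products (equivalently, Bass--Serre theory on the associated tree) to obtain the splitting of $H$, from which the finite-subgroup and conjugacy-class statements follow. The only difference is one of presentation: the paper works directly with the $\mathrm{GL}(2,\integers)$ amalgam $\langle A,C\rangle \ast_{\langle -I,C\rangle}\langle B,C\rangle$ (Lemma 50 of \cite{R-TII}) and cites Theorem 3.14 of \cite{S-W}, whereas you run the tree computation in $\mathrm{PGL}(2,\integers)$ via Lemma 45 of \cite{R-TII} and lift through the central extension by $\langle -I\rangle$, arriving at the same quotient graph of groups and the same conclusion.
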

\begin{proof}
By mapping $H$ into $\mathrm{GL}(2,\integers/2\integers)$, we see that $H$ has index 3 in $\mathrm{GL}(2,\integers)$ 
with coset representatives the elements of the cyclic group $\langle B\rangle$ of order 3. 
The group $\mathrm{GL}(2,\integers)$ is the free product of the dihedral group $\langle A, C\rangle$ of order 8 
with the dihedral group $\langle B, C\rangle$ of order 12 amalgamated over the dihedral group $\langle -I, C\rangle$ of order 4 by Lemma 50 of \cite{R-TII}. 
By the subgroup theorem for free products with amalgamation as explained in Theorem 3.14 of \cite{S-W}, 
we deduce that $H$ is the free product of the dihedral group $\langle  I, AC\rangle$ of order $4$ with the dihedral group $B\langle A, C\rangle B^{-1}$ of order $8$ 
amalgamated over the cyclic group $\langle -I\rangle$ of order $2$. 

From the free product with amalgamation structure of $H$, 
we deduce that every finite subgroup of $H$ is conjugate in $H$ to a subgroup of either $\langle  I, AC\rangle$ or $B\langle A, C\rangle B^{-1}$,  
and $H$ has $7$ conjugacy classes of inverse pairs of elements of finite order 
represented by $I, AC, CA, -I, BCB^{-1}, BACB^{-1}, BAB^{-1}$, respectively, of orders  $1,2,2,2,2,2,4$, respectively.  
\end{proof}

It follows from Lemmas 18 and 19 that $\mathrm{Out}(\Mu)$ has 14 conjugacy classes of inverse pairs of elements of finite order. 
Hence, there are 14 affine equivalence classes of flat $N^3_1\, |\, S^1$ fiberings. 
These fiberings are represented by the generalized Calabi constructions described in Table 25 as explained in \S 8. 

\begin{table} 

\begin{tabular}{c|c|c|c|c}	
no. & mfd. & cS-fbr.  &  grp. & representative      \\ \hline	
 1	&$N^4_1$ & $N^3_1\,|\,S^1$  & $C_1$ & 
	  	$ \left(\begin{array}{rrr} 
		1 &  0 & 0 \\ 0 & 1 & 0 \\ 0 & 0 & 1 \end{array}\right)$   \\ \hline
 2	&$N^4_2$ & $N^3_1\,|\,S^1$ &  $C_2$ &
		$ \left(\begin{array}{rrrr} 
		 0 & 1 &  0 &  0 \\ 0 & 0 & 1 & 0 \\  \frac{1}{2} & 0 & 0 & 1\end{array}\right)$    \\ \hline
3	&$N^4_3$ & $N^3_1\,|\,S^1$ &  $C_2$ &
		$ \left(\begin{array}{rrr} 
	         1 &  0 &  0 \\  0 & -1 & 0 \\  0 & 0 & 1\end{array}\right)$    \\ \hline
4	&$N^4_4$ & $N^3_1\,|\,S^1$ &  $C_2$ &
		$ \left(\begin{array}{rrrr} 
		 0 & 1 &  0 &  0 \\ 0 & 0 & -1 & 0 \\  \frac{1}{2} & 0 & 0 & 1\end{array}\right)$    \\ \hline
5	&$N^4_4$ & $N^3_1\,|\,S^1$ &  $C_2$ &
		$ \left(\begin{array}{rrr} 
		  1 &  0 &  0 \\  -2 & -1 & 0 \\  0 & 0 & 1 \end{array}\right)$    \\ \hline
6	&$N^4_5$ & $N^3_1\,|\,S^1$ &  $C_2$ &
		$ \left(\begin{array}{rrrr} 
		 0 & 1 &  0 &  0 \\ 0 & -2 & -1 & 0 \\  \frac{1}{2} & 0 & 0 & 1\end{array}\right)$    \\ \hline
7	&$N^4_8$ & $N^3_1\,|\,S^1$ &  $C_2$ &
		$ \left(\begin{array}{rrr} 
		  -1 &  0 &  0 \\  0 & 1 & 0 \\   0 & 0 & 1\end{array}\right)$    \\ \hline
8	&$N^4_9$ & $N^3_1\,|\,S^1$ &  $C_2$ &
		$ \left(\begin{array}{rrrr} 
		 0  & -1 &  0 &  0 \\ 0 & 0 & 1 & 0 \\  \frac{1}{2} & 0 & 0 & 1\end{array}\right)$    \\ \hline 
9	&$N^4_{10}$ & $N^3_1\,|\,S^1$  & $C_2$ & 
	  	$ \left(\begin{array}{rrr} 
		-1 &  -1 & 0 \\ 0 & 1 & 0 \\ 0 & 0 & 1 \end{array}\right)$   \\ \hline
10	&$N^4_{12}$ & $N^3_1\,|\,S^1$  & $C_2$ & 
	  	$ \left(\begin{array}{rrrr} 
		0 & -1 &  -1 & 0 \\ 0 & 0 & 1 & 0 \\ \frac{1}{2} & 0 & 0 & 1 \end{array}\right)$   \\ \hline
11	&$N^4_{24}$ & $N^3_1\,|\,S^1$  & $C_2$ & 
	  	$ \left(\begin{array}{rrr} 
		 -1 &  0 & 0 \\ 0 & -1 & 0 \\  0 & 0 & 1 \end{array}\right)$   \\ \hline
12	&$N^4_{25}$ & $N^3_1\,|\,S^1$  & $C_2$ & 
	  	$ \left(\begin{array}{rrrr} 
		0 & -1 &  0 & 0 \\ 0 & 0 & -1 & 0 \\ \frac{1}{2} & 0 & 0 & 1 \end{array}\right)$   \\ \hline
13	&$N^4_{27}$ & $N^3_1\,|\,S^1$ &  $C_4$ &
		$ \left(\begin{array}{rrrr} 
		  -1 &  -1 &  0 \\  2 & 1 & 0 \\   0 & 0 & 1\end{array}\right)$    \\ \hline
14	&$N^4_{28}$ & $N^3_1\,|\,S^1$ &  $C_4$ &
		$ \left(\begin{array}{rrrr} 
		 0 & -1 &  -1 &  0 \\  0 & 2 & 1 & 0 \\ \frac{1}{2} &  0 & 0 & 1\end{array}\right)$    \\ \hline			
 \end{tabular}

\vspace{.2in}
\caption{The flat $N^3_1$ fiberings over $S^1$}
\end{table}

Next we consider $N^3_1\, | \, I$ fiberings. 

\begin{lemma}  
Let $\Mu = \langle t_1, t_2, t_3, \alpha\rangle$ with $\alpha = e_1/2 + \mathrm{diag}(1,1,-1)$,   
and let $\beta = b + \mathrm{diag}(C,\pm 1)$ be an affinity of $E^3$ that normalizes $\Mu$ 
such that $\beta_\star$ has order $2$ and $\beta_\star$ does not fix a point of $E^3/\Mu$. 
Then $C$ has order $1$ or  $2$. 
If $C$ has order $1$, then $\beta_\star$ is conjugate to either $(e_2/2+I)_\star$, $(e_3/2+I)_\star$, or $(e_2/2+e_3/2+I)_\star$. 
The quotient of $E^3/\Mu$ by the action of  $\beta_\star$ is of type $N^3_1$, $N^3_1$, or $N^3_2$, respectively. 

If $C$ has order $2$, then $\beta_\star$ is conjugate to either $(e_3/2 +\mathrm{diag}(1,-1,1))_\star$, 
or $(e_2/2+\mathrm{diag}(-1,1,1))_\star$, or $(e_2/2+e_3/2+ \mathrm{diag}(-1,1,1))_\star$, or $(e_3/2+B)_\star$, 
with $Be_1 = e_1-2e_2$, $Be_2 = -e_2$, and $Be_3 = e_3$. 
The quotient of $E^3/\Mu$ by the action of $\beta_\star$ is of type $N^3_3$, $N^3_3$, $N^3_4$, or $N^3_4$, respectively. 
\end{lemma}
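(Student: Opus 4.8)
The plan is to mimic the proofs of Lemmas 5, 8, 11, 13, and 15: first pin down the possible linear parts using Lemmas 18 and 19, then for each admissible $2\times 2$ block $C$ list the lifts $\beta$ whose induced map $\beta_\star$ has order $2$, discard those that fix a point by means of the fixed-point criterion of Lemma 3, reduce the survivors to normal form by conjugation in $\mathrm{Aff}(\Mu)$, and finally read off the quotient type from its IT number in Table 1B of \cite{B-Z}.

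First I would observe that $\beta_\star^2 = \mathrm{id}$ forces $\beta^2 \in \Mu$, so $\Omega(\beta_\star)$ has order $1$ or $2$ in $\mathrm{Out}(\Mu)$; under the isomorphism $\mathrm{Out}(\Mu) \cong (\integers/2\integers) \times H$ of Lemma 18 its image is $(\epsilon(b_3), C)$, whence $C^2 = I$ and $C$ has order $1$ or $2$, proving the first assertion. A convenient preliminary reduction is that, because $\alpha \in \Mu$ has linear part $\mathrm{diag}(1,1,-1)$ and $\alpha_\star = \mathrm{id}$, replacing $\beta$ by $\alpha\beta$ leaves $\beta_\star$ unchanged while flipping the sign $\delta$ in $B = \mathrm{diag}(C,\delta)$; hence I may always assume $\delta = 1$, i.e. $B = \mathrm{diag}(C,1)$. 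This matches the fact that every representative listed in the lemma has $Be_3 = e_3$. By Lemma 19 I may then conjugate so that $C$ is one of $I$, $AC = \mathrm{diag}(-1,1)$, $CA = \mathrm{diag}(1,-1)$, $-I$, $BCB^{-1}$, or $BACB^{-1}$.

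For the order-$1$ case ($C = I$, $B = I$), $\beta$ is a translation $b + I$ with $b \in \tfrac12\integers^3 \setminus \integers^3$, so modulo $\integers^3$ there are seven candidate vectors. Applying Lemma 3 with coset representatives $I, \alpha$ of $\Tau = \integers^3$ in $\Mu$, I would compute that $\beta_\star$ fails to be fixed-point-free exactly when the $\alpha\beta$-translate lies in the relevant image lattice, eliminating the classes with $b_1 \equiv \tfrac12$ and $b_2 \equiv 0$; the mod-$2$ action of $H$ on the first two coordinates (generated by $\bigl(\begin{smallmatrix}1&1\\0&1\end{smallmatrix}\bigr)$) then collapses the remaining five vectors into the three orbits with representatives $e_2/2$, $e_3/2$, $e_2/2 + e_3/2$, giving quotients $N^3_1$, $N^3_1$, $N^3_2$. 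For the order-$2$ case I would treat each candidate block $C$ in turn: for $-I$ and $BCB^{-1}$ the linear part of $\alpha\beta$ makes $\widetilde{\alpha\beta}_\star$ fix a point of $E^3/\Tau$ for every admissible $b$, so by Lemma 3 no fixed-point-free lift exists and these blocks are excluded; for $CA = \mathrm{diag}(1,-1)$ and $BACB^{-1}$ the fixed-point-free lifts reduce, after shifting $b$ by $(I-B)v$, to the single representatives $e_3/2 + \mathrm{diag}(1,-1,1)$ and $e_3/2 + B$ with $Be_1 = e_1 - 2e_2$, $Be_2 = -e_2$; and for $AC = \mathrm{diag}(-1,1)$ the fixed-point-free condition is $b_2 \notin \integers$ with $b_3 \in \{0,\tfrac12\}$ unconstrained, yielding the two representatives $e_2/2 + \mathrm{diag}(-1,1,1)$ and $e_2/2 + e_3/2 + \mathrm{diag}(-1,1,1)$. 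Identifying the four quotients by their IT numbers gives $N^3_3$, $N^3_3$, $N^3_4$, $N^3_4$ as stated.

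The main obstacle is the bookkeeping in the order-$2$ case: one must simultaneously track the order condition ($\beta^2 \in \Mu$), the two fixed-point conditions coming from the coset representatives $I$ and $\alpha$ in Lemma 3, and the residual conjugation freedom (translations shift $b$ by $(I-B)v$, while the centralizer of $C$ acts on the remaining coordinates). The delicate point is that the block $AC = \mathrm{diag}(-1,1)$ produces two inequivalent classes, distinguished by $b_3 \bmod \integers$, whereas the superficially similar blocks $-I$ and $BCB^{-1}$ produce none; verifying this asymmetry, rather than the routine matrix computations, is where care is required, and the cleanest confirmation that the two $AC$-classes are genuinely distinct is that their quotients have different IT numbers ($N^3_3$ versus $N^3_4$).
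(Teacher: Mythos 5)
Your proposal is correct and follows essentially the same route as the paper's proof: reduce the linear part to the normal forms of Lemma 19 (after using $\alpha$ to fix the sign of the third diagonal entry), winnow and normalize the translation parts via the fixed-point criterion of Lemma 3 together with Lemma 8, and identify the resulting quotients by their IT numbers or by CARAT. The paper's own argument is just a terser version of this, citing Lemmas 3, 8, and 19 for the conjugacy normal forms and then identifying each quotient space group from Table 1B of Brown et al.
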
 
\begin{proof}
If $C$ has order $1$, then $\beta_\star$ is conjugate to either $(e_2/2+I)_\star$, $(e_3/2+I)_\star$, or $(e_2/2+e_3/2+I)_\star$ by Lemmas 3 and 8. 
The quotient of $E^3/\Mu$ by the action of  $(e_2/2+I)_\star$ or $(e_3/2+I)_\star$ is clearly of type $N^3_1$. 
The quotient of $E^3/\Mu$ by the action of  $(e_2/2+e_3/2 + I)_\star$ is of type $N^3_2$, 
since the 3-space group $\langle t_2, \alpha, e_2/2+e_3/2 + I\rangle$ is the same as 
the 3-space group $\langle e_2/2-e_3/2 + I, e_2/2+e_3/2 + I, \alpha\rangle$, which has IT number 9 according to Table 1B of \cite{B-Z}. 

If $C$ has order $2$, then $\beta_\star$ is conjugate to either $(e_3/2 +\mathrm{diag}(1,-1,1))_\star$, 
or $(e_2/2+\mathrm{diag}(-1,1,1))_\star$, or $(e_2/2+e_3/2+ \mathrm{diag}(-1,1,1))_\star$, or $(e_3/2+B)_\star$, 
with $Be_1 = e_1-2e_2$, $Be_2 = -e_2$, and $Be_3 = e_3$ 
by Lemmas 3, 8, and 19. 

The quotient of $E^3/\Mu$ by the action of $(e_3/2 +\mathrm{diag}(1,-1,1))_\star$ is of type $N^3_3$, 
since the 3-space group $\langle t_2, \alpha, e_3/2 +\mathrm{diag}(1,-1,1)\rangle$ has IT number 29 according to Table 1B of \cite{B-Z}. 
The quotient of $E^3/\Mu$ by the action of  $(e_2/2+\mathrm{diag}(-1,1,1))_\star$ is of type $N^3_3$, 
since the 3-space group $\langle t_3, \alpha, e_2/2+\mathrm{diag}(-1,1,1)\rangle$ has IT number 29 according to Table 1B of \cite{B-Z}. 

The quotient of $E^3/\Mu$ by the action of $(e_2/2+e_3/2+ \mathrm{diag}(-1,1,1))_\star$ is of type $N^3_4$, 
since the 3-space group $\langle t_2, \alpha, e_2/2+e_3/2+ \mathrm{diag}(-1,1,1)\rangle$ is of type $N^3_4$ according to CARAT. 
The quotient of $E^3/\Mu$ by the action of $(e_3/2+B)_\star$ is of type $N^3_4$, 
since the 3-space group $\langle t_2, \alpha, e_3/2+B\rangle$ is of type $N^3_4$ according to CARAT. 
\end{proof}

Let $\beta = b+\mathrm{diag}(B, \pm 1)$ and $\gamma = c +\mathrm{diag}(C,\pm 1)$ be affinities of $E^3$ that normalize $\Mu$ and such that 
$\beta_\star$ and $\gamma_\star$ are order 2 affinities of $N^3_1$ which do not fix a point of $N^3_1$, and $\Omega(\beta_\star\gamma_\star)$ has finite order. 
If $\langle B, C\rangle$ has order 2, then the pair $\{B,  C\}$ is conjugate to the pair $\{I, D\}$ 
where $D=\mathrm{diag}(-1,1)$ or  $\mathrm{diag}(1,-1)$ or $De_1 = e_1- 2e_2$ and $De_2 = -e_1$ by Lemma 20. 
If $\langle B, C\rangle$ has order greater than 2, then $\langle  B,  C\rangle$ has order 4 
and the pair $\{B, C\}$ is conjugate to either the pair $\{\mathrm{diag}(-1,1),  \mathrm{diag}(1,-1)\}$, 
or the pair $\{D, -D\}$, where $De_1 = e_1- 2e_2$ and $De_2 = -e_1$, by Lemmas 18, 19, and 20.  
By considering all the possibilities for the conjugacy classes of $\beta_\star$ and $\gamma_\star$, 
we find that there are 37 equivalence classes of pairs $\{\beta_\star,\gamma_\star\}$, and so there are 37 
affine equivalence classes of flat $N^3_1\,|\, \Iota$ fiberings by Theorem 10.  
These fiberings are described in Tables 26, 27, and 28 as explained in \S 8. 

\vspace{.15in}
\noindent{\bf Example 5.}
Consider the torsion-free 4-space group 6/2/1/27 in \cite{B-Z}. 
After the permutation in coordinates $(143)$, the group is defined by 
$$\Gamma = \langle t_1, t_2, t_3, t_4, \alpha, \beta, \gamma\rangle$$
with $\alpha = e_1/2 + \mathrm{diag}(1,1,-1,1)$, and $\beta = e_2/2+\mathrm{diag}(-1,1,-1,-1)$, and $\gamma = e_3/2+e_4/2 + \mathrm{diag}(1,-1,1,-1)$. 

Let $\Nu = \langle t_1, t_2, t_3, \alpha\rangle$.   Then $\Nu$ is a complete normal subgroup of $\Gamma$ of type $N^3_1$. 
Let $V = \mathrm{Span}\{e_1, e_2, e_3\}$.  Then $V/\Nu$ is a flat 3-manifold of type $N^3_1$. 
The group $\Gamma/\Nu$ is infinite dihedral, since $\Nu\beta$ and $\Nu\gamma$ are order 2 generators of $\Gamma/\Nu$, 
and $\Nu\beta$ acts on the line $V^\perp$ by the reflection $(-I)_\star$ and $\Nu\gamma$ acts on the line $V^\perp$ by the reflection $(e_4/2-I)_\star$.  

Now $\Nu\beta$ acts on $V/\Nu$ by $(e_2/2+\mathrm{diag}(-1,1,-1))_\star$ in the first three coordinates, 
which is the same as $(e_1/2+e_2/2+\mathrm{diag}(-1,1,1))_\star$ after multiplying by $\alpha_\star$, 
and which is conjugate to $(e_2/2+\mathrm{diag}(-1,1,1))_\star$ after conjugating by $(-e_1/4+I)_\star$. 
Moreover $\Nu\gamma$ acts on $V/\Nu$ by $(e_3/2+\mathrm{diag}(1,-1,1))_\star$.  
Conjugating by $(-e_1/4+I)_\star$ does not affect the action of $\Nu\gamma$ on $V/\Nu$. 
Therefore the geometric fibration of $E^4/\Gamma$ determined by $\Nu$ is affinely equivalent to the $N^3_1\,|\,\Iota$ fibering described in Row 36 of Table 28 
by Theorem 10.  Hence $E^4/\Gamma$ is of type $N^4_{46}$. 

The first Betti number of $E^4/\Gamma$ is zero, since the first Betti number of $E^4/\Gamma$ is the dimension of the fixed space of the point group of $\Gamma$ 
which is zero. This example is a counterexample to Hillman's claim at the bottom of page 37 of \cite{H} that there are no closed flat 4-manifolds 
with first Betti number zero that have a $N^3_1\,|\,\Iota$ fibering. 
Hillman's argument for the classification of all the closed flat 4-manifolds, with zero first Betti number, in \cite{H} has a gap, 
since he did not consider all possible fibrations of these manifolds over $\Iota$.  
This gap is filled by our classification of all the fibrations of closed flat 4-manifolds over $\Iota$.

\begin{table} 

\begin{tabular}{c|c|c|c|c|c}	
no. & mfd. & cS-fbr.  &  grp. & pair representatives  & s-fbrs.  \\ \hline	
1	&$N^4_3$ & $N^3_1\,|\,\Iota$ & $D_1$ &
 		$ \left(\begin{array}{rrrr} 
		0 & 1 & 0 &  0 \\ \frac{1}{2} & 0 & 1 & 0 \\ 0 & 0 & 0 & 1\end{array}\right)$, 
		$ \left(\begin{array}{rrrr} 
		0 & 1 & 0 &  0 \\ \frac{1}{2} & 0 & 1 & 0 \\ 0 & 0 & 0 & 1\end{array}\right)$ & $N^3_1, N^3_1$ \\ \hline
2	&$N^4_4$ & $N^3_1\,|\,\Iota$ & $D_1$ &
 		$ \left(\begin{array}{rrrr} 
		0 & 1 & 0 &  0 \\ \frac{1}{2} & 0 & 1 & 0 \\ \frac{1}{2} & 0 & 0 & 1\end{array}\right)$, 
		$ \left(\begin{array}{rrrr} 
		0 & 1 & 0 &  0 \\ \frac{1}{2} & 0 & 1 & 0 \\ \frac{1}{2} & 0 & 0 & 1\end{array}\right)$ & $N^3_2, N^3_2$ \\ \hline
3	&$N^4_6$ & $N^3_1\,|\,\Iota$ & $D_2$ &
 		$ \left(\begin{array}{rrrr} 
		0 & 1 & 0 &  0 \\ \frac{1}{2} & 0 & 1 & 0 \\ 0 & 0 & 0 & 1\end{array}\right)$, 
		$ \left(\begin{array}{rrrr} 
		0 & 1 & 0 &  0 \\ \frac{1}{2} & 0 & 1 & 0 \\ \frac{1}{2} & 0 & 0 & 1\end{array}\right)$ & $N^3_1, N^3_2$ \\ \hline
4	&$N^4_8$ & $N^3_1\,|\,\Iota$ & $D_1$ &
 		$ \left(\begin{array}{rrrr} 
		0 & 1 & 0 &  0 \\ 0 & 0 & 1 & 0 \\ \frac{1}{2} & 0 & 0 & 1\end{array}\right)$, 
		$ \left(\begin{array}{rrrr} 
		0 & 1 & 0 &  0 \\ 0 & 0 & 1 & 0 \\ \frac{1}{2} & 0 & 0 & 1\end{array}\right)$ & $N^3_1, N^3_1$ \\ \hline
5	&$N^4_{10}$ & $N^3_1\,|\,\Iota$ & $D_2$ &
 		$ \left(\begin{array}{rrrr} 
		0 & 1 & 0 &  0 \\ \frac{1}{2} & 0 & 1 & 0 \\ 0 & 0 & 0 & 1\end{array}\right)$, 
		$ \left(\begin{array}{rrrr} 
		\frac{1}{2} & 1 & 0 &  0 \\ \frac{1}{2} & 0 & 1 & 0 \\ 0 & 0 & 0 & 1\end{array}\right)$ & $N^3_1, N^3_1$ \\ \hline
6	&$N^4_{11}$ & $N^3_1\,|\,\Iota$ & $D_2$ &
 		$ \left(\begin{array}{rrrr} 
		0 & 1 & 0 &  0 \\ 0 & 0 & 1 & 0 \\ \frac{1}{2} & 0 & 0 & 1\end{array}\right)$, 
		$ \left(\begin{array}{rrrr} 
		0 & 1 & 0 &  0 \\ \frac{1}{2} & 0 & 1 & 0 \\ \frac{1}{2} & 0 & 0 & 1\end{array}\right)$ & $N^3_1, N^3_2$ \\ \hline
7	&$N^4_{12}$ & $N^3_1\,|\,\Iota$ & $D_2$ &
 		$ \left(\begin{array}{rrrr} 
		0 & 1 & 0 &  0 \\ \frac{1}{2} & 0 & 1 & 0 \\ \frac{1}{2} & 0 & 0 & 1\end{array}\right)$, 
		$ \left(\begin{array}{rrrr} 
		\frac{1}{2} & 1 & 0 &  0 \\ \frac{1}{2} & 0 & 1 & 0 \\ \frac{1}{2} & 0 & 0 & 1\end{array}\right)$ & $N^3_2, N^3_2$ \\ \hline
8	&$N^4_{13}$ & $N^3_1\,|\,\Iota$ & $D_2$ &
 		$ \left(\begin{array}{rrrr} 
		0 & 1 & 0 &  0 \\ \frac{1}{2} & 0 & 1 & 0 \\ 0 & 0 & 0 & 1\end{array}\right)$, 
		$ \left(\begin{array}{rrrr} 
		0 & 1 & 0 &  0 \\ 0 & 0 & 1 & 0 \\ \frac{1}{2} & 0 & 0 & 1\end{array}\right)$ & $N^3_1, N^3_1$ \\ \hline
9	&$N^4_{13}$ & $N^3_1\,|\,\Iota$ & $D_2$ &
 		$ \left(\begin{array}{rrrr} 
		0 & 1 & 0 &  0 \\ \frac{1}{2} & 0 & 1 & 0 \\ 0 & 0 & 0 & 1\end{array}\right)$, 
		$ \left(\begin{array}{rrrr} 
		\frac{1}{2}  & 1 & 0 &  0 \\ \frac{1}{2} & 0 & 1 & 0 \\ \frac{1}{2} & 0 & 0 & 1\end{array}\right)$ & $N^3_1, N^3_2$ \\ \hline
10	&$N^4_{22}$ & $N^3_1\,|\,\Iota$ & $D_1$ &
 		$ \left(\begin{array}{rrrr} 
		0 & 1 & 0 &  0 \\ 0 & 0 & -1 & 0 \\ \frac{1}{2} & 0 & 0 & 1\end{array}\right)$, 
		$ \left(\begin{array}{rrrr} 
		0 & 1 & 0 &  0 \\ 0 & 0 & -1 & 0 \\ \frac{1}{2} & 0 & 0 & 1\end{array}\right)$ & $N^3_3, N^3_3$ \\ \hline
11	&$N^4_{23}$ & $N^3_1\,|\,\Iota$ & $D_1$ &
 		$ \left(\begin{array}{rrrr} 
		0 & 1 & 0 &  0 \\ 0 & -2 & -1 & 0 \\ \frac{1}{2} & 0 & 0 & 1\end{array}\right)$, 
		$ \left(\begin{array}{rrrr} 
		0 & 1 & 0 &  0 \\ 0 & -2 & -1 & 0 \\ \frac{1}{2} & 0 & 0 & 1\end{array}\right)$ & $N^3_4, N^3_4$ \\ \hline
12	&$N^4_{24}$ & $N^3_1\,|\,\Iota$ & $D_1$ &
 		$ \left(\begin{array}{rrrr} 
		0 & -1 & 0 &  0 \\ \frac{1}{2} & 0 & 1 & 0 \\ 0 & 0 & 0 & 1\end{array}\right)$, 
		$ \left(\begin{array}{rrrr} 
		0 & -1 & 0 &  0 \\ \frac{1}{2} & 0 & 1 & 0 \\ 0 & 0 & 0 & 1\end{array}\right)$ & $N^3_3, N^3_3$ \\ \hline
13	&$N^4_{25}$ & $N^3_1\,|\,\Iota$ & $D_1$ &
 		$ \left(\begin{array}{rrrr} 
		0 & -1 & 0 &  0 \\ \frac{1}{2} & 0 & 1 & 0 \\ \frac{1}{2} & 0 & 0 & 1\end{array}\right)$, 
		$ \left(\begin{array}{rrrr} 
		0 & -1 & 0 &  0 \\ \frac{1}{2} & 0 & 1 & 0 \\ \frac{1}{2} & 0 & 0 & 1\end{array}\right)$ & $N^3_4, N^3_4$ \\ \hline
14	&$N^4_{26}$ & $N^3_1\,|\,\Iota$ & $D_2$ &
 		$ \left(\begin{array}{rrrr} 
		0 & -1 & 0 &  0 \\ \frac{1}{2} & 0 & 1 & 0 \\ 0 & 0 & 0 & 1\end{array}\right)$, 
		$ \left(\begin{array}{rrrr} 
		0 & -1 & 0 &  0 \\ \frac{1}{2} & 0 & 1 & 0 \\ \frac{1}{2} & 0 & 0 & 1\end{array}\right)$ & $N^3_3, N^3_4$ \\ \hline \end{tabular}

\vspace{.2in}
\caption{The flat $N^3_1$ fiberings over $\Iota$, Part I}
\end{table}

\begin{table} 

\begin{tabular}{c|c|c|c|c|c}	
no. & mfd. & cS-fbr.  &  grp. & pair representatives  & s-fbrs.  \\ \hline	
15	&$N^4_{30}$ & $N^3_1\,|\,\Iota$ & $D_2$ &
 		$ \left(\begin{array}{rrrr} 
		 \frac{1}{2} & 1 & 0 &  0 \\ \frac{1}{2} & 0 & 1 & 0 \\ 0 & 0 & 0 & 1\end{array}\right)$, 
		$ \left(\begin{array}{rrrr} 
		0 & -1& 0 &  0 \\ \frac{1}{2} & 0 & 1 & 0 \\ 0 & 0 & 0 & 1\end{array}\right)$ & $N^3_1, N^3_3$ \\ \hline
16	&$N^4_{31}$ & $N^3_1\,|\,\Iota$ & $D_2$ &
 		$ \left(\begin{array}{rrrr} 
		0 & 1 & 0 &  0 \\ \frac{1}{2} & 0 & 1 & 0 \\ 0 & 0 & 0 & 1\end{array}\right)$, 
		$ \left(\begin{array}{rrrr} 
		0 & -1& 0 &  0 \\ \frac{1}{2} & 0 & 1 & 0 \\ 0 & 0 & 0 & 1\end{array}\right)$ & $N^3_1, N^3_3$ \\ \hline
17	&$N^4_{31}$ & $N^3_1\,|\,\Iota$ & $D_2$ &
 		$ \left(\begin{array}{rrrr} 
		0 & 1 & 0 &  0 \\ 0 & 0 & 1 & 0 \\ \frac{1}{2} & 0 & 0 & 1\end{array}\right)$, 
		$ \left(\begin{array}{rrrr} 
		0 & 1& 0 &  0 \\ 0 & 0 & -1 & 0 \\  \frac{1}{2} & 0 & 0 & 1\end{array}\right)$ & $N^3_1, N^3_3$ \\ \hline
18	&$N^4_{32}$ & $N^3_1\,|\,\Iota$ & $D_2$ &
 		$ \left(\begin{array}{rrrr} 
		0 & 1 & 0 &  0 \\ 0 & 0 & 1 & 0 \\ \frac{1}{2} & 0 & 0 & 1\end{array}\right)$, 
		$ \left(\begin{array}{rrrr} 
		0 & -1& 0 &  0 \\  \frac{1}{2}  & 0 & 1 & 0 \\ 0 & 0 & 0 & 1\end{array}\right)$ & $N^3_1, N^3_3$ \\ \hline
19	&$N^4_{32}$ & $N^3_1\,|\,\Iota$ & $D_2$ &
 		$ \left(\begin{array}{rrrr} 
		\frac{1}{2} & 1 & 0 &  0 \\ \frac{1}{2} & 0 & 1 & 0 \\ \frac{1}{2} & 0 & 0 & 1\end{array}\right)$, 
		$ \left(\begin{array}{rrrr} 
		0 & -1& 0 &  0 \\  \frac{1}{2}  & 0 & 1 & 0 \\ \frac{1}{2} & 0 & 0 & 1\end{array}\right)$ & $N^3_2, N^3_4$ \\ \hline
20	&$N^4_{33}$ & $N^3_1\,|\,\Iota$ & $D_2$ &
 		$ \left(\begin{array}{rrrr} 
		0 & 1 & 0 &  0 \\ 0 & 0 & 1 & 0 \\ \frac{1}{2} & 0 & 0 & 1\end{array}\right)$, 
		$ \left(\begin{array}{rrrr} 
		0 & -1& 0 &  0 \\  \frac{1}{2}  & 0 & 1 & 0 \\ \frac{1}{2} & 0 & 0 & 1\end{array}\right)$ & $N^3_1, N^3_4$ \\ \hline
21	&$N^4_{33}$ & $N^3_1\,|\,\Iota$ & $D_2$ &
 		$ \left(\begin{array}{rrrr} 
		0 & 1 & 0 &  0 \\  \frac{1}{2} & 0 & 1 & 0 \\ \frac{1}{2} & 0 & 0 & 1\end{array}\right)$, 
		$ \left(\begin{array}{rrrr} 
		0 & -1& 0 &  0 \\  \frac{1}{2}  & 0 & 1 & 0 \\ \frac{1}{2} & 0 & 0 & 1\end{array}\right)$ & $N^3_2, N^3_4$ \\ \hline
22	&$N^4_{33}$ & $N^3_1\,|\,\Iota$ & $D_2$ &
 		$ \left(\begin{array}{rrrr} 
		0 & 1 & 0 &  0 \\  0 & 0 & 1 & 0 \\ \frac{1}{2} & 0 & 0 & 1\end{array}\right)$, 
		$ \left(\begin{array}{rrrr} 
		0 & 1& 0 &  0 \\  0 & -2 & -1 & 0 \\ \frac{1}{2} & 0 & 0 & 1\end{array}\right)$ & $N^3_1, N^3_4$ \\ \hline
23	&$N^4_{34}$ & $N^3_1\,|\,\Iota$ & $D_2$ &
 		$ \left(\begin{array}{rrrr} 
		\frac{1}{2} & 1 & 0 &  0 \\ \frac{1}{2} & 0 & 1 & 0 \\ \frac{1}{2} & 0 & 0 & 1\end{array}\right)$, 
		$ \left(\begin{array}{rrrr} 
		0 & -1& 0 &  0 \\  \frac{1}{2}  & 0 & 1 & 0 \\ 0 & 0 & 0 & 1\end{array}\right)$ & $N^3_2, N^3_3$ \\ \hline
24	&$N^4_{35}$ & $N^3_1\,|\,\Iota$ & $D_2$ &
 		$ \left(\begin{array}{rrrr} 
		0 & 1 & 0 &  0 \\ \frac{1}{2} & 0 & 1 & 0 \\ \frac{1}{2} & 0 & 0 & 1\end{array}\right)$, 
		$ \left(\begin{array}{rrrr} 
		0 & -1& 0 &  0 \\  \frac{1}{2}  & 0 & 1 & 0 \\ 0 & 0 & 0 & 1\end{array}\right)$ & $N^3_2, N^3_3$ \\ \hline
25	&$N^4_{35}$ & $N^3_1\,|\,\Iota$ & $D_2$ &
 		$ \left(\begin{array}{rrrr} 
		0 & 1 & 0 &  0 \\   \frac{1}{2} & 0 & 1 & 0 \\ \frac{1}{2} & 0 & 0 & 1\end{array}\right)$, 
		$ \left(\begin{array}{rrrr} 
		0 & 1& 0 &  0 \\  0 & -2 & -1 & 0 \\ \frac{1}{2} & 0 & 0 & 1\end{array}\right)$ & $N^3_2, N^3_4$ \\ \hline
26	&$N^4_{36}$ & $N^3_1\,|\,\Iota$ & $D_2$ &
 		$ \left(\begin{array}{rrrr} 
		 \frac{1}{2} & 1 & 0 &  0 \\  \frac{1}{2} & 0 & 1 & 0 \\ 0 & 0 & 0 & 1\end{array}\right)$, 
		$ \left(\begin{array}{rrrr} 
		0 & -1& 0 &  0 \\  \frac{1}{2}  & 0 & 1 & 0 \\ \frac{1}{2} & 0 & 0 & 1\end{array}\right)$ & $N^3_1, N^3_4$ \\ \hline
27	&$N^4_{36}$ & $N^3_1\,|\,\Iota$ & $D_2$ &
 		$ \left(\begin{array}{rrrr} 
		0 & 1 & 0 &  0 \\  \frac{1}{2} & 0 & 1 & 0 \\ 0 & 0 & 0 & 1\end{array}\right)$, 
		$ \left(\begin{array}{rrrr} 
		0 & 1& 0 &  0 \\  0 & -2 & -1 & 0 \\ \frac{1}{2} & 0 & 0 & 1\end{array}\right)$ & $N^3_1, N^3_4$ \\ \hline
28	&$N^4_{37}$ & $N^3_1\,|\,\Iota$ & $D_2$ &
 		$ \left(\begin{array}{rrrr} 
		0 & 1 & 0 &  0 \\  \frac{1}{2} & 0 & 1 & 0 \\ 0 & 0 & 0 & 1\end{array}\right)$, 
		$ \left(\begin{array}{rrrr} 
		0 & 1 & 0 &  0 \\ 0 & 0 & -1 & 0 \\  \frac{1}{2} & 0 & 0 & 1\end{array}\right)$ & $N^3_1, N^3_3$ \\ \hline \end{tabular}

\vspace{.2in}
\caption{The flat $N^3_1$ fiberings over $\Iota$, Part II}
\end{table}

\begin{table} 

\begin{tabular}{c|c|c|c|c|c}	
no. & mfd. & cS-fbr.  &  grp. & pair representatives  & s-fbrs.  \\ \hline	
29	&$N^4_{37}$ & $N^3_1\,|\,\Iota$ & $D_2$ &
 		$ \left(\begin{array}{rrrr} 
		0 & 1 & 0 &  0 \\  \frac{1}{2} & 0 & 1 & 0 \\ \frac{1}{2}  & 0 & 0 & 1\end{array}\right)$, 
		$ \left(\begin{array}{rrrr} 
		0 & 1 & 0 &  0 \\ 0 & 0 & -1 & 0 \\  \frac{1}{2} & 0 & 0 & 1\end{array}\right)$ & $N^3_2, N^3_3$ \\ \hline
30	&$N^4_{37}$ & $N^3_1\,|\,\Iota$ & $D_2$ &
 		$ \left(\begin{array}{rrrr} 
		0 & 1 & 0 &  0 \\  \frac{1}{2} & 0 & 1 & 0 \\ 0 & 0 & 0 & 1\end{array}\right)$, 
		$ \left(\begin{array}{rrrr} 
		0 & -1 & 0 &  0 \\ \frac{1}{2} & 0 & 1 & 0 \\  \frac{1}{2} & 0 & 0 & 1\end{array}\right)$ & $N^3_1, N^3_4$ \\ \hline
31	&$N^4_{38}$ & $N^3_1\,|\,\Iota$ & $D_2$ &
 		$ \left(\begin{array}{rrrr} 
		\frac{1}{2} & 1 & 0 &  0 \\  \frac{1}{2} & 0 & 1 & 0 \\ 0 & 0 & 0 & 1\end{array}\right)$, 
		$ \left(\begin{array}{rrrr} 
		0 & 1 & 0 &  0 \\ 0 & 0 & -1 & 0 \\  \frac{1}{2} & 0 & 0 & 1\end{array}\right)$ & $N^3_1, N^3_3$ \\ \hline
32	&$N^4_{38}$ & $N^3_1\,|\,\Iota$ & $D_2$ &
 		$ \left(\begin{array}{rrrr} 
		\frac{1}{2} & 1 & 0 &  0 \\  \frac{1}{2} & 0 & 1 & 0 \\ \frac{1}{2} & 0 & 0 & 1\end{array}\right)$, 
		$ \left(\begin{array}{rrrr} 
		0 & 1 & 0 &  0 \\ 0 & 0 & -1 & 0 \\  \frac{1}{2} & 0 & 0 & 1\end{array}\right)$ & $N^3_2, N^3_3$ \\ \hline
33	&$N^4_{38}$ & $N^3_1\,|\,\Iota$ & $D_2$ &
 		$ \left(\begin{array}{rrrr} 
		\frac{1}{2} & 1 & 0 &  0 \\  \frac{1}{2} & 0 & 1 & 0 \\ 0 & 0 & 0 & 1\end{array}\right)$, 
		$ \left(\begin{array}{rrrr} 
		0 & 1 & 0 &  0 \\ 0 & -2 & -1 & 0 \\  \frac{1}{2} & 0 & 0 & 1\end{array}\right)$ & $N^3_1, N^3_4$ \\ \hline
34	&$N^4_{39}$ & $N^3_1\,|\,\Iota$ & $D_2$ &
 		$ \left(\begin{array}{rrrr} 
		\frac{1}{2} & 1 & 0 &  0 \\  \frac{1}{2} & 0 & 1 & 0 \\  \frac{1}{2} & 0 & 0 & 1\end{array}\right)$, 
		$ \left(\begin{array}{rrrr} 
		0 & 1 & 0 &  0 \\ 0 & -2 & -1 & 0 \\  \frac{1}{2} & 0 & 0 & 1\end{array}\right)$ & $N^3_2, N^3_4$ \\ \hline
35	&$N^4_{45}$ & $N^3_1\,|\,\Iota$ & $D_2$ &
 		$ \left(\begin{array}{rrrr} 
		0 & 1 & 0 &  0 \\ 0 & -2 & -1 & 0 \\  \frac{1}{2} & 0 & 0 & 1\end{array}\right)$, 
		$ \left(\begin{array}{rrrr} 
		0 & -1 & 0 &  0 \\ 0 & 2 & 1 & 0 \\  \frac{1}{2} & 0 & 0 & 1\end{array}\right)$ & $N^3_4, N^3_4$ \\ \hline
36	&$N^4_{46}$ & $N^3_1\,|\,\Iota$ & $D_2$ &
 		$ \left(\begin{array}{rrrr} 
		0 & 1 & 0 &  0 \\ 0 & 0 & -1 & 0 \\ \frac{1}{2} & 0 & 0 & 1\end{array}\right)$, 
		$ \left(\begin{array}{rrrr} 
		0 & -1 & 0 &  0 \\ \frac{1}{2} & 0 & 1 & 0 \\ 0 & 0 & 0 & 1\end{array}\right)$ & $N^3_3, N^3_3$ \\ \hline
37	&$N^4_{46}$ & $N^3_1\,|\,\Iota$ & $D_2$ &
 		$ \left(\begin{array}{rrrr} 
		0 & 1 & 0 &  0 \\ 0 & 0 & -1 & 0 \\ \frac{1}{2} & 0 & 0 & 1\end{array}\right)$, 
		$ \left(\begin{array}{rrrr} 
		0 & -1 & 0 &  0 \\ \frac{1}{2} & 0 & 1 & 0 \\   \frac{1}{2} & 0 & 0 & 1\end{array}\right)$ & $N^3_3, N^3_4$ \\ \hline
 \end{tabular}

\vspace{.2in}
\caption{The flat $N^3_1$ fiberings over $\Iota$, Part III}
\end{table}

\newpage

\section{Fibrations of Closed Flat 4-Manifolds with Generic Fiber $N^3_2$} 

In this section, we describe the affine classification of the geometric co-Seifert fibrations of closed flat 4-manifolds with generic fiber 
the flat 3-manifold $N^3_2$.  Here $N^3_2 = E^3/\Mu$ with $\Mu = \langle t_1,t_2,t_3,\alpha\rangle$ and $\alpha = e_3/2  + A$ and
$$A = \left(\begin{array}{rrr}
0 & 1 & 0 \\ 1 & 0 & 0 \\ 0 & 0 & 1
\end{array}\right).$$  

\begin{lemma} 
Let $\Mu = \langle t_1, t_2, t_3, \alpha\rangle$ with $\alpha = e_3/2 + A$ and $A$ as above.  
Then $N_A(\Mu)$ is the set of all $s+S$ such that 
$$S =  \left(\begin{array}{rrr}
\frac{d+e}{2} & \frac{d-e}{2} & \frac{c}{2} \\ \frac{d-e}{2} & \frac{d+e}{2} & \frac{c}{2} \\ b & b & a
\end{array}\right)
\ with\  
\left(\begin{array}{rrr}
a & b &0 \\ c & d & 0 \\ 0 & 0 & e
\end{array}\right) \in \mathrm{GL}(3,\integers)$$
and $c \in 2\integers$ and $s_1-s_2 + c/4 \in \integers$.
Moreover
$$\mathrm{Out}(\Mu) \cong  \{C\in  \mathrm{GL}(2,\integers): c_{21} \in 2\integers\}$$ 
with $\Omega((s+S)_\star)$ corresponding to the $2\times 2$ block of the above block diagonal matrix. 
\end{lemma}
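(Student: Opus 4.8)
The plan is to compute $N_A(\Mu)$ directly and then read off $\mathrm{Out}(\Mu)$ from the machinery recalled in \S7, namely the epimorphisms $\Phi\colon N_A(\Mu)\to\mathrm{Aff}(\Mu)$ with kernel $\Mu$ and $\Omega\colon\mathrm{Aff}(\Mu)\to\mathrm{Out}(\Mu)$ with kernel $\Phi(C_A(\Mu))$, which together give $\mathrm{Out}(\Mu)\cong N_A(\Mu)/\Mu C_A(\Mu)$. First I would pin down the arithmetic of $\Mu$. Since $Ae_3=e_3$ and $A^2=I$, one checks $\alpha^2=t_3$, so the point group is $\{I,A\}\cong C_2$, the translation subgroup is $\Tau=\langle t_1,t_2,t_3\rangle=\integers^3$, and the elements of $\Mu$ with linear part $A$ are exactly $v+e_3/2+A$ with $v\in\integers^3$. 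Let $\phi=s+S$ normalize $\Mu$. Because $\Tau$ is characteristic in $\Mu$ and $\phi t_v\phi^{-1}=t_{Sv}$, normality forces $S\integers^3=\integers^3$, i.e.\ $S\in\mathrm{GL}(3,\integers)$. The linear part of $\phi\alpha\phi^{-1}$ is $SAS^{-1}$, which must lie in the point group $\{I,A\}$; as it has order $2$ we get $SAS^{-1}=A$, so $S$ commutes with $A$.

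Next I solve $SA=AS$ over the integers. Since left multiplication by $A$ swaps rows $1,2$ and right multiplication swaps columns $1,2$, the condition forces $S=\begin{pmatrix} p & q & r\\ q & p & r\\ x & x & z\end{pmatrix}$; setting $d=p+q$, $e=p-q$, $c=2r$, $b=x$, $a=z$ rewrites this as the displayed matrix and exhibits $S$ as the image of $\mathrm{diag}\!\big(\left(\begin{smallmatrix}a&b\\c&d\end{smallmatrix}\right),e\big)$ under the fixed change of basis to the eigenbasis $e_1+e_2,\,e_1-e_2,\,e_3$ of $A$; in particular $\det S=e(ad-bc)$. Integrality of $S$ together with $\det S=\pm1$ is then exactly the pair of conditions that the associated block matrix lie in $\mathrm{GL}(3,\integers)$ and that $c\in2\integers$ (the latter makes $r=c/2$ integral and, via $e(ad-bc)=\pm1$, forces $a,d,e$ odd, so $\tfrac{d\pm e}{2}\in\integers$). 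Finally I compute the translation part $w=(I-A)s+\tfrac12 Se_3$ of $\phi\alpha\phi^{-1}$ and impose $w\in e_3/2+\integers^3$; since $Se_3=(c/2,c/2,a)$ and $(I-A)s=(s_1-s_2,s_2-s_1,0)$, the third coordinate condition reduces to $a$ odd (already known) while the first two collapse to the single congruence $s_1-s_2+c/4\in\integers$. A short converse check that every such $s+S$ conjugates each $t_i$ and $\alpha$ back into $\Mu$ then completes the description of $N_A(\Mu)$.

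For the outer automorphism group I first determine the center: a translation $t_v$ is central iff $Av=v$, so $Z(\Mu)=\langle e_1+e_2,e_3\rangle$ and $\mathrm{Span}(Z(\Mu))=\mathrm{Fix}(A)=\mathrm{span}\{e_1+e_2,e_3\}$, whence $C_A(\Mu)=\{c+I:\,c_1=c_2\}$. I then define $\rho\colon N_A(\Mu)\to\mathrm{GL}(2,\integers)$ by $\rho(s+S)=\left(\begin{smallmatrix}a&b\\c&d\end{smallmatrix}\right)$. Because $S\mapsto$ its eigenbasis block matrix is a homomorphism and the passage to the block $\left(\begin{smallmatrix}a&b\\c&d\end{smallmatrix}\right)$ differs from that block only by a fixed conjugation (absorbing the factors of two), $\rho$ is a homomorphism; its image is $H=\{C\in\mathrm{GL}(2,\integers):c_{21}\in2\integers\}$, since $\det\left(\begin{smallmatrix}a&b\\c&d\end{smallmatrix}\right)=\pm1$, $c$ is even, and every such $C$ is realized by a suitable choice of $e=1$ and translation $s$. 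It remains to identify $\ker\rho$: $\rho(s+S)=I$ forces $a=d=1$, $b=c=0$, hence $S\in\{I,A\}$ with the condition $s_1-s_2\in\integers$, and comparing with $\Mu C_A(\Mu)=\{w+I:w_1-w_2\in\integers\}\cup\{w+e_3/2+A:w_1-w_2\in\integers\}$ shows the two sets coincide. Since $\mathrm{Out}(\Mu)=N_A(\Mu)/\Mu C_A(\Mu)$, the map $\rho$ descends to the claimed isomorphism $\mathrm{Out}(\Mu)\cong H$ with $\Omega((s+S)_\star)$ corresponding to $\left(\begin{smallmatrix}a&b\\c&d\end{smallmatrix}\right)$.

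The main obstacle I expect is the bookkeeping of the half-integer shifts: verifying that the single congruence $s_1-s_2+c/4\in\integers$ captures precisely the coset condition for $\phi\alpha\phi^{-1}$, and then matching $\ker\rho$ with $\Mu C_A(\Mu)$ so that no spurious constraint on the third coordinate survives. The factor-of-two discrepancies between $S$ and its associated block matrix (the evenness of $c$, the oddness of $a,d,e$) must be tracked carefully, since they are exactly what pins the image of $\rho$ down to $H$ rather than a larger or smaller subgroup of $\mathrm{GL}(2,\integers)$.
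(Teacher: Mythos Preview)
Your proposal is correct and follows exactly the approach the paper sketches (after Lemma~9): compute $N_A(\Mu)$ by elementary means and then recover $\mathrm{Out}(\Mu)$ via the epimorphisms $\Phi$ and $\Omega$, i.e.\ as $N_A(\Mu)/\Mu\,C_A(\Mu)$. The paper does not give a detailed proof of this lemma, so your write-up supplies precisely the computation the authors leave to the reader; your identification of $\ker\rho$ with $\Mu\,C_A(\Mu)$ and the verification that $\rho$ is a homomorphism via restriction to $\mathrm{Fix}(A)$ followed by a fixed $\mathrm{GL}(2,\rationalnos)$-conjugation are the right way to make the last step rigorous.
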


By Lemmas 19 and 21, we have that $\mathrm{Out}(\Mu)$ has 7 conjugacy classes of inverse pairs of elements of finite order. 
Representatives in $N_A(\Mu)$ for these conjugacy classes are given in Table 29. 
Hence, there are 7 affine equivalence classes of flat $N_2^3\,|\,S^1$ fiberings. 
These fiberings are described in Table 29 as explained in \S 8. 

\begin{table} 

\begin{tabular}{c|c|c|c|c}	
no. & mfd. & cS-fbr.  &  grp. & representative      \\ \hline	
 1	&$N^4_2$ & $N^3_2\,|\,S^1$  & $C_1$ & 
	  	$ \left(\begin{array}{rrr} 
		1 &  0 & 0 \\ 0 & 1 & 0 \\ 0 & 0 & 1 \end{array}\right)$   \\ \hline
 2	&$N^4_6$ & $N^3_2\,|\,S^1$ &  $C_2$ &
		$ \left(\begin{array}{rrr} 
		 -1 &  0 &  0 \\  0 & -1 & 0 \\  0 & 0 & 1\end{array}\right)$    \\ \hline
3	&$N^4_7$ & $N^3_2\,|\,S^1$ &  $C_2$ &
		$ \left(\begin{array}{rrrr} 
	         0 & -1 &  0 &  -1  \\  \frac{1}{2} & 0 & -1 & -1 \\  0 & 0 & 0 & 1\end{array}\right)$    \\ \hline
4	&$N^4_{11}$ & $N^3_2\,|\,S^1$ &  $C_2$ &
		$ \left(\begin{array}{rrr} 
		  1 &  0 &  0 \\  0 & 1 & 0 \\   0 & 0 & - 1\end{array}\right)$    \\ \hline
5	&$N^4_{13}$ & $N^3_2\,|\,S^1$ &  $C_2$ &
		$ \left(\begin{array}{rrr} 
		  -1 &  0 &  0 \\  0 & -1 & 0 \\  1 & 1 & 1 \end{array}\right)$    \\ \hline
6	&$N^4_{26}$ & $N^3_2\,|\,S^1$ &  $C_2$ &
		$ \left(\begin{array}{rrr} 
		 -1 &  0 &  0 \\  0 & -1 & 0 \\  0 & 0 & -1\end{array}\right)$    \\ \hline
7	&$N^4_{29}$ & $N^3_2\,|\,S^1$ &  $C_4$ &
		$ \left(\begin{array}{rrrr} 
		  0 & -1 &  0 &  -1 \\  \frac{1}{2} & 0 & -1 & -1 \\  0 & 1 & 1 & 1\end{array}\right)$    \\ \hline			
 \end{tabular}

\vspace{.2in}
\caption{The flat $N^3_2$ fiberings over $S^1$}
\end{table}

Next we consider $N^3_2\, | \, I$ fiberings. 

\begin{lemma}  
Let $\Mu = \langle t_1, t_2, t_3, \alpha\rangle$ with $\alpha = e_3/2 + A$ and $A$ defined as above,   
and let $\beta$ be an affinity of $E^3$ that normalizes $\Mu$ 
such that $\beta_\star$ has order $2$ and $\beta_\star$ does not fix a point of $E^3/\Mu$. 
Then $\beta_\star = (e_1/2 + e_2/2 + I)_\star$. 
The quotient of $E^3/\Mu$ by the action of  $\beta_\star$ is of type $N^3_1$. 
\end{lemma}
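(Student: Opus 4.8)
The plan is to mimic the proofs of Lemmas~13, 15, and 16, exploiting the description of $\mathrm{Out}(\Mu)$ in Lemma~21 together with the fixed-point criterion of Lemma~3 and the classification of order-$2$ fixed-point-free affinities of the $3$-torus in Lemma~8. Since $\Omega$ is a homomorphism and $\beta_\star$ has order $2$, the image $\Omega(\beta_\star)$ has order $1$ or $2$ in $\mathrm{Out}(\Mu)\cong H=\{C\in\mathrm{GL}(2,\integers):c_{21}\in 2\integers\}$. First I would record that the centre $Z(\Mu)$ is generated by the translations $t_1t_2$ and $t_3$, so that $\mathrm{Span}(Z(\Mu))=\mathrm{span}\{e_1+e_2,e_3\}$, and hence the kernel of $\Omega$ is $\Phi(C_A(\Mu))=\{(c+I)_\star:c\in\mathrm{span}\{e_1+e_2,e_3\}\}$. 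The holonomy group of $\Mu$ is $\{I,A\}$, so in Lemma~3 I would take the coset representatives $\alpha_0=I$ and $\alpha_1=\alpha$, reducing every fixed-point question on $E^3/\Mu$ to the two induced maps $\widetilde\beta_\star$ and $\widetilde{\alpha\beta}_\star$ on the $3$-torus $E^3/\Tau$.

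In the case $\Omega(\beta_\star)=I$, the element $\beta_\star$ lies in $\ker\Omega$, so $\beta_\star=(c+I)_\star$ with $c\in\mathrm{span}\{e_1+e_2,e_3\}$. Order $2$ together with $c\notin\integers^3$ leaves, modulo the lattice $\langle e_1+e_2,e_3\rangle$, the three candidates $c=\tfrac12(e_1+e_2)$, $c=\tfrac12 e_3$, and $c=\tfrac12(e_1+e_2)+\tfrac12 e_3$. For each I would compute $\alpha\beta$ and test $\widetilde{\alpha\beta}_\star$ for a fixed point on the torus: a short computation shows that $\widetilde{\alpha\beta}_\star$ fixes a point exactly when $c=\tfrac12 e_3$ or $c=\tfrac12(e_1+e_2)+\tfrac12 e_3$ (the obstruction living in the $e_3$-coordinate and in the image of $A-I$), whereas for $c=\tfrac12(e_1+e_2)$ both $\widetilde\beta_\star$ and $\widetilde{\alpha\beta}_\star$ are fixed-point-free. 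By Lemma~3 this isolates $\beta_\star=(e_1/2+e_2/2+I)_\star$; since all representatives of this coset differ by an element of $\Tau\subset\Mu$, it is a single well-defined element of $\mathrm{Aff}(\Mu)$.

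The main obstacle is to rule out $\Omega(\beta_\star)$ of order $2$. Here I would run through the five order-$2$ conjugacy classes of $H$ supplied by Lemma~19, represented by $AC,CA,-I,BCB^{-1},BACB^{-1}$, lift each to an affinity $\beta=s+S$ of the block form prescribed by Lemma~21, and impose the relation $\beta^2\in\Mu$ to constrain $s$ to finitely many cosets. In every case I expect to produce a fixed point of $\widetilde\beta_\star$ or of $\widetilde{\alpha\beta}_\star$ on $E^3/\Tau$: for instance, when $S=\mathrm{diag}(1,1,-1)$ the map $\widetilde{\alpha\beta}_\star$ has matrix swapping the first two coordinates and negating the third, and its fixed-point equation is solvable because the relevant translation congruence reduces to $s_1+s_2\in\integers$, which the normalizer condition $s_1-s_2\in\integers$ forces. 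Applying Lemma~3, each such $\beta_\star$ then fixes a point of $E^3/\Mu$, contradicting the hypothesis. This case analysis is the bulk of the work; conjugation-invariance of fixed-point-freeness lets me reduce $S$ to the listed representatives, after which only finitely many translation parts $s$ survive the order-$2$ constraint.

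Finally, to identify the quotient I would observe that adjoining $\gamma=e_1/2+e_2/2+I$ to $\Mu$ enlarges the translation lattice by the half-vector $\tfrac12(e_1+e_2)$ while leaving the holonomy equal to $\langle A\rangle\cong C_2$, so that $\Gamma'=\langle t_1,t_2,t_3,\alpha,\gamma\rangle$ is a torsion-free $3$-space group whose point group is generated by the reflection $A$. Choosing the lattice basis $u_1=\tfrac12(e_1+e_2),\,u_2=e_1,\,u_3=e_3$, I would rewrite $\Gamma'$ with a reduced set of generators and then match it, by comparison with Table~1B of \cite{B-Z} (or via CARAT's recognition routine), to the $3$-space group of IT number $7$, which is of type $N^3_1$. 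This completes the proof that $E^3/\Gamma'$ is of type $N^3_1$.
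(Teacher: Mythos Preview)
Your approach is essentially the same as the paper's: split on the order of $\Omega(\beta_\star)$, use Lemma~3 to reduce fixed-point questions to the torus, and invoke Lemma~8 for the classification; the paper streamlines the order-$2$ case by replacing $\beta$ with $\alpha\beta$ if necessary so that the linear part matches one of the Table~29 representatives, then argues (as you do) that each such linear part forces a fixed point. One small slip in your example: the congruence $s_1+s_2\in\integers$ is not forced by the normalizer condition $s_1-s_2\in\integers$ alone---you also need the order-$2$ constraint $2s_1,2s_2\in\integers$ coming from $\beta^2\in\Mu$, which you did record earlier but should invoke here.
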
 
\begin{proof}
If $\Omega(\beta_\star)$ has order 1, then $\beta_\star = (e_1/2 + e_2/2 + I)_\star$ by Lemmas 3 and 8. 
The quotient of $E^3/\Mu$ by the action of  $(e_1/2+e_2/2 + I)_\star$ is of type $N^3_1$, 
since the 3-space group $\langle t_1, \alpha, e_1/2+e_2/2 + I\rangle$ is the same as 
the 3-space group $\langle e_1/2+e_2/2 + I, e_1/2-e_2/2 + I, \alpha\rangle$, which has IT number 7 according to Table 1B of \cite{B-Z}. 

Now suppose $\Omega(\beta_\star)$ has order 2.  
Let $\beta = b+ B$ with $b+B \in N_A(\Mu)$. 
By replacing $\beta$ with $\alpha\beta$, if necessary, 
we may assume that $B$ is conjugate, in the point group of $N_A(\Mu)$, to the matrix part of one of representatives 2-6 in Table 29. 
It follows from Lemmas 3 and 8, that $\beta_\star$ fixes a point of $E^3/\Mu$, which is a contradiction. 
This is easy to prove for the diagonal matrices. 
The two nondiagonal matrices are conjugate in $\mathrm{SL}(3,\integers)$ to $D$ where $D$ transposes $e_1$ and $e_2$ and $De_3=-e_3$. 
which implies that $\beta_\star$ fixes a point of $E^3/\Mu$ by Lemmas 3 and 8. 
\end{proof}

By Lemma 22, the isometry $(e_1/2+e_2/2 + I)_\star$ is the unique order 2 affinity of $N^3_2$ that does not fix a point of $N^3_2$. 
By Theorem 10, there is exactly 1 affine equivalence class of flat $N^3_2\,|\, \Iota$ fiberings, which is described in Table 30 as explained in \S 8. 

\begin{table} 

\begin{tabular}{c|c|c|c|c|c}	
no. & mfd. & cS-fbr.  &  grp. & pair representatives  & s-fbrs.  \\ \hline	
1	&$N^4_{10}$ & $N^3_2\,|\,\Iota$ & $D_1$ &
 		$ \left(\begin{array}{rrrr} 
		\frac{1}{2} & 1 & 0 &  0 \\ \frac{1}{2} & 0 & 1 & 0 \\ 0 & 0 & 0 & 1\end{array}\right)$, 
		$ \left(\begin{array}{rrrr} 
		\frac{1}{2} & 1 & 0 &  0 \\ \frac{1}{2} & 0 & 1 & 0 \\ 0 & 0 & 0 & 1\end{array}\right)$ & $N^3_1, N^3_1$ \\ \hline
 \end{tabular}

\vspace{.2in}
\caption{The flat $N^3_2$ fiberings over $\Iota$}
\end{table}

\section{Fibrations of Closed Flat 4-Manifolds with Generic Fiber $N^3_3$} 

In this section, we describe the affine classification of the geometric co-Seifert fibrations of closed flat 4-manifolds with generic fiber 
the flat 3-manifold $N^3_3$.  Here $N^3_3 = E^3/\Mu$ with $\Mu = \langle t_1,t_2,t_3,\alpha, \beta\rangle$ with $\alpha = e_2/2 + e_3/2 + 
\mathrm{diag}(-1,-1,1)$, and $\beta = e_3/2 + \mathrm{diag}(1,-1,1)$. 

\begin{lemma} 
Let $\Mu = \langle t_1, t_2, t_3, \alpha, \beta\rangle$ with $\alpha $ and $\beta$ defined as above.  
Then we have
$$N_A(\Mu) = \{c+C: 2c_1, 2c_2 \in \integers\ and\ C = \mathrm{diag}(\pm 1, \pm 1,\pm 1)\},$$
and
$$\mathrm{Out}(\Mu) \cong (\integers/2\integers)^2 \times \{\pm I\}$$
with $\Omega((e_1/2+I)_\star)$ corresponding to $(1,0,I)$, and $\Omega((e_2/2+I)_\star)$ corresponding to $(0,1,I)$, 
and $\Omega((-I)_\star)$ corresponding to $(0,0,-I)$.  
\end{lemma}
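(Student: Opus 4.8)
The plan is to compute $N_A(\Mu)$ first and then to read off $\mathrm{Out}(\Mu)$ as the quotient $N_A(\Mu)/(\Mu\,C_A(\Mu))$, using the two epimorphisms $\Phi\colon N_A(\Mu)\to\mathrm{Aff}(\Mu)$ (kernel $\Mu$) and $\Omega\colon\mathrm{Aff}(\Mu)\to\mathrm{Out}(\Mu)$ (kernel $\Phi(C_A(\Mu))$) recorded just before Theorem 10. Throughout I will write the point group of $\Mu$ as $P=\{I,\ \mathrm{diag}(-1,-1,1),\ \mathrm{diag}(1,-1,1),\ \mathrm{diag}(-1,1,1)\}$ and use $\Tau=\langle t_1,t_2,t_3\rangle=\integers^3$ for the (characteristic) translation lattice.

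For the normalizer, let $\phi=c+C\in N_A(\Mu)$. Conjugation by $\phi$ preserves $\Tau$, so $C\in\mathrm{GL}(3,\integers)$, and it permutes $P$. The element $\mathrm{diag}(-1,-1,1)$ is the unique nontrivial member of $P$ of determinant $+1$ (its $(-1)$-eigenspace is the plane $\langle e_1,e_2\rangle$), so $C$ must fix it and hence commute with it. Consequently $C$ preserves the splitting $\langle e_3\rangle\oplus\langle e_1,e_2\rangle$, giving $Ce_3=\pm e_3$, while $C$ restricted to $\langle e_1,e_2\rangle$ is a signed permutation matrix permuting the reflection lines $\langle e_1\rangle,\langle e_2\rangle$. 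Thus $C$ is either $\mathrm{diag}(\pm1,\pm1,\pm1)$ or such a diagonal matrix composed with the swap $e_1\leftrightarrow e_2$.

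The hard part is excluding the swap, and this is where the inhomogeneous parts of $\alpha$ and $\beta$, not just their linear parts, must enter. If $C$ swapped $e_1$ and $e_2$, then $\phi\beta\phi^{-1}$ would be an element of $\Mu$ lying over $C\,\mathrm{diag}(1,-1,1)\,C^{-1}=\mathrm{diag}(-1,1,1)$. Its translation part equals $C(e_3/2)+(I-\mathrm{diag}(-1,1,1))c$, whose third coordinate is $\pm\tfrac12$ (the correction term $(I-\mathrm{diag}(-1,1,1))c=\mathrm{diag}(2,0,0)c$ touches only the first coordinate). But every element of $\Mu$ over $\mathrm{diag}(-1,1,1)$ is congruent mod $\Tau$ to $\alpha\beta\equiv e_2/2$, whose third coordinate is $0$; since $\tfrac12\not\equiv0\pmod\integers$ this is impossible. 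Hence $C=\mathrm{diag}(\epsilon_1,\epsilon_2,\epsilon_3)$ with $\epsilon_i=\pm1$. With $C$ diagonal I would then impose $\phi\alpha\phi^{-1},\phi\beta\phi^{-1}\in\Mu$ and compare translation parts modulo $\Tau$ coordinate by coordinate; this forces exactly $2c_1,2c_2\in\integers$ and leaves $c_3$ free. Conversely, any such $\phi$ is directly checked to conjugate each generator into $\Mu$, so $N_A(\Mu)$ is as stated.

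For $\mathrm{Out}(\Mu)$ I would first determine $Z(\Mu)=\langle t_3\rangle$: a translation $v+I$ is central precisely when $v\in\mathrm{Fix}(\mathrm{diag}(-1,-1,1))\cap\mathrm{Fix}(\mathrm{diag}(1,-1,1))\cap\integers^3=\integers e_3$. Hence $\mathrm{Span}(Z(\Mu))=\realnos e_3$ and $C_A(\Mu)=\{se_3+I:s\in\realnos\}$, and the composite $\Omega\Phi$ gives $\mathrm{Out}(\Mu)\cong N_A(\Mu)/(\Mu\,C_A(\Mu))$. An index count finishes the argument: writing $N_A(\Mu)=T\rtimes D$ with $T$ the translations ($c_1,c_2\in\tfrac12\integers$, $c_3\in\realnos$) and $D=\{\mathrm{diag}(\pm1,\pm1,\pm1)\}$, one finds $\Mu C_A\cap T=\integers e_1+\integers e_2+\realnos e_3$ (index $4$ in $T$, contributing $(\integers/2\integers)^2$) while $\Mu C_A$ realizes exactly the linear parts $P$ (index $2$ in $D$), so the quotient has order $8$. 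I would then verify that $x=\Omega((e_1/2+I)_\star)$, $y=\Omega((e_2/2+I)_\star)$, $z=\Omega((-I)_\star)$ each square to the identity, commute, and generate the quotient (using $\alpha\equiv\beta\equiv1$ to reduce any diagonal linear part to $I$ or $-I$ modulo $P$), whence the order-$8$ quotient is $\langle x\rangle\times\langle y\rangle\times\langle z\rangle\cong(\integers/2\integers)^2\times\{\pm I\}$ with the asserted correspondence.
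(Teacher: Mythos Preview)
Your argument is correct and follows exactly the approach the paper describes: compute $N_A(\Mu)$ directly, then obtain $\mathrm{Out}(\Mu)$ as $N_A(\Mu)/(\Mu\,C_A(\Mu))$ via the epimorphisms $\Phi$ and $\Omega$. The paper does not give a proof of this lemma (it declares the normalizer computation elementary and refers to Hillman for $\mathrm{Out}(\Mu)$), so you have supplied precisely the details the authors left to the reader, including the key step of using the third coordinate of the translation part of $\phi\beta\phi^{-1}$ to exclude the coordinate swap in $C$.
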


It follows from Lemma 23 that $\mathrm{Out}(\Mu)$ is an elementary 2-group of order 8,   
and so $\mathrm{Out}(\Mu)$ has 8 conjugacy classes of inverse pairs.  
Hence, there are 8 affine equivalence classes of flat $N_3^3\,|\,S^1$ fiberings. 
These fiberings are described in Table 31 as explained in \S 8. 

\begin{table} 

\begin{tabular}{c|c|c|c|c}	
no. & mfd. & cS-fbr.  &  grp. & representative      \\ \hline	
1	&$N^4_8$ & $N^3_3\,|\,S^1$  & $C_1$ & 
	  	$ \left(\begin{array}{rrr} 
		1 &  0 & 0 \\ 0 & 1 & 0 \\ 0 & 0 & 1 \end{array}\right)$   \\ \hline
2	&$N^4_{10}$ & $N^3_3\,|\,S^1$ &  $C_2$ &
		$ \left(\begin{array}{rrrr} 
		0 & 1 &  0 &  0 \\ \frac{1}{2} & 0 & 1 & 0 \\  0 & 0 & 0 & 1\end{array}\right)$    \\ \hline
3	&$N^4_{11}$ & $N^3_3\,|\,S^1$ &  $C_2$ &
		$ \left(\begin{array}{rrrr} 
		\frac{1}{2} & 1 &  0 &  0 \\ 0 & 0 & 1 & 0 \\  0 & 0 & 0 & 1\end{array}\right)$    \\ \hline
4	&$N^4_{13}$ & $N^3_3\,|\,S^1$ &  $C_2$ &
		$ \left(\begin{array}{rrrr} 
		\frac{1}{2} & 1 &  0 &  0 \\ \frac{1}{2} & 0 & 1 & 0 \\  0 & 0 & 0 & 1\end{array}\right)$    \\ \hline
5	&$N^4_{30}$ & $N^3_3\,|\,S^1$  & $C_2$ & 
	  	$ \left(\begin{array}{rrr} 
		-1 &  0 & 0 \\ 0 & -1 & 0 \\ 0 & 0 & -1 \end{array}\right)$   \\ \hline
6	&$N^4_{31}$ & $N^3_3\,|\,S^1$ &  $C_2$ &
		$ \left(\begin{array}{rrrr} 
		 0 & -1 &  0 &  0 \\  \frac{1}{2} & 0 & -1 & 0 \\ 0 & 0 & 0 & -1\end{array}\right)$    \\ \hline
7	&$N^4_{32}$ & $N^3_3\,|\,S^1$ &  $C_2$ &
		$ \left(\begin{array}{rrrr} 
		  \frac{1}{2} & -1 &  0 &  0 \\ 0 & 0 & -1 & 0 \\ 0 & 0 & 0 & -1\end{array}\right)$    \\ \hline
8	&$N^4_{33}$ & $N^3_3\,|\,S^1$ &  $C_2$ &
		$ \left(\begin{array}{rrrr} 
		  \frac{1}{2} & -1 &  0 &  0 \\ \frac{1}{2} & 0 & -1 & 0 \\ 0 & 0 & 0 & -1\end{array}\right)$    \\ \hline			
 \end{tabular}

\vspace{.2in}
\caption{The flat $N^3_3$ fiberings over $S^1$}
\end{table}

Next we consider $N^3_3\, | \, I$ fiberings. 

\begin{lemma}  
Let $\Mu = \langle t_1, t_2, t_3, \alpha, \beta\rangle$ with $\alpha$ and $\beta$ defined as above,   
and let $\gamma$ be an affinity of $E^3$ that normalizes $\Mu$ 
such that $\gamma_\star$ has order $2$ and $\gamma_\star$ does not fix a point of $E^3/\Mu$. 
Then $\gamma_\star = (e_1/2 + I)_\star$. 
The quotient of $E^3/\Mu$ by the action of  $\gamma_\star$ is of type $N^3_3$. 
\end{lemma}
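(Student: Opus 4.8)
The plan is to classify directly, inside the group $\mathrm{Aff}(\Mu)$, all order-$2$ fixed-point-free affinities and to show that the resulting list collapses to the single element $(e_1/2+I)_\star$. By Lemma 23 I may write $\gamma = c + C$ with $c+C \in N_A(\Mu)$, so $2c_1,2c_2\in\integers$ and $C = \mathrm{diag}(\sigma_1,\sigma_2,\sigma_3)$ with each $\sigma_i = \pm 1$; in particular $C^2 = I$, and the order-$2$ hypothesis then amounts to $(I+C)c\in\integers^3$ together with $\gamma\notin\Mu$. The holonomy group of $\Mu$ is $\{I,\mathrm{diag}(-1,-1,1),\mathrm{diag}(1,-1,1),\mathrm{diag}(-1,1,1)\}$, and I take $I,\alpha,\beta,\alpha\beta$ as coset representatives for the translation lattice $\Tau=\langle t_1,t_2,t_3\rangle$.

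The engine of the argument is Lemma 3 combined with Lemma 8. By Lemma 3, $\gamma_\star$ fixes a point of $E^3/\Mu$ if and only if one of the four affinities $I\gamma,\ \alpha\gamma,\ \beta\gamma,\ \alpha\beta\gamma$ descends to a self-map of the $3$-torus $E^3/\Tau$ having a fixed point. Each of these products has diagonal linear part (a product of $C$ with a diagonal holonomy matrix), and for a diagonal affinity $\delta+\mathrm{diag}(\rho_1,\rho_2,\rho_3)$ the torus criterion of Lemma 8 reads: a fixed point exists exactly when $\delta_i\in\integers$ for every coordinate $i$ with $\rho_i=+1$. Thus fixed-point-freeness of $\gamma_\star$ is equivalent to the conjunction, over the four products, of the requirement that each product has some coordinate whose linear entry is $+1$ and whose translation entry is non-integral. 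I would record the four translation vectors explicitly, using $\alpha = (e_2+e_3)/2+\mathrm{diag}(-1,-1,1)$, $\beta = e_3/2+\mathrm{diag}(1,-1,1)$, and $\alpha\beta = e_2/2+e_3+\mathrm{diag}(-1,1,1)$; the half-integer shifts $e_2/2,\ e_3/2$ are what drive the analysis.

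First I would dispose of $\sigma_3=-1$: then coordinate $3$ has linear entry $-1$ in all four products, so the only possible $+1$ coordinates are $1$ and $2$, and checking the four sign patterns of $(\sigma_1,\sigma_2)$ shows that in each pattern at least one product has no $+1$ coordinate at all, hence a fixed point. So $\sigma_3=+1$, and the order-$2$ condition then forces $2c_3\in\integers$. There remain four cases indexed by $(\sigma_1,\sigma_2)$, namely $C\in\{I,\ \mathrm{diag}(1,-1,1),\ \mathrm{diag}(-1,1,1),\ \mathrm{diag}(-1,-1,1)\}$, the last three being the linear parts of $\beta,\ \alpha\beta,\ \alpha$. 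In each case I solve the four fixed-point-free inequalities for $c$ modulo $\integers$: they always force $c_1\in \frac12+\integers$ and determine $c_2,c_3$ so that $\gamma$ is congruent modulo $\Tau$ to $(e_1/2+I)$, $(e_1/2+I)\beta$, $(e_1/2+I)(\alpha\beta)$, and $(e_1/2+I)\alpha$, respectively. Since $\alpha,\beta,\alpha\beta\in\Mu$ act trivially on $E^3/\Mu$, every case yields $\gamma_\star=(e_1/2+I)_\star$, proving the asserted equality. For the quotient type, the quotient of $E^3/\Mu$ by the free order-$2$ action $(e_1/2+I)_\star$ is $E^3/\Mu'$ with $\Mu'=\langle\Mu,\,e_1/2+I\rangle$, whose translation lattice is $\langle e_1/2,e_2,e_3\rangle$ and whose holonomy generators are still $\alpha,\beta$; conjugating by the affinity $\mathrm{diag}(2,1,1)$ fixes $\alpha$ and $\beta$ and carries $e_1/2+I$ to $t_1$, so $\Mu'$ is affinely equivalent to $\Mu$ and the quotient is of type $N^3_3$.

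The step I expect to be the main obstacle is the four-case solving of the fixed-point-free inequalities. The sign of each coordinate's linear entry flips with the holonomy representative, and the half-integer contributions coming from the $e_2/2,\ e_3/2$ parts of $\alpha$ and $\beta$ must be tracked with care; a single sign slip, for instance in the coordinate-$2$ entry of $\beta\gamma$, can spuriously leave $c_2$ unconstrained and destroy the uniqueness. Keeping the four translation vectors straight is essentially the entire content of the proof.
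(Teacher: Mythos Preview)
Your argument is correct and uses the same ingredients as the paper: Lemma~23 to describe $N_A(\Mu)$, Lemma~3 to reduce the fixed-point question to the four torus maps $\widetilde{\gamma},\widetilde{\alpha\gamma},\widetilde{\beta\gamma},\widetilde{\alpha\beta\gamma}$, and the standard diagonal fixed-point criterion on the torus. The organization differs slightly. The paper first observes that the holonomy of $\Mu$ is already $\{\mathrm{diag}(\pm1,\pm1,1)\}$, so one may immediately take $C=\pm I$; the case $C=-I$ is then dispatched in one line (it always fixes a point on the torus), leaving only $C=I$. Your route instead argues $\sigma_3=+1$ first and then works through four choices of $(\sigma_1,\sigma_2)$, which is sound but redundant: those four $\gamma$ differ from one another by right-multiplication by $I,\beta,\alpha\beta,\alpha\in\Mu$, so they all represent the same $\gamma_\star$ and a single case suffices. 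On the other hand, your identification of the quotient type is cleaner than the paper's: you conjugate $\langle\Mu,\,e_1/2+I\rangle$ by $\mathrm{diag}(2,1,1)$ and recover $\Mu$ directly, whereas the paper simply appeals to CARAT.
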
 
\begin{proof}
By Lemma 23, we have that $\gamma_\star = (c\pm I)_\star$. 
If $\gamma_\star = (c-I)_\star$,   
then $\gamma_\star$ fixes a point of $E^3/\Mu$ by Lemmas 3 and 8, which is a contradiction. 
If $\gamma_\star = (c+I)_\star$, then $\gamma_\star = (e_1/2 + I)_\star$ by Lemmas 3 and 8. 
The quotient of $E^3/\Mu$ by the action of  $(e_1/2 + I)_\star$ is of type $N^3_3$, 
since the 3-space group $\langle t_2, \alpha, \beta, e_1/2+ I\rangle$ is of type $N^3_3$ according to CARAT. 
\end{proof}

By Lemma 24, the isometry $(e_1/2 + I)_\star$ is the unique order 2 affinity of $N^3_3$ that does not fix a point of $N^3_3$. 
By Theorem 10, there is exactly 1 affine equivalence class of flat $N^3_3\,|\, \Iota$ fiberings, which is described in Table 32 as explained in \S 8. 

\begin{table} 

\begin{tabular}{c|c|c|c|c|c}	
no. & mfd. & cS-fbr.  &  grp. & pair representatives  & s-fbrs.  \\ \hline	
1	&$N^4_{31}$ & $N^3_3\,|\,\Iota$ & $D_1$ &
 		$ \left(\begin{array}{rrrr} 
		\frac{1}{2} & 1 & 0 &  0 \\ 0 & 0 & 1 & 0 \\ 0 & 0 & 0 & 1\end{array}\right)$, 
		$ \left(\begin{array}{rrrr} 
		\frac{1}{2} & 1 & 0 &  0 \\ 0 & 0 & 1 & 0 \\ 0 & 0 & 0 & 1\end{array}\right)$ & $N^3_3, N^3_3$ \\ \hline
 \end{tabular}

\vspace{.2in}
\caption{The flat $N^3_3$ fiberings over $\Iota$}
\end{table}

\section{Fibrations of Closed Flat 4-Manifolds with Generic Fiber $N^3_4$} 

In this section, we describe the affine classification of the geometric co-Seifert fibrations of closed flat 4-manifolds with generic fiber 
the flat 3-manifold $N^3_4$.  Here $N^3_4 = E^3/\Mu$ with $\Mu = \langle t_1,t_2,t_3,\alpha, \beta\rangle$ with $\alpha = e_2/2 + e_3/2 + 
\mathrm{diag}(-1,-1,1)$, and $\beta = e_1/2 + \mathrm{diag}(1,-1,1)$. 

\begin{lemma}  
Let $\Mu = \langle t_1, t_2, t_3, \alpha, \beta\rangle$ with $\alpha $ and $\beta$ defined as above.  
Then we have
$$N_A(\Mu) = \{c+C: 2c_1, 2c_2 \in \integers\ and\ C = \mathrm{diag}(\pm 1, \pm 1,\pm 1)\},$$
and
$$\mathrm{Out}(\Mu) \cong (\integers/2\integers)^2 \times \{\pm I\}$$
with $\Omega((e_1/2+I)_\star)$ corresponding to $(1,0,I)$, and $\Omega((e_2/2+I)_\star)$ corresponding to $(0,1,I)$, 
and $\Omega((-I)_\star)$ corresponding to $(0,0,-I)$.  
\end{lemma}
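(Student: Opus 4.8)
The plan is to compute $N_A(\Mu)$ by hand and then read off $\mathrm{Out}(\Mu)$ from it using the two epimorphisms of Section 7, namely $\Phi:N_A(\Mu)\to\mathrm{Aff}(\Mu)$ with kernel $\Mu$ and $\Omega:\mathrm{Aff}(\Mu)\to\mathrm{Out}(\Mu)$ with kernel $\Phi(C_A(\Mu))$, so that $\mathrm{Out}(\Mu)\cong N_A(\Mu)/\Mu C_A(\Mu)$. Here the holonomy group is $P=\{I,\mathrm{diag}(-1,-1,1),\mathrm{diag}(1,-1,1),\mathrm{diag}(-1,1,1)\}$, a translation $v+I$ is central exactly when $v$ lies in the common fixed space $\langle e_3\rangle$ of $P$, so $Z(\Mu)=\langle t_3\rangle$ and $C_A(\Mu)=\{c_3e_3+I:c_3\in\realnos\}$ by the Charlap--Vasquez lemma.

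First I would pin down the linear part $C$ of an element $c+C\in N_A(\Mu)$. Since the translation lattice $\langle t_1,t_2,t_3\rangle\cong\integers^3$ is characteristic in $\Mu$, conjugation forces $C\in\mathrm{GL}(3,\integers)$, and it must carry $P$ to itself, i.e.\ $CPC^{-1}=P$. Because $\mathrm{diag}(-1,-1,1)$ is the unique non-identity element of $P$ of determinant $+1$, the matrix $C$ must commute with it, which confines $C$ to the block form $\mathrm{diag}(C',\pm1)$ with $C'\in\mathrm{GL}(2,\integers)$; normalizing $P$ then forces $C'$ to normalize the diagonal Klein four-group of $\mathrm{GL}(2,\integers)$, so $C'$ is a signed permutation matrix. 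The key step is to rule out the coordinate swap: if $C'$ interchanged $e_1$ and $e_2$, then $c+C$ would conjugate $\beta=e_1/2+\mathrm{diag}(1,-1,1)$ to an element of linear part $\mathrm{diag}(-1,1,1)$, which must lie in the $\Mu$-coset of $\alpha\beta=-e_1/2+e_2/2+e_3/2+\mathrm{diag}(-1,1,1)$ and hence carry a glide component $\tfrac{1}{2}e_3$ in the fixed space $\langle e_2,e_3\rangle$; but the swap sends the glide $\tfrac{1}{2}e_1$ of $\beta$ to $\pm\tfrac{1}{2}e_2$, which has no $e_3$-component, so the two cosets cannot agree modulo $\integers^3$. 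Thus $C'$ is diagonal and $C=\mathrm{diag}(\pm1,\pm1,\pm1)$.

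With $C$ diagonal, every holonomy element commutes with $C$, so for $\phi=c+C$ the conjugate $\phi\beta\phi^{-1}$ has translation part $(I-\mathrm{diag}(1,-1,1))c+C(\tfrac{1}{2}e_1)=2c_2e_2\pm\tfrac{1}{2}e_1$, which lies in $\tfrac{1}{2}e_1+\integers^3$ precisely when $2c_2\in\integers$; conjugating $\alpha\beta$ likewise yields $2c_1\in\integers$, while conjugating $\alpha$ and the $t_i$ imposes nothing further and no condition on $c_3$ appears. Conversely any such $c+C$ sends the generators of $\Mu$ into $\Mu$ and has an inverse of the same shape, so it normalizes $\Mu$; this proves the stated formula for $N_A(\Mu)$.

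Finally I would compute $\mathrm{Out}(\Mu)=N_A(\Mu)/\Mu C_A(\Mu)$. Passing to the quotient by $C_A(\Mu)$ deletes $c_3$; reducing $c_1,c_2$ modulo $\integers$ via $t_1,t_2$ and the linear part modulo $P$ via $\alpha,\beta,\alpha\beta$ together with $-I$ shows that the three images $\Omega((e_1/2+I)_\star)$, $\Omega((e_2/2+I)_\star)$, $\Omega((-I)_\star)$ generate $\mathrm{Out}(\Mu)$. Each is an involution, since $(e_i/2+I)^2=t_i\in\Mu$ and $(-I)^2=I$, and none is trivial because $\Mu$ contains no translation by $\tfrac{1}{2}e_i$ and no element of linear part $-I$; moreover they commute modulo $\Mu$ (for instance the commutator of $-I$ and $e_1/2+I$ equals $t_1^{-1}$). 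A short index count finishes the order: the linear-part map gives $[\{\pm1\}^3:P]=2$ and the trivial-linear-part translations contribute $[(\tfrac{1}{2}\integers)^2:\integers^2]=4$, so $|\mathrm{Out}(\Mu)|=8$. Hence these three commuting involutions realize $\mathrm{Out}(\Mu)\cong(\integers/2\integers)^2\times\{\pm I\}$ with exactly the stated correspondence. I expect the glide-obstruction step excluding the swap to be the only real content; once it is in place the argument coincides with the $N^3_3$ computation of Lemma 23, which explains why the two lemmas have identical conclusions.
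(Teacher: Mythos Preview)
Your argument is correct and is exactly the method the paper indicates: the paper states this lemma without proof, having remarked earlier (after Lemma 9) that the normalizer computation ``is elementary and is left to the reader'' and that $\mathrm{Out}(\Mu)$ was computed ``from $N_A(\Mu)$ via the epimorphisms $\Phi$ and $\Omega$.'' Your glide-obstruction step excluding the coordinate swap, the resulting half-integer constraints on $c_1,c_2$, and the order count via the short exact sequence with kernel $(\tfrac{1}{2}\integers/\integers)^2$ and quotient $\{\pm1\}^3/P$ are all sound; the only slight imprecision is that the glide of $\alpha\beta$ in the fixed space $\langle e_2,e_3\rangle$ is $\tfrac{1}{2}e_2+\tfrac{1}{2}e_3$, not just $\tfrac{1}{2}e_3$, but only the $e_3$-component matters for the contradiction and your conclusion is unaffected.
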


It follows from Lemma 25 that $\mathrm{Out}(\Mu)$ is an elementary 2-group of order 8,   
and so $\mathrm{Out}(\Mu)$ has 8 conjugacy classes of inverse pairs.  
Hence, there are 8 affine equivalence classes of flat $N^3_4\,|\,S^1$ fiberings. 
These fiberings are described in Table 33 as explained in \S 8. 

\begin{table} 

\begin{tabular}{c|c|c|c|c}	
no. & mfd. & cS-fbr.  &  grp. & representative      \\ \hline	
1	&$N^4_9$ & $N^3_4\,|\,S^1$  & $C_1$ & 
	  	$ \left(\begin{array}{rrr} 
		1 &  0 & 0 \\ 0 & 1 & 0 \\ 0 & 0 & 1 \end{array}\right)$   \\ \hline
2	&$N^4_{11}$ & $N^3_4\,|\,S^1$ &  $C_2$ &
		$ \left(\begin{array}{rrrr} 
		0 & 1 &  0 &  0 \\ \frac{1}{2} & 0 & 1 & 0 \\  0 & 0 & 0 & 1\end{array}\right)$    \\ \hline
3	&$N^4_{12}$ & $N^3_4\,|\,S^1$ &  $C_2$ &
		$ \left(\begin{array}{rrrr} 
		\frac{1}{2} & 1 &  0 &  0 \\ 0 & 0 & 1 & 0 \\  0 & 0 & 0 & 1\end{array}\right)$    \\ \hline
4	&$N^4_{13}$ & $N^3_4\,|\,S^1$ &  $C_2$ &
		$ \left(\begin{array}{rrrr} 
		\frac{1}{2} & 1 &  0 &  0 \\ \frac{1}{2} & 0 & 1 & 0 \\  0 & 0 & 0 & 1\end{array}\right)$    \\ \hline
5	&$N^4_{34}$ & $N^3_4\,|\,S^1$  & $C_2$ & 
	  	$ \left(\begin{array}{rrrr} 
		  \frac{1}{2} & -1 &  0 &  0 \\ 0 & 0 & -1 & 0 \\ 0 & 0 & 0 & -1\end{array}\right)$   \\ \hline
6	&$N^4_{35}$ & $N^3_4\,|\,S^1$ &  $C_2$ &
		$ \left(\begin{array}{rrr} 
		-1 &  0 & 0 \\ 0 & -1 & 0 \\ 0 & 0 & -1 \end{array}\right)$   \\ \hline
7	&$N^4_{36}$ & $N^3_4\,|\,S^1$ &  $C_2$ &
		$ \left(\begin{array}{rrrr} 
		  \frac{1}{2} & -1 &  0 &  0 \\ \frac{1}{2} & 0 & -1 & 0 \\ 0 & 0 & 0 & -1\end{array}\right)$    \\ \hline
8	&$N^4_{37}$ & $N^3_4\,|\,S^1$ &  $C_2$ &
		$ \left(\begin{array}{rrrr} 
		 0 & -1 &  0 &  0 \\  \frac{1}{2} & 0 & -1 & 0 \\ 0 & 0 & 0 & -1\end{array}\right)$    \\ \hline			
 \end{tabular}

\vspace{.2in}
\caption{The flat $N^3_4$ fiberings over $S^1$}
\end{table}

Next we consider $N^3_4\, | \, I$ fiberings. 

\begin{lemma}  
Let $\Mu = \langle t_1, t_2, t_3, \alpha, \beta\rangle$ with $\alpha$ and $\beta$ defined as above,   
and let $\gamma$ be an affinity of $E^3$ that normalizes $\Mu$ 
such that $\gamma_\star$ has order $2$ and $\gamma_\star$ does not fix a point of $E^3/\Mu$. 
Then $\gamma_\star = (e_2/2 + I)_\star$. 
The quotient of $E^3/\Mu$ by the action of  $\gamma_\star$ is of type $N^3_3$. 
\end{lemma}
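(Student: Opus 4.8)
The plan is to mirror the proof of Lemma 24 (the $N^3_3$ case) step for step, substituting the presentation data of $N^3_4$. By Lemma 25 I may write $\gamma = c + C$ with $C = \mathrm{diag}(\pm1,\pm1,\pm1)$ and $2c_1, 2c_2 \in \integers$. The holonomy group of $\Mu$ is $\{I, \mathrm{diag}(-1,-1,1), \mathrm{diag}(1,-1,1), \mathrm{diag}(-1,1,1)\}$, realized by $I, \alpha, \beta, \alpha\beta$, and $-I$ lies in the nontrivial coset of this subgroup inside the group of diagonal sign matrices. Since left multiplication of $\gamma$ by an element of $\Mu$ does not change the class $\gamma_\star \in \mathrm{Aff}(\Mu) = N_A(\Mu)/\Mu$, I would multiply by the appropriate one of $\alpha, \beta, \alpha\beta$ to reduce the linear part to $\pm I$, so that $\gamma_\star = (c \pm I)_\star$; the sign records $\Omega(\gamma_\star)$ in the $\{\pm I\}$ factor of $\mathrm{Out}(\Mu)$.

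Next I would dispose of the minus sign exactly as in Lemma 24: the affinity $c - I$ fixes the point $c/2$ of the $3$-torus $E^3/\Tau$ with $\Tau = \langle t_1, t_2, t_3\rangle$, so taking the coset representative $I$ in Lemma 3 (and using Lemma 8 to see $-I$ is an order-$2$ torus symmetry with a fixed point) shows $\gamma_\star$ fixes a point of $E^3/\Mu$, contrary to hypothesis. Hence $\gamma_\star = (c + I)_\star$, and the order-$2$ condition gives $2c \in \integers^3$, so modulo $\Mu$ I may take $c \in \{0, 1/2\}^3$ with $c \ne 0$.

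The substantive step is then to pin down $c$. I would apply Lemma 3 with the four coset representatives $I, \alpha, \beta, \alpha\beta$ of $\Tau$ in $\Mu$, compute the translation parts of $\gamma, \alpha\gamma, \beta\gamma, \alpha\beta\gamma$, and use Lemma 8 to record, for each, the precise condition on $c$ under which $(\widetilde{\mu_i\gamma})_\star$ fixes a torus point (each linear part $\mathrm{diag}(-1,-1,1)$, $\mathrm{diag}(1,-1,1)$, $\mathrm{diag}(-1,1,1)$ kills one coordinate of the translation and thereby forces one congruence). Demanding that no representative fix a point is the conjunction of these four conditions; it forces $c_3 = 0$ (from $\alpha$), then $c_1 = 0$ (from $\beta$, the $\alpha\beta$ condition being automatic once $c_3 = 0$), and with $c \ne 0$ this leaves $c = e_2/2$. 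Thus $\gamma_\star = (e_2/2 + I)_\star$.

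Finally, to identify the quotient I would note that $E^3/\Mu$ modulo $(e_2/2 + I)_\star$ equals $E^3/\langle t_1, t_3, \alpha, \beta, e_2/2 + I\rangle$ (using $(e_2/2+I)^2 = t_2$) and invoke CARAT to recognize this $3$-space group as being of type $N^3_3$, just as in Lemma 24. I expect the main obstacle to be the simultaneous bookkeeping in the plus-sign case: one must track the four torus fixed-point conditions at once and verify that their conjunction isolates the single vector $e_2/2$; by contrast the minus-sign case and the CARAT identification are routine.
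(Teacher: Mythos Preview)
Your proposal is correct and follows essentially the same approach as the paper's proof: reduce the linear part to $\pm I$ via Lemma 25, eliminate the $-I$ case by Lemmas 3 and 8, then in the $+I$ case use Lemma 3 with the coset representatives $I,\alpha,\beta,\alpha\beta$ (together with Lemma 8) to force $c=e_2/2$, and finally identify the quotient group via CARAT. The paper's proof is terser---it simply cites ``Lemmas 3 and 8'' at each step---but your explicit unpacking of the four torus fixed-point conditions is exactly what those citations amount to.
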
 
\begin{proof}
By Lemma 25, we have that $\gamma_\star = (c\pm I)_\star$. 
If $\gamma_\star = (c-I)_\star$,   
then $\gamma_\star$ fixes a point of $E^3/\Mu$ by Lemmas 3 and 8, which is a contradiction. 
If $\gamma_\star = (c+I)_\star$, then $\gamma_\star = (e_2/2 + I)_\star$ by Lemmas 3 and 8. 
The quotient of $E^3/\Mu$ by the action of  $(e_2/2 + I)_\star$ is of type $N^3_3$, 
since the 3-space group $\langle t_1, \alpha, \beta, e_2/2+ I\rangle$ is of type $N^3_3$ according to CARAT. 
\end{proof}

By Lemma 26, the isometry $(e_2/2 + I)_\star$ is the unique order 2 affinity of $N^3_4$ that does not fix a point of $N^3_4$. 
By Theorem 10, there is exactly 1 affine equivalence class of flat $N^3_4\,|\, \Iota$ fiberings, which is described in Table 34 as explained in \S 8. 

\begin{table} 

\begin{tabular}{c|c|c|c|c|c}	
no. & mfd. & cS-fbr.  &  grp. & pair representatives  & s-fbrs.  \\ \hline	
1	&$N^4_{30}$ & $N^3_4\,|\,\Iota$ & $D_1$ &
 		$ \left(\begin{array}{rrrr} 
		0 & 1 & 0 &  0 \\ \frac{1}{2} & 0 & 1 & 0 \\ 0 & 0 & 0 & 1\end{array}\right)$, 
		$ \left(\begin{array}{rrrr} 
		0 & 1 & 0 &  0 \\ \frac{1}{2} & 0 & 1 & 0 \\ 0 & 0 & 0 & 1\end{array}\right)$ & $N^3_3, N^3_3$ \\ \hline
 \end{tabular}

\vspace{.2in}
\caption{The flat $N^3_4$ fiberings over $\Iota$}
\end{table}


\begin{thebibliography}{99}

\bibitem{B-Z} H. Brown, R. B\"ulow, J. Neub\"user, H. Wondratschek, H. Zassenhaus, 
{\it Crystallographic groups of four-dimensional space}, 
John Wiley \& Sons, New York, 1978. 

\bibitem{C-V} L. S. Charlap and A. T. Vasquez, Compact flat Riemannian manifolds III: The group of affinities, 
Amer. J. Math. {\bf 95} (1973), 471-494. 

\bibitem{Conway} J. H. Conway, The orbifold notation for surface groups, 
In: {\it Groups, Combinatorics and Geometry}, London Math. Soc. Lec. Notes Ser. {\bf 165}, 
Cambridge Univ. Press (1992), 438-447. 

\bibitem{Farkas} D. R. Farkas, Miscellany on Bieberbach group algebras, 
{\it Pacific J. Math.} {\bf 59} (1975), 427-435. 

\bibitem{IT} T. Hahn, {\it International tables for crystallography, Vol. A, 
Space-group symmetry, Second Edition}, Ed. by T. Hahn.,    
D. Reidel Publishing Co., Dordrecht, 1987. 

\bibitem{H-W} W. Hantzsche and H. Wendt, Dreidimensionale euklidische Raumformen, Math. Ann. {\bf 110} (1935), 593-611. 

\bibitem{H} J. A. Hillman, Flat 4-manifold groups, {\it New Zealand J. Math.}, 
{\bf 24} (1995), 29-40.


\bibitem{Lambert} T. P. Lambert, 
{\it On the classification of closed flat four-manifolds}, 
Doctoral Dissertation, Vanderbilt Univ., Nashville, 2007.

\bibitem{Levine} R. D. Levine, {\it The compact Euclidean space forms of dimension four}, 
Doctoral Dissertation, Univ. California, Berkeley, University Microfilms, 1970.

\bibitem{NSW} J. Neub\"user, B. Souvignier, and H. Wondratschek, Corrections to {\it Crystallographic Groups 
of Four-Dimensional Space} by Brown et. al. (1978) [New York: Wiley and Sons], 
Acta. Cryst.  {\bf A58} (2002), 301. 

\bibitem{Carat} J. Opgenorth, W. Plesken, and T. Schulz, Crystallographic Algorithms and Tables, 
Acta Cryst. {\bf A54} (1998), 517-531. 

\bibitem{P-S} B. Putrycz and A. Szczepa\'nski, Existence of spin structures on flat four-manifolds, 
Adv. Geom. {\bf 10}, (2010), 179-188. 

\bibitem{R} J.G. Ratcliffe {\it Foundations of Hyperbolic Manifolds, Second Edition}, 
Graduate Texts in Math., vol. {\bf 149}, Springer-Verlag, Berlin, Heidelberg, and
New York, 2006. 

\bibitem{R-Ts} J.G. Ratcliffe and S.T. Tschantz, Spin and complex structures on flat gravitation instantons, 
{\it Class. Quantum Grav.} {\bf 17}  (2000), 179-188. 

\bibitem{R-T5} J.G. Ratcliffe and S.T. Tschantz, Integral congruence two hyperbolic 5-manifolds, 
{\it Geometraie Dedicata} {\bf 107}  (2004), 187-209. 

\bibitem{R-T} J.G. Ratcliffe and S.T. Tschantz, Fibered orbifolds and crystallographic groups, 
Algebr. Geom. Topol. {\bf 10} (2010), 1627-1664. 

\bibitem{R-TII} J.G. Ratcliffe and S.T. Tschantz, Fibered orbifolds and crystallographic groups II, 
arXiv:1112.3981v2 [math.GT] 3 Oct 2012.


\bibitem{S-W} P. Scott and T. Wall, Topological methods in group theory, In: Homological Group Theory, edited by C.T.C. Wall, 
London Math. Soc. Lec. Note Ser. 36 (1979), 137-203. 

\bibitem{Wolf} J.A. Wolf, {\it Spaces of Constant Curvature}, Publish
or Perish, Houston, 1974.

\bibitem{Z} B. Zimmermann, On the Hantzsche-Wendt Manifold, 
Monatsh. Math. {\bf 110}, (1990), 321-327. 

\end{thebibliography}
\end{document}